\newtheorem{theorem}{Theorem}[section]
\newtheorem{remark}{Remark}[section]
\newtheorem{lemma}[theorem]{Lemma}
\newtheorem{proposition}[theorem]{Proposition}
\newtheorem{corollary}[theorem]{Corollary}
\newtheorem{define}{Definition}[section]  
\newtheorem{hypothesis}{Hypothesis}[section]
\newcommand{\joinR}{\hspace{-.1em}}
\newcommand{\RomanI}{I}
\newcommand{\RomanII}{\mbox{\RomanI\joinR\RomanI}}
\DeclareMathOperator*{\Tr}{Tr}
\DeclareMathOperator*{\Id}{Id}
\DeclareMathOperator*{\esssup}{esssup}
\DeclareMathOperator*{\divergence}{div}
\DeclareMathOperator*{\curl}{curl}
\begin{document}
\title[Stochastic MHD system]{Stochastic magnetohydrodynamics system: \\
cross and magnetic helicity in ideal case; non-uniqueness up to Lions' exponents from prescribed initial data}

\subjclass[2010]{35A02; 35R60; 76W05}
 
\author[Kazuo Yamazaki]{Kazuo Yamazaki}  
\address{Department of Mathematics, University of Nebraska, Lincoln, 243 Avery Hall, PO Box, 880130, Lincoln, NE 68588-0130, U.S.A.; Phone: 402-473-3731; Fax: 402-472-8466}
\email{kyamazaki2@unl.edu}
\date{}
\keywords{Convex integration; Magnetic helicity; Magnetohydrodynamics turbulence; Random force; Taylor's conjecture.}
\thanks{This work was supported by the Simons Foundation (962572, KY)}

\begin{abstract}
We consider the three-dimensional magnetohydrodynamics system forced by random noise. First, for smooth solutions in the ideal case, the cross helicity remains invariant while the magnetic helicity precisely equals the initial magnetic helicity added by a linear temporal growth and multiplied by an exponential temporal growth respectively in the additive and the linear multiplicative case. We employ the technique of convex integration to construct an analytically weak and probabilistically strong solution such that, with positive probability, all of the total energy, cross helicity, and magnetic helicity more than double from initial time. Second, we consider the three-dimensional magnetohydrodynamics system forced by additive noise and diffused up to the Lions' exponent and employ convex integration with temporal intermittency to prove non-uniqueness of solutions starting from prescribed initial data. 
\end{abstract}

\maketitle

\section{Introduction}

\subsection{Motivation from physics and real-world applications}
The study of magnetohydrodynamics (MHD) focuses on the dynamics of electrically conducting fluids and the pioneering works of Alfv$\acute{\mathrm{e}}$n \cite{A42a}, followed by the introduction of the Hall-MHD by Lighthill \cite{L60} have captivated the interests from applied scientists in plasma physics, geophysics, and astrophysics. While fluid turbulence is often investigated through the Navier-Stokes equations,  MHD turbulence describes the chaotic regimes of magnetofluid flow at high Reynolds number that often occurs in laboratory settings such as fusion confinement devices (e.g. the reversed field pinch), as well as astrophysical systems (e.g. the solar corona); we refer to \cite{B03} for details.  

In the study of turbulence, theoretical hypothesis such as those of Kolmogorov \cite{K41a, K41b} have been confirmed via experiments. Deferring precise notations, if $u_{\nu_{1}}$ represents the statistically stationary solution to the Navier-Stokes equations with viscosity $\nu_{1}$, the  Kolmogorov's zeroth law of turbulence postulates the existence of $\epsilon > 0$ such that 
\begin{equation*}
\lim_{\nu_{1} \searrow 0} \nu_{1} \mathbb{E} [ \lVert \nabla u_{\nu_{1}} \rVert_{L^{2}_{x}}^{2} ]= \epsilon > 0,  
\end{equation*} 
where $\mathbb{E}$ denotes the mathematical expectation. Numerical evidence of this phenomena is called anomalous dissipation (e.g. \cite{KIYIU03} for the Euler equations while \cite{DA14, LBMM15, MP09} for the MHD system), which is considered especially important in MHD: e.g. 
\begin{quotation}
dissipation of energy in the limit of high Reynolds number... is an open problem in 3D for fluids and MHD, and yet it is central for astrophysics where dissipative structures, reconnection, and acceleration of particles are well observed $\hdots \sim$ Mininni and Pouquet from \cite{MP09}. 
\end{quotation} 
A closely related conjecture from physics community is that of Onsager \cite{O49} who predicted that the H$\ddot{\mathrm{o}}$lder regularity exponent $\frac{1}{3}$ is the threshold that determines the energy conservation of fluid velocity when the Reynolds number is infinite. 

The MHD system has various invariants such as the total energy \eqref{est 0a} analogously to the energy of the Euler equations. Additionally, the MHD system possesses unique invariants such as magnetic helicity \eqref{est 0b} and cross helicity \eqref{est 0c} which measure the linkage and twist of magnetic field lines, and entanglement of vorticity with magnetic field, respectively. The invariance of magnetic helicity was first discovered by Woltjer \cite{W58}, and Taylor \cite{T74, T86} conjectured that magnetic helicity is approximately conserved for large Reynolds numbers (see also \cite[Section 4]{M01} by Moffatt).  

On the other hand, while the uniqueness of the classical Leray-Hopf weak solution to the 3D Navier-Stokes equations remains open, Lions in \cite[p. 263]{L59} introduced the generalized Navier-Stokes equations via an addition of a fractional Laplacian ``$(-1)^{m} \epsilon \Delta^{m}$'' and remarkably in \cite[p. 96]{L69} already claimed the uniqueness of its Leray-Hopf weak solution when $m \geq \frac{d+2}{4}$ in $d$-dimensional space. Such a result was extended to the generalized MHD system with both viscous and magnetic diffusions given by fractional Laplacians with distinct powers by Wu \cite{W03}. 

The study of stochastic partial differential equations (PDEs), PDEs forced by random noise, has a long history dating back to, at least, \cite{LL57}, and has been utilized in the study of turbulence; e.g. Cho, Lazarian, and Vishniac \cite{CLV02} studied the MHD turbulence via the stochastic MHD system. In this manuscript we explore the three-dimensional (3D) stochastic MHD system; we highlight our results and motivation. 
\begin{enumerate}
\item For smooth solutions in the ideal case, the corresponding cross helicity remains invariant while magnetic helicity grows linearly and exponentially in time when forced respectively by additive and linear multiplicative noise. Using the convex integration technique, we construct solutions such that, with positive probability, its total energy, cross helicity, and magnetic helicity all grow more than twice faster than those of the classical solution; this provides counterexamples to the stochastic analogue of Taylor's conjecture at low regularity level. 
\item The convex integration technique has been adapted to the stochastic Navier-Stokes and Euler equations to prove their non-uniqueness in law in recent works (e.g. \cite{HZZ19, HZZ20, HZZ21a}) and many of their proofs of non-uniqueness utilized the energy. Our work may inspire new proofs of non-uniqueness that are available only for the MHD system using cross helicity or magnetic helicity, e.g. for the stochastic compressible MHD system (e.g. \cite{W21}) for which the total energy may be more complex to compute than magnetic helicity due to interactions with its density.  
\item We prove non-uniqueness of 3D MHD system forced by additive noise, fully up to Lions' exponent starting from prescribed initial data.  
\end{enumerate}

\subsection{The MHD system and past results}\label{Section 1.2} 
We focus on the spatial domain $\mathbb{T}^{d}$ for $d \in \{2,3\}$, although most of our discussions here apply to the case of $\mathbb{R}^{d}$ as well. We write $\partial_{t} \triangleq \frac{\partial}{\partial t}$ and recall the spatial Lebesgue and homogeneous Sobolev spaces $L^{p}(\mathbb{T}^{d})$ for $p \in [1,\infty]$ and $\dot{H}^{s}(\mathbb{T}^{d})$ for $s \in \mathbb{R}$ with respective norms $\lVert \cdot \rVert_{L_{x}^{p}}$, $\lVert \cdot \rVert_{\dot{H}_{x}^{s}}$, and denote the temporal analogues by $\lVert \cdot\rVert_{L_{t}^{p}}$. We define the fractional Laplacian $(-\Delta)^{m}$ as a Fourier operator with a Fourier symbol $\lvert k \rvert^{2m}$. We write $A \lesssim_{\alpha, \beta} B$ and $A \approx_{\alpha,\beta} B$ when there exist a constant $C = C (\alpha, \beta)\geq 0$ such that $A \leq CB$, and $B \lesssim A \lesssim B$, respectively. We also often write $A \overset{(\cdot)}{\lesssim}B$ to indicate that the inequality is due to equation $(\cdot)$. We define $\mathbb{N}_{0} \triangleq \mathbb{N} \cup \{0\}$ and designate $\mathbb{P}$ as the Leray projection onto the space of all divergence-free vector fields, as well as
\begin{align*}
\mathbb{P}_{\neq 0} f \triangleq f - \fint_{\mathbb{T}^{3}} f dx, \hspace{3mm} \widehat{ \mathbb{P}_{\leq k} f} (\xi) \triangleq 1_{B(0, k)} (\xi) \hat{f}(\xi), \hspace{3mm} \mathbb{P}_{>k} f \triangleq (\Id - \mathbb{P}_{\leq k}) f. 
\end{align*}
We denote a tensor product and a trace-free tensor product by $\otimes$ and $\mathring{\otimes}$, respectively. For $p \in [1,\infty], k \in \mathbb{N}_{0}$, and $ \alpha \in \mathbb{N}_{0}^{3}$, we often use an abbreviation of 
\begin{align*}
\lVert g \rVert_{C_{t} L_{x}^{p}} \triangleq \sup_{s \in [0,t]}  \lVert g(s) \rVert_{L_{x}^{p}}, \hspace{1mm} \text{ and } \hspace{1mm} \lVert g \rVert_{C_{t,x}^{N}} \triangleq \sum_{0 \leq k + \lvert \alpha \rvert \leq N} \lVert \partial_{t}^{k} D^{\alpha} g \rVert_{L^{\infty}}. 
\end{align*} 

We denote the velocity, magnetic, and current density vector fields respectively by $u: \mathbb{R}_{\geq 0} \times \mathbb{T}^{d} \mapsto \mathbb{R}^{d}, b: \mathbb{R}_{\geq 0} \times \mathbb{T}^{d} \mapsto \mathbb{R}^{d}$, and $j \triangleq \nabla \times b$, while the pressure scalar field by $\pi: \mathbb{R}_{\geq 0} \times \mathbb{T}^{d} \mapsto \mathbb{R}$. The vector components will be denoted via super-indices. We denote the kinematic viscosity by $\nu_{1} \geq 0$ which is informally the reciprocal of the Reynolds number. The magnetic resistivity will be represented by $\nu_{2} \geq 0$; while our discussion go through for the case $\nu_{1} \neq \nu_{2}$, researchers in numerical analysis (e.g. \cite{DA14, LBMM15, MP09}) typically choose a unit magnetic Prandtl number $P_{m} \triangleq \frac{\nu_{1}}{\nu_{2}}$. At last, we let $h \geq 0$ measure the magnitude of the Hall effect. Then the initial-value-problem of the Hall-MHD system takes the form of 
\begin{subequations}\label{est 1}
\begin{align}
& \partial_{t}u + (u\cdot\nabla) u + \nabla \pi + \nu_{1} (-\Delta)^{m_{1}} u = (b\cdot\nabla) b, \hspace{5mm} \nabla\cdot u = 0, \\
& \partial_{t} b + \nabla \times (b \times u) + h \nabla \times ( j \times b) + \nu_{2} (-\Delta)^{m_{2}} b = 0, 
\end{align}
\end{subequations} 
starting from $(u^{\text{in}}, b^{\text{in}})(x) = (u,b)(0,x)$, where $(b\cdot\nabla) b$ represents the Lorentz force. We always assume  $\nabla\cdot b^{\text{in}} = 0,$, and observe that in such a case, the divergence-free property is propagated for all $t \geq 0$.  When $h = 0$, \eqref{est 1} recovers the MHD system; additionally, if $b \equiv 0$, the system reduces to the Navier-Stokes equations, which turns into the Euler equations when $\nu_{1} = 0$. Let us state a precise definition of analytically weak solution to \eqref{est 1}. 
\begin{define}  (E.g. \cite[Definitions 3.1, 3.5-3.6]{BV19b}) 
\indent 
\begin{enumerate}
\item For all $\nu_{1}, \nu_{2} \geq 0$, $u, b \in C_{t}^{0} L_{x}^{2}$ is an analytically weak solution to \eqref{est 1} if for any $t$, $u(t,\cdot), b(t,\cdot)$ are both weakly divergence-free, have zero mean, and satisfies \eqref{est 1} distributionally. 
\item In case $\nu_{1}, \nu_{2} > 0$, a pair of 
\begin{align*}
u \in C_{\text{weak}}^{0} ([0,T]; L_{x}^{2}) \cap L^{2} (0,T; \dot{H}_{x}^{m_{1}}) \text{ and } b \in C_{\text{weak}}^{0} ([0,T]; L_{x}^{2}) \cap L^{2} (0,T; \dot{H}_{x}^{m_{2}})
\end{align*}
is a Leray-Hopf weak solution to \eqref{est 1} if $u(t,\cdot), b(t,\cdot)$ are divergence-free, have zero mean for all $t \in [0,T]$, and satisfies \eqref{est 1} distributionally and the energy inequality
\begin{equation*}
\frac{1}{2} \left(\lVert u(t) \rVert_{L^{2}}^{2} + \lVert b(t) \rVert_{L^{2}}^{2}\right) + \int_{0}^{t} \nu_{1} \lVert u(s) \rVert_{\dot{H}^{m_{1}}}^{2} + \nu_{2} \lVert b(s) \rVert_{\dot{H}^{m_{2}}}^{2}  ds \leq  \frac{1}{2} \left( \lVert u^{\text{in}} \rVert_{L^{2}}^{2} + \lVert b^{\text{in}} \rVert_{L^{2}}^{2} \right). 
\end{equation*} 
\end{enumerate} 
\end{define} 
It is convenient to denote by $A$ the magnetic potential such that 
\begin{equation}\label{est 59}
\nabla \times A = b
\end{equation} 
and work on the following form of the Hall-MHD system 
\begin{subequations}\label{est 2}
\begin{align}
& \partial_{t}u + (u\cdot\nabla) u + \nabla \pi + \nu_{1} (-\Delta)^{m_{1}} u = (b\cdot\nabla) b, \hspace{5mm} \nabla\cdot u = 0,  \label{est 2a}\\
& \partial_{t} A + b \times u + h(j\times b) + \nabla \chi + \nu_{2} (-\Delta)^{m_{2}} A = 0, \label{est 2b} 
\end{align}
\end{subequations} 
for some $\chi$. Using divergence-free properties and the vector calculus identity of 
\begin{equation}\label{vector calculus identity}
(\Xi \times \Psi) \cdot \Xi = 0 \hspace{3mm} \forall \hspace{1mm} \Xi, \Psi \in \mathbb{R}^{3}, 
\end{equation} 
it can be shown that the following quantities are invariant for the ideal MHD system: 
\begin{subequations}
\begin{align}
\text{total energy at time }t \triangleq \mathcal{E} (t) \triangleq& \frac{1}{2} \int_{\mathbb{T}^{d}} \lvert u(t) \rvert^{2} + \lvert b(t) \rvert^{2} dx, \label{est 0a}\\
\text{magnetic helicity at time } t \triangleq \mathcal{H}_{b} (t) \triangleq& \int_{\mathbb{T}^{d}} A(t) \cdot b (t) dx, \label{est 0b} \\
\text{cross helicity at time }t  \triangleq \mathcal{H}_{u} (t) \triangleq& \int_{\mathbb{T}^{d}} u(t) \cdot b (t) dx; \label{est 0c}
\end{align} 
\end{subequations}
both $\mathcal{E}(t)$ and $\mathcal{H}_{b}(t)$ remain invariant even for the Hall-MHD system. 

The mathematical investigation of the MHD system was pioneered by Duvaut and Lions \cite{DL72} and fundamental results such as the global existence of a Leray-Hopf weak solution in case $d \in \{2,3\}$ and its uniqueness in case $d = 2$ are well known (e.g. \cite[Theorem 3.1]{ST83}). As we mentioned, Lions \cite{L69} extended such a result for the generalized Navier-Stokes equations, \eqref{est 1} when $b \equiv 0$, as long as $m_{1} \geq \frac{d+2}{4}$. Because the $L^{2}(\mathbb{T}^{d})$-norm from the energy in \eqref{est 0a} is considered the most useful quantity among all the bounded quantities upon energy estimates and the Navier-Stokes equations has a scaling invariance of $(u_{\lambda}, \pi_{\lambda})(t,x) \triangleq (\lambda^{2m_{1} -1} u, \lambda^{4m_{1} -2} \pi) (\lambda^{2m_{1}} t, \lambda x)$, we call the case $m_{1} < \frac{d+2}{4}, m_{1} = \frac{d+2}{4}$, and $m_{1} > \frac{d+2}{4}$, the $L^{2}(\mathbb{T}^{d})$-supercritical, critical, and subcritical cases, respectively. Consequently, the uniqueness results obtained by Lions \cite{L69} belong to the $L^{2}(\mathbb{T}^{d})$-critical and subcritical cases; analogous uniqueness results for the generalized MHD system obtained by Wu \cite{W03} also fall in the $L^{2}(\mathbb{T}^{d})$-critical and subcritical cases of $m_{1}, m_{2} \geq \frac{1}{2} + \frac{d}{4}$. Finally, we mention the global regularity result of logarithmically supercritical Navier-Stokes equations by Tao \cite{T09} (see also the logarithmically supercritical MHD system in \cite{W11, Y18}).   

The stochastic MHD system has also caught much attention. We refer to \cite{S10, SS99, Y17} on well-posedness, \cite{BD07, Y19a, Y20a} on ergodicity, \cite{CM10} on large deviation theory, and \cite{S21} on tamed stochastic MHD system. To be precise, let us write down two special cases of our interests. First, we define $L_{\sigma}^{p} \triangleq \{ f \in L^{p}(\mathbb{T}^{3}): \fint_{\mathbb{T}^{3}} f dx = 0, \nabla\cdot f = 0\}$ and similarly $H_{\sigma}^{s}$, $s \in \mathbb{R}_{\geq 0}$. For some probability space $(\Omega, \mathcal{F}, \mathbf{P})$, we choose additive independent random force $G_{k}G_{k}^{\ast}$-Wiener processes valued in some Hilbert space $U_{k}$, specifically 
\begin{equation}\label{est 9}
G_{k} dB_{k} = \sum_{j=1}^{\infty} G_{k,j} dB_{k,j} = \sum_{j=1}^{\infty} \sqrt{ \lambda_{k,j}} e_{k,j} d\beta_{k,j}, \hspace{3mm} G_{k} G_{k}^{\ast}e_{k,j} = \lambda_{k,j} e_{k,j}, \hspace{3mm} k \in \{1,2\} 
\end{equation} 
where $\{\beta_{k,j}\}_{j=1}^{\infty}$ are mutually independent $\mathbb{R}$-valued Brownian motions on $(\Omega,\mathcal{F},\mathbf{P})$ and $\{e_{k,j}\}_{j=1}^{\infty}$ is an orthonormal basis (o.n.b.) of $U_{k}$ (see \cite[Section 4.1]{DZ14}). Then we consider 
\begin{subequations}\label{est 3} 
\begin{align}
& du + [\divergence ( u \otimes u - b \otimes b) + \nabla \pi +  \nu_{1} (-\Delta)^{m_{1}} u ] dt = G_{1}dB_{1}, \hspace{3mm} \nabla\cdot u =0,  \label{est 3a}\\
& dA + [(b \times u) + \nabla \chi + \nu_{2} (-\Delta)^{m_{2}} A ] dt = \nabla \times (-\Delta)^{-1} G_{2}dB_{2}. \label{est 3b}
\end{align}
\end{subequations} 
Sufficiently smooth solutions to \eqref{est 3} starting from deterministic initial data satisfy 
\begin{subequations}\label{est 7}
\begin{align}
& \mathbb{E}^{\mathbf{P}}[ \mathcal{E}(t) ] + \mathbb{E}^{\mathbf{P}} [ \int_{0}^{t}  \nu_{1} \lVert u \rVert_{\dot{H}^{m_{1}}}^{2} + \nu_{2} \lVert b \rVert_{\dot{H}^{m_{2}}}^{2} ds ] =  \mathcal{E} (0) + \frac{t}{2}  \sum_{k=1}^{2} \Tr (G_{k}G_{k}^{\ast}), \label{est 7a}\\
& \mathbb{E}^{\mathbf{P}} [ \mathcal{H}_{b} (t) ] + \mathbb{E}^{\mathbf{P}} [ \int_{0}^{t}  2 \nu_{2} \langle A, (-\Delta)^{m_{2}} b \rangle ds] = \mathcal{H}_{b} (0) + t C_{G_{2}}, \label{est 7b} \\
&\mathbb{E}^{\mathbf{P}} [ \mathcal{H}_{u} (t) ] + \mathbb{E}^{\mathbf{P}} [ \int_{0}^{t} \nu_{1} \langle (-\Delta)^{m_{1}} u, b \rangle + \nu_{2} \langle (-\Delta)^{m_{2}} b, u \rangle ds ] = \mathcal{H}_{u} (0),   \label{est 7c}
\end{align}
\end{subequations} 
\begin{equation}\label{define CG2}
C_{G_{2}} \triangleq \sum_{j=1}^{\infty} \langle \nabla \times (-\Delta)^{-1} G_{2}G_{2}^{\ast} e_{2,j}, e_{2,j} \rangle. 
\end{equation} 
The second case of interest is the linear multiplicative noise with $\mathbb{R}$-valued Wiener processes $B_{k}, k \in \{1,2\}$ so that 
\begin{subequations}\label{est 6}
\begin{align}
& du + [ \nu_{1} (-\Delta)^{m_{1}} u + \divergence ( u \otimes u - b \otimes b) + \nabla \pi] dt = udB_{1}, \hspace{3mm} \nabla\cdot u =0,  \\
& dA + [\nu_{2} (-\Delta)^{m_{2}} A + b \times u + \nabla \chi ] dt = AdB_{2}. 
\end{align}
\end{subequations} 
Again, sufficiently smooth solutions starting from deterministic initial data satisfy 
\begin{subequations}\label{est 8}
\begin{align}
& \mathbb{E}^{\mathbf{P}} [ \mathcal{E} (t) ] + \mathbb{E}^{\mathbf{P}} [ \int_{0}^{t} e^{t-s} \left( \nu_{1} \lVert u \rVert_{\dot{H}^{m_{1}}}^{2} + \nu_{2} \lVert b \rVert_{\dot{H}^{m_{2}}}^{2}  \right) ] (s) ds = e^{t} \mathcal{E} (0),  \label{est 8a}\\
& \mathbb{E}^{\mathbf{P}} [ \mathcal{H}_{b} (t) ] + \mathbb{E}^{\mathbf{P}} [ 2\nu_{2} \int_{0}^{t} e^{t-s} \langle A, (-\Delta)^{m_{2}} b \rangle (s) ds ] = e^{t} \mathcal{H}_{b} (0), \label{est 8b} \\
& \mathbb{E}^{\mathbf{P}} [\mathcal{H}_{u} (t) ] + \mathbb{E}^{\mathbf{P}} [ \int_{0}^{t} \nu_{1} \langle (-\Delta)^{m_{1}} u, b \rangle(s) + \nu_{2} \langle (-\Delta)^{m_{2}} b, u \rangle(s) ds ] = \mathcal{H}_{u}(0). \label{est 8c}
\end{align}
\end{subequations} 
Taylor's argument in \cite{T74, T86} can be extended to imply that in the ideal case $\nu_{1}, \nu_{2} > 0$ are arbitrarily small, the magnetic helicity on average should grow linearly with the rate of $t  C_{G_{2}}$ and exponentially with the rate of $e^{t} \mathcal{H}_{b} (0)$ in the additive and linear multiplicative cases, respectively. 

Next, we review recent developments of the convex integration technique. While energy conservation of sufficiently smooth solutions to the 3D Euler equations was proven by Constantin, E, and Titi \cite{CET94} and Eyink \cite{E94} in 1994, the construction of rough solutions, namely those in H$\ddot{\mathrm{o}}$lder space with an exponent less than $\frac{1}{3}$, that do not conserve energy presented significant difficulties. The first monumental breakthroughs were achieved by De Lellis and Sz$\acute{\mathrm{e}}$kelyhidi Jr. \cite{DS09, DS10, DS13} in which they adapted the convex integration technique from geometry and particularly constructed in \cite[Theorem 1.1]{DS13} a continuous solution to the 3D Euler equations with prescribed energy. Further improvements (e.g. \cite{BDIS15}) led Isett \cite{I18} to settle the Onsager's conjecture in all dimensions $d \geq 3$. Via new ingredient of intermittency, Buckmaster and Vicol \cite{BV19a} proved non-uniqueness of weak solutions to the 3D Navier-Stokes equations and further extensions and improvements followed; e.g. \cite{BSV19} on the surface quasi-geostrophic equations, \cite{BMS21} on the power-law model, \cite{LTZ20} on Boussinesq system, and \cite{MS20} on transport equation (see also surveys \cite{BV19b, BV20}). 

In connection to our manuscript, we elaborate on the MHD system \eqref{est 1}. First, Faraco and Lindberg \cite{FL20} proved by forming a sequence of Leray-Hopf weak solutions depending on $\nu_{1}, \nu_{2} > 0$, that the weak solutions of the ideal MHD system that are weak$^{\ast}$-limits in $L_{t}^{\infty} L_{x}^{2}$ conserve magnetic helicity $\mathcal{H}_{b}$. The same authors, together with Sz$\acute{\mathrm{e}}$kelyhidi Jr., \cite{FLS21} constructed infinitely many bounded non-trivial solutions that are compactly supported in space-time such that they preserve the magnetic helicity $\mathcal{H}_{b}$ but not total energy $\mathcal{E}$ or cross helicity $\mathcal{H}_{u}$. Finally, Beekie, Buckmaster, and Vicol \cite{BBV20} proved that there exists a weak solution $(u,b)$ in $C_{t} H_{x}^{\beta}$ for some $\beta  >0$ to the 3D ideal MHD system such that $\mathcal{H}_{b}(1) > 2 \lvert \mathcal{H}_{b} (0) \rvert$ and the corresponding total energy $\mathcal{E}(t)$ and cross helicity $\mathcal{H}_{u}(t)$ are non-trivial and non-constant. We also refer to \cite{CL21, FLS23, LZZ22} for more works that employed the convex integration technique on the MHD system. 

Among the researchers on the 3D stochastic Navier-Stokes equations, there were open problems that were unique in the stochastic setting: uniqueness in law \cite{DD03a} and existence of probabilistically strong solution \cite{F08}. First, Breit, Feireisl, and Hofmanov$\acute{\mathrm{a}}$ \cite{BFH20} and Chiodaroli, Feireisl, and Flandoli \cite{CFF19} proved path-wise non-uniqueness of certain stochastic Euler equations via convex integration. Subsequently, Hofmanov$\acute{\mathrm{a}}$, Zhu, and Zhu \cite{HZZ19} considered the 3D stochastic Navier-Stokes equations and proved that given any $T > 0$ and $\kappa \in (0,1)$, there exists $\gamma \in (0,1)$ and a $\mathbf{P}$-almost surely ($\mathbf{P}$-a.s.) strictly positive stopping time $\mathfrak{t}$ satisfying $\mathbf{P} (\{ \mathfrak{t} \geq T \}) > \kappa$ and $\{\mathcal{F}_{t}\}_{t\geq 0}$-adapted analytically weak solution $u$ that belongs to $C([0, \mathfrak{t}]; H_{x}^{\gamma})$ $\mathbf{P}$-a.s. starting from deterministic initial data $u^{\text{in}}$  such that 
\begin{align*}
 \frac{1}{2} \lVert u(T) \rVert_{L^{2}} > 2  \Bigg( \frac{1}{2}\lVert u^{\text{in}} \rVert_{L^{2}} + \sqrt{ \frac{T}{2}}  \lVert G_{1} \rVert_{L_{2}(U, L_{\sigma}^{2})} \Bigg) \text{ on } \{ \mathfrak{t} \geq T \}. 
\end{align*}
Non-uniqueness in law implies non-uniqueness path-wise due to Yamada-Watanabe theorem and such solutions constructed via convex integration are probabilistically strong and therefore their works contributed to both problems from \cite{DD03a, F08}. This led to many further improvements and extensions to various stochastic PDEs in the past several years: \cite{HZZ20, HLP23} on the Euler equations, \cite{HZZ21a, HZZ21b, HZZ22b, HZZ23a, B23, CDZ22, HPZZ23a, HPZZ23b, P23, RS21, Y23a} on the Navier-Stokes equations, \cite{Y22c} on the Boussinesq system, \cite{HLZZ23, HZZ22a, WY24, Y23b, Y23d} on the surface quasi-geostrophic equations, and \cite{KY22, MS23} on the transport equation. Following \cite{BCV22, LT20} that proved the non-uniqueness of weak solutions to the deterministic 3D generalized Navier-Stokes equations for all $m_{1} < \frac{5}{4}$, \cite{Y22a} extended \cite{HZZ19} to the case of additional random force of additive and linear multiplicative types (\cite{Y22b} in the 2D case). Moreover, it was proven in \cite{Y23c} that for any $m_{1}, m_{2} \in (0,1)$ and $\kappa \in (0,1)$, there exists $\gamma \in (0,1)$ and a $\mathbf{P}$-a.s. strictly positive stopping time $\mathfrak{t}$ satisfying $\mathbf{P} (\{ \mathfrak{t} \geq T \}) > \kappa$ and $\{\mathcal{F}_{t}\}_{t\geq 0}$-adapted analytically weak, probabilistically strong solution $(u,b)$ to the 3D MHD system forced by additive noise 
\begin{subequations}\label{gen stoch MHD}
\begin{align}
& du + [\divergence (u\otimes u - b \otimes b) + \nabla \pi + \nu_{1} (-\Delta)^{m_{1}} u] dt = G_{1}dB_{1}, \hspace{3mm} \nabla\cdot u = 0, \\
& db + [\divergence (b\otimes u - u \otimes b) + \nu_{2} (-\Delta)^{m_{2}} b] dt = G_{2}dB_{2},
\end{align}
\end{subequations} 
starting from deterministic initial data that satisfies 
\begin{equation}\label{est 10}
\mathcal{E}(T) > 2 \Bigg( \mathcal{E} (0) + \frac{T}{2} [\lVert G_{1} \rVert_{L_{2} (U, L_{\sigma}^{2})}^{2} + \lVert G_{2} \rVert_{L_{2} (U, L_{\sigma}^{2})}^{2}]\Bigg) \hspace{1mm} \text{ on } \{ \mathfrak{t} \geq T \}; 
\end{equation}   
the case of linear multiplicative noise was also obtained with \eqref{est 10} replaced by 
\begin{equation}\label{est 11} 
\mathcal{E} (T) > 2 e^{T} \mathcal{E} (0) \hspace{1mm} \text{ on } \{ \mathfrak{t} \geq T \}. 
\end{equation} 

\section{Statement of main results}
To present the first set of main results, we consider the ideal case and construct a solution to \eqref{est 3} in the case of additive noise and  \eqref{est 6} in the case of linear multiplicative noise such that their cross and magnetic helicity grow significantly faster than those of classical solutions (recall \eqref{est 7} and \eqref{est 8}). Let us focus on the 3D case and state our first result in the case of an additive noise. We fix the probability space $(\Omega, \mathcal{F}, \mathbf{P})$ and for any Hilbert space $H$, we denote by $L_{2} (H, H_{\sigma}^{1})$ the space of all Hilbert-Schmidt operators from $H$ to $H_{\sigma}^{1}$. We fix a cylindrical Wiener process $B_{k}$ on a Hilbert space $U_{k}$ satisfying $B_{k} (0) = 0$, $k \in \{1,2\}$, and let $\{\mathcal{F}_{t}\}_{t \geq 0}$ be its canonical filtration of $(B_{1}, B_{2})$ augmented by all the $\mathbf{P}$-negligible sets. 
\begin{theorem}\label{Theorem 2.1}
Consider \eqref{est 3} with $\nu_{1} = \nu_{2} = 0$. Suppose that for $\delta \in (0, \frac{1}{12})$ and $k \in \{1,2\}$, 
\begin{equation}\label{est 15} 
G_{k} \in L_{2} (U_{k}, H_{\sigma}^{1-\delta}). 
\end{equation} 
Then, given $T > 0$ and $\kappa \in (0,1)$, there exist $\gamma \in (0,1)$ and a $\mathbf{P}$-a.s. strictly positive stopping time $\mathfrak{t}$ such that $\mathbf{P} ( \{ \mathfrak{t} \geq T \}) > \kappa$ and the following is additionally satisfied. There exists a pair of $\{ \mathcal{F}_{t}\}_{t\geq 0}$-adapted processes $(u,b)$ that is a weak solution of \eqref{est 3} starting from a deterministic initial data $(u^{\text{in}}, b^{\text{in}})$, satisfies 
\begin{equation}\label{est 12}
\esssup_{\omega \in \Omega} \lVert u(\omega) \rVert_{C_{t} \dot{H}_{x}^{\gamma}} < \infty, \hspace{3mm} \esssup_{\omega \in \Omega} \lVert b(\omega) \rVert_{C_{t} \dot{H}_{x}^{\gamma}} < \infty, 
\end{equation} 
and on  $\{ \mathfrak{t} \geq T \}$, 
\begin{align}\label{est 13}
\mathcal{E} (T) > 2 \left( \mathcal{E}(0) + \frac{T}{2}\sum_{k=1}^{2} \Tr (G_{k}G_{k}^{\ast})\right), \hspace{1mm} \mathcal{H}_{b}(T) > 2 \Bigg(\mathcal{H}_{b} (0) +  T  C_{G_{2}} \Bigg), \hspace{1mm} \mathcal{H}_{u} (T) > 2 \mathcal{H}_{u} (0).
\end{align}
\end{theorem}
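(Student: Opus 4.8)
The plan is to transfer the problem, by a noise‑removing substitution, to a random version of the ideal MHD system and then run a stochastic convex integration scheme in the spirit of \cite{HZZ19, Y22a, Y23c}, using the magnetic building blocks of \cite{BBV20} and keeping simultaneous track of all three invariants.

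\emph{Step 1: noise removal.} Let $z_1$ solve $dz_1 + z_1\,dt = G_1\,dB_1$ with $z_1(0)=0$, let $Z_2$ solve $dZ_2 + Z_2\,dt = \nabla\times(-\Delta)^{-1}G_2\,dB_2$ with $Z_2(0)=0$, and set $z_2\triangleq\nabla\times Z_2$; under \eqref{est 15} these are $\{\mathcal F_t\}_{t\ge0}$-adapted and have $\mathbf{P}$-a.s. trajectories in $C_tH_x^{1-\delta}$. With $v\triangleq u-z_1$, $D\triangleq A-Z_2$, $B\triangleq b-z_2=\nabla\times D$, the system \eqref{est 3} with $\nu_1=\nu_2=0$ becomes a pathwise random PDE for $(v,B)$ in which $z_1,z_2,Z_2$ appear only through explicitly known terms that are either quadratic in $z$, bilinear in $z$ and the unknown, or the linear damping $z_1,z_2$.

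\emph{Step 2: convex integration.} I would build $(v_q,B_q)$ solving the MHD--Reynolds system
\begin{align*}
\partial_t v_q + \divergence(v_q\mathring{\otimes}v_q - B_q\mathring{\otimes}B_q) + \nabla p_q &= \divergence\mathring{R}_q^v + (z\text{-terms}), \qquad \nabla\cdot v_q=0,\\
\partial_t B_q + \divergence(B_q\otimes v_q - v_q\otimes B_q) &= \divergence\mathring{R}_q^b + (z\text{-terms}), \qquad \nabla\cdot B_q=0,
\end{align*}
with $\mathring{R}_q^v$ symmetric, $\mathring{R}_q^b$ skew-symmetric, and the usual geometric parameters $\lambda_q=a^{b^{q}}$, $\delta_q=\lambda_q^{-2\beta}$. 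The increment $(w_{q+1}^v,w_{q+1}^B)$ is assembled from the intermittent building blocks of \cite{BBV20} and their potentials, oscillating at frequency $\lambda_{q+1}$, with amplitude functions selected through the relevant geometric lemmas so that the low-frequency parts of $w^v\mathring{\otimes}w^v - w^B\mathring{\otimes}w^B$ and of $w^B\otimes w^v - w^v\otimes w^B$ cancel $\mathring{R}_q^v$ and $\mathring{R}_q^b$. Propagating $\|v_{q+1}-v_q\|_{C_tL_x^2}+\|B_{q+1}-B_q\|_{C_tL_x^2}\lesssim\delta_{q+1}^{1/2}$, $\|(v_q,B_q)\|_{C_{t,x}^1}\lesssim\lambda_q$, $\|(\mathring{R}_q^v,\mathring{R}_q^b)\|_{C_tL_x^1}\lesssim\delta_{q+1}\lambda_q^{-\alpha}$ exactly as in the deterministic MHD case --- the $z$-terms being absorbable into the Reynolds error thanks to the regularity margin in \eqref{est 15} with $\delta<\tfrac1{12}$ --- the limit $(v,B)=\lim_q(v_q,B_q)$ lies in $C_t\dot H_x^\gamma$ for some $\gamma>0$, so $(u,b)=(v+z_1,B+z_2)$ satisfies \eqref{est 12}.

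\emph{Step 3: steering the invariants and localizing in $\omega$.} Alongside the iteration I would track the three scalars $\int|w^v|^2+|w^B|^2$, $\int w^v\cdot w^B$, and $\int w^D\cdot w^B$ and use the free parameters left in the building blocks after the geometric lemmas --- their overall amplitudes, the relative orientation of the velocity and magnetic blocks sharing a cell, and the helical sign of the magnetic block --- to drive these quantities, pointwise in time, toward prescribed targets; this is what yields \eqref{est 13}. Finally, writing $z\triangleq(z_1,z_2,Z_2)$, define the $\mathbf{P}$-a.s. strictly positive stopping time $\mathfrak t\triangleq L\wedge\inf\{t\ge0:\|z\|_{C_tH_x^{1-\delta}}\ge L\}$ (and similarly for any further norms of $z$ entering the estimates) and multiply the nonlinear and $z$-dependent terms in the scheme by a cutoff supported on $[0,\mathfrak t]$; since $z(0)=0$ and $z$ has continuous trajectories, $\mathfrak t\nearrow\infty$ $\mathbf{P}$-a.s. as $L\to\infty$, so $L=L(T,\kappa)$ can be chosen with $\mathbf{P}(\{\mathfrak t\ge T\})>\kappa$, and on $\{\mathfrak t\ge T\}$ the pair $(u,b)$ is a genuine solution of \eqref{est 3} on $[0,T]$. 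Taking the deterministic datum $(u^{\text{in}},b^{\text{in}})=(0,0)$ (so that $\mathcal E(0)=\mathcal H_b(0)=\mathcal H_u(0)=0$) and the targets large enough to overshoot the right sides of \eqref{est 13} on $\{\mathfrak t\ge T\}$ finishes the proof.

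\emph{Main obstacle.} The decisive point is Step 3: the Beekie--Buckmaster--Vicol scheme already has to balance a symmetric velocity stress against a skew magnetic stress, and now the building blocks must in addition realize prescribed increments of three global quadratic functionals --- one of them, $\mathcal H_b$, being a quantity that does not enter the $L^2$-level estimates but whose near-conservation (Taylor's conjecture) is exactly what we are contradicting. I expect the hard part to be verifying that enough free parameters survive the geometric lemmas to hit all three targets without destroying the $C_tL_x^1$ decay of $\mathring R_q$ or the $C_t\dot H_x^\gamma$ summability; a further delicate point is keeping the $\{\mathcal F_t\}$-adaptedness and the $\omega$-uniform bound \eqref{est 12} intact through the inverse-divergence and amplitude constructions once the $z$-cutoff is in place.
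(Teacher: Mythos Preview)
Your Steps 1--2 broadly follow the paper's strategy (noise removal plus the stochastic MHD convex integration of \cite{Y23c}), but Step 3 is where your proposal diverges from the paper and where the real gap lies.

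The paper does \emph{not} steer the three invariants by tuning free parameters in the building blocks at each iteration step. Instead, it starts the iteration from an explicit trigonometric ansatz
\[
v_0(t,x)=\frac{M_0(t)^{1/2}}{(2\pi)^{3/2}}(\sin x^3,0,0),\qquad \Xi_0(t,x)=\frac{M_0(t)^{1/2}}{(2\pi)^{3}}(\sin x^3,\cos x^3,0),
\]
where $M_0(t)$ grows like $L^4e^{4Lt}$; this pair already has \emph{all three} quantities $\tilde{\mathcal E},\tilde{\mathcal H}_u,\tilde{\mathcal H}_b$ growing at the desired rate by direct computation (note $\Xi_0$ is its own curl-potential $\digamma_0$). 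The entire helicity argument then reduces to showing that the convex-integration perturbations leave these quantities essentially unchanged, i.e.\ that $|\tilde{\mathcal H}_b(t)-\tilde{\mathcal H}_{0,b}(t)|$ and $|\tilde{\mathcal H}_u(t)-\tilde{\mathcal H}_{0,u}(t)|$ are small relative to $\tilde{\mathcal H}_{0,b}(t),\tilde{\mathcal H}_{0,u}(t)$. In particular, the initial data $(u^{\text{in}},b^{\text{in}})$ is \emph{not} $(0,0)$ but is determined by the limit $(v(0),\Xi(0))$ of the construction; your choice of zero data is incompatible with this mechanism.

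The genuinely new technical point --- which your proposal does not anticipate --- is that controlling $\mathcal H_b$ requires estimating the \emph{vector potential} increment $\|\digamma_{q+1}-\digamma_q\|_{L^2}=\|\nabla\times(-\Delta)^{-1}(\Xi_{q+1}-\Xi_q)\|_{L^2}$, a negative-order quantity. For $d_{q+1}^p+d_{q+1}^c$ this is easy because of the $\curl\curl$ structure, but the temporal corrector $d_{q+1}^t$ (which is present in the \cite{Y23c} scheme, unlike in \cite{BBV20}) has no such structure and must be handled separately via Lemma~\ref{Lemma A.5}; see Remark~\ref{Remark 3.2} and the estimate of $\mathrm{I}_{22}$. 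Your ``steering'' idea, by contrast, would require producing prescribed increments of $\int A\cdot b$ from the building blocks while simultaneously satisfying the two geometric lemmas that already pin down the amplitude functions $a_\xi$ in terms of $\mathring R_l^v,\mathring R_l^\Theta$ --- it is not clear any genuine freedom remains, and you correctly flag this as the main obstacle. The paper's route avoids this problem entirely.
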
 

\begin{remark}
The only hypothesis on the noise in \cite[Theorem 2.1]{Y23c} was 
\begin{equation}\label{hypo noise} 
\Tr (G_{k}G_{k}^{\ast}) < \infty \text{ for both } k \in \{1,2\}; 
\end{equation} 
however, in contrast to \cite[Theorem 2.1]{Y23c} which was in the diffusive case with $\nu_{1}(-\Delta)^{m_{1}} u$ and $\nu_{2} (-\Delta)^{m_{2}} b$ where $\nu_{1}, \nu_{2} > 0$, we cannot rely on the smoothing effect from the diffusion in the ideal case and hence the stronger assumption on the noise. The hypothesis \eqref{est 15} is sufficient to guarantee for all $\delta \in (0, \frac{1}{12})$ and all $t > 0$,   
\begin{equation}\label{est 16} 
\max_{k\in\{1,2\}} \{ \lVert G_{k}B_{k} \rVert_{C_{t}H^{1-\delta}}, \lVert G_{k}B_{k} \rVert_{C_{t}^{\frac{1}{2} - \delta}L^{2}} \} < \infty  \hspace{3mm} \mathbf{P}\text{-a.s.}
\end{equation}
\end{remark} 

Next, we state our second result in the case of linear multiplicative noise. 
\begin{theorem}\label{Theorem 2.2} 
Consider \eqref{est 6} with $\nu_{1} = \nu_{2} = 0$. Suppose that $B_{k}$ is an $\mathbb{R}$-valued Wiener process on $(\Omega, \mathcal{F}, \mathbf{P})$ for both $k \in \{1,2\}$. Then, given $T > 0$ and $\kappa \in (0,1)$, there exist $\gamma \in (0,1)$ and a $\mathbf{P}$-a.s. strictly positive stopping time $\mathfrak{t}$ such that $\mathbf{P} ( \{ \mathfrak{t} \geq T \}) > \kappa$ and the following is additionally satisfied. There exists a pair of $\{ \mathcal{F}_{t}\}_{t\geq 0}$-adapted processes $(u,b)$ that is a weak solution of \eqref{est 6} starting from a deterministic initial data $(u^{\text{in}}, b^{\text{in}})$, satisfies \eqref{est 12} and on $\{ \mathfrak{t} \geq T \}$, 
\begin{equation}\label{est 14}
\mathcal{E} (T) > 2 e^{T} \mathcal{E}(0), \hspace{3mm} \mathcal{H}_{b}(T) > 2 e^{T} \mathcal{H}_{b}(0), \hspace{3mm}  \mathcal{H}_{u} (T) > 2 \mathcal{H}_{u}(0).
\end{equation} 
\end{theorem}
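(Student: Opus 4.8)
The plan is to adapt the convex integration scheme for the stochastic MHD system, handling the linear multiplicative noise by the standard Girsanov/transformation-of-variables trick before running the iteration.

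First I would remove the noise by setting $v \triangleq e^{-B_1} u$, $\mathcal{A} \triangleq e^{-B_2} A$ (and correspondingly $c \triangleq e^{-B_2} b = \nabla \times \mathcal{A}$), so that $(v, \mathcal{A})$ solves a \emph{random} PDE with no It\^o term: the nonlinearities pick up factors like $e^{B_1}$, $e^{B_2}$, $e^{2B_2 - B_1}$, and the zeroth-order terms $\tfrac12 v$, $\tfrac12 \mathcal{A}$ appear from the quadratic variation. These exponential factors are, for each fixed $\omega$, smooth in time and bounded on any compact interval, so they are harmless multipliers in the iteration. I would then build, for fixed $\omega$, a sequence $(v_q, \mathcal{A}_q, \mathring{R}_q^u, \mathring{R}_q^b)$ solving the MHD-Reynolds system, exactly as in \cite{BBV20} and \cite{Y23c}, but now with the random coefficients inserted. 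The new perturbations are built from Beltrami-type flows (for the velocity, to control $\mathring{R}^u$) together with the box-flow/shear building blocks used in \cite{BBV20} to also correct the magnetic stress $\mathring{R}^b$; the iteration estimates are $\omega$-wise, with all constants depending on $\sup_{[0,T]} \lVert e^{\pm B_k}\rVert$, which is a.s. finite. Defining the stopping time $\mathfrak{t}_L$ as the first time one of these sup-norms (or the norm of $B_k$ in an appropriate H\"older space) exceeds a large constant $L$, and choosing $L$ large, gives $\mathbf{P}(\{\mathfrak{t}_L \geq T\}) > \kappa$; adaptedness follows because at each stage the perturbations depend on the noise only through $B_k(s)$, $s \leq t$.

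The key structural point — and this is where the helicity identities in \eqref{est 8} are used — is to \emph{prescribe} the target profiles. I would fix a deterministic, smooth, divergence-free pair $(\bar u, \bar b)$ on $[0,T]$ with the property that $\mathcal{E}(\bar u(T), \bar b(T))$, $\mathcal{H}_b(\bar b(T))$ and $\mathcal{H}_u(\bar u(T), \bar b(T))$ are all strictly larger than twice the respective right-hand sides in \eqref{est 14} — concretely, take the initial data $(u^{\text{in}}, b^{\text{in}})$ small (even zero) and $(\bar u, \bar b)$ at time $T$ a large Beltrami pair, so $2 e^T \mathcal{E}(0)$, $2 e^T \mathcal{H}_b(0)$ and $2 \mathcal{H}_u(0)$ are all tiny while the target helicities are order one. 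The iteration is then arranged so that $v_q \to \bar v \triangleq e^{-B_1}\bar u$ and $\mathcal{A}_q \to \bar{\mathcal{A}}$ in $C_t \dot H_x^\gamma$ with the Reynolds stresses going to zero; undoing the transformation gives a genuine weak solution $(u,b) = (e^{B_1} v, \nabla\times(e^{B_2}\mathcal{A}))$ of \eqref{est 6} with $(u,b)(T) = (\bar u, \bar b)$ on $\{\mathfrak{t}_L \geq T\}$, hence all three inequalities in \eqref{est 14}. The bound \eqref{est 12} comes from the uniform-in-$\omega$ (on $\{\mathfrak{t}_L \geq T\}$, then extended off it by a cutoff) control of $\lVert v_q\rVert_{C_t\dot H^\gamma}$ for a small $\gamma$ dictated by the intermittency parameters; one standard subtlety is to glue the solution so that it agrees with, say, a fixed smooth extension before $\mathfrak{t}_L$ so that the final process is defined on all of $\Omega$ and adapted.

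The main obstacle I anticipate is the \emph{simultaneous} control of all three quantities: total energy, cross helicity, and magnetic helicity. In the pure Navier–Stokes/energy case one only tracks $\int |u|^2$, and in \cite{BBV20} magnetic helicity is the robust one (it is lower-order, involving $A\cdot b$, and is only approximately matched). Here I must ensure the perturbation at each step adjusts $\mathcal{E}$, $\mathcal{H}_u$, and $\mathcal{H}_b$ toward their prescribed targets while keeping all error terms summable. Magnetic helicity is delicate because it is not quadratically coercive and the convex-integration perturbations can create $O(1)$ helicity unless the building blocks are chosen in carefully cancelling pairs (as in \cite{FLS21, BBV20}); cross helicity $\int u\cdot b$ mixes the two perturbations and so the velocity and magnetic increments cannot be designed independently. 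I would handle this by the now-standard device of adding, at each stage, a small, smooth, spatially-homogeneous-in-frequency correction (a ``helicity-adjusting'' perturbation supported at a fixed low frequency, orthogonal in $L^2$ to the high-frequency part) tuned so that $(\mathcal{E}, \mathcal{H}_u, \mathcal{H}_b)(v_{q+1}, \mathcal{A}_{q+1})$ hits prescribed values $e_q \to (\text{targets})$ with geometrically small corrections — the implicit-function/continuity argument for the existence of such a tuning being routine once the Jacobian of the three functionals with respect to three scalar amplitudes is shown to be nondegenerate. Everything else — the Reynolds-stress estimates, the mollification in time to retain adaptedness, the choice of $\gamma$ — follows the template of \cite{Y23c} with the exponential random multipliers carried along.
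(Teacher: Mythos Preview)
Your transformation step matches the paper exactly (set $v=e^{-B_1}u$, $\Xi=e^{-B_2}b$, run an $\omega$-wise iteration with random coefficients, cut off by a stopping time $T_L$ controlling $|B_k|$ and its H\"older norm). After that, however, your strategy diverges from the paper and contains a genuine gap.

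\textbf{The gap.} You propose to fix a smooth target $(\bar u,\bar b)$ and arrange the iteration so that $v_q\to \bar v\triangleq e^{-B_1}\bar u$ in $C_t\dot H_x^\gamma$, yielding $(u,b)(T)=(\bar u,\bar b)$. Convex integration does not allow this: the limit is a genuinely rough field whose high-frequency structure is what makes the Reynolds stresses vanish. If $v_q$ converged to a smooth $\bar v$, then $\mathring{R}_q^v,\mathring{R}_q^\Xi\to 0$ would force $(\bar v,\bar\Xi)$ to solve the random MHD system exactly, which a generic Beltrami pair does not. One can prescribe scalar profiles (energy as a function of $t$), but not the solution field. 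Consequently your ``helicity-adjusting low-frequency correction via an implicit function argument'' is both unnecessary and, for $\mathcal H_b$, problematic: magnetic helicity involves $A=\nabla\times(-\Delta)^{-1}b$, one derivative smoother than $b$, so the Jacobian you need is not obviously nondegenerate once the high-frequency perturbations are present.

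\textbf{What the paper does instead.} No helicity tuning occurs during the iteration. The paper picks an explicit first iterate $(v_0,\Xi_0)$ with amplitude $m_L M_0(t)^{1/2}$, $M_0(t)=e^{4Lt+2L}$, so that the associated random helicities $\tilde{\mathcal H}_{0,b}$, $\tilde{\mathcal H}_{0,u}$ already satisfy $\tilde{\mathcal H}_{0,\cdot}(T)/\tilde{\mathcal H}_{0,\cdot}(0)=e^{4LT}$. It then runs the standard scheme from \cite{Y23c} and shows \emph{a posteriori} that the limit's helicities are close to those of $(v_0,\Xi_0)$: one bounds $\lVert v-v_0\rVert_{L^2}$, $\lVert \Xi-\Xi_0\rVert_{L^2}$, and, for $\mathcal H_b$, the potential difference $\lVert\digamma-\digamma_0\rVert_{L^2}$. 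The latter is the real technical point (the analogue of Remark~\ref{Remark 3.2}): the magnetic perturbation $d_{q+1}$ has a temporal corrector $d_{q+1}^t$ that is \emph{not} in curl form, so $\nabla\times(-\Delta)^{-1}d_{q+1}^t$ must be estimated directly via the oscillation frequency $\lambda_{q+1}\sigma$ and Lemma~\ref{Lemma A.5}. For cross helicity there is a further delicate constant calculation (Remark~\ref{Remark 3.4}): the naive inductive bound $\lVert\Xi_0\rVert_{L^2}\le 2m_L M_0^{1/2}$ fails to give the factor~$2$; one must use the exact identity $\lVert\Xi_0\rVert_{L^2}=m_L M_0^{1/2}/(2\pi)^{3/2}$. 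Finally, positivity of $\mathcal H_b(t)$ and $\mathcal H_u(t)$ (needed to absorb the factors $e^{\pm 2B_2}$, $e^{B_1+B_2}$ on $\{T_L\ge T\}$) comes for free from the lower bounds $\tilde{\mathcal H}_{\cdot}(t)\ge c\,\tilde{\mathcal H}_{0,\cdot}(t)>0$.
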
 

Our Theorems \ref{Theorem 2.1}-\ref{Theorem 2.2} indicate that the Taylor's conjecture does not hold in the stochastic setting, extending the deterministic result from \cite{BBV20}. One of the technical difficulties is that the convex integration scheme of \cite{Y23c} is actually more complicated than that of \cite{BBV20}; e.g. the perturbation in \cite{BBV20} did not have temporal corrector (see \cite[Equations (5.33)-(5.34)]{BBV20}) while those of \cite{Y23c} did (see \cite[Equation (144)]{BBV20}). 

\begin{remark}
It is well known that employing convex integration technique can face challenges in low dimensions; hence, it would be of interest to investigate extensions of Theorems \ref{Theorem 2.1}-\ref{Theorem 2.2} to the 2D case. Moreover, the ideal deterministic Hall-MHD system conserves magnetic helicity, although not cross helicity. Thus, it would also be of interest to extend the second inequalities in \eqref{est 13} and \eqref{est 14} to the stochastic Hall-MHD system (e.g. \cite{Y17}). 
\end{remark} 

\begin{remark}
Let us comment on the non-uniqueness issue of the solutions we constructed in Theorems \ref{Theorem 2.1}-\ref{Theorem 2.2}. The proof of non-uniqueness in \cite{Y23c} would not work. In \cite{Y23c}, one employs convex integration to construct a solution such that its energy grows as first inequalities of \eqref{est 13} and \eqref{est 14} respectively in cases of additive and linear multiplicative noise, takes such a solution at $t= 0$, and uses it to construct another solution by Galerkin approximation such that the energy satisfies upper bounds from \eqref{est 7a} and \eqref{est 8a} to conclude non-uniqueness. This approach is not applicable in the case of Theorems \ref{Theorem 2.1}-\ref{Theorem 2.2} because Galerkin approximation cannot construct a solution to the ideal stochastic MHD system. This was our original motivation to pursue Theorem \ref{Theorem 2.3}, the case of prescribed initial data. 
\end{remark} 

We describe our second main result that extends various works, e.g. \cite{HZZ21a, LZZ22, Y22a, Y23c}. 
\begin{theorem}\label{Theorem 2.3} 
Suppose that \eqref{hypo noise} holds, $B_{k}$ is a $G_{k}G_{k}^{\ast}$-Wiener process for $k\in\{1,2\}$, and 
\begin{equation}\label{define m1, m2, and p}
m_{1}, m_{2} \in \Bigg[1, \frac{5}{4}\Bigg), \hspace{1mm} \text{ and } \hspace{1mm} p \in \Bigg( \frac{6}{5}, \frac{4}{3}\Bigg). 
\end{equation} 
Let initial data $(u^{\text{in}}, b^{\text{in}}) \in L_{\sigma}^{p} \times L_{\sigma}^{p}$ be independent of the Wiener processes $B_{1}$ and $B_{2}$. Then there exist $\zeta > 0$ and infinitely many probabilistically strong processes  
\begin{equation*}
(u,b) \text{ in } C([0,\infty); L^{p}(\mathbb{T}^{3})) \cap L_{\text{loc}}^{2} ([0, \infty); H^{\zeta} (\mathbb{T}^{3} )) \hspace{1mm}  \mathbf{P}\text{-a.s.,} 
\end{equation*}
that solves \eqref{gen stoch MHD} analytically weakly on $[0,\infty)$ such that $(u,b)\rvert_{t=0} = (u^{\text{in}}, b^{\text{in}})$. 
\end{theorem}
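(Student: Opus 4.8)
\emph{Proof plan.} The strategy is to combine three strands: the stochastic reduction and prescribed-initial-data mechanism of \cite{HZZ21a, Y22a}; the convex integration scheme for the MHD system with its coupled velocity/magnetic geometry, as in \cite{BBV20, LZZ22, Y23c}; and the temporal (in addition to spatial) intermittency that permits reaching the Lions' exponent, developed for the hyperdissipative Navier--Stokes equations in \cite{BCV22, LT20} and adapted to the stochastic setting in \cite{Y22a}.

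First I would eliminate the noise by subtracting stochastic convolutions. For $k \in \{1,2\}$ let $z_{k}$ solve
\[
dz_{k} + \nu_{k} (-\Delta)^{m_{k}} z_{k} \, dt = G_{k} \, dB_{k}, \qquad z_{k}(0) = 0,
\]
with $\mathbb{P}$ applied in the velocity component. Under \eqref{hypo noise}, parabolic smoothing together with a factorization argument gives, $\mathbf{P}$-a.s.\ and for every $T > 0$, that $z_{k} \in C([0,T]; L_{\sigma}^{2}) \cap C([0,T]; H_{\sigma}^{\sigma_{0}})$ for every $\sigma_{0} < m_{k}$, hence $z_{k} \in C_{t} L_{x}^{2} \cap C_{t} L_{x}^{r}$ for some $r > \tfrac{p}{p-1}$ (as $\tfrac{6}{3-2m_{k}} \geq 6 > \tfrac{p}{p-1}$ when $m_{k} \geq 1$). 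Setting $v \triangleq u - z_{1}$ and $B \triangleq b - z_{2}$, the pair $(v,B)$ satisfies $\mathbf{P}$-a.s.\ the random system (no stochastic integral)
\begin{align}
& \partial_{t} v + \nu_{1}(-\Delta)^{m_{1}} v + \divergence\big( (v+z_{1}) \otimes (v+z_{1}) - (B+z_{2}) \otimes (B+z_{2}) \big) + \nabla \pi = 0, \notag \\
& \partial_{t} B + \nu_{2}(-\Delta)^{m_{2}} B + \divergence\big( (B+z_{2}) \otimes (v+z_{1}) - (v+z_{1}) \otimes (B+z_{2}) \big) = 0, \label{plan random MHD}
\end{align}
with $\nabla\cdot v = \nabla\cdot B = 0$ and, since $z_{k}(0) = 0$, with $(v,B)(0) = (u^{\text{in}}, b^{\text{in}})$; so it suffices to produce infinitely many $\{\mathcal{F}_{t}\}$-adapted solutions of \eqref{plan random MHD} in $C([0,\infty); L^{p}) \cap L_{\mathrm{loc}}^{2}([0,\infty); H^{\zeta})$.

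The core is a convex integration iteration producing $(v_{q}, B_{q}, \mathring{R}_{q}, \mathring{M}_{q})_{q \in \mathbb{N}_{0}}$ solving the MHD-Reynolds system obtained by placing $\divergence \mathring{R}_{q}$ and $\divergence \mathring{M}_{q}$ on the right-hand sides of \eqref{plan random MHD} (with $z_{k}$ mollified at scale $\ell_{q}$), where $\mathring{R}_{q}$ is symmetric trace-free and $\mathring{M}_{q}$ skew-symmetric. With $\lambda_{q} = a^{(b_{0}^{q})}$, $\delta_{q} = \lambda_{q}^{-2\beta}$ for $a \gg 1$, $b_{0} > 1$ and $\beta > 0$ small, the inductive bounds read, schematically, $\|\mathring{R}_{q}\|_{L_{t}^{1} L_{x}^{1}} + \|\mathring{M}_{q}\|_{L_{t}^{1} L_{x}^{1}} \leq \delta_{q+1}$, $\|v_{q}\|_{C_{t,x}^{1}} + \|B_{q}\|_{C_{t,x}^{1}} \lesssim \lambda_{q}^{4}$, $\|v_{q+1} - v_{q}\|_{C_{t} L_{x}^{p}} + \|B_{q+1} - B_{q}\|_{C_{t} L_{x}^{p}} \lesssim \delta_{q+1}^{1/2}$, and a $\lambda_{q+1}^{-\varepsilon_{0}}$-small bound in $L_{t}^{2} H_{x}^{\zeta}$, with each perturbation supported in $\{ t \geq \tau_{q+1} \}$, $\tau_{q} \downarrow 0$, so that $v_{q}(0) = u^{\text{in}}$, $B_{q}(0) = b^{\text{in}}$ for all $q$. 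The initialization uses the linear evolutions of $u^{\text{in}}, b^{\text{in}}$; the bound $\|e^{-t \nu_{k} (-\Delta)^{m_{k}}} g\|_{L_{x}^{2}} \lesssim t^{-\frac{3}{2m_{k}}(\frac{1}{p} - \frac{1}{2})} \|g\|_{L_{x}^{p}}$ makes $\mathring{R}_{0}$ time-integrable precisely because $p > \tfrac{6}{3+2m_{k}}$, which holds since $p > \tfrac{6}{5} \geq \tfrac{6}{3+2m_{k}}$ for $m_{k} \geq 1$. At step $q \mapsto q+1$ one mollifies $(v_{q}, B_{q})$ and $z_{k}$, then adds divergence-free perturbations $w_{q+1}^{v}, w_{q+1}^{b}$ --- each a principal part $w_{q+1}^{v,p}, w_{q+1}^{b,p}$ plus a divergence corrector and a temporal corrector --- built from intermittent jets with temporal concentration (the building blocks of Cheskidov--Luo used in \cite{BCV22, Y22a}); the amplitudes of the principal parts are selected, through the velocity geometric lemma for $\mathring{R}_{q}$ and the magnetic skew geometric lemma for $\mathring{M}_{q}$ as in \cite{BBV20, LZZ22}, so that the low-frequency parts of $w_{q+1}^{v,p} \otimes w_{q+1}^{v,p} - w_{q+1}^{b,p} \otimes w_{q+1}^{b,p}$ and of $w_{q+1}^{b,p} \otimes w_{q+1}^{v,p} - w_{q+1}^{v,p} \otimes w_{q+1}^{b,p}$ cancel $\mathring{R}_{q}$ and $\mathring{M}_{q}$, while the temporal correctors cancel $\partial_{t}$ of the principal parts against the corresponding self-interaction low modes.

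Finally, with $v_{q+1} = v_{q} + w_{q+1}^{v}$, $B_{q+1} = B_{q} + w_{q+1}^{b}$, the new stresses $\mathring{R}_{q+1}, \mathring{M}_{q+1}$ split into diffusive, corrector, oscillation, ``flow'' (interaction with the mollified $z_{k}$) and mollification errors, each estimated in $L_{t}^{1} L_{x}^{1}$ via the Calder\'on--Zygmund bound for the antidivergence on high-frequency blocks and the smallness furnished by the temporal concentration parameter --- this is exactly what allows the diffusive error, of schematic size $\nu_{k} \lambda_{q+1}^{2m_{k} - 1} \delta_{q+1}^{1/2}$ damped by the temporal gain, to be absorbed for every $m_{k} < \tfrac{5}{4}$. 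Letting $q \to \infty$ yields $(v,B) \in C([0,\infty); L^{p}) \cap L_{\mathrm{loc}}^{2}([0,\infty); H^{\zeta})$ solving \eqref{plan random MHD}, hence $(u,b) = (v + z_{1}, B + z_{2})$ solves \eqref{gen stoch MHD} analytically weakly with $(u,b)\rvert_{t=0} = (u^{\text{in}}, b^{\text{in}})$; since the whole construction is a measurable map of the driving paths (carried out up to the stopping times $\mathfrak{t}_{L} = \inf\{ t : \max_{k} \|z_{k}(t)\|_{H_{x}^{\sigma_{0}}} \geq L \} \wedge L$ with $L$-dependent parameters, then patched as $L \uparrow \infty$, as in \cite{HZZ21a, Y22a}), $(u,b)$ is $\{\mathcal{F}_{t}\}$-adapted and probabilistically strong. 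Infinitely many distinct solutions come from inserting a free parameter into the first step --- e.g.\ prescribing the energy $\int_{\mathbb{T}^{3}} |v(t_{*})|^{2} + |B(t_{*})|^{2} \, dx$ at a fixed $t_{*} > 0$ (finite for a.e.\ $t_{*}$ by the $L_{t}^{2} H_{x}^{\zeta}$ bound) to equal one of countably many admissible values --- and checking via the $C_{t} L_{x}^{p}$ estimates that different choices give different solutions. I expect the main obstacle to be the simultaneous bookkeeping, within one $L_{t}^{1} L_{x}^{1}$-based Reynolds-stress estimate, of the temporal intermittency needed to reach $m_{1}, m_{2} < \tfrac{5}{4}$, the $L^{p}$ framework with $p \in (\tfrac{6}{5}, \tfrac{4}{3})$ forced both by that intermittency and by the roughness of the prescribed data, the coupled velocity/magnetic geometry with its skew stress, and the constraint that every perturbation vanish at $t = 0$ --- each of which appears separately in \cite{BCV22, LT20, Y22a, Y23c, BBV20, LZZ22, HZZ21a}, but whose combination one must check still closes.
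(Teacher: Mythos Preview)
Your overall architecture is right and tracks the paper closely: subtract the stochastic convolution, run an MHD convex integration with temporal intermittency up to a stopping time, make the perturbations vanish near $t=0$, then patch over stopping times. Two points, however, need correction.

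\textbf{Separation of the initial data.} You set $v=u-z_{1}$, $B=b-z_{2}$, so $(v,B)(0)=(u^{\text{in}},b^{\text{in}})$, and then initialize $v_{0}$ as the linear evolution of $u^{\text{in}}$. But then $v_{0}(0)=u^{\text{in}}\in L^{p}$ only, so your inductive bound $\lVert v_{q}\rVert_{C_{t,x}^{1}}\lesssim\lambda_{q}^{4}$ already fails at $q=0$, and mollification in time across $t=0$ is ill-behaved. The paper avoids this by writing $u=v+z_{1}^{\text{in}}+z_{1}$ with $z_{1}^{\text{in}}(t)=e^{-t(-\Delta)^{m_{1}}}u^{\text{in}}$ pulled out \emph{separately}; then the convex-integration unknown $v$ starts at zero, the inductive $v_{q}$ are made to vanish identically on a shrinking interval $[0,\sigma_{q}]$, and the singular behavior of $z_{k}^{\text{in}}$ near $t=0$ lives entirely in the Reynolds stress (which is why the $L_{t}^{1}$ framework and the integrability condition you correctly identified are needed). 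This is the bookkeeping you should adopt.

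\textbf{Non-uniqueness mechanism.} Prescribing $\int_{\mathbb{T}^{3}}\lvert v(t_{*})\rvert^{2}+\lvert B(t_{*})\rvert^{2}\,dx$ at a fixed $t_{*}$ does not close here. With temporal intermittency the principal perturbation carries the factor $g_{\xi}(t)$, whose square oscillates between $0$ and order $\tau$; hence $\lVert w_{q+1}^{p}(t)\rVert_{L_{x}^{2}}^{2}$ is not controlled pointwise in $t$ by the scheme, and the limit lies only in $L_{t,x}^{2}$, not $C_{t}L_{x}^{2}$ (this is exactly the price paid to reach $m_{k}<\tfrac{5}{4}$). The paper therefore uses a \emph{time-integrated} discriminant: it inserts a free parameter $\gamma_{q+1}$ (equal to $K$ at one fixed step) into the velocity amplitude $\rho_{v}$ only, and tracks
\[
\lVert v\rVert_{L^{2}([2\wedge T_{L},T_{L}]\times\mathbb{T}^{3})}^{2}-\lVert \Theta\rVert_{L^{2}([2\wedge T_{L},T_{L}]\times\mathbb{T}^{3})}^{2},
\]
which is controllable because $\int g_{\xi}^{2}\,dt$ averages to $1$. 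Different $K$'s give different values of this functional. Your idea is salvageable, but you must replace the pointwise energy by such a time-averaged quantity.
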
 

\begin{corollary}\label{Corollary 2.4}
Define $m_{1}, m_{2}$, and $p$ by \eqref{define m1, m2, and p}. Then, non-uniqueness in law holds for \eqref{gen stoch MHD} for every given initial law supported in divergence-free vector fields in $L^{p} (\mathbb{T}^{3})$. 
\end{corollary}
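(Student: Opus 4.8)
## Proof Proposal for Corollary 2.4

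The plan is to deduce non-uniqueness in law from the existence statement of Theorem~\ref{Theorem 2.3}, following the now-standard argument (as in \cite{HZZ21a, Y23c}) that upgrades the existence of infinitely many probabilistically strong solutions from a fixed deterministic (or prescribed-law) initial datum into non-uniqueness in law. The key point is that Theorem~\ref{Theorem 2.3} produces, for each deterministic $(u^{\text{in}}, b^{\text{in}}) \in L_\sigma^p \times L_\sigma^p$, infinitely many probabilistically strong solutions on the common filtered probability space; the laws of two distinct such solutions on the path space $C([0,\infty); L^p(\mathbb{T}^3))^2$ need to be shown to differ, and then one must handle a general initial law by conditioning/mixing.

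First I would fix the path space $\mathcal{X} \triangleq C([0,\infty); L^p(\mathbb{T}^3)) \times C([0,\infty); L^p(\mathbb{T}^3))$, equipped with its Borel $\sigma$-algebra, and recall that a probabilistically strong solution induces a probability measure on $\mathcal{X}$ (jointly with the Wiener path, if one prefers to work on $\mathcal{X} \times C([0,\infty); U_1 \times U_2)$, which is cleaner for defining a martingale-solution/law notion). The first step is to show that from the infinitely many solutions of Theorem~\ref{Theorem 2.3} one can extract at least two whose laws are genuinely distinct. This is where I expect the main obstacle: Theorem~\ref{Theorem 2.3} as stated asserts infinitely many solutions, but ``infinitely many'' must be quantified so that distinctness of paths forces distinctness of laws --- the usual device is that the convex integration construction in the proof of Theorem~\ref{Theorem 2.3} yields solutions indexed so that, for a fixed realization of the noise (or after integrating it out), they achieve a continuum of values of some observable (e.g. the energy $\mathcal{E}(t_0)$ at a fixed time $t_0$, or the $L^p$-norm), and since each solution is $\{\mathcal{F}_t\}$-adapted to the \emph{same} Wiener process, two solutions with a.s.-distinct trajectories push forward the fixed Wiener law to distinct laws on $\mathcal{X}$. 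I would therefore invoke the internal structure of the proof of Theorem~\ref{Theorem 2.3}: the free parameter in the convex integration scheme (the target energy profile, realized via the starting Reynolds stress / the amplitude of the first perturbation) can be chosen to take uncountably many values, giving solutions $(u^{(a)}, b^{(a)})$ for $a$ in an interval with $\mathbb{E}^{\mathbf{P}}[\mathcal{E}^{(a)}(t_0)] \neq \mathbb{E}^{\mathbf{P}}[\mathcal{E}^{(a')}(t_0)]$ for $a \neq a'$, whence their laws on $\mathcal{X}$ differ.

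Next I would address the passage from a deterministic initial datum to an arbitrary initial law $\Lambda$ supported on divergence-free fields in $L^p(\mathbb{T}^3)$. Since $(u^{\text{in}}, b^{\text{in}})$ in Theorem~\ref{Theorem 2.3} is only required to be independent of $(B_1, B_2)$ --- not deterministic --- I can directly apply Theorem~\ref{Theorem 2.3} with an $\mathcal{F}_0$-measurable random initial datum of law $\Lambda$, obtaining infinitely many probabilistically strong solutions with that initial law; alternatively, if one prefers the deterministic version, one disintegrates $\Lambda$, applies the deterministic result for $\Lambda$-a.e.\ datum, selects distinct solution branches measurably (a measurable-selection argument, e.g.\ via a Kuratowski--Ryll-Nardzewski type selection from the set-valued map assigning to each datum its family of solutions), and re-mixes over $\Lambda$. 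Either route produces at least two solutions $(u^{(1)}, b^{(1)})$ and $(u^{(2)}, b^{(2)})$, each a probabilistically strong (hence also a martingale/weak) solution of \eqref{gen stoch MHD} with initial law $\Lambda$, whose laws on $\mathcal{X}$ are distinct because they already differ conditionally on a positive-$\Lambda$-measure set of initial data.

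Finally I would record that two martingale solutions with the same initial law but distinct laws on the path space is precisely the statement of non-uniqueness in law, completing the proof; I would also remark that by the Yamada--Watanabe theorem this simultaneously yields pathwise non-uniqueness. The only genuinely delicate point, as flagged above, is making ``infinitely many'' in Theorem~\ref{Theorem 2.3} into ``infinitely many distinct laws,'' and the measurable-selection step in the reduction to general initial laws; both are handled by the same techniques already used in \cite{HZZ21a, Y23c, Y22a}, so I would cite those for the routine measurability bookkeeping rather than reproduce it.
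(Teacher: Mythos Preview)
Your overall strategy is the paper's: the free parameter in the convex-integration construction (here $K=\gamma_{3}$ from \eqref{define A}) yields a one-parameter family of solutions, and one finds a real-valued observable whose distribution separates them; since Theorem~\ref{Theorem 2.3} already accepts $\mathcal{F}_{0}$-measurable random initial data independent of $(B_{1},B_{2})$, your route~(a) is exactly what the paper does and no measurable-selection argument is needed.

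The one point where you should adjust is the choice of observable. Your proposed $\mathbb{E}^{\mathbf{P}}[\mathcal{E}^{(a)}(t_{0})]$ at a fixed time $t_{0}$ is not available here: the solutions produced by Theorem~\ref{Theorem 2.3} lie only in $C_{t}L_{x}^{p}$ with $p<2$ and $L^{2}_{\mathrm{loc},t}H_{x}^{\zeta}$, so the pointwise-in-time energy need not be finite, and in fact the whole reason $p<2$ enters the statement is that the temporal-intermittency scheme used to reach the Lions exponent sacrifices $C_{t}L_{x}^{2}$. The paper instead uses the time-integrated quantity $\lVert v\rVert_{L^{2}_{[2\wedge T_{L},T_{L}],x}}^{2}-\lVert\Theta\rVert_{L^{2}_{[2\wedge T_{L},T_{L}],x}}^{2}$, which is finite and is precisely the observable tracked by the inductive estimate \eqref{est 127}. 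Concretely, one takes $L$ large enough that $\mathbf{P}(\{T_{L}>2\})>\tfrac{1}{2}$, and then \eqref{est 139} pins this observable into the interval $[3K(T_{L}-2)-\mathcal{C},\,3K(T_{L}-2)+\mathcal{C}]$ on $\{T_{L}>2\}$; choosing $K,K'$ with $K-K'>\tfrac{2\mathcal{C}}{3(T_{L}-2)}$ makes these intervals disjoint, so the laws of $(u_{K},b_{K})$ and $(u_{K'},b_{K'})$ differ. With that substitution your argument goes through.
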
 

\begin{remark}
Our proof of Theorem \ref{Theorem 2.3} is inspired by \cite{HZZ21a, LZZ22, LZ23} but differ in many ways. 
\begin{itemize}
\item First, in contrast to \cite{HZZ21a, LZ23}, the convex integration scheme for the MHD system is more complex than that of the Navier-Stokes equations. Thus, we turn to the approach of \cite{LZZ22}. However, \cite{LZZ22} did not prescribe initial data, and for this purpose, we turn to the approaches of \cite{HZZ21a, LZ23}. 
\item One of the major difficulties, in contrast to \cite{HZZ21a} which treats the Navier-Stokes equations with $m_{1} = 1$, is that in order to attain the full $L^{2}(\mathbb{T}^{3})$-supercritical regime of $m_{1}, m_{2} < \frac{5}{4}$, we can rely on temporal intermittency but only at the cost of 
reducing the regularity of our solution from $C_{t}L_{x}^{2}$ to $L_{t,x}^{2}$. This was already observed in \cite{LZZ22} and is in sharp contrast to the case of the Navier-Stokes equations; e.g. \cite{Y22a} was able to extend \cite{HZZ19} to the full $L^{2}(\mathbb{T}^{3})$-supercritical regime without relying on the temporal intermittency or reducing regularity. The loss of regularity has a consequence that to prescribe initial data, the inductive solution at a low regularity level must be identically zero in a vanishing time interval near the origin. In \cite{HZZ21a} for the 3D Navier-Stokes equations, because the natural regularity space was $C_{t}L_{x}^{2}$, the authors in \cite{HZZ21a} had the $L^{2}(\mathbb{T}^{3})$-norm equal zero in a vanishing time interval near the origin (see \cite[Equation (5.5)]{HZZ21a}). In our case, the natural regularity space is $L_{t,x}^{2}$; this is why we chose our inductive solution in $C_{t}L_{x}^{p}$ for $p < 2$. Moreover, in contrast to \cite[Equation (5.5)]{HZZ21a}, our inductive solution is identically zero (not in a norm) in a vanishing time interval (see \eqref{hypothesis 2} and \eqref{est 123c}).
\end{itemize} 
\end{remark} 

\begin{remark}
\begin{enumerate}  
\item The restriction of $p < \frac{4}{3}$ in \eqref{define m1, m2, and p} comes from \eqref{est 257}. On the other hand, the lower bound $\frac{6}{5} < p$ is rather for convenience of the proof; e.g. in \eqref{est 229}, informally we will need to split $f^{2}$ in $L^{1}(\mathbb{T}^{3})$-norm to $\lVert f \rVert_{L^{p}}$ and therefore $\lVert f\rVert_{L^{\frac{p}{p-1}}}$ due to H$\ddot{o}$lder's inequality; then, for convenience in view of Proposition \ref{Proposition 5.1} we want $p > \frac{6}{5}$ so that we can bound  $\lVert f\rVert_{L^{\frac{p}{p-1}}} \lesssim \lVert f \rVert_{H^{1-\delta}}$. We also mention that allowing different choices of $p$ for $u^{\text{in}}$ and $b^{\text{in}}$ is immediately possible without any significant modification to the proof of Theorem \ref{Theorem 2.3}; due to heavy notations that we have already, we choose to not pursue this generalization. 
\item The restriction of $m_{k} \in [1, \frac{5}{4})$ is a consequence of $p > \frac{6}{5}$. For example, in \eqref{est 229} we need $1- \frac{3}{m_{2} p} + \frac{3}{2m_{2} p^{\ast}} > 0$ and the requirement of $p^{\ast} > 1$ close to 1 and $p > \frac{6}{5}$ lead to $m_{2} \geq 1$. Anyway, typically in convex integration schemes, stronger diffusion presents more difficulties and thus we choose to focus on the case $m_{k} \in [1, \frac{5}{4})$. 
\end{enumerate} 
\end{remark}  

\begin{remark}
Just a few months prior to completing this work, the following manuscripts also appeared on ArXiv which seem interesting and of relevance to our work: \cite{CLZ24, CZZ24, KK24}. 
\end{remark} 

We prove Theorems \ref{Theorem 2.1}-\ref{Theorem 2.2} in Section \ref{Section 3}-\ref{Section 4}. Then we prove Theorem \ref{Theorem 2.3} and Corollary \ref{Corollary 2.4} in Section \ref{Section 5}. We mention that our proof goes through in the setting of deterministic force as well. In Section \ref{Section A.1}, we provide preliminaries needed for the proof of Theorems \ref{Theorem 2.1}-\ref{Theorem 2.2}; due to difference in convex integration schemes, we devote Section \ref{Section A.2} for further preliminaries needed to prove Theorem \ref{Theorem 2.3}. 

\section{Proof of Theorem \ref{Theorem 2.1}}\label{Section 3}
Until Section \ref{Section 5}, we shall consider $\mathbb{T} = [-\pi,\pi]$ to be precise. Considering \eqref{est 3} with $\nu_{1} = \nu_{2} = 0$, we apply curl operator on \eqref{est 3b} and define  
\begin{equation}\label{est 65} 
(v, \Xi) \triangleq (u - G_{1} B_{1}, b - G_{2} B_{2})
\end{equation} 
to obtain  
\begin{subequations}\label{est 17} 
\begin{align}
& \partial_{t} v + \divergence \left( ( v + G_{1}B_{1}) \otimes ( v + G_{1} B_{1}) - ( \Xi +  G_{2} B_{2}) \otimes ( \Xi +  G_{2}B_{2}) \right) \nonumber \\
& \hspace{83mm} + \nabla \pi = 0, \hspace{2mm} \nabla\cdot v = 0, \\
& \partial_{t} \Xi + \divergence \left( (\Xi + G_{2}B_{2}) \otimes ( v + G_{1}B_{1}) - ( v + G_{1}B_{1}) \otimes ( \Xi + G_{2}B_{2}) \right) = 0. 
\end{align}
\end{subequations}
For any $\sigma > 0$ arbitrarily small, we define for $L > 1$, $\delta \in (0, \frac{1}{12})$, and the Sobolev constant 
\begin{equation}\label{define CS} 
C_{S} = C_{S}(\sigma) \geq 0 \text{ such that } \lVert f \rVert_{L^{\infty} (\mathbb{T}^{3})} \leq C_{S} \lVert f \rVert_{\dot{H}^{\frac{3+\sigma}{2}}(\mathbb{T}^{3})},
\end{equation}  
a stopping time 
\begin{align}
T_{L} \triangleq& \inf \left\{ t \geq 0: C_{S} \max_{k\in\{1,2\}}  \lVert G_{k}B_{k} (t) \rVert_{H^{1-\delta}} \geq L^{\frac{1}{4}} \right\} \nonumber \\
&  \wedge \inf\left\{ t \geq 0: C_{S} \max_{k\in\{1,2\}}  \lVert G_{k}B_{k} \rVert_{C_{t}^{\frac{1}{2} - 2 \delta} L^{2}}  \geq L^{\frac{1}{2}} \right\} \wedge L \label{est 49} 
\end{align} 
and observe that $T_{L} > 0$ and $\lim_{L\to\infty} T_{L} = + \infty$ $\mathbf{P}$-a.s. due to \eqref{est 16}. We clarify two reasons why the construction of the convex integration solution up to the stopping time $\mathfrak{t} = T_{L}$ for $L \gg 1$ of \cite{Y23c} is expected to go through in our case with minimum modifications.

\begin{remark}\label{Remark 3.1}
\indent
\begin{enumerate}
\item First, in the diffusive case of \cite{Y23c}, the author considered the solutions $z_{1}$ to the stochastic Stokes equation and $z_{2}$ to the stochastic heat equation and subtracted them from $u$ and $b$, respectively (see \cite[Equations (50)-(51)]{Y23c}). In the rest of the proof of constructing the solution up to the stopping time, only the regularity of $z_{k} \in C_{T}H_{x}^{1-\delta} \cap C_{T}^{\frac{1}{2} - \delta} L_{x}^{2}$ for all $\delta \in (0, \frac{1}{2}), T > 0$ and both $k \in \{1,2\}$ was used (see \cite[Equation (59)]{Y23c}), and $G_{1}B_{1}$ and $G_{2}B_{2}$ satisfy this condition due to \eqref{est 49}.  
\item Second, it is well known that in general construction of the convex integration solutions, diffusive terms only appear at the end of the proof as linear terms that must be bounded upon verifying the inductive hypothesis on the errors. 
\end{enumerate} 
\end{remark} 
 For any $a \in 2\mathbb{N}, b \in \mathbb{N}, \beta \in (0,1)$, and $L \geq 1$ to be specified, we define 
 \begin{equation}\label{define lambdaq deltaq}
\lambda_{q} \triangleq a^{b^{q}} \text{ for } q \in \mathbb{N}_{0}, \hspace{3mm} \delta_{q} \triangleq \lambda_{q}^{-2\beta} \text{ for } q \in \mathbb{N}_{0}, 
\end{equation} 
\begin{subequations}\label{est 48}
\begin{align}
& M_{0} \in C^{\infty} (\mathbb{R}) \text{ such that } M_{0} (t) = 
\begin{cases}
L^{4} & \text{ if } t \leq 0, \\
L^{4} e^{4L t} & \text{ if } t \geq T \wedge L, 
\end{cases} \\
& 0 \leq M_{0}'(t) \leq 8 L M_{0}(t), \hspace{3mm} M_{0} ''(t) \leq 32 L^{2} M_{0}(t)
\end{align}
\end{subequations} 
(cf. \cite[Equations (68)-(69)]{Y23c}). Then we define 
\begin{equation}\label{est 44} 
\alpha \triangleq \frac{1}{1600} \hspace{1mm} \text{ and } \hspace{1mm} (G_{k}B_{k})_{q}   \triangleq \mathbb{P}_{\leq f(q)}G_{k}B_{k} \text{ where } f(q) \triangleq \lambda_{q+1}^{\frac{3\alpha}{22}} \text{ for both } k \in \{1,2\}
\end{equation} 
(cf. \cite[Equations (94) and (65)]{Y23c}). Considering \eqref{est 17}, we seek a solution $(v_{q}, \Xi_{q}, \mathring{R}_{q}^{v}, \mathring{R}_{q}^{\Xi})$ to 
\begin{subequations}\label{est 18} 
\begin{align}
& \partial_{t} v_{q} + \divergence \left( ( v_{q} + (G_{1}B_{1})_{q}) \otimes ( v_{q} + (G_{1} B_{1})_{q}) - ( \Xi_{q} +  (G_{2} B_{2})_{q}) \otimes ( \Xi_{q} +  (G_{2}B_{2})_{q}) \right) \nonumber \\
& \hspace{73mm} + \nabla \pi_{q} = \divergence \mathring{R}_{q}^{v}, \hspace{2mm} \nabla\cdot v_{q} = 0, \\
& \partial_{t} \Xi_{q} + \divergence \left( ( \Xi_{q} + (G_{2}B_{2})_{q}) \otimes ( v_{q} + (G_{1}B_{1})_{q}) - ( v_{q} + (G_{1}B_{1})_{q}) \otimes ( \Xi_{q} +  (G_{2}B_{2})_{q}) \right) \nonumber \\
& \hspace{82mm}  = \divergence \mathring{R}_{q}^{\Xi},  \hspace{2mm} \nabla\cdot \Xi_{q} = 0
\end{align}
\end{subequations}
over $[t_{q}, T_{L}]$ for $q \in \mathbb{N}_{0}$ where $\mathring{R}_{q}^{v}$ and $\mathring{R}_{q}^{\Xi}$ are respectively symmetric trace-free and skew-symmetric matrices and 
\begin{equation}\label{define tq}
t_{q} \triangleq -1 + \sum_{1 \leq \iota \leq q} \delta_{\iota}^{\frac{1}{2}} \text{ with the convention that } \sum_{1 \leq \iota \leq 0} \triangleq 0.
\end{equation} 
We acknowledge that $t_{q}$ in \cite[Equation (67)]{Y23c} was defined as $-2 + \sum_{1 \leq \iota \leq q} \delta_{\iota}^{\frac{1}{2}}$ but the proofs in \cite{Y23c} can be readily modified to the definition in \eqref{define tq} and we want $t_{q} \geq -1$ in the proof of Theorem \ref{Theorem 2.3}. Next, we extend $B_{k}$ to $[-2, 0]$ by $B_{k} (t) \equiv B_{k} (0)$ for $k \in \{1,2\}$ and all $t \in [-2,0]$. We introduce the following norms for $t \geq t_{q}$: for all $p \in [1,\infty]$ and $\iota_{1} \in \mathbb{N}_{0}, \iota_{2} \in \mathbb{R}$,
\begin{subequations}
\begin{align}
& \lVert f \rVert_{C_{t,x,q}^{\iota_{1}}} \triangleq \lVert f \rVert_{C_{[t_{q}, t], x}^{\iota_{1}}} \triangleq \sup_{s \in [t_{q}, t]} \sum_{0 \leq j + \lvert \beta \rvert \leq \iota_{1}} \lVert \partial_{s}^{j} D^{\beta} f(s) \rVert_{C_{x}}, \hspace{2mm} \lVert f \rVert_{C_{t,q} L_{x}^{p}} \triangleq \sup_{s \in [t_{q}, t]} \lVert f(s) \rVert_{L_{x}^{p}} \\
&\lVert f \rVert_{C_{t,q}^{j} C_{x}^{\iota_{1}}} \triangleq \sum_{0\leq k \leq j, 0 \leq \lvert \beta \rvert \leq \iota_{1}} \lVert \partial_{s}^{k} D^{\beta} f(s) \rVert_{C_{[t_{q}, t], x}}, \hspace{14mm} \lVert f \rVert_{C_{t,q} H_{x}^{\iota_{2}}} \triangleq \sup_{s \in [t_{q}, t]} \lVert f(s) \rVert_{H_{x}^{\iota_{2}}}. 
\end{align}
\end{subequations} 
We are now ready to state the inductive estimates:
\begin{hypothesis}\label{Hypothesis 3.1}
For universal constants $c_{v}, c_{\Xi} > 0$ from \cite[Equations (120) and (131)]{Y23c}, the solution $(v_{q}, \Xi_{q}, \mathring{R}_{q}^{v}, \mathring{R}_{q}^{\Xi})$ to \eqref{est 18} satisfies for all $t \in [t_{q},T_{L}]$ 
\begin{subequations}\label{est 26}
\begin{align}
& \lVert v_{q} \rVert_{C_{t,q}L_{x}^{2}} \leq M_{0}(t)^{\frac{1}{2}} \Bigg(1+ \sum_{1 \leq \iota \leq q} \delta_{\iota}^{\frac{1}{2}} \Bigg) \leq 2 M_{0}(t)^{\frac{1}{2}},\label{est 26a} \\
& \lVert \Xi_{q} \rVert_{C_{t,q}L_{x}^{2}} \leq M_{0}(t)^{\frac{1}{2}} \Bigg(1+ \sum_{1 \leq \iota \leq q} \delta_{\iota}^{\frac{1}{2}} \Bigg) \leq 2 M_{0}(t)^{\frac{1}{2}},\label{est 26b} \\
& \lVert v_{q} \rVert_{C_{t,x,q}^{1}} \leq M_{0}(t)^{\frac{1}{2}} \lambda_{q}^{4}, \hspace{9mm} \lVert \Xi_{q} \rVert_{C_{t,x,q}^{1}} \leq M_{0}(t)^{\frac{1}{2}} \lambda_{q}^{4}, \label{est 26c}\\
& \lVert \mathring{R}_{q}^{v} \rVert_{C_{t,q}L_{x}^{1}} \leq c_{v} M_{0}(t) \delta_{q+1}, \hspace{3mm} \lVert \mathring{R}_{q}^{\Xi} \rVert_{C_{t,q}L_{x}^{1}} \leq c_{\Xi} M_{0}(t) \delta_{q+1}. \label{est 26d}
\end{align}
\end{subequations} 
\end{hypothesis}

The following result is concerned with the initial step $q =0$. 
\begin{proposition}\label{Proposition 3.1}
Let 
\begin{equation}\label{est 20} 
v_{0}(t,x) \triangleq \frac{ M_{0}(t)^{\frac{1}{2}} }{(2\pi)^{\frac{3}{2}}} 
\begin{pmatrix}
\sin(x^{3}) \\
0\\
0
\end{pmatrix} \hspace{2mm} 
\text{ and } \hspace{2mm} 
\Xi_{0}(t,x) \triangleq \frac{M_{0}(t)^{\frac{1}{2}}}{(2\pi)^{3}} 
\begin{pmatrix}
\sin(x^{3})\\
\cos(x^{3}) \\
0 
\end{pmatrix}.  
\end{equation}
Then, together with
\begin{subequations} 
\begin{align}
\mathring{R}_{0}^{v} (t,x) &\triangleq \frac{\frac{d}{dt} M_{0}(t)^{\frac{1}{2}}}{(2\pi)^{\frac{3}{2}}} 
\begin{pmatrix}
0 & 0 & - \cos(x^{3}) \\
0 & 0 & 0 \\
-\cos(x^{3}) & 0 & 0 
\end{pmatrix}   \\
&+ \Bigg(v_{0} \mathring{\otimes} (G_{1}B_{1})_{0}+  (G_{1}B_{1})_{0} \mathring{\otimes} v_{0} + (G_{1}B_{1})_{0} \mathring{\otimes} (G_{1}B_{1})_{0} \nonumber \\
& \hspace{5mm} - \Xi_{0}\mathring{\otimes}  (G_{2}B_{2})_{0} - (G_{2}B_{2})_{0} \mathring{\otimes} \Xi_{0} - (G_{2}B_{2})_{0} \mathring{\otimes} (G_{2}B_{2})_{0}\Bigg) (t,x), \nonumber \\
\mathring{R}_{0}^{\Xi} (t,x) &\triangleq \frac{ \frac{d}{dt}M_{0}(t)^{\frac{1}{2}}}{(2\pi)^{3}} 
\begin{pmatrix}
0 & 0 & -\cos(x^{3}) \\
0 & 0 & \sin(x^{3}) \\
\cos(x^{3}) & -\sin(x^{3}) & 0 
\end{pmatrix}   \\
&+ \Bigg(\Xi_{0} \otimes (G_{1}B_{1})_{0} + (G_{2}B_{2})_{0} \otimes v_{0} + (G_{2}B_{2})_{0} \otimes (G_{1}B_{1})_{0} \nonumber \\
& \hspace{5mm} - v_{0} \otimes (G_{2}B_{2})_{0} - (G_{1}B_{1})_{0} \otimes \Xi_{0} - (G_{1}B_{1})_{0} \otimes (G_{2}B_{2})_{0}   \Bigg) (t,x), \nonumber 
\end{align} 
\end{subequations}
$(v_{0}, \Xi_{0})$ satisfies Hypothesis \ref{Hypothesis 3.1} and \eqref{est 18} at level $q= 0$ over $[t_{0}, T_{L}]$ provided 
\begin{subequations}\label{est 19}
\begin{align}
&(40)^{\frac{4}{3}} < L, \label{est 19a}\\
& ((2\pi)^{3} + 1)^{2} 20 \pi^{\frac{3}{2}} \max\{ c_{v}^{-1}, c_{\Xi}^{-1} \} < a^{2\beta b} 20 \pi^{\frac{3}{2}} \max \{ c_{v}^{-1}, c_{\Xi}^{-1} \} \leq L \leq \frac{ (2\pi)^{\frac{3}{2}} a^{4} -2}{4}. \label{est 19b}
\end{align}
\end{subequations}  
Finally, $v_{0}(t,x), \Xi_{0}(t,x), \mathring{R}_{0}^{v}(t,x)$, and $\mathring{R}_{0}^{\Xi}(t,x)$ are all deterministic for all $t \in [t_{0}, 0]$. 
\end{proposition}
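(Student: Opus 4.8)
\textbf{Proof plan for Proposition \ref{Proposition 3.1}.} The plan is a direct verification exploiting the fact that $v_{0}$ and $\Xi_{0}$ are $x^{3}$-dependent profiles with vanishing third component. First I would record the elementary facts: $\nabla\cdot v_{0}= \partial_{1}(\cdots)=0$, $\nabla\cdot\Xi_{0}=\partial_{1}\sin(x^{3})+\partial_{2}\cos(x^{3})=0$, and $\int_{\mathbb{T}^{3}}\sin(x^{3})\,dx=\int_{\mathbb{T}^{3}}\cos(x^{3})\,dx=0$, so both fields are admissible. The structural point is that, using $\divergence(F\otimes G)=\partial_{j}(G^{j}F)$ together with $v_{0}^{3}\equiv\Xi_{0}^{3}\equiv 0$, every quadratic self-interaction vanishes in divergence form: $\divergence(v_{0}\otimes v_{0})=\divergence(\Xi_{0}\otimes\Xi_{0})=\divergence(\Xi_{0}\otimes v_{0})=\divergence(v_{0}\otimes\Xi_{0})=0$. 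Hence, after expanding the nonlinearities in \eqref{est 18} at $q=0$, only $\partial_{t}v_{0}$ (resp.\ $\partial_{t}\Xi_{0}$) and the terms linear or quadratic in $(G_{k}B_{k})_{0}$ survive. A one-line computation gives that the divergence of the first (matrix) summand of $\mathring{R}_{0}^{v}$ equals $\tfrac{d}{dt}M_{0}(t)^{1/2}(2\pi)^{-3/2}(\sin(x^{3}),0,0)=\partial_{t}v_{0}$, and similarly the divergence of the first summand of $\mathring{R}_{0}^{\Xi}$ equals $\partial_{t}\Xi_{0}$; the remaining summands of $\mathring{R}_{0}^{v}$ and $\mathring{R}_{0}^{\Xi}$ are precisely the mixed tensor products in the nonlinearities of \eqref{est 18}, where in the $v$-equation the replacement of $\otimes$ by $\mathring{\otimes}$ produces a scalar multiple of the identity that is absorbed into $\nabla\pi_{0}$, whereas in the $\Xi$-equation the antisymmetric combinations are already trace-free so no correction is needed. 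One checks directly from the displayed formulas that $\mathring{R}_{0}^{v}$ is symmetric trace-free and $\mathring{R}_{0}^{\Xi}$ is skew-symmetric. This establishes \eqref{est 18} at level $q=0$.

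Next I would verify Hypothesis \ref{Hypothesis 3.1} at $q=0$ on $[t_{0},T_{L}]$. With $\mathbb{T}=[-\pi,\pi]$ one has $\lVert\sin(x^{3})\rVert_{L^{2}(\mathbb{T}^{3})}^{2}=4\pi^{3}$ and $\lVert(\sin(x^{3}),\cos(x^{3}),0)\rVert_{L^{2}(\mathbb{T}^{3})}^{2}=(2\pi)^{3}$, so $\lVert v_{0}(t)\rVert_{L^{2}}=2^{-1/2}M_{0}(t)^{1/2}$ and $\lVert\Xi_{0}(t)\rVert_{L^{2}}=(2\pi)^{-3/2}M_{0}(t)^{1/2}$; since $M_{0}$ is nondecreasing and the empty sum $\sum_{1\le\iota\le 0}\delta_{\iota}^{1/2}=0$, these give \eqref{est 26a}--\eqref{est 26b}. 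From $M_{0}'\le 8LM_{0}$ we get $|\tfrac{d}{dt}M_{0}^{1/2}|\le 4LM_{0}^{1/2}$, hence $\lVert v_{0}\rVert_{C_{t,x,0}^{1}}\le (4L+2)(2\pi)^{-3/2}M_{0}(t)^{1/2}$ and $\lVert\Xi_{0}\rVert_{C_{t,x,0}^{1}}\le(4L+2)(2\pi)^{-3}M_{0}(t)^{1/2}$, and the upper bound $L\le\frac{(2\pi)^{3/2}a^{4}-2}{4}$ in \eqref{est 19b} is exactly the condition $(4L+2)(2\pi)^{-3/2}\le a^{4}=\lambda_{0}^{4}$, giving \eqref{est 26c}. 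For the Reynolds stresses \eqref{est 26d}, I would bound each summand of $\mathring{R}_{0}^{v},\mathring{R}_{0}^{\Xi}$ in $L^{1}$: the matrix summands are $\le CLM_{0}(t)^{1/2}$ via $\lVert\cos(x^{3})\rVert_{L^{1}(\mathbb{T}^{3})}=\lVert\sin(x^{3})\rVert_{L^{1}(\mathbb{T}^{3})}=16\pi^{2}$ and the derivative bound, while the mixed summands are controlled by H\"older's inequality through $\lVert v_{0}\rVert_{L^{2}}\lVert(G_{1}B_{1})_{0}\rVert_{L^{2}}+\lVert(G_{1}B_{1})_{0}\rVert_{L^{2}}^{2}$ and the analogues with $\Xi_{0},G_{2}$; for $t\le T_{L}$ the definition \eqref{est 49} yields $\lVert(G_{k}B_{k})_{0}(t)\rVert_{L^{2}}\le\lVert G_{k}B_{k}(t)\rVert_{H^{1-\delta}}\le L^{1/4}$ (taking $C_{S}\ge 1$ without loss of generality), so every summand is $\le C(L+L^{1/2})M_{0}(t)^{1/2}$ for a universal $C$. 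Finally, since $M_{0}(t)\ge L^{4}$ we have $M_{0}(t)^{1/2}\le L^{-2}M_{0}(t)$, so each summand is $\le CL^{-1}M_{0}(t)$, and the lower bound $a^{2\beta b}\,20\pi^{3/2}\max\{c_{v}^{-1},c_{\Xi}^{-1}\}\le L$ in \eqref{est 19b} (with $\delta_{1}=\lambda_{1}^{-2\beta}=a^{-2\beta b}$), together with \eqref{est 19a} and the first inequality of \eqref{est 19b}, is exactly what forces $CL^{-1}\le c_{v}\delta_{1}$ and $CL^{-1}\le c_{\Xi}\delta_{1}$. For the last assertion, on $[t_{0},0]$ we have $B_{k}(t)\equiv B_{k}(0)=0$, hence $(G_{k}B_{k})_{0}\equiv 0$, and $M_{0}(t)\equiv L^{4}$ so $\tfrac{d}{dt}M_{0}^{1/2}\equiv 0$; thus $\mathring{R}_{0}^{v}$ and $\mathring{R}_{0}^{\Xi}$ reduce to their (now vanishing) matrix summands, and $v_{0},\Xi_{0},\mathring{R}_{0}^{v},\mathring{R}_{0}^{\Xi}$ are all deterministic there.

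The computations are routine; the only delicate point is the mismatch between the $M_{0}^{1/2}$ growth that appears naturally in the bounds for $v_{0},\Xi_{0}$ and their first derivatives and the $M_{0}\delta_{q+1}$ scale required in \eqref{est 26d}. The remedy is to trade one factor $M_{0}^{1/2}$ for $L^{-2}M_{0}$ using $M_{0}\ge L^{4}$, and it is precisely this trade-off that produces the lower bound on $L$ in \eqref{est 19b}; the main practical obstacle is simply keeping the various absolute constants (from the Sobolev embedding \eqref{define CS}, from $\lVert\sin\rVert_{L^{1}},\lVert\cos\rVert_{L^{1}}$, and from $c_{v},c_{\Xi}$) below the stated thresholds, after which everything follows by inserting the explicit formulas \eqref{est 20}.
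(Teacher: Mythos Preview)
Your proposal is correct and follows the same direct-verification approach that the paper defers to \cite[Proposition 4.7]{Y23c}: compute the quadratic self-interactions (which vanish because $v_{0}^{3}=\Xi_{0}^{3}=0$ and the profiles depend only on $x^{3}$), check that the explicit matrix summands have divergence equal to $\partial_{t}v_{0}$ and $\partial_{t}\Xi_{0}$, and then bound each piece of the stresses using $M_{0}\ge L^{4}$ to trade $M_{0}^{1/2}$ for $L^{-2}M_{0}$. The only cosmetic remark is that the paper builds the factor $C_{S}$ directly into the stopping time \eqref{est 49}, so your ``$C_{S}\ge 1$ without loss of generality'' is implicit there and no separate normalization is needed.
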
 

\begin{proof}[Proof of Proposition \ref{Proposition 3.1}]
The claims can be proven similarly to \cite[Proposition 4.7]{Y23c}; the difference from the lack of diffusion can be overcome as described in Remark \ref{Remark 3.1}. 
\end{proof}

The following proposition is a key iteration that allows us to deduce the solution at step $q+1$ from step $q$. 

\begin{proposition}\label{Proposition 3.2}
Let $L$ satisfy 
\begin{equation}\label{est 23}
\max \{ (40)^{\frac{4}{3}}, (( 2\pi)^{3} + 1)^{2} 20 \pi^{\frac{3}{2}} \max \{ c_{v}^{-1}, c_{\Xi}^{-1} \} \} < L. 
\end{equation} 
Then there exist a choice of parameters $a, b,$ and $\beta$ such that \eqref{est 19} is fulfilled and the following holds. Suppose that $(v_{q}, \Xi_{q}, \mathring{R}_{q}^{v}, \mathring{R}_{q}^{\Xi})$ are $\{\mathcal{F}_{t}\}_{t\geq 0}$-adapted processes that solve \eqref{est 18} and satisfy Hypothesis \ref{Hypothesis 3.1}. Then there exist $\{\mathcal{F}_{t}\}_{t\geq 0}$-adapted processes $(v_{q+1}, \Xi_{q+1}, \mathring{R}_{q+1}^{v}, \mathring{R}_{q+1}^{\Xi})$ that satisfy Hypothesis \ref{Hypothesis 3.1} and solves  \eqref{est 18} at level $q+1$, and for all $t \in [t_{q+1}, T_{L}]$, 
\begin{equation}\label{est 24}
\lVert v_{q+1}(t) - v_{q}(t) \rVert_{L_{x}^{2}} \leq M_{0}(t)^{\frac{1}{2}} \delta_{q+1}^{\frac{1}{2}} \hspace{1mm} \text{ and } \hspace{1mm} \lVert \Xi_{q+1}(t) - \Xi_{q}(t) \rVert_{L_{x}^{2}} \leq M_{0}(t)^{\frac{1}{2}} \delta_{q+1}^{\frac{1}{2}}. 
\end{equation} 
Finally, if $(v_{q}, \Xi_{q}, \mathring{R}_{q}^{v}, \mathring{R}_{q}^{\Xi}) (t,x)$ is deterministic over $[t_{q}, 0]$, then $(v_{q+1}, \Xi_{q+1}, \mathring{R}_{q+1}^{v}, \mathring{R}_{q+1}^{\Xi})(t,x)$ is also deterministic over $[t_{q+1}, 0]$. 
\end{proposition}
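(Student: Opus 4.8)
The plan is to follow the convex integration iteration of \cite{Y23c} essentially verbatim, because, as recorded in Remark \ref{Remark 3.1}, passing from the diffusive system to the ideal one only removes the two linear terms $\nu_1(-\Delta)^{m_1}v_\ell$ and $\nu_2(-\Delta)^{m_2}\Xi_\ell$ that in the diffusive scheme were absorbed into the new Reynolds stresses and estimated at the very end; deleting them can only help. First I would fix an intermediate frequency $\ell^{-1}$, a small power of $\lambda_q\lambda_{q+1}$, and mollify $(v_q,\Xi_q)$ in space and time and $(\mathring R_q^v,\mathring R_q^\Xi)$ in space, obtaining $(v_\ell,\Xi_\ell,\mathring R_\ell^v,\mathring R_\ell^\Xi)$ which solves \eqref{est 18} up to a mollification commutator stress; Hypothesis \ref{Hypothesis 3.1} at level $q$ together with \eqref{est 19} yields $C^1_{t,x}$ control of $v_\ell,\Xi_\ell$ at cost $\ell^{-1}$ and a quantitatively small commutator. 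Since mollification, the Leray projection, and the inverse-divergence operator $\mathcal R$ are deterministic and causal, $(v_\ell,\Xi_\ell,\dots)$ stays $\{\mathcal F_t\}_{t\ge0}$-adapted and remains deterministic on $[t_{q+1},0]\subset[t_q,0]$ whenever $(v_q,\Xi_q,\dots)$ is.

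Next I would construct the perturbations. Using the MHD versions of the building-block/geometric lemmas (as in \cite{BBV20,Y23c}) for the symmetric trace-free stress $\mathring R_\ell^v$ and the skew-symmetric stress $\mathring R_\ell^\Xi$, write $w_{q+1}=w_{q+1}^{(p)}+w_{q+1}^{(c)}+w_{q+1}^{(t)}$ and $d_{q+1}=d_{q+1}^{(p)}+d_{q+1}^{(c)}+d_{q+1}^{(t)}$. The principal parts are superpositions of intermittent flows concentrated at spatial frequency $\lambda_{q+1}$ with amplitudes of size $\approx(M_0(t)\delta_{q+1})^{1/2}$, cut off to vanish where $|\mathring R_\ell^v|$ (resp. $|\mathring R_\ell^\Xi|$) drops below $c_v M_0(t)\delta_{q+1}$ (resp. $c_\Xi M_0(t)\delta_{q+1}$), exactly as in \cite[Equations (120),(131)]{Y23c}; the correctors $w^{(c)}_{q+1},d^{(c)}_{q+1}$ restore the divergence-free constraints and the temporal correctors $w^{(t)}_{q+1},d^{(t)}_{q+1}$ are chosen so that their $\partial_t$ absorbs the slowly varying, zero-frequency part of $\divergence(w^{(p)}_{q+1}\otimes w^{(p)}_{q+1}-d^{(p)}_{q+1}\otimes d^{(p)}_{q+1})+\mathring R_\ell^v$ and of its magnetic analogue. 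Then I set $v_{q+1}\triangleq v_\ell+w_{q+1}$, $\Xi_{q+1}\triangleq\Xi_\ell+d_{q+1}$, and define $\mathring R_{q+1}^v,\mathring R_{q+1}^\Xi$ as $\mathcal R$ applied to the sum of the remaining errors: the high-frequency oscillation error, the transport/Nash error $\partial_t w^{(p)}_{q+1}+\divergence(v_\ell\otimes w_{q+1}+w_{q+1}\otimes v_\ell)$ and its skew-symmetric counterpart, the corrector errors, the mollification commutator, and the linear error produced by replacing $G_kB_k$ by $(G_kB_k)_q=\mathbb P_{\le f(q)}G_kB_k$ with $f(q)=\lambda_{q+1}^{\frac{3\alpha}{22}}$ and by the frequency jump $q\to q+1$ of the mollified noise.

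I would then verify Hypothesis \ref{Hypothesis 3.1} at level $q+1$ and \eqref{est 24}. The $L^2_x$ bound: $\|w_{q+1}\|_{L^2_x}\le M_0(t)^{1/2}\delta_{q+1}^{1/2}$ follows from the amplitude size and the normalization of the building blocks (taking $b$ large so that the intermittency-induced loss is a negligible power of $\lambda_{q+1}$), with the correctors lower order; adding this to $\|v_\ell\|_{L^2_x}\le\|v_q\|_{L^2_x}$ and telescoping gives \eqref{est 26a}--\eqref{est 26b}, and the same estimate is \eqref{est 24}. The $C^1_{t,x}$ bound \eqref{est 26c} holds with room to spare because the highest frequency present is $\lesssim\lambda_{q+1}$ and amplitudes are $\le M_0(t)^{1/2}$ times powers of $\lambda_{q+1}$ strictly below $4$, while $\partial_t M_0^{1/2}$ is harmless thanks to $0\le M_0'\le8LM_0$. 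The $L^1_x$ bound \eqref{est 26d}: each error term is controlled using the frequency gaps among $\ell^{-1},\lambda_q,\lambda_{q+1}$, the concentration dimensions of the building blocks, and — for the linear noise error — the stopping-time estimates $C_S\max_k\|G_kB_k(t)\|_{H^{1-\delta}}<L^{1/4}$ and $C_S\max_k\|G_kB_k\|_{C_t^{\frac{1}{2}-2\delta}L^2}<L^{1/2}$ from \eqref{est 49}, so that the new velocity stress is $\le c_v M_0(t)\delta_{q+2}$ and the new magnetic stress is $\le c_\Xi M_0(t)\delta_{q+2}$ once $a$ (hence $\lambda_{q+1}$) is large relative to the universal constants; this, together with \eqref{est 23}, is what selects $a,b,\beta$ subject to \eqref{est 19}. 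Adaptedness propagates because every operation is deterministic and causal, and on $[t_{q+1},0]$ all inputs — $v_\ell,\Xi_\ell,\mathring R_\ell^v,\mathring R_\ell^\Xi$ and $(G_kB_k)_q$ (the latter since $B_k(t)\equiv B_k(0)$ for $t\le0$) — are deterministic, hence so are the outputs.

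The main obstacle is the $L^1_x$ estimate of the oscillation error together with its temporal corrector and the noise cross terms: one has to choose the intermittency dimensions of the MHD building blocks and the ratio $b$ so that, simultaneously, the high-frequency residual of $\divergence(w^{(p)}_{q+1}\otimes w^{(p)}_{q+1}-d^{(p)}_{q+1}\otimes d^{(p)}_{q+1})$ is recovered after $\mathcal R$, the temporal corrector $w^{(t)}_{q+1}$ does not spoil \eqref{est 26a} or \eqref{est 24}, and the products $w^{(p)}_{q+1}\otimes(G_1B_1)_q$, $d^{(p)}_{q+1}\otimes(G_2B_2)_q$, and so on remain of order $M_0\delta_{q+2}$ — the latter being exactly why $(G_kB_k)_q$ is mollified to frequency $\lambda_{q+1}^{\frac{3\alpha}{22}}\ll\lambda_{q+1}$ with $\alpha=\frac{1}{1600}$. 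Reproducing these balances is where essentially all the work lies, but it is the same as in \cite{Y23c} once the diffusion terms, which are absent here, are dropped.
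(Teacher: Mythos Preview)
Your proposal is correct and follows essentially the same route as the paper, which likewise defers the detailed estimates to \cite[Proposition~4.8]{Y23c} and only sketches the mollification, the amplitude functions $a_\xi$, the decomposition $w_{q+1}=w_{q+1}^{p}+w_{q+1}^{c}+w_{q+1}^{t}$ (and the magnetic analogue), and the identity \eqref{est 40a}--\eqref{est 40b}. Two small corrections to your sketch: in the paper the stresses $\mathring R_q^{v},\mathring R_q^{\Xi}$ (and $(G_kB_k)_q$) are mollified in both space \emph{and} time, which is what gives the needed $C^{N}_{t,x}$ control of $\mathring R_\ell^{v},\mathring R_\ell^{\Xi}$; and the amplitudes are not ``cut off to vanish'' where the stress is small---the function $\chi$ of \eqref{est 78} satisfies $\chi\ge1$, so $\rho_v,\rho_\Xi$ are bounded below by $c_v M_0\delta_{q+1}$, $c_\Xi M_0\delta_{q+1}$ respectively and the principal perturbations are nowhere zero.
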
 

\begin{proof}[Proof of Proposition \ref{Proposition 3.2}]
This result can be proven similarly to \cite[Proposition 4.8]{Y23c} taking into account of Remark \ref{Remark 3.1}. For subsequent convenience, we sketch some ideas, state definitions and key estimates from \cite{Y23c}. We fix 
\begin{subequations}  
\begin{align}
&\eta \in \mathbb{Q}_{+} \cap \Bigg( \frac{1}{16}, \frac{1}{8}  \Bigg], \hspace{1mm} \sigma \triangleq \lambda_{q+1}^{2\eta - 1}, \hspace{1mm} r \triangleq \lambda_{q+1}^{6\eta - 1}, \hspace{1mm} \text{ and } \hspace{1mm} \mu \triangleq \lambda_{q+1}^{1-\eta},  \label{eta, sigma, r, mu} \\
& b \in \{ \iota \in \mathbb{N}: \iota > 39 \alpha^{-1}\}, \hspace{1mm} \text{ and } \hspace{1mm} l \triangleq \lambda_{q+1}^{ - \frac{3\alpha}{2}} \lambda_{q}^{-2} \label{est 35}
\end{align}
\end{subequations}
where $\alpha$ was defined in \eqref{est 44}. We consider $\{\vartheta_{l} \}_{l > 0}$ and $\{ \varrho_{l}\}_{l > 0}$, specifically $\vartheta_{l} (\cdot) \triangleq l^{-1} \vartheta (\frac{\cdot}{l})$ and $\varrho_{l} (\cdot) \triangleq  l^{-3} \varrho(\frac{\cdot}{l})$, as families of standard mollifiers on $\mathbb{R}$ and $\mathbb{R}^{3}$ with mass one and compact support, the latter compact support specifically on $(l, 2l]$. Then we mollify $v_{q}, \Xi_{q}, \mathring{R}_{q}^{v}, \mathring{R}_{q}^{\Xi}$, and $(G_{k}B_{k})_{q}$ for $k \in \{1,2\}$ in space and time to obtain 
\begin{subequations}\label{est 34}
\begin{align}
&v_{l} \triangleq v_{q} \ast_{x} \varrho_{l} \ast_{t} \vartheta_{l}, \hspace{5mm} \Xi_{l} \triangleq \Xi_{q}\ast_{x} \varrho_{l} \ast_{t} \vartheta_{l}, \\
&\mathring{R}_{l}^{v} \triangleq \mathring{R}_{q}^{v} \ast_{x} \varrho_{l} \ast_{t} \vartheta_{l}, \hspace{3mm} \mathring{R}_{l}^{\Xi} \triangleq \mathring{R}_{q}^{\Theta} \ast_{x} \varrho_{l}\ast_{t} \vartheta_{l}, \hspace{3mm} (G_{k}B_{k})_{l} \triangleq (G_{k}B_{k})_{q} \ast_{x} \varrho_{l}  \ast_{t} \vartheta_{l}. 
\end{align}
\end{subequations}
It follows that the corresponding mollified system to \eqref{est 18} is 
\begin{subequations}\label{est 140}
\begin{align}
& \partial_{t} v_{l} +  \divergence ((v_{l} + (G_{1}B_{1})_{l} ) \otimes (v_{l} + (G_{1}B_{1})_{l} ) - (\Xi_{l} +  (G_{2}B_{2})_{l}) \otimes (\Xi_{l} +  (G_{2}B_{2})_{l} ))  \nonumber \\
& \hspace{48mm} + \nabla \pi_{l}  = \divergence ( \mathring{R}_{l}^{v} + R_{\text{com1}}^{v}), \hspace{5mm} \nabla \cdot v_{l} = 0,  \\
& \partial_{t} \Xi_{l} + \divergence ((\Xi_{l} +  (G_{2}B_{2})_{l} ) \otimes (v_{l} + (G_{1}B_{1})_{l} ) -  ( v_{l} + (G_{1}B_{1})_{l} ) \otimes (\Xi_{l} +  (G_{2}B_{2})_{l}) ) \nonumber \\
& \hspace{58mm} = \divergence ( \mathring{R}_{l}^{\Xi} + R_{\text{com1}}^{\Xi}), \hspace{4mm} \nabla \cdot \Xi_{l} = 0, 
\end{align}
\end{subequations} 
where 
\begin{subequations} 
\begin{align}
&\pi_{l} \triangleq \pi_{q} \ast_{x} \varrho_{l} \ast_{t} \vartheta_{l} - \frac{1}{3} (\lvert v_{l} + (G_{1}B_{1})_{l} \rvert^{2} - \lvert \Xi_{l} +  (G_{2}B_{2})_{l} \rvert^{2})   \\
& \hspace{18mm} + \frac{1}{3} (\lvert v_{q} + (G_{1}B_{1})_{q} \rvert^{2} - \lvert \Xi_{q} + (G_{2}B_{2})_{q} \rvert^{2}) \ast_{x} \varrho_{l} \ast_{t} \vartheta_{l},  \nonumber \\
&R_{\text{com1}}^{v} \triangleq (v_{l} + (G_{1}B_{1})_{l} )\mathring{\otimes} (v_{l} + (G_{1}B_{1})_{l}) - (\Xi_{l} +  (G_{2}B_{2})_{l}) \mathring{\otimes} (\Xi_{l} +  (G_{2}B_{2})_{l})  \\
 & \hspace{8mm} -((v_{q} + (G_{1}B_{1})_{q}) \mathring{\otimes} (v_{q} +(G_{1}B_{1})_{q}) - (\Xi_{q} +  (G_{2}B_{2})_{q}) \mathring{\otimes} (\Xi_{q} +  (G_{2}B_{2})_{q} )) \ast_{x} \varrho_{l} \ast_{t} \vartheta_{l},  \nonumber \\
&R_{\text{com1}}^{\Xi} \triangleq  (\Xi_{l} +  (G_{2}B_{2})_{l}) \otimes (v_{l} + (G_{1}B_{1})_{l})  -  (v_{l} + (G_{1}B_{1})_{l}) \otimes (\Xi_{l} +  (G_{2}B_{2})_{l}) \\
& \hspace{8mm}  -( (\Xi_{q} +  (G_{2}B_{2})_{q}) \otimes (v_{q} + (G_{1}B_{1})_{q} ) -(v_{q} + (G_{1}B_{q})_{q}) \otimes (\Xi_{q} +  (G_{2}B_{2})_{q} ) ) \ast_{x} \varrho_{l} \ast_{t} \vartheta_{l} \nonumber 
\end{align}
\end{subequations}
(see \cite[Equations (101)-(102)]{Y23c}). We let $\chi: [0,\infty) \mapsto \mathbb{R}$ be a smooth function such that 
\begin{equation}\label{est 78}
\chi(z)  
\begin{cases}
= 1 & \text{ if } z \in [0, 1],\\
\in [\frac{z}{2}, 2z] & \text{ if } z \in (1,2), \\
=z & \text{ if } z \geq 2, 
\end{cases}
\end{equation} 
and then the magnetic amplitude function for all $\xi \in \Lambda_{\Xi}$, 
\begin{equation}\label{est 79} 
a_{\xi} (t,x) \triangleq \rho_{\Xi}^{\frac{1}{2}}(t,x) \gamma_{\xi} \Bigg( - \frac{ \mathring{R}_{l}^{\Xi}(t,x)}{\rho_{\Xi}(t,x)} \Bigg) \text{ where } \rho_{\Xi} (t, x) \triangleq 2 \delta_{q+1} \epsilon_{\Xi}^{-1} c_{\Xi} M_{0}(t) \chi \Bigg( \frac{ \lvert \mathring{R}_{l}^{\Xi} (t ,x) \rvert}{c_{\Xi} \delta_{q+1} M_{0}(t)}  \Bigg)
\end{equation}
(see \cite[Equations (110) and (117)]{Y23c}). On the other hand, we define 
\begin{equation}\label{est 80}
\mathring{G}^{\Xi} \triangleq \sum_{\xi \in \Lambda_{\Xi}} a_{\xi}^{2} (\xi\otimes \xi - \xi_{2} \otimes \xi_{2}), 
\end{equation} 
where $\xi$ and $\xi_{2}$ are two of the elements of an o.n.b. that appears in Geometric Lemmas, specifically \cite[Lemmas 3.1-3.2]{Y23c} (see Lemmas \ref{Lemma A.2}-\ref{Lemma A.3}). We also define the velocity amplitude functions 
\begin{subequations}\label{est 81}
\begin{align}
a_{\xi} (t,x) \triangleq& \rho_{v}^{\frac{1}{2}} (t,x) \gamma_{\xi} \Bigg( \Id - \frac{ \mathring{R}_{l}^{v}(t,x) + \mathring{G}^{\Xi}(t,x)}{\rho_{v}(t,x)} \Bigg) \hspace{5mm} \forall \hspace{1mm} \xi \in \Lambda_{v}  \\
\text{ where } \rho_{v} (t,x) \triangleq& 2\epsilon_{v}^{-1} c_{v} \delta_{q+1} M_{0}(t) \chi \Bigg( \frac{ \lvert \mathring{R}_{l}^{v} (t,x) + \mathring{G}^{\Xi} (t,x) \rvert }{c_{v} \delta_{q+1} M_{0}(t)} \Bigg)
\end{align}
\end{subequations} 
(see \cite[Equation (128)]{Y23c}). We are now ready to define the perturbations 
\begin{equation}\label{est 38}
w_{q+1} \triangleq w_{q+1}^{p} + w_{q+1}^{c} + w_{q+1}^{t} \hspace{2mm} \text{ and } \hspace{2mm} d_{q+1} \triangleq d_{q+1}^{p} + d_{q+1}^{c}+ d_{q+1}^{t},  
\end{equation} 
where the temporal correctors are defined through $\phi_{\xi}$ and $\varphi_{\xi}$ from \eqref{est 141} as follows: 
\begin{equation}\label{est 39} 
w_{q+1}^{t} \triangleq - \mu^{-1} \sum_{\xi \in \Lambda} \mathbb{P} \mathbb{P}_{\neq 0} (a_{\xi}^{2} \mathbb{P}_{\neq 0} (\phi_{\xi}^{2} \varphi_{\xi}^{2} ))\xi \hspace{1mm} \text{ and }  \hspace{1mm} d_{q+1}^{t} \triangleq - \mu^{-1} \sum_{\xi\in\Lambda_{\Xi}}\mathbb{P} \mathbb{P}_{\neq 0} (a_{\xi}^{2}\mathbb{P}_{\neq 0} (\phi_{\xi}^{2}\varphi_{\xi}^{2})) \xi_{2}. 
\end{equation}  
The specific form of $w_{q+1}^{p}, w_{q+1}^{c}, d_{q+1}^{p}$, and $d_{q+1}^{c}$, are not important for our current proof (see \cite[Equations (139)-(140)]{Y23c}) while we will rely on the following identity: 
\begin{subequations}
\begin{align}
&w_{q+1}^{p} + w_{q+1}^{c} = N_{\Lambda}^{-2} \lambda_{q+1}^{-2} \curl\curl \sum_{\xi \in \Lambda} a_{\xi} \phi_{\xi} \Psi_{\xi} \xi, \label{est 40a}\\
&d_{q+1}^{p} + d_{q+1}^{c}  =  N_{\Lambda}^{-2} \lambda_{q+1}^{-2}  \curl\curl\sum_{\xi\in\Lambda_{\Xi}}a_{\xi}\phi_{\xi}\Psi_{\xi}\xi_{2} \label{est 40b}
\end{align}
\end{subequations}
where $\Psi_{\xi}$ and $N_{\Lambda}$ are defined respectively in \eqref{est 141} and \eqref{est 278} (see \cite[Equation (142)]{Y23c}). At last, we are able to define the solutions at step $q+1$ via the perturbations as follows: 
\begin{equation}\label{est 32} 
v_{q+1} \triangleq v_{l} + w_{q+1} \hspace{2mm} \text{ and } \hspace{2mm}  \Xi_{q+1} \triangleq \Xi_{l} + d_{q+1}
\end{equation} 
(see \cite[Equation (147)]{Y23c}). We will subsequently rely on the following estimate from \cite[Equation (122)]{Y23c}: 
\begin{equation}\label{est 42}
\lVert a_{\xi} \rVert_{C_{t,q+1}C_{x}^{j}} \lesssim \delta_{q+1}^{\frac{1}{2}} l^{-5j-2} M_{0}(t)^{\frac{1}{2}} \hspace{3mm} \forall \hspace{1mm} j \geq 0, \xi \in \Lambda_{\Xi} 
\end{equation}
\end{proof}

We are ready to prove Theorem \ref{Theorem 2.1}. Given any $T > 0$ and $\kappa \in (0,1)$, starting from the quadruple $(v_{0}, \Xi_{0}, \mathring{R}_{0}^{v}, \mathring{R}_{0}^{\Xi})$ in Proposition \ref{Proposition 3.1}, by taking $L > 0$ sufficiently large enough to satisfy \eqref{est 23}, Proposition \ref{Proposition 3.2} admits $(v_{q},\Xi_{q}, \mathring{R}_{q}^{v}, \mathring{R}_{q}^{\Xi})$ for all $q \in \mathbb{N}$ that satisfy Hypothesis \ref{Hypothesis 3.1}, \eqref{est 18}, and \eqref{est 24}. It is shown in \cite[Equation (87)]{Y23c} that such $\{v_{q}\}_{q \in \mathbb{N}_{0}}, \{ \Xi_{q}\}_{q\in\mathbb{N}_{0}}$ are both Cauchy in $C([0, T_{L}]; \dot{H}^{\gamma}(\mathbb{T}^{3}))$, $\gamma \in (0, \frac{\beta}{4+ \beta})$. Therefore, we deduce the limiting processes 
\begin{equation}\label{est 25} 
\lim_{q\to\infty} v_{q} \triangleq v \text{ and } \lim_{q\to\infty} \Xi_{q} \triangleq \Xi \text{ both in } C([0,T_{L}]; \dot{H}^{\gamma} (\mathbb{T}^{3})).
\end{equation} 
Additionally, it is shown in \cite[Equation (90)]{Y23c} that the difference between such $\Xi$ from \eqref{est 25} and $\Xi_{0}$ from \eqref{est 20} satisfies 
\begin{equation}\label{est 30}
\lVert \Xi(t) - \Xi_{0} (t) \rVert_{L_{x}^{2}} \leq M_{0}(t)^{\frac{1}{2}} \sum_{q\geq 0} \delta_{q+1}^{\frac{1}{2}}
\end{equation}
and an identical computation shows 
\begin{equation}\label{est 68} 
\lVert v(t) - v_{0} (t) \rVert_{L_{x}^{2}} \leq M_{0}(t)^{\frac{1}{2}} \sum_{q\geq 0} \delta_{q+1}^{\frac{1}{2}}.
\end{equation} 
Using $b^{q+1} \geq b(q+1)$ for all $q \in \mathbb{N}$ as long as $b \geq 2$, we can further bound by 
\begin{equation}\label{est 68.5} 
\max\{\lVert \Xi(t) - \Xi_{0} (t) \rVert_{L_{x}^{2}}, \lVert v(t) - v_{0} (t) \rVert_{L_{x}^{2}}  \}  \overset{\eqref{define lambdaq deltaq}}{\leq}  M_{0}(t)^{\frac{1}{2}} \sum_{q\geq 0} a^{-b(q+1) \beta} = M_{0}(t)^{\frac{1}{2}} \Bigg( \frac{a^{-b\beta}}{1-a^{-b\beta}} \Bigg).
\end{equation} 
Consequently, all the claims in Theorem \ref{Theorem 2.1} except the second and third inequalities of \eqref{est 13} can be proven similarly to the proof of \cite[Theorem 2.1]{Y23c}, to which we refer interested readers for details.

\subsection{Proof of second inequality in \eqref{est 13}}
We now prove the second inequality in \eqref{est 13}. For a fixed $T > 0$, we take $L > 0$ larger if necessary so that 
 \begin{align}
\frac{L^{4}}{(2\pi)^{3}} + \frac{3}{(2\pi)^{\frac{3}{2}} - 2} &\Bigg[ 2( 2 L^{2} e^{2L T} + L^{\frac{1}{4}}) L^{\frac{1}{4}} + L^{\frac{1}{2}} + \Bigg( \frac{ (2\pi)^{\frac{3}{2}} - 2}{3} \Bigg) T C_{G_{2}} \Bigg]\leq \frac{1}{(2\pi)^{\frac{9}{2}}} L^{4} e^{4LT}.  \label{est 63} 
\end{align} 
Because $\Xi$ deduced in \eqref{est 25} is divergence-free and mean-zero, we can define its potential 
\begin{equation}\label{est 58}
\digamma \text{ such that } \nabla \times \digamma = \Xi. 
\end{equation} 
Next, considering our choice of $\Xi_{0}$ at first iteration from \eqref{est 20}, we can explicitly see its vector potential $\digamma_{0}$ defined by 
\begin{equation}\label{est 22} 
\digamma_{0} (t,x) \triangleq \frac{ M_{0}(t)^{\frac{1}{2}}}{(2\pi)^{3}} 
\begin{pmatrix}
\sin(x^{3}) \\
\cos(x^{3}) \\
0
\end{pmatrix} 
\end{equation} 
which is identical to $\Xi_{0}(t,x)$. We define the following random magnetic helicity corresponding to random PDEs \eqref{est 17}:   
\begin{align}\label{est 21}
\tilde{\mathcal{H}}_{b}(t) \triangleq \int_{\mathbb{T}^{3}} \digamma(t,x) \cdot \Xi (t,x) dx \hspace{2mm} \text{ and } \hspace{2mm}  \tilde{\mathcal{H}}_{0,b} (t) \triangleq \int_{\mathbb{T}^{3}} \digamma_{0} (t,x) \cdot \Xi_{0} (t,x) dx.  
\end{align}
Due to the explicit expression of \eqref{est 20} and \eqref{est 22}, we can directly compute   
\begin{equation}\label{est 27}
\lVert \digamma_{0} (t) \rVert_{L^{2}} = \frac{M_{0}(t)^{\frac{1}{2}}}{(2\pi)^{\frac{3}{2}}} \hspace{2mm} \text{ and } \hspace{2mm}  \tilde{\mathcal{H}}_{0,b} (t) = \frac{M_{0}(t)}{(2\pi)^{3}}.
\end{equation} 
Taking limit $q\to\infty$ in \eqref{est 26b} we see that 
\begin{equation}\label{est 119}
\lVert \Xi (t) \rVert_{L_{x}^{2}} \leq 2 M_{0}(t)^{\frac{1}{2}}. 
\end{equation} 
Implementing this, along with H$\ddot{\mathrm{o}}$lder's inequality, \eqref{est 27}, and \eqref{est 68.5} gives us 
\begin{equation}\label{est 56} 
\lvert \tilde{\mathcal{H}}_{b}(t) - \tilde{\mathcal{H}}_{0,b} (t) \rvert  \leq \lVert \digamma(t) - \digamma_{0} (t) \rVert_{L_{x}^{2}} 2 M_{0}(t)^{\frac{1}{2}} + \frac{M_{0}(t)}{(2\pi)^{\frac{3}{2}}}  \frac{a^{-b\beta}}{1-a^{-b\beta}}.
\end{equation} 
We need to estimate $\lVert \digamma(t) - \digamma_{0}(t) \rVert_{L_{x}^{2}}$, and to do so, we write 
\begin{equation}\label{est 33} 
\lVert \digamma(t) - \digamma_{0}(t) \rVert_{L_{x}^{2}}  \overset{\eqref{est 25} }{\leq} \sum_{q\geq 0} \lVert(\digamma_{q+1} - \digamma_{q})(t) \rVert_{L_{x}^{2}} \overset{\eqref{est 58}}{=} \sum_{q\geq 0} \lVert  \nabla \times (-\Delta)^{-1} ( \Xi_{q+1} - \Xi_{q}) (t) \rVert_{L_{x}^{2}}.
\end{equation} 
Now we apply the identity $\Xi_{q+1}  = \Xi_{l} + d_{q+1}$ from \eqref{est 32} to \eqref{est 33} to deduce 
\begin{equation}\label{est 55}
\lVert (\digamma - \digamma_{0})(t) \rVert_{L_{x}^{2}} \leq \RomanI_{1}(t) + \RomanI_{2}(t)
\end{equation} 
where 
\begin{equation}\label{est 37}
\RomanI_{1}(t) \triangleq  \sum_{q \geq 0} \lVert \nabla \times (-\Delta)^{-1} ( \Xi_{l} - \Xi_{q})(t) \rVert_{L_{x}^{2}} \hspace{2mm} \text{ and } \hspace{2mm} \RomanI_{2}(t) \triangleq \sum_{q\geq 0}  \lVert \nabla \times (-\Delta)^{-1} d_{q+1}(t) \rVert_{L_{x}^{2}}. 
\end{equation}  
To work on $\RomanI_{1}$, we use the mean-zero property of $\Xi_{l} - \Xi_{q}$ and mollifier estimate to compute 
\begin{align}
\lVert \nabla \times (-\Delta)^{-1} ( \Xi_{l} - \Xi_{q})(t) \rVert_{L_{x}^{2}}  \lesssim  \lVert \Xi_{l} - \Xi_{q} \rVert_{C_{t}L_{x}^{2}}  \overset{\eqref{est 34}}{\lesssim} l \lVert \Xi_{q} \rVert_{C_{t,x}^{1}}  \overset{\eqref{est 26c}\eqref{est 35}}{\lesssim}   M_{0}(t)^{\frac{1}{2}} \lambda_{q+1}^{-\frac{3\alpha}{2} + \frac{2}{b}}. \label{est 36}
\end{align}
We apply \eqref{est 36} to $\RomanI_{1}$ of \eqref{est 37}, and use the fact that $b > 39\alpha^{-1}$  due to \eqref{est 35}  to estimate by 
\begin{align}\label{est 54}
\RomanI_{1}(t) \lesssim M_{0}(t)^{\frac{1}{2}} \sum_{q\geq 0} \lambda_{q+1}^{-\frac{3\alpha}{2} + \frac{2}{b}} \lesssim M_{0}(t)^{\frac{1}{2}} \sum_{q\geq 0} a^{b(q+1) (-\frac{3\alpha}{2} + \frac{2}{b})}  \ll  M_{0}(t)^{\frac{1}{2}} \frac{ a^{-1}}{1-a^{-1}}. 
\end{align}
Next, the estimate of $\RomanI_{2}$ is more delicate. Thus, we first write using \eqref{est 38}, \eqref{est 39}, and \eqref{est 40b}, 
\begin{equation}\label{est 41} 
d_{q+1} =  N_{\Lambda}^{-2} \lambda_{q+1}^{-2}  \curl\curl\sum_{\xi\in\Lambda_{\Xi}}a_{\xi}\phi_{\xi}\Psi_{\xi}\xi_{2}   - \mu^{-1} \sum_{\xi\in\Lambda_{\Xi}}\mathbb{P} \mathbb{P}_{\neq 0} (a_{\xi}^{2}\mathbb{P}_{\neq 0} (\phi_{\xi}^{2}\varphi_{\xi}^{2})) \xi_{2}.
\end{equation} 
\begin{remark}\label{Remark 3.2}
The convex integration scheme of \cite{BBV20} did not have a temporal corrector (see ``$d_{q+1}:= d_{q+1}^{p} + d_{q+1}^{c}$'' in \cite[Equation (5.33b)]{BBV20}) while ours in \eqref{est 38} does. Moreover, the temporal corrector is more difficult to estimate than $d_{q+1}^{p} + d_{q+1}^{c}$ because $d_{q+1}^{p} + d_{q+1}^{c}$ has the favorable form of $\curl\curl$ allowing us to take full advantage of $\nabla\times (-\Delta)^{-1}$. 
\end{remark}
We apply this decomposition \eqref{est 41} to $\RomanI_{2}$ of \eqref{est 37} to split to 
\begin{equation}\label{est 51}
\RomanI_{2}(t) \lesssim \RomanI_{21}(t) + \RomanI_{22}(t)
\end{equation} 
where 
\begin{subequations}\label{est 45}
\begin{align}
\RomanI_{21}(t) \triangleq&  \sum_{q\geq 0} \lambda_{q+1}^{-2} \sum_{\xi \in \Lambda_{\Xi}} \lVert \nabla \times ( a_{\xi} \phi_{\xi} \Psi_{\xi} \xi_{2}) \rVert_{C_{t}L_{x}^{2}},  \label{est 45a}\\
\RomanI_{22}(t) \triangleq& \mu^{-1} \sum_{q\geq 0} \sum_{\xi \in \Lambda_{\Xi}} \lVert \nabla \times (-\Delta)^{-1} \mathbb{P}_{\neq 0} ( a_{\xi}^{2} \mathbb{P}_{\neq 0} ( \phi_{\xi}^{2} \varphi_{\xi}^{2} )) \xi_{2} \rVert_{C_{t}L_{x}^{2}}. \label{est 45b} 
\end{align}
\end{subequations} 
To estimate $\RomanI_{21}$, we use the fact that 
\begin{align}\label{est 96}
-1 - \beta + \frac{21\alpha}{2} + \frac{14}{b} \overset{\eqref{est 35}}{<} - 1 + \frac{21\alpha}{2} + 14 (\frac{\alpha}{39}) \overset{\eqref{est 44}}{<} -\frac{1}{2}, 
\end{align}
to compute 
\begin{align}
\RomanI_{21}(t) \overset{\eqref{est 45a}}{\lesssim}& \sum_{q \geq 0}  \lambda_{q+1}^{-2} \sum_{\xi \in \Lambda_{\Xi}} \lVert a_{\xi} \rVert_{C_{t} C_{x}^{1}} \lVert \nabla ( \phi_{\xi} \Psi_{\xi} ) \rVert_{C_{t}L_{x}^{2}} \overset{\eqref{est 42} \eqref{est 43c}}{\lesssim} \sum_{q \geq 0}  \lambda_{q+1}^{-2} ( \delta_{q+1}^{\frac{1}{2}} l^{-7} M_{0}(t)^{\frac{1}{2}}) \lambda_{q+1}   \nonumber \\
&\overset{\eqref{define lambdaq deltaq} \eqref{est 35}}{\approx} M_{0}(t)^{\frac{1}{2}} \sum_{q \geq 0} \lambda_{q+1}^{-1-\beta + \frac{21\alpha}{2} + \frac{14}{b}}   \overset{\eqref{est 96}}{\ll} M_{0}(t)^{\frac{1}{2}} \frac{a^{-1}}{1-a^{-1}}.\label{est 50}  
\end{align}
Next, we come to $\RomanI_{22}$ of \eqref{est 45b} that arises due to the temporal corrector. We observe that $\phi_{\xi}^{2}\varphi_{\xi}^{2}$ is $(\mathbb{T}/\lambda_{q+1} \sigma)^{3}$-periodic so that minimal active frequency in $\mathbb{P}_{\neq 0} (\phi_{\xi}^{2}\varphi_{\xi}^{2})$ is given by $\lambda_{q+1}\sigma$. Therefore, using 
\begin{equation*}
 - 5 \eta + 21\alpha + \frac{28}{b} < 0 \text{ and } b\Bigg(-5 \eta + 21 \alpha + \frac{28}{b}\Bigg)< -1 
\end{equation*} 
that can be verified using \eqref{eta, sigma, r, mu}, \eqref{est 35}, and \eqref{est 44}, we estimate from \eqref{est 45b} by Lemma \ref{Lemma A.5}  
\begin{align}
\RomanI_{22}(t) \overset{\eqref{est 46}}{\lesssim}& \sum_{q\geq 0} \mu^{-1}   \sum_{\xi \in \Lambda_{\Xi}} (\lambda_{q+1} \sigma)^{-1} \lVert  a_{\xi}^{2} \rVert_{C_{t} C_{x}^{2}} \lVert ( \phi_{\xi} \varphi_{\xi})^{2} \rVert_{C_{t} L_{x}^{2}}  \nonumber \\
\overset{\eqref{est 42} \eqref{est 43c} \eqref{eta, sigma, r, mu}}{\lesssim}& \sum_{q\geq 0} \lambda_{q+1}^{-5 \eta +  21 \alpha + \frac{28}{b}} M_{0}(t) \ll M_{0}(t)^{\frac{1}{2}} \frac{a^{-1}}{1-a^{-1}}.  \label{est 52}
\end{align}
Due to this estimate, we can now conclude that 
\begin{align}\label{est 53}
\RomanI_{2}(t) \overset{\eqref{est 51}}{\lesssim} \RomanI_{21}(t) + \RomanI_{22}(t) \overset{\eqref{est 52}\eqref{est 50}}{\ll} M_{0}(t)^{\frac{1}{2}} \frac{a^{-1}}{1-a^{-1}}. 
\end{align}
 At last, applying \eqref{est 54} and \eqref{est 53} to \eqref{est 55} gives us 
 \begin{equation}\label{est 57}
\lVert (\digamma - \digamma_{0})(t) \rVert_{L^{2}} \overset{\eqref{est 55}}{\leq} \RomanI_{1}(t) + \RomanI_{2}(t) \overset{\eqref{est 54} \eqref{est 53}}{\ll} M_{0}(t)^{\frac{1}{2}} \frac{a^{-1}}{1-a^{-1}}. 
\end{equation} 
Now we use the fact that $(2\pi)^{3} + 1 < a^{b\beta}$ from \eqref{est 19b} to deduce from \eqref{est 56} 
\begin{align}
\lvert \tilde{\mathcal{H}}_{b}(t) - \tilde{\mathcal{H}}_{0,b}(t) \rvert \overset{\eqref{est 57}}{\leq} M_{0}(t) \frac{a^{-1}}{1-a^{-1}} + \frac{M_{0}(t)}{(2\pi)^{\frac{3}{2}}} \frac{a^{-b\beta}}{1-a^{-b\beta}} \overset{\eqref{est 27}\eqref{est 19b}}{\leq} \frac{2 \tilde{\mathcal{H}}_{0,b} (t) }{(2\pi)^{\frac{3}{2}}} \label{est 60} 
\end{align}  
for $a \in 2 \mathbb{N}$ sufficiently large. Consequently, along with \eqref{est 27} we can deduce 
\begin{equation}\label{est 61} 
0 < \tilde{\mathcal{H}}_{0,b}(t) \Bigg(1 - \frac{2}{(2\pi)^{\frac{3}{2}}} \Bigg) \leq \tilde{\mathcal{H}}_{b}(t).
\end{equation}  
It follows that 
\begin{align}
\lvert \tilde{\mathcal{H}}_{b} (0) \rvert \leq& \lvert \tilde{\mathcal{H}}_{b} (0) - \tilde{\mathcal{H}}_{0,b} (0) \rvert + \lvert \tilde{\mathcal{H}}_{0,b} (0) \rvert  \overset{\eqref{est 60} \eqref{est 27} \eqref{est 48}}{\leq} \tilde{\mathcal{H}}_{0,b} (0) \frac{2}{(2\pi)^{\frac{3}{2}}} + \frac{L^{4}}{(2\pi)^{3}} \label{est 64}\\
\overset{\eqref{est 63}\eqref{est 27}}{\leq}&  \frac{3\tilde{\mathcal{H}}_{0,b}(T)}{(2\pi)^{\frac{3}{2}}}   - \frac{3}{(2\pi)^{\frac{3}{2}} - 2} \left[ 2(2M_{0}(T)^{\frac{1}{2}} + L^{\frac{1}{4}} ) L^{\frac{1}{4}} + L^{\frac{1}{2}}  + \left( \frac{ (2\pi)^{\frac{3}{2}} - 2}{3} \right) T C_{G_{2}} \right] \nonumber \\
\overset{\eqref{est 61}}{\leq}&  \left( \frac{3}{(2\pi)^{\frac{3}{2}} - 2} \right) \tilde{\mathcal{H}}_{b}(T) - \frac{3}{(2\pi)^{\frac{3}{2}} - 2} \left[ 2(2M_{0}(T)^{\frac{1}{2}} + L^{\frac{1}{4}} ) L^{\frac{1}{4}} + L^{\frac{1}{2}}  + \left( \frac{ (2\pi)^{\frac{3}{2}} - 2}{3} \right) T C_{G_{2}} \right].  \nonumber
\end{align}     
At last, we are ready to conclude that the magnetic helicity grows at least twice from initial time on $\{ \mathfrak{t} \geq T \}$ as follows. First, using the fact that  $G_{2}B_{2} (0) = 0$, we can write 
\begin{align}\label{est 66}
\int_{\mathbb{T}^{3}} A(0) \cdot b(0) dx \overset{\eqref{est 59} \eqref{est 65}}{=} \int_{\mathbb{T}^{3}} \nabla \times (-\Delta)^{-1} \Xi (0) \cdot \Xi(0) dx \overset{\eqref{est 58}\eqref{est 21}}{=} \tilde{\mathcal{H}}_{b} (0).    
\end{align}
Now we are ready to estimate on $\{ \mathfrak{t} \geq T \}$, 
\begin{align}
&\int_{\mathbb{T}^{3}} A(0) \cdot b(0) dx \overset{\eqref{est 66}\eqref{est 64}}{\leq} \Bigg( \frac{3}{(2\pi)^{\frac{3}{2}} - 2} \Bigg) \tilde{\mathcal{H}}_{b}(T) \nonumber \\
&- \frac{3}{(2\pi)^{\frac{3}{2}} - 2} \left[ 2(2M_{0}(T)^{\frac{1}{2}} + L^{\frac{1}{4}} ) L^{\frac{1}{4}} + L^{\frac{1}{2}}  + \left( \frac{ (2\pi)^{\frac{3}{2}} - 2}{3} \right) T C_{G_{2}} \right]  \nonumber\\
&\overset{\eqref{est 21} \eqref{est 58} \eqref{est 65} \eqref{est 59}}{\leq} \left( \frac{3}{(2\pi)^{\frac{3}{2}} - 2} \right) \Bigg[ \int_{\mathbb{T}^{3}} A(T) \cdot b(T) dx \nonumber \\
& \hspace{20mm} + 2 ( \lVert \Xi \rVert_{C_{T} L_{x}^{2}} + \lVert G_{2}B_{2} \rVert_{C_{T} L_{x}^{2}} ) \lVert G_{2} B_{2} \rVert_{C_{T} L_{x}^{2}} + \lVert G_{2}B_{2} \rVert_{C_{T} L_{x}^{2}}^{2} \Bigg] \nonumber\\
&- \frac{3}{(2\pi)^{\frac{3}{2}} - 2} \left[ 2(2M_{0}(T)^{\frac{1}{2}} + L^{\frac{1}{4}} ) L^{\frac{1}{4}} + L^{\frac{1}{2}}  + \left( \frac{ (2\pi)^{\frac{3}{2}} - 2}{3} \right) T  C_{G_{2}}\right]  \nonumber \\ 
&\overset{\eqref{est 26b} \eqref{est 49}}{\leq}  \left( \frac{3}{(2\pi)^{\frac{3}{2}} -2} \right) \int_{\mathbb{T}^{3}} A(T) \cdot b(T) dx  - T C_{G_{2}}.  \label{est 103} 
\end{align} 
Therefore, by definition of $\mathcal{H}_{b}$ from \eqref{est 0b}, we have shown that 
\begin{equation*}
\frac{ (2\pi)^{\frac{3}{2}} -2}{3} \left[\mathcal{H}_{b}(0) + T C_{G_{2}} \right]  \leq \mathcal{H}_{b}(T), 
\end{equation*}  
which implies the second inequality in \eqref{est 13} because $\frac{ (2\pi)^{\frac{3}{2}} -2}{3}  \approx 4.6$. 

\subsection{Proof of the third inequality in \eqref{est 13}}
As we will see, although the overall computations are less technical than the proof of the second inequality in \eqref{est 13}, the constants must be computed more carefully in order to prove the desired double growth of the cross helicity (see Remark \ref{Remark 3.4}). 
For the fixed $T > 0$, we take $L > 1$ larger if necessary to satisfy 
\begin{equation}\label{est 72} 
\frac{L^{4}}{2^{\frac{5}{2}} \pi^{\frac{3}{2}}} + \Bigg( \frac{ 2^{\frac{5}{2}} \pi^{\frac{3}{2}} + 2}{4\pi^{3} - 2^{\frac{5}{2}} \pi^{\frac{3}{2}} - 1} \Bigg) [ 4 L^{2} e^{2LT} L^{\frac{1}{4}} + 3L^{\frac{1}{2}} ] \leq \frac{L^{4} e^{4LT}}{(2\pi)^{\frac{9}{2}}}.
\end{equation} 
Similarly to \eqref{est 21} we define random cross helicity corresponding to our random PDEs \eqref{est 17}: 
\begin{equation}\label{est 67}
\tilde{\mathcal{H}}_{u}(t) \triangleq \int_{\mathbb{T}^{3}} v (t,x) \cdot \Xi (t,x) dx \hspace{1mm} \text{ and } \hspace{1mm} \tilde{\mathcal{H}}_{0,u} (t) \triangleq \int_{\mathbb{T}^{3}} v_{0} (t,x) \cdot \Xi_{0} (t,x) dx.
\end{equation} 
Considering our choices of $v_{0}$ and $\Xi_{0}$ in \eqref{est 20}, we can directly compute 
\begin{equation}\label{est 73}
\tilde{\mathcal{H}}_{0,u} (t) = \frac{M_{0}(t)}{2^{\frac{5}{2}} \pi^{\frac{3}{2}}}.
\end{equation} 

\begin{remark}\label{Remark 3.4}
As we pointed out already, $\tilde{\mathcal{H}}_{0,u} (t) = \frac{M_{0}(t)}{2^{\frac{5}{2}} \pi^{\frac{3}{2}}}$ from \eqref{est 73} is larger than $\tilde{\mathcal{H}}_{0,b} (t) = \frac{M_{0}(t)}{(2\pi)^{3}}$ from \eqref{est 27}. This difference made it impossible for us to prove the double growth of cross helicity in our initial attempt when we used a straight-forward bound of 
\begin{align*}
\lVert \Xi_{0} (t) \rVert_{L_{x}^{2}} \overset{\eqref{est 26b}}{\leq} 2M_{0}(t)^{\frac{1}{2}}
\end{align*}
in \eqref{est 69} similarly to \eqref{est 119}. It turns out that this can be overcome by relying on the identity  
\begin{equation}\label{est 120} 
\lVert \Xi_{0}(t) \rVert_{L_{x}^{2}} = \frac{M_{0}(t)^{\frac{1}{2}}}{(2\pi)^{\frac{3}{2}}}.
\end{equation} 
\end{remark} 

We compute by starting from \eqref{est 67} and relying on \eqref{est 26a} and \eqref{est 120}, 
\begin{align}\label{est 69} 
 \lvert \tilde{\mathcal{H}}_{u} (t) - \tilde{\mathcal{H}}_{0,u} (t) \rvert  \leq 2M_{0}(t)^{\frac{1}{2}} \lVert \Xi(t) - \Xi_{0}(t) \rVert_{L_{x}^{2}} + \lVert v(t) - v_{0} (t) \rVert_{L_{x}^{2}} \frac{ M_{0} (t)^{\frac{1}{2}}}{(2\pi)^{\frac{3}{2}}}. 
\end{align} 
Then we can compute considering that $a^{b \beta} > (2\pi)^{3} + 1$ from the first inequality of \eqref{est 19b},
\begin{align}
\lvert \tilde{\mathcal{H}}_{u}(t) - \tilde{\mathcal{H}}_{0,u} (t) \rvert \overset{\eqref{est 69} \eqref{est 68.5}}{\leq}& \frac{M_{0}(t)}{2^{\frac{5}{2}} \pi^{\frac{3}{2}}} \left[ 2^{\frac{5}{2}} \pi^{\frac{3}{2}} \left(2+ \frac{1}{(2\pi)^{\frac{3}{2}}} \right) \frac{a^{-b\beta}}{1-a^{-b\beta}} \right] \nonumber \\
&\overset{\eqref{est 73}}{\leq}  \tilde{\mathcal{H}}_{0,u}(t) \left( \frac{ 2^{\frac{5}{2}} \pi^{\frac{3}{2}} + 1}{4\pi^{3}} \right).\label{est 70}
\end{align} 
It follows that 
\begin{equation}\label{est 71}
0 < \tilde{\mathcal{H}}_{0,u} (t) \left( 1 - \frac{ 2^{\frac{5}{2}} \pi^{\frac{3}{2}} + 1}{4\pi^{3}}  \right) \leq \tilde{\mathcal{H}}_{u}(t).
\end{equation} 
This leads to 
\begin{align}
\lvert \tilde{\mathcal{H}}_{u} (0) \rvert \leq& \lvert \tilde{\mathcal{H}}_{u} (0) - \tilde{\mathcal{H}}_{0,u} (0) \rvert + \tilde{\mathcal{H}}_{0,u} (0)  \overset{\eqref{est 70} \eqref{est 73}}{\leq} \tilde{\mathcal{H}}_{0,u} (0)  \left( \frac{ 2^{\frac{5}{2}} \pi^{\frac{3}{2}} + 1}{4\pi^{3}}  \right) + \frac{L^{4}}{2^{\frac{5}{2}} \pi^{\frac{3}{2}}}  \nonumber \\
\overset{\eqref{est 72} \eqref{est 73} \eqref{est 71}}{\leq}& \left( \frac{ 2^{\frac{5}{2}} \pi^{\frac{3}{2}} + 2}{4\pi^{3} - 2^{\frac{5}{2}} \pi^{\frac{3}{2}} - 1} \right)\tilde{\mathcal{H}}_{u} (T) -  \left( \frac{ 2^{\frac{5}{2}} \pi^{\frac{3}{2}} + 2}{4\pi^{3} - 2^{\frac{5}{2}} \pi^{\frac{3}{2}} - 1} \right)\left[ 4 M_{0}(T)^{\frac{1}{2}} L^{\frac{1}{4}} + 3 L^{\frac{1}{2}} \right].  \label{est 74}
\end{align}
We are ready to conclude the third inequality in \eqref{est 13} as follow: as $G_{k}B_{k}(0) = 0$ for $k \in \{1,2\}$, we can write 
\begin{equation}\label{est 118}
\int_{\mathbb{T}^{3}} (u \cdot b) (0) dx \overset{\eqref{est 65} }{=} \int_{\mathbb{T}^{3}}  (v \cdot \Xi)(0) dx \overset{\eqref{est 67}}{=} \tilde{\mathcal{H}}_{u} (0). 
\end{equation} 
and hence on $\{T \leq \mathfrak{t}\}$, 
\begin{align*}
&\Bigg\lvert \int_{\mathbb{T}^{3}} (u\cdot b)(0) dx \Bigg\rvert\overset{\eqref{est 74}}{\leq} \left( \frac{ 2^{\frac{5}{2}} \pi^{\frac{3}{2}} + 2}{4\pi^{3} - 2^{\frac{5}{2}} \pi^{\frac{3}{2}} - 1} \right) \tilde{\mathcal{H}}_{u} (T) -  \left( \frac{ 2^{\frac{5}{2}} \pi^{\frac{3}{2}} + 2}{4\pi^{3} - 2^{\frac{5}{2}} \pi^{\frac{3}{2}} - 1} \right) \left[ 4 M_{0}(T)^{\frac{1}{2}} L^{\frac{1}{4}} + 3L^{\frac{1}{2}} \right] \\
& \hspace{5mm} \overset{\eqref{est 67}\eqref{est 65}}{\leq} \left( \frac{ 2^{\frac{5}{2}} \pi^{\frac{3}{2}} + 2}{4\pi^{3} - 2^{\frac{5}{2}} \pi^{\frac{3}{2}} - 1} \right)  [ \int_{\mathbb{T}^{3}}  ( u \cdot b)T) dx  \\
& \hspace{9mm}  + (\lVert v(T) \rVert_{L_{x}^{2}} + \lVert G_{1}B_{1}(T) \rVert_{L_{x}^{2}}) \lVert G_{2}B_{2} (T) \rVert_{L_{x}^{2}} + \lVert G_{1}B_{1}(T) \rVert_{L_{x}^{2}} (\lVert \Xi (T) \rVert_{L_{x}^{2}} + \lVert G_{2}B_{2}(T) \rVert_{L_{x}^{2}})  \\
& \hspace{9mm} + \lVert G_{1}B_{1}(T) \rVert_{L_{x}^{2}} \lVert G_{2}B_{2}(T) \rVert_{L_{x}^{2}} ]  - \left( \frac{ 2^{\frac{5}{2}} \pi^{\frac{3}{2}} + 2}{4\pi^{3} - 2^{\frac{5}{2}} \pi^{\frac{3}{2}} - 1} \right) \left[ 4 M_{0}(T)^{\frac{1}{2}} L^{\frac{1}{4}} + 3L^{\frac{1}{2}} \right]  \\
& \hspace{5mm} \overset{\eqref{est 26} \eqref{est 49}}{\leq}  \left( \frac{ 2^{\frac{5}{2}} \pi^{\frac{3}{2}} + 2}{4\pi^{3} - 2^{\frac{5}{2}} \pi^{\frac{3}{2}} - 1} \right) \int_{\mathbb{T}^{3}}  ( u \cdot b)(T) dx.
\end{align*}
This implies 
\begin{equation*}
\left( \frac{ 4\pi^{3} - 2^{\frac{5}{2}} \pi^{\frac{3}{2}} - 1}{2^{\frac{5}{2}} \pi^{\frac{3}{2}} + 2} \right) \mathcal{H}_{u}(0) \leq \mathcal{H}_{u}(T)
\end{equation*} 
which implies the third inequality in \eqref{est 13} because $\left(\frac{ 4\pi^{3} - 2^{\frac{5}{2}} \pi^{\frac{3}{2}} - 1}{2^{\frac{5}{2}} \pi^{\frac{3}{2}} + 2} \right) \approx 2.7$. This concludes the proof of Theorem \ref{Theorem 2.1}. 

\section{Proof of Theorem \ref{Theorem 2.2}}\label{Section 4}
Considering \eqref{est 6} with $\nu_{1} = \nu_{2} = 0$, we define 
\begin{equation}\label{est 102}
v \triangleq \Upsilon_{1}^{-1} u \text{ where } \Upsilon_{1} \triangleq e^{B_{1}} \hspace{1mm} \text{ and } \hspace{1mm}  \Xi \triangleq \Upsilon_{2}^{-1}b \text{ where } \Upsilon_{2} \triangleq e^{B_{2}} 
\end{equation} 
to obtain  
\begin{subequations} 
\begin{align}
& \partial_{t} v + \frac{1}{2} v + \divergence  ( \Upsilon_{1} v \otimes v - \Upsilon_{1}^{-1} \Upsilon_{2}^{2} \Xi \otimes \Xi) + \Upsilon_{1}^{-1} \nabla \pi = 0,\\
& \partial_{t} \Xi + \frac{1}{2} \Xi + \divergence  (\Upsilon_{1} \Xi \otimes v - \Upsilon_{1}v \otimes \Xi) = 0
\end{align}
\end{subequations}
(see \cite[Equation (20)]{Y23c}). We define for $\delta \in (0, \frac{1}{4})$, 
\begin{equation}\label{est 110}
T_{L} \triangleq \inf\{t> 0: \max_{k=1,2} \lvert B_{k} (t) \rvert \geq L^{\frac{1}{4}} \} \wedge \inf\{t > 0: \max_{k=1,2} \lVert B_{k} \rVert_{C_{t}^{\frac{1}{2} - 2 \delta}} \geq L^{\frac{1}{2}} \} \wedge L 
\end{equation} 
so that $T_{L} > 0$ and $\lim_{L\to\infty} T_{L} = + \infty$ $\mathbf{P}$-a.s. We take the same definitions of $\lambda_{q}$ and $\delta_{q}$ from \eqref{define lambdaq deltaq}, introduce $m_{L}$, and define $M_{0}(t)$ differently from \eqref{est 48} as follows:
\begin{subequations}\label{est 107} 
\begin{align}
&m_{L} \triangleq \sqrt{3} L^{\frac{5}{4}} e^{\frac{5}{2} L^{\frac{1}{4}}}, \hspace{3mm}  M_{0} \in C^{\infty} (\mathbb{R}) \text{ such that } M_{0}(t) = 
\begin{cases}
e^{2L} & \text{ if } t \leq 0, \\
e^{4L t + 2L} & \text{ if } t \geq T \wedge L, 
\end{cases}\\
& \hspace{29mm}0 \leq M_{0}'(t) \leq 8L M_{0}(t), \hspace{3mm} M_{0}''(t) \leq 32 L^{2} M_{0}(t)
\end{align}
\end{subequations}   
(see \cite[Equation (211)]{Y23c}). We take the same definition of $\alpha$ in \eqref{est 44} and search for the solution $(v_{q}, \Xi_{q}, \mathring{R}_{q}^{v}, \mathring{R}_{q}^{\Xi})$ for $q \in \mathbb{N}_{0}$ that solves over $[t_{q}, T_{L}]$ 
\begin{subequations}\label{est 76}
\begin{align}
&\partial_{t} v_{q} + \frac{1}{2} v_{q} + \divergence ( \Upsilon_{1} (v_{q} \otimes v_{q}) - \Upsilon_{1}^{-1} \Upsilon_{2}^{2} (\Xi_{q} \otimes \Xi_{q} ))  + \nabla p_{q} = \divergence \mathring{R}_{q}^{v}, \hspace{3mm} \nabla\cdot v_{q}  = 0, \\
&\partial_{t} \Xi_{q} + \frac{1}{2}\Xi_{q}  + \Upsilon_{1} \divergence (\Xi_{q} \otimes v_{q} - v_{q} \otimes \Xi_{q} ) = \divergence \mathring{R}_{q}^{\Xi},  \hspace{24mm} \nabla\cdot \Xi_{q} = 0, 
\end{align}
\end{subequations} 
where $t_{q}$ was defined in \eqref{define tq}, and $\mathring{R}_{q}^{v}$ is a symmetric trace-free matrix and $\mathring{R}_{q}^{\Xi}$ is a skew-symmetric matrix. We extend $B_{k}$ to $[-2,0]$ by $B_{k}(t) \equiv B_{k}(0)$ for $k \in \{1,2\}$ and all $t \in [-2,0]$ again and consider the following inductive hypothesis. 
\begin{hypothesis}\label{Hypothesis 4.1}
For universal constants $c_{v}, c_{\Xi} > 0$ from \cite[Equations (120) and (131)]{Y23c}, the solution $(v_{q}, \Xi_{q}, \mathring{R}_{q}^{v}, \mathring{R}_{q}^{\Xi})$ to \eqref{est 76} satisfies for all $t \in [t_{q},T_{L}]$, 
\begin{subequations}\label{est 87}
\begin{align}
& \lVert v_{q} \rVert_{C_{t,q}L_{x}^{2}} \leq m_{L} M_{0}(t)^{\frac{1}{2}} \Bigg(1+ \sum_{1 \leq \iota \leq q} \delta_{\iota}^{\frac{1}{2}}\Bigg) \leq 2 m_{L}M_{0}(t)^{\frac{1}{2}},\label{est 87a} \\
& \lVert \Xi_{q} \rVert_{C_{t,q}L_{x}^{2}} \leq m_{L} M_{0}(t)^{\frac{1}{2}} \Bigg(1+ \sum_{1 \leq \iota \leq q} \delta_{\iota}^{\frac{1}{2}}\Bigg) \leq 2 m_{L} M_{0}(t)^{\frac{1}{2}},\label{est 87b} \\
& \lVert v_{q} \rVert_{C_{t,x,q}^{1}} \leq m_{L} M_{0}(t)^{\frac{1}{2}} \lambda_{q}^{4}, \hspace{4mm} \lVert \Xi_{q} \rVert_{C_{t,x,q}^{1}} \leq m_{L}M_{0}(t)^{\frac{1}{2}} \lambda_{q}^{4},  \label{est 87c}\\
& \lVert \mathring{R}_{q}^{v} \rVert_{C_{t,q}L_{x}^{1}} \leq c_{v} M_{0}(t) \delta_{q+1}, \hspace{3mm} \lVert \mathring{R}_{q}^{\Xi} \rVert_{C_{t,q}L_{x}^{1}} \leq c_{\Xi} M_{0}(t) \delta_{q+1}.  \label{est 87d}
\end{align}
\end{subequations} 
\end{hypothesis}
\begin{proposition}\label{Proposition 4.1}
Let 
\begin{equation}\label{est 86}
v_{0}(t,x) \triangleq \frac{m_{L} M_{0}(t)^{\frac{1}{2}} }{(2\pi)^{\frac{3}{2}}} 
\begin{pmatrix}
\sin(x^{3}) \\
0\\
0
\end{pmatrix}
\hspace{2mm} \text{ and } \hspace{2mm} 
\Xi_{0} (t,x) \triangleq 
\frac{ m_{L} M_{0}(t)^{\frac{1}{2}} }{(2\pi)^{3}} 
\begin{pmatrix}
\sin(x^{3}) \\
\cos(x^{3}) \\
0
\end{pmatrix}.
\end{equation}
Then, together with 
\begin{equation*} 
\mathring{R}_{0}^{v} (t,x) \triangleq \frac{ m_{L }( \frac{d}{dt} + \frac{1}{2}) M_{0}(t)^{\frac{1}{2}} }{(2\pi)^{\frac{3}{2}}} 
\begin{pmatrix}
0 & 0 & - \cos(x^{3}) \\
0 & 0 & 0 \\
-\cos(x^{3}) & 0 & 0
\end{pmatrix}
+ \mathcal{R} (-\Delta)^{m_{1}} v_{0} (t,x) 
\end{equation*}  
and 
\begin{equation*} 
\mathring{R}_{0}^{\Xi} (t,x) \triangleq \frac{ m_{L} (\frac{d}{dt} + \frac{1}{2}) M_{0}(t)^{\frac{1}{2}} }{(2\pi)^{3}} 
\begin{pmatrix}
0 & 0 & -\cos(x^{3}) \\
0 & 0 & \sin(x^{3}) \\
\cos(x^{3}) & -\sin(x^{3}) & 0 
\end{pmatrix}
+ \mathcal{R}^{\Xi} (-\Delta)^{m_{2}} \Xi_{0} (t,x), 
\end{equation*} 
$(v_{0}, \Xi_{0})$ satisfies Hypothesis \ref{Hypothesis 4.1} and \eqref{est 76} at level $q= 0$ over $[t_{0},T_{L}]$ provided 
\begin{equation}\label{est 77}
\sqrt{3} (( 2\pi)^{3} + 1)^{2} < \sqrt{3} a^{2\beta b} \leq \frac{ \min \{c_{v}, c_{\Xi} \} e^{L - \frac{5}{2} L^{\frac{1}{4}}}}{L^{\frac{5}{4}} [ 8 (4L + \frac{1}{2} ) (2\pi)^{\frac{1}{2}} + 36 \pi^{\frac{3}{2}}]}, \hspace{3mm} L \leq \frac{ (2\pi)^{\frac{3}{2}} a^{4} -2}{2}.
\end{equation} 
Finally, $v_{0}(t,x), \Xi_{0} (t,x), \mathring{R}_{0}^{v} (t,x)$, and $\mathring{R}_{0}^{\Xi} (t,x)$ are all deterministic over $[t_{0}, 0]$. 
\end{proposition}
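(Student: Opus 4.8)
The plan is to follow, almost verbatim, the proof of \cite[Proposition 4.7]{Y23c} (equivalently, of Proposition \ref{Proposition 3.1}): the only genuinely new ingredients here are the zeroth-order damping terms $\tfrac12 v_q$, $\tfrac12\Xi_q$ in \eqref{est 76}, the spatially homogeneous multipliers $\Upsilon_1 = e^{B_1}$ and $\Upsilon_2 = e^{B_2}$, the growth profile $M_0$ from \eqref{est 107}, and the amplitude $m_L$. Since $\Upsilon_1,\Upsilon_2$ depend only on time, they commute with every spatial operator and are structurally inert; in fact they do not even enter the explicit formulas for $v_0,\Xi_0,\mathring R_0^v,\mathring R_0^\Xi$. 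First I would record the structural facts: from \eqref{est 86}, $v_0$ and $\Xi_0$ are smooth, mean-zero and divergence-free, since each nonzero component is a function of $x^3$ alone while $v_0^3\equiv\Xi_0^3\equiv0$; the trigonometric matrices forming the first summands of $\mathring R_0^v$ and $\mathring R_0^\Xi$ are symmetric trace-free, resp.\ skew-symmetric, by inspection, and $\mathcal R$, $\mathcal R^\Xi$ produce matrices of the corresponding symmetry type, so $\mathring R_0^v$, $\mathring R_0^\Xi$ have the right symmetry.

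Next I would verify \eqref{est 76} at level $q=0$ by direct substitution. The key point is that every quadratic term vanishes identically: the nonzero entries of $v_0\otimes v_0$, $\Xi_0\otimes\Xi_0$, and $\Xi_0\otimes v_0 - v_0\otimes\Xi_0$ are functions of $x^3$ sitting in slots differentiated only by $\partial_1$ or $\partial_2$ upon taking the divergence, so that $\divergence(\Upsilon_1\, v_0\otimes v_0) = \divergence(\Upsilon_1^{-1}\Upsilon_2^2\,\Xi_0\otimes\Xi_0) = \Upsilon_1\divergence(\Xi_0\otimes v_0 - v_0\otimes\Xi_0) = 0$. Consequently \eqref{est 76} at $q=0$ reduces to $(\partial_t + \tfrac12)v_0 + \nabla p_0 = \divergence\mathring R_0^v$ and $(\partial_t + \tfrac12)\Xi_0 = \divergence\mathring R_0^\Xi$, and using $\partial_3(-\cos x^3) = \sin x^3$ one checks that the divergence of the trigonometric matrix in $\mathring R_0^v$ equals $\tfrac{m_L}{(2\pi)^{3/2}}\bigl(\tfrac{d}{dt} + \tfrac12\bigr)M_0(t)^{1/2}(\sin x^3,0,0) = (\partial_t + \tfrac12)v_0$, and likewise that of the trigonometric matrix in $\mathring R_0^\Xi$ equals $(\partial_t + \tfrac12)\Xi_0$; the remaining lower-order pieces and the pressure $p_0$ are handled exactly as in \cite[Proposition 4.7]{Y23c} (note $(-\Delta)^{m_k}$ acts as the identity on these single-frequency fields).

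Then I would verify Hypothesis \ref{Hypothesis 4.1} directly from the explicit formulas, with the empty sum $\sum_{1\le\iota\le0}\delta_\iota^{1/2} = 0$. On $\mathbb T^3 = [-\pi,\pi]^3$ a short computation gives $\|v_0(t)\|_{L_x^2} = \tfrac{1}{\sqrt2}m_LM_0(t)^{1/2}$ and $\|\Xi_0(t)\|_{L_x^2} = (2\pi)^{-3/2}m_LM_0(t)^{1/2}$, both $\le m_LM_0(t)^{1/2}$, which yields \eqref{est 87a}--\eqref{est 87b}. For \eqref{est 87c}, since $|\tfrac{d}{dt}M_0(t)^{1/2}| = \tfrac12 M_0(t)^{-1/2}M_0'(t) \le 4LM_0(t)^{1/2}$ by \eqref{est 107}, one gets $\|v_0\|_{C^1_{t,x,0}} \le (2\pi)^{-3/2}(4L+2)m_LM_0(t)^{1/2}$, which is $\le m_LM_0(t)^{1/2}\lambda_0^4 = m_LM_0(t)^{1/2}a^4$ precisely under the upper bound $L \le \tfrac{(2\pi)^{3/2}a^4 - 2}{2}$ from \eqref{est 77} (and identically for $\Xi_0$). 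For \eqref{est 87d}, using $|(\tfrac{d}{dt} + \tfrac12)M_0(t)^{1/2}| \le (4L+\tfrac12)M_0(t)^{1/2}$, bounding the $L^1(\mathbb T^3)$-norms of the trigonometric matrices by explicit constants, and estimating the lower-order term by $\|\mathcal R(-\Delta)^{m_1}v_0\|_{L_x^1} \lesssim \|v_0\|_{L_x^1}$, one arrives at $\|\mathring R_0^v\|_{C_{t,0}L_x^1} \le [8(4L+\tfrac12)(2\pi)^{1/2} + 36\pi^{3/2}]\,m_LM_0(t)^{1/2}$ — the bracketed constant being precisely the one appearing in the denominator of the upper bound in \eqref{est 77} — and this is $\le c_vM_0(t)\delta_1 = c_vM_0(t)a^{-2\beta b}$ once $m_L/M_0(t)^{1/2} \le \sqrt3\,L^{5/4}e^{\frac52 L^{1/4} - L}$ (valid because $M_0(t)\ge e^{2L}$ on $[t_0,T_L]$) is combined with the defining inequalities of the lower bound on $a^{2\beta b}$ in \eqref{est 77}; $\mathring R_0^\Xi$ is estimated analogously with $c_\Xi$.

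Finally, the determinism claim: on $[t_0,0]$ one has $M_0\equiv e^{2L}$ (hence $\tfrac{d}{dt}M_0^{1/2}\equiv0$ there) and the extension of $B_k$ is constant $\equiv B_k(0) = 0$, so $\Upsilon_1\equiv\Upsilon_2\equiv1$; as the explicit formulas for $v_0,\Xi_0,\mathring R_0^v,\mathring R_0^\Xi$ then involve only the deterministic function $M_0$ and trigonometric functions of $x^3$, all four are deterministic over $[t_0,0]$. The step requiring the most care is the Reynolds-stress estimate \eqref{est 87d}: it is the binding one because the amplitude $m_L = \sqrt3 L^{5/4}e^{5L^{1/4}/2}\gg1$ — which encodes the worst-case exponential growth of $e^{B_k}$ up to the stopping time $T_L$ of \eqref{est 110} — must be absorbed into $c_v\delta_1 = c_va^{-2\beta b}$ times $M_0(t)^{1/2}\ge e^L$, and it is exactly the chain of inequalities in \eqref{est 77} (with its explicitly tracked constants $8(4L+\tfrac12)(2\pi)^{1/2} + 36\pi^{3/2}$ and $e^{L - \frac52 L^{1/4}}$) that makes this possible; the other five bounds are comparatively immediate.
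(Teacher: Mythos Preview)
Your proposal is correct and matches the paper's own proof, which simply refers to \cite[Proposition~5.6]{Y23c}; your explicit verification of the structural properties, the vanishing of all quadratic terms in \eqref{est 76}, and the inductive bounds in Hypothesis~\ref{Hypothesis 4.1} is exactly the intended argument.

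One small point worth flagging: the terms $\mathcal R(-\Delta)^{m_1}v_0$ and $\mathcal R^{\Xi}(-\Delta)^{m_2}\Xi_0$ appearing in the stated $\mathring R_0^v$, $\mathring R_0^\Xi$ are vestiges of the diffusive setting in \cite{Y23c}. Since \eqref{est 76} is the ideal system (no $(-\Delta)^{m_k}$ on the left-hand side), the divergences of these pieces have nothing to cancel against, so your sentence ``the remaining lower-order pieces \ldots\ are handled exactly as in \cite[Proposition~4.7]{Y23c}'' is the one place where the transfer does not quite go through literally. In the present ideal case these terms should simply be omitted---which is what the paper's own remark that ``the lack of diffusion in our current case has zero effect'' is signaling---and once they are dropped your verification of \eqref{est 76} at $q=0$ is complete as written. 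Their inclusion is harmless for the $L^1$ bounds in \eqref{est 87d} (your estimate $\lVert\mathcal R(-\Delta)^{m_1}v_0\rVert_{L_x^1}\lesssim\lVert v_0\rVert_{L_x^1}$ is fine there), which is presumably why the constant $36\pi^{3/2}$ survives in \eqref{est 77}.
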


\begin{proof}[Proof of Proposition \ref{Proposition 4.1}]
This result can be proven similarly to the proof of \cite[Proposition 5.6]{Y23c}. The lack of diffusion in our current case has zero effect in its proof. 
\end{proof}

\begin{proposition}\label{Proposition 4.2}
Let $L$ satisfy 
\begin{equation}\label{est 82}
\sqrt{3} ((2\pi)^{3} + 1)^{2} < \frac{ \min \{c_{v}, c_{\Xi} \}  e^{L  - \frac{5}{2} L^{\frac{1}{4}}}}{ L^{\frac{5}{4}} [8 (4L + \frac{1}{2}) (2\pi)^{\frac{1}{2}} + 36 \pi^{\frac{3}{2}}]}.
\end{equation} 
Then there exist a choice of parameters $a, b,$ and $\beta$ such that \eqref{est 77} is fulfilled and the following holds. Suppose that $(v_{q}, \Xi_{q}, \mathring{R}_{q}^{v}, \mathring{R}_{q}^{\Xi})$ are $\{\mathcal{F}_{t}\}_{t\geq 0}$-adapted processes that solve \eqref{est 76} and satisfy Hypothesis \ref{Hypothesis 4.1} over $[t_{q}, T_{L}]$. Then there exist $\{\mathcal{F}_{t}\}_{t\geq 0}$-adapted processes $(v_{q+1}, \Xi_{q+1}, \mathring{R}_{q+1}^{v}, \mathring{R}_{q+1}^{\Xi})$ that solve \eqref{est 76} and satisfy Hypothesis \ref{Hypothesis 4.1} at level $q+1$, and for all $t \in [t_{q+1}, T_{L}]$, 
\begin{equation}\label{est 83}
\lVert v_{q+1}(t) -v_{q}(t) \rVert_{L_{x}^{2}} \leq m_{L} M_{0}(t)^{\frac{1}{2}} \delta_{q+1}^{\frac{1}{2}} \hspace{1mm} \text{ and } \hspace{1mm}  \lVert \Xi_{q+1} (t) - \Xi_{q}(t) \rVert_{L_{x}^{2}} \leq m_{L} M_{0}(t)^{\frac{1}{2}} \delta_{q+1}^{\frac{1}{2}}. 
\end{equation} 
Finally, if $(v_{q}, \Xi_{q}, \mathring{R}_{q}^{v}, \mathring{R}_{q}^{\Xi}) (t,x)$ is deterministic over $[t_{q}, 0]$, then $(v_{q+1}, \Xi_{q+1}, \mathring{R}_{q+1}^{v}, \mathring{R}_{q+1}^{\Xi})(t,x)$ is also deterministic over $t \in [t_{q+1}, 0]$. 
\end{proposition}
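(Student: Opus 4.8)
The plan is to perform one step of the intermittent convex integration iteration of \cite{Y23c}, adapted exactly as the additive-noise Proposition \ref{Proposition 3.2} was: the two structural features here are that we work in the ideal case $\nu_{1}=\nu_{2}=0$, so by Remark \ref{Remark 3.1} the diffusive terms drop out of the list of linear errors, and that the random \emph{scalar} coefficients $\Upsilon_{1}=e^{B_{1}}$, $\Upsilon_{1}^{-1}\Upsilon_{2}^{2}$, $\Upsilon_{1}$ together with the zeroth-order damping $\tfrac{1}{2}v_{q}$, $\tfrac{1}{2}\Xi_{q}$ from the It\^o correction in \eqref{est 76} must be carried through. First I would take the same building-block parameters $\eta,\sigma,r,\mu,b,l$ as in \eqref{eta, sigma, r, mu}--\eqref{est 35}, check that the standing hypothesis \eqref{est 82} leaves room to choose $a\in 2\mathbb{N}$, $b$, $\beta$ fulfilling \eqref{est 77}, mollify $v_{q},\Xi_{q},\mathring R_{q}^{v},\mathring R_{q}^{\Xi}$ in space and (backward in) time and $\Upsilon_{1},\Upsilon_{1}^{-1}\Upsilon_{2}^{2}$ in time at scale $l$ to obtain $v_{l},\Xi_{l},\mathring R_{l}^{v},\mathring R_{l}^{\Xi}$, and record the mollified system along with the commutator stresses, in direct parallel with \eqref{est 34}--\eqref{est 140}.

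Next I would define the magnetic and velocity amplitudes $a_{\xi}$ through $\rho_{\Xi}$ and $\rho_{v}$ — the latter built from $\mathring R_{l}^{v}+\mathring G^{\Xi}$ with $\mathring G^{\Xi}$ as in \eqref{est 80} — using the geometric lemmas, exactly as in \eqref{est 78}--\eqref{est 81}, set the perturbations $w_{q+1}=w_{q+1}^{p}+w_{q+1}^{c}+w_{q+1}^{t}$ and $d_{q+1}=d_{q+1}^{p}+d_{q+1}^{c}+d_{q+1}^{t}$ with principal parts \eqref{est 40a}--\eqref{est 40b} and temporal correctors \eqref{est 39} engineered to cancel the time-derivative of the principal parts, and put $v_{q+1}=v_{l}+w_{q+1}$, $\Xi_{q+1}=\Xi_{l}+d_{q+1}$. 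The verification then splits into three routine parts: (i) the $C_{t,q+1}L_{x}^{2}$ and $C_{t,x,q+1}^{1}$ bounds \eqref{est 83} and \eqref{est 87a}--\eqref{est 87c}, which follow from the amplitude estimate \eqref{est 42}, the $L^{p}$-sizes of the intermittent building blocks, and the telescoping of $\sum_{1\le\iota\le q+1}\delta_{\iota}^{1/2}$, with the prefactor $m_{L}=\sqrt{3}\,L^{\frac{5}{4}}e^{\frac{5}{2}L^{1/4}}$ and the growth function $M_{0}$ from \eqref{est 107} supplying the constants; (ii) the new Reynolds stresses: one expands the quadratic nonlinearities with $v_{q+1},\Xi_{q+1}$ inserted, isolates the usual oscillation, transport, Nash, corrector and mollification-commutator errors together with a linear error (from $\tfrac{1}{2}w_{q+1}$ and from the mismatch $\Upsilon_{1,l}(v_{l}\mathring\otimes v_{l})-(\Upsilon_{1}(v_{q}\mathring\otimes v_{q}))\ast_{x}\varrho_{l}\ast_{t}\vartheta_{l}$ and its magnetic analogue), inverts $\divergence$ with the antidivergence operators $\mathcal R$ and $\mathcal R^{\Xi}$, and bounds each piece by $c_{v}M_{0}(t)\delta_{q+2}$ resp. $c_{\Xi}M_{0}(t)\delta_{q+2}$ to close \eqref{est 87d} at level $q+1$; (iii) checking that the exponents make every frequency power strictly negative — the same elementary inequalities as \eqref{est 96} and those preceding \eqref{est 52}, now with a few extra powers of $l^{-1}$ from the time-mollified coefficients — which close because $b>39\alpha^{-1}$ and $\alpha=\tfrac{1}{1600}$ leave a wide margin.

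I expect the main obstacle to be the bookkeeping that guarantees that \emph{only} deterministic, $L$-dependent constants (and no $\omega$-dependent quantities) enter the inductive bounds \eqref{est 87}, in spite of the factors $\Upsilon_{k}^{\pm1}$ and the damping. This is precisely the purpose of the stopping time \eqref{est 110}: on $[t_{q},T_{L}]$ one has $\max_{k}\lvert B_{k}\rvert\le L^{1/4}$ and $\max_{k}\lVert B_{k}\rVert_{C_{t}^{1/2-2\delta}}\le L^{1/2}$, hence deterministic bounds of order $e^{L^{1/4}}$ and $L^{1/2}e^{L^{1/4}}$ on $\lVert\Upsilon_{k}^{\pm1}\rVert_{C_{T_{L}}^{1/2-2\delta}}$; these get absorbed into $m_{L}$ and into the $e^{L-\frac{5}{2}L^{1/4}}$ factors in \eqref{est 77}--\eqref{est 82}, so that the oscillation cancellation (which needs $\rho_{v},\rho_{\Xi}$ to dominate $\lvert\mathring R_{l}^{v}+\mathring G^{\Xi}\rvert$ and $\lvert\mathring R_{l}^{\Xi}\rvert$), the corrector estimates, and the commutator estimates all remain uniform in $\omega$. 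Once this is in place, every estimate is a deterministic reprise of \cite{Y23c}.

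Finally, for the determinism assertion: all non-random ingredients — the mollifiers, the cutoff $\chi$ from \eqref{est 78}, the geometric-lemma directions, $N_{\Lambda}$, $m_{L}$, $M_{0}$ — are deterministic, and on $[-2,0]$ we have $B_{k}\equiv B_{k}(0)=0$, hence $\Upsilon_{k}\equiv1$ there. Since $t_{q+1}\ge t_{q}\ge-1$ by \eqref{define tq}, the scale $l$ satisfies $2l\ll\delta_{q+1}^{1/2}=t_{q+1}-t_{q}$ by \eqref{define lambdaq deltaq} and \eqref{est 35}, and the time mollification is backward-looking, the mollified quantities on $[t_{q+1},0]$ depend on $(v_{q},\Xi_{q},\mathring R_{q}^{v},\mathring R_{q}^{\Xi})$ and on the $\Upsilon_{k}$ only over $[t_{q},0)$, where by the inductive hypothesis everything is deterministic; hence $v_{l},\Xi_{l},\mathring R_{l}^{v},\mathring R_{l}^{\Xi}$, the amplitudes, the perturbations, and therefore $(v_{q+1},\Xi_{q+1},\mathring R_{q+1}^{v},\mathring R_{q+1}^{\Xi})$ are deterministic on $[t_{q+1},0]$. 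This matches the corresponding statement in \cite{Y23c} and in Proposition \ref{Proposition 3.2}.
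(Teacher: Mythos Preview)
Your overall strategy is right and matches the paper's, but there is one concrete point where the scheme as you describe it would not close: you propose to use the amplitudes $a_{\xi}$, the principal$+$corrector identities \eqref{est 40a}--\eqref{est 40b}, and $\mathring G^{\Xi}$ ``exactly as in \eqref{est 78}--\eqref{est 81}'' and \eqref{est 80}. In the multiplicative case the quadratic nonlinearity in \eqref{est 76} carries the random scalar prefactors $\Upsilon_{1,l}$ and $\Upsilon_{1,l}^{-1}\Upsilon_{2,l}^{2}$, so the low-frequency part of the oscillation term in the velocity equation is $\Upsilon_{1,l}\,w_{q+1}^{p}\mathring\otimes w_{q+1}^{p}-\Upsilon_{1,l}^{-1}\Upsilon_{2,l}^{2}\,d_{q+1}^{p}\mathring\otimes d_{q+1}^{p}$. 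If $w_{q+1}^{p},d_{q+1}^{p}$ are built from the unrescaled $a_{\xi}$, the geometric-lemma cancellation produces $\Upsilon_{1,l}\bigl(\mathring R_{l}^{v}+\mathring G^{\Xi}\bigr)$ rather than $\mathring R_{l}^{v}+\mathring G^{\Xi}$, leaving an error $(\Upsilon_{1,l}-1)\mathring R_{l}^{v}$ of size $\sim e^{L^{1/4}}\delta_{q+1}M_{0}(t)$, which does not beat $\delta_{q+2}M_{0}(t)$ and so \eqref{est 87d} fails at level $q+1$.

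The fix, which is exactly what the paper does (following \cite[Proposition 5.7]{Y23c}), is to build the principal and corrector parts from the rescaled amplitudes $\bar a_{\xi}\triangleq\Upsilon_{1,l}^{-1/2}a_{\xi}$ for all $\xi\in\Lambda$, so that \eqref{est 40a}--\eqref{est 40b} are replaced by \eqref{est 92}, and to redefine $\mathring G^{\Xi}\triangleq\sum_{\xi\in\Lambda_{\Xi}}\bar a_{\xi}^{2}\bigl(\Upsilon_{1,l}\,\xi\otimes\xi-\Upsilon_{1,l}^{-1}\Upsilon_{2,l}^{2}\,\xi_{2}\otimes\xi_{2}\bigr)$ in place of \eqref{est 80}. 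With this rescaling the $\Upsilon$-factors cancel in the oscillation step and the amplitude estimate becomes \eqref{est 95}, $\lVert \bar a_{\xi}\rVert_{C_{t,q+1}C_{x}^{j}}\lesssim m_{L}^{1/5}l^{-5j-2}M_{0}(t)^{1/2}\delta_{q+1}^{1/2}$, the extra $m_{L}^{1/5}$ being harmless in all subsequent bounds. Note that the temporal correctors are still defined with the unrescaled $a_{\xi}^{2}$ as in \eqref{est 39}; compare \eqref{est 279}. Once this modification is in place, the rest of your outline---the mollified system with commutator stresses, the error decomposition, the use of \eqref{est 110} to absorb $\Upsilon$-bounds into $m_{L}$, and the determinism argument on $[t_{q+1},0]$---goes through as you describe.
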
  

\begin{proof}[Proof of Proposition \ref{Proposition 4.2}]
Again, this result can be proven similarly to \cite[Proposition 5.7]{Y23c} by taking into account of Remark \ref{Remark 3.1} (2). For subsequent proof again, we sketch some ideas, state definitions and key estimates from \cite{Y23c}. The choice of parameters $\eta, \sigma, r, \mu, b$, and $l$ are identical to \eqref{eta, sigma, r, mu}, and \eqref{est 35}. In addition to $v_{l}, \Xi_{l}, \mathring{R}_{l}^{v}$, and $\mathring{R}_{l}^{\Xi}$ in \eqref{est 34}, we mollify $\Upsilon_{k}$ for $k \in \{1,2\}$ and define $\Upsilon_{k,l} \triangleq \Upsilon_{k} \ast_{t} \vartheta_{l}$ for $k \in \{1,2\}$. It follows that the corresponding mollified system is 
\begin{subequations}
\begin{align}
&\partial_{t} v_{l} + \frac{1}{2} v_{l} + \divergence (\Upsilon_{1,l} (v_{l} \otimes v_{l}) - \Upsilon_{1,l}^{-1} \Upsilon_{2,l}^{2} (\Xi_{l} \otimes \Xi_{l}) ) + \nabla p_{l} = \divergence (\mathring{R}_{l}^{v}  + R_{\text{com1}}^{v}), \\
& \partial_{t} \Xi_{l} + \frac{1}{2} \Xi_{l} + \Upsilon_{1,l} \divergence (\Xi_{l} \otimes v_{l} - v_{l} \otimes \Xi_{l} ) = \divergence( \mathring{R}_{l}^{\Xi}  + R_{\text{com1}}^{\Xi}), 
\end{align}
\end{subequations}
where 
\begin{subequations}
\begin{align}
p_{l} \triangleq&  \left(\Upsilon_{1} \frac{ \lvert v_{q} \rvert^{2}}{3} \right)\ast_{x} \varrho_{l} \ast_{t} \vartheta_{l} - \left(\Upsilon_{1}^{-1} \Upsilon_{2}^{2} \frac{ \lvert \Xi_{q} \rvert^{2}}{3} \right)\ast_{x} \varrho_{l} \ast_{t} \vartheta_{l} \nonumber\\
& \hspace{20mm} + p_{q} \ast_{x} \varrho_{l} \ast_{t} \vartheta_{l} - \Upsilon_{1,l} \frac{ \lvert v_{l} \rvert^{2}}{3} + \Upsilon_{1,l}^{-1} \Upsilon_{2,l}^{2} \frac{ \lvert \Xi_{l} \rvert^{2}}{3}, \\
R_{\text{com1}}^{v}\triangleq& - (\Upsilon_{1} (v_{q} \mathring{\otimes} v_{q}) ) \ast_{x} \varrho_{l} \ast_{t} \vartheta_{l} + (\Upsilon_{1}^{-1} \Upsilon_{2}^{2} (\Xi_{q} \mathring{\otimes} \Xi_{q} ))\ast_{x} \varrho_{l} \ast_{t} \vartheta_{l} \nonumber\\
& \hspace{20mm} + \Upsilon_{1,l} (v_{l} \mathring{\otimes} v_{l}) - \Upsilon_{1,l}^{-1} \Upsilon_{2,l}^{2} (\Xi_{l} \mathring{\otimes} \Xi_{l} ), \label{estimate 434}\\
R_{\text{com1}}^{\Xi} \triangleq& -(\Upsilon_{1}(\Xi_{q} \otimes v_{q} )) \ast_{x} \varrho_{l} \ast_{t} \vartheta_{l} + (\Upsilon_{1} (v_{q} \otimes \Xi_{q}) )\ast_{x} \varrho_{l} \ast_{t} \vartheta_{l} \nonumber \\
& \hspace{20mm} + \Upsilon_{1,l} (\Xi_{l} \otimes v_{l} ) - \Upsilon_{1,l} (v_{l} \otimes \Xi_{l}) 
\end{align}
\end{subequations} 
(see \cite[Equations (239)-(240)]{Y23c}). With the same $\chi$ from \eqref{est 78}, $a_{\xi}$ for $\xi \in \Lambda_{\Xi}$ and $\rho_{\Xi}$ from \eqref{est 79}, we define 
\begin{equation*} 
\bar{a}_{\xi}(t,x) \triangleq \Upsilon_{1,l}^{-\frac{1}{2}}(t) a_{\xi} (t,x) \overset{\eqref{est 79}}{=} \Upsilon_{1,l}^{-\frac{1}{2}}(t) \rho_{\Xi}^{\frac{1}{2}}(t,x) \gamma_{\xi} \left(- \frac{\mathring{R}_{l}^{\Xi}(t,x) }{\rho_{\Xi}(t,x)} \right) \hspace{2mm} \forall \hspace{1mm} \xi \in \Lambda_{\Xi}
\end{equation*} 
(see \cite[Equation (220)]{Y23c}). Differently from \eqref{est 80}, we need to define 
\begin{equation*}
\mathring{G}^{\Xi} \triangleq \sum_{\xi \in \Lambda_{\Xi}} \bar{a}_{\xi}^{2} (\Upsilon_{1,l} \xi \otimes \xi - \Upsilon_{1,l}^{-1} \Upsilon_{2,l}^{2} \xi_{2} \otimes \xi_{2}) 
\end{equation*}  
(see \cite[Equation (216)]{Y23c}). We define $\rho_{v}$ and $a_{\xi}$ for $\xi \in \Lambda_{v}$ identically to \eqref{est 32} and then 
\begin{equation*}
\bar{a}_{\xi} (t,x) \triangleq \Upsilon_{1,l}^{-\frac{1}{2}}(t) a_{\xi}(t,x) \overset{\eqref{est 81} }{=} \Upsilon_{1,l}^{-\frac{1}{2}}(t)\rho_{v}^{\frac{1}{2}} (t,x) \gamma_{\xi} \Bigg( \Id - \frac{ \mathring{R}_{l}^{v}(t,x) + \mathring{G}^{\Xi} (t,x)}{\rho_{v} (t,x)} \Bigg)  \hspace{2mm} \forall \hspace{1mm} \xi \in \Lambda_{v} 
\end{equation*}
(see \cite[Equation (218)]{Y23c}). We define $w_{q+1}$ and $d_{q+1}$ identically to \eqref{est 81} where the temporal correctors are defined identically to \eqref{est 39}, and $w_{q+1}^{p}, w_{q+1}^{c}, d_{q+1}^{p}$, and $d_{q+1}^{c}$, satisfy this time  
\begin{subequations}\label{est 92} 
\begin{align}
&N_{\Lambda}^{-2} \lambda_{q+1}^{-2} \curl\curl \sum_{\xi \in \Lambda} \bar{a}_{\xi} \phi_{\xi} \Psi_{\xi} \xi = w_{q+1}^{p} + w_{q+1}^{c}, \\
&N_{\Lambda}^{-2} \lambda_{q+1}^{-2} \curl\curl\sum_{\xi\in\Lambda_{\Xi}}\bar{a}_{\xi}\phi_{\xi}\Psi_{\xi}\xi_{2} = d_{q+1}^{p} + d_{q+1}^{c}
\end{align}
\end{subequations} 
(see \cite[Equation (258)]{Y23c}). Under these settings we define $(v_{q+1}, \Xi_{q+1})$ identically to \eqref{est 42}. We will subsequently rely on the following estimate from \cite[Equation (248b)]{Y23c}: 
\begin{equation}\label{est 95} 
\lVert \bar{a}_{\xi} \rVert_{C_{t,q+1}C_{x}^{j}} \lesssim m_{L}^{\frac{1}{5}} l^{-5j - 2} M_{0}(t)^{\frac{1}{2}} \delta_{q+1}^{\frac{1}{2}} \hspace{5mm} \forall \hspace{1mm} j \geq 0, \hspace{1mm}  \xi \in \Lambda_{\Xi}.
\end{equation}

\end{proof}

We are ready to prove Theorem \ref{Theorem 2.2}.  Given any $T > 0$ and $\kappa \in (0,1)$, again, starting from the solution $(v_{0}, \Xi_{0}, \mathring{R}_{0}^{v}, \mathring{R}_{0}^{\Xi})$ at step $q = 0$ from Proposition \ref{Proposition 4.1}, by taking $L > 0$ sufficiently large enough to satisfy \eqref{est 82}, Proposition \ref{Proposition 4.1} further gives us $(v_{q}, \Xi_{q}, \mathring{R}_{q}^{v}, \mathring{R}_{q}^{\Xi})$ for all $q \in \mathbb{N}$ that satisfy Hypothesis \ref{Hypothesis 4.1},  \eqref{est 76}, and \eqref{est 83}. Both $\{v_{q}\}_{q\in\mathbb{N}_{0}}$ and $\{\Xi_{q}\}_{q\in\mathbb{N}_{0}}$ are Cauchy in $C([0, T_{L}]; \dot{H}^{\gamma} (\mathbb{T}^{3}))$ for all $\gamma \in (0, \frac{\beta}{4+ \beta})$ and we can define the limiting processes 
\begin{equation}\label{est 85}
\lim_{q\to\infty} v_{q} \triangleq v \text{ and } \lim_{q\to\infty} \Xi_{q} \triangleq \Xi \text{ both in } C([0,T_{L}]; \dot{H}^{\gamma} (\mathbb{T}^{3})). 
\end{equation} 
Improving \cite[Equations (234)]{Y23c} similarly to \eqref{est 68.5} we obtain 
\begin{equation}\label{est 84}
\max\{\lVert \Xi(t) - \Xi_{0} (t) \rVert_{L_{x}^{2}}, \lVert v(t) - v_{0} (t) \rVert_{L_{x}^{2}}  \}  \leq m_{L}M_{0}(t)^{\frac{1}{2}} \left( \frac{a^{-b\beta}}{1-a^{-b\beta}} \right).
\end{equation}  
All the claims in Theorem \ref{Theorem 2.2} except the second and third inequalities in \eqref{est 14} follow from \cite[Proof of Theorem 2.3]{Y23c}. Thus, we now focus on the proofs of the second and third inequalities in \eqref{est 14}. 

\subsection{Proof of the second inequality in \eqref{est 14}}
For a fixed $T > 0$, we take $L > 0$ larger if necessary so that 
\begin{equation}\label{est 108} 
e^{3 L^{\frac{1}{2}}} \left(1+ \frac{2}{(2\pi)^{\frac{3}{2}}} \right) \left( \frac{ 2(2\pi)^{\frac{3}{2}}}{(2\pi)^{\frac{3}{2}} - 2} \right) \leq e^{4L T} \text{ and } T \leq L^{\frac{1}{2}}.
\end{equation} 
First, because $\Xi$ deduced via \eqref{est 85} is divergence-free and mean-zero, we obtain $\digamma$ defined identically to \eqref{est 58}. Considering the definition of $\Xi_{0}$ in \eqref{est 86}, we can explicitly see its vector potential $\digamma_{0}$ defined by 
\begin{equation}\label{est 121}
\digamma_{0} (t,x) \triangleq \frac{ m_{L}M_{0}(t)^{\frac{1}{2}}}{(2\pi)^{3}} 
\begin{pmatrix}
\sin(x^{3}) \\
\cos(x^{3}) \\
0
\end{pmatrix}
\end{equation}  
which is, again, identical to $\Xi_{0}$. We define $\tilde{\mathcal{H}}_{b}$ and $\tilde{\mathcal{H}}_{0,b}$ corresponding to \eqref{est 76} identically to \eqref{est 21} and compute directly 
\begin{equation}\label{est 89} 
\lVert \digamma_{0} (t) \rVert_{L_{x}^{2}} = \frac{m_{L} M_{0}(t)^{\frac{1}{2}}}{(2\pi)^{\frac{3}{2}}} \text{ and } \tilde{\mathcal{H}}_{0,b} (t) = \frac{m_{L}^{2}M_{0}(t)}{(2\pi)^{3}}.
\end{equation} 
Due to \eqref{est 87b} we know 
\begin{equation}\label{est 90}
\lVert \Xi(t) \rVert_{L_{x}^{2}} \leq 2 m_{L} M_{0}(t)^{\frac{1}{2}}. 
\end{equation} 
Considering this, along with H$\ddot{\mathrm{o}}$lder's inequality, leads us from \eqref{est 21} to 
\begin{align}
& \lvert \tilde{\mathcal{H}}_{b} (t) - \tilde{\mathcal{H}}_{0,b}(t) \rvert \leq \lVert \digamma (t) - \digamma_{0}(t) \rVert_{L_{x}^{2}} \lVert \Xi(t) \rVert_{L_{x}^{2}} + \lVert \digamma_{0} (t) \rVert_{L_{x}^{2}} \lVert \Xi (t) - \Xi_{0} (t) \rVert_{L_{x}^{2}} \nonumber \\
\overset{\eqref{est 90} \eqref{est 89} \eqref{est 84}}{\leq}&  \lVert \digamma (t) - \digamma_{0} (t) \rVert_{L_{x}^{2}} 2 m_{L} M_{0}(t)^{\frac{1}{2}} + \frac{m_{L}^{2} M_{0}(t)}{(2\pi)^{\frac{3}{2}}} \left( \frac{a^{-b\beta}}{1-a^{-b\beta}} \right). \label{est 100a} 
\end{align}
Identically to \eqref{est 33}, \eqref{est 55}, we have 
\begin{equation}\label{est 100b} 
\lVert (\digamma - \digamma_{0})(t) \rVert_{L_{x}^{2}} \overset{\eqref{est 33}}{\leq} \sum_{q\geq 0} \lVert \nabla \times (-\Delta)^{-1} (\Xi_{q+1} - \Xi_{q})(t) \rVert_{L_{x}^{2}} \overset{\eqref{est 55}}{\leq} \RomanI_{1}(t) + \RomanI_{2}(t) 
\end{equation} 
for $\RomanI_{1}, \RomanI_{2}$ defined in \eqref{est 37}. Similarly to \eqref{est 36} we can estimate 
\begin{equation}\label{est 91}
\lVert \nabla \times (-\Delta)^{-1} (\Xi_{l} - \Xi_{q}) (t) \rVert_{L_{x}^{2}} \lesssim l \lVert \Xi_{q} \rVert_{C_{t,x}^{1}} \overset{\eqref{est 87c}}{\lesssim} l m_{L} M_{0}(t)^{\frac{1}{2}} \lambda_{q}^{4} \overset{\eqref{est 35}}{\approx}  m_{L} M_{0}(t)^{\frac{1}{2}} \lambda_{q+1}^{-\frac{3\alpha}{2} + \frac{2}{b}}.
\end{equation} 
Similar computations to \eqref{est 54} give us 
\begin{equation}\label{est 99} 
\RomanI_{1}(t) \overset{\eqref{est 37} \eqref{est 91}}{\lesssim} \sum_{q\geq 0}  m_{L} M_{0}(t)^{\frac{1}{2}} \lambda_{q+1}^{-\frac{3\alpha}{2} + \frac{2}{b}} \ll M_{0}(t)^{\frac{1}{2}} \frac{a^{-1}}{1-a^{-1}}.
\end{equation} 
Similarly to \eqref{est 41} we can write using \eqref{est 38}, \eqref{est 39}, and \eqref{est 92}, 
\begin{align}
d_{q+1} =  N_{\Lambda}^{-2} \lambda_{q+1}^{-2} \curl\curl\sum_{\xi\in\Lambda_{\Xi}}\bar{a}_{\xi}\phi_{\xi}\Psi_{\xi}\xi_{2}   - \mu^{-1} \sum_{\xi\in\Lambda_{\Xi}}\mathbb{P} \mathbb{P}_{\neq 0} (a_{\xi}^{2}\mathbb{P}_{\neq 0} (\phi_{\xi}^{2}\varphi_{\xi}^{2})) \xi_{2}.   \label{est 279}
\end{align}
Consequently, we get a bound of $\RomanI_{2}$ that is analogous to \eqref{est 51} as follows:
\begin{equation}\label{est 93} 
\RomanI_{2}(t) \lesssim \RomanI_{21}(t) + \RomanI_{22}(t)
\end{equation} 
where 
\begin{subequations}\label{est 94}
\begin{align}
\RomanI_{21}(t) \triangleq&  \sum_{q\geq 0} \lambda_{q+1}^{-2} \sum_{\xi \in \Lambda_{\Xi}} \lVert \nabla \times ( \bar{a}_{\xi} \phi_{\xi} \Psi_{\xi} \xi_{2}) \rVert_{C_{t}L_{x}^{2}},  \label{est 94a}\\
\RomanI_{22}(t) \triangleq& \mu^{-1} \sum_{q\geq 0} \sum_{\xi \in \Lambda_{\Xi}} \lVert \nabla \times (-\Delta)^{-1} \mathbb{P}_{\neq 0} ( a_{\xi}^{2} \mathbb{P}_{\neq 0} ( \phi_{\xi}^{2} \varphi_{\xi}^{2} )) \xi_{2} \rVert_{C_{t}L_{x}^{2}}. \label{est 94b} 
\end{align}
\end{subequations} 
We can now estimate $\RomanI_{21}(t)$ identically to \eqref{est 96}-\eqref{est 50}, with the only exception that we rely on \eqref{est 95} instead of \eqref{est 42} and thereby obtain an extra factor of $m_{L}^{\frac{1}{5}}$: starting from \eqref{est 94a} 
\begin{align}
\RomanI_{21}(t) \lesssim& \sum_{q \geq 0}  \lambda_{q+1}^{-2} \sum_{\xi \in \Lambda_{\Xi}} \lVert \bar{a}_{\xi} \rVert_{C_{t} C_{x}^{1}} \lVert \nabla ( \phi_{\xi} \Psi_{\xi} ) \rVert_{C_{t}L_{x}^{2}} \overset{\eqref{est 95} \eqref{est 43c}}{\lesssim} \sum_{q \geq 0}  \lambda_{q+1}^{-2} ( \delta_{q+1}^{\frac{1}{2}} l^{-7} m_{L}^{\frac{1}{5}} M_{0}(t)^{\frac{1}{2}}) \lambda_{q+1}   \nonumber \\
&\overset{\eqref{define lambdaq deltaq} \eqref{est 35}}{\lesssim} m_{L}^{\frac{1}{5}}  M_{0}(t)^{\frac{1}{2}} \sum_{q \geq 0} \lambda_{q+1}^{-\frac{1}{2}}   \ll M_{0}(t)^{\frac{1}{2}} \frac{a^{-1}}{1-a^{-1}}.\label{est 97}   
\end{align}
As we described in the proof of Proposition \ref{Proposition 4.2}, the definition of the temporal corrector never changed from the proof of Theorem \ref{Theorem 2.1} and therefore our $\RomanI_{22}$ in \eqref{est 94b} is same as \eqref{est 45b}. Hence, the estimate \eqref{est 52} directly applies in our current case, yielding 
\begin{equation}\label{est 98} 
\RomanI_{2}(t)  \overset{\eqref{est 93}}{\lesssim} \RomanI_{21}(t) + \RomanI_{22}(t) \overset{\eqref{est 97} \eqref{est 52}}{\ll} M_{0}(t)^{\frac{1}{2}} \frac{a^{-1}}{1-a^{-1}}. 
\end{equation} 
Applying \eqref{est 99} and \eqref{est 98} to \eqref{est 100b} gives us
\begin{equation}\label{est 101} 
\lVert (\digamma - \digamma_{0})(t) \rVert_{L_{x}^{2}} \overset{\eqref{est 100b}}{\leq}\RomanI_{1}(t) + \RomanI_{2}(t) \overset{\eqref{est 99} \eqref{est 98}}{\ll} M_{0}(t)^{\frac{1}{2}} \frac{a^{-1}}{1-a^{-1}}.
\end{equation} 
Now we apply \eqref{est 101} and use $(2\pi)^{3} + 1 < a^{b\beta}$ from \eqref{est 77} to deduce from \eqref{est 100a}
\begin{align}
\lvert \tilde{\mathcal{H}}_{b}(t) - \tilde{\mathcal{H}}_{0,b}(t) \rvert \leq M_{0}(t) \frac{a^{-1}}{1-a^{-1}}  + \frac{m_{L}^{2}M_{0}(t)}{(2\pi)^{\frac{3}{2}}} \frac{a^{-b\beta}}{1-a^{-b\beta}}\overset{\eqref{est 89} \eqref{est 77}}{\leq}  \frac{2 \tilde{\mathcal{H}}_{0,b} (t)}{(2\pi)^{\frac{3}{2}}} \label{est 105}
\end{align} 
for $a \in 2 \mathbb{N}$ sufficiently large. Consequently, 
\begin{equation}\label{est 106} 
0 < \tilde{\mathcal{H}}_{0,b}(t) \left(1 - \frac{2}{(2\pi)^{\frac{3}{2}}} \right) \leq \tilde{\mathcal{H}}_{b}(t).
\end{equation}  
The following computations will diverge from \eqref{est 64}, \eqref{est 66}, and \eqref{est 103}. First, we deduce 
\begin{align}
\lvert \tilde{\mathcal{H}}_{b} (0) \rvert& \leq \lvert \tilde{\mathcal{H}}_{b}(0) - \tilde{\mathcal{H}}_{0,b}(0) \rvert + \lvert \tilde{\mathcal{H}}_{0,b} (0) \rvert  \overset{\eqref{est 105} \eqref{est 89} \eqref{est 107}}{=} \frac{ m_{L}^{2} e^{2L}}{(2\pi)^{3}}  \left(1+ \frac{2}{(2\pi)^{\frac{3}{2}}} \right)  \nonumber \\
&  \overset{\eqref{est 108}}{\leq} \left( \frac{ (2\pi)^{\frac{3}{2}} -2}{2(2\pi)^{\frac{3}{2}}} \right) e^{-3L^{\frac{1}{2}}} \frac{ m_{L}^{2} e^{4L T + 2L}}{(2\pi)^{3}} \overset{\eqref{est 107}\eqref{est 89}}{=} \left( \frac{ (2\pi)^{\frac{3}{2}} -2}{2(2\pi)^{\frac{3}{2}}} \right) e^{-3L^{\frac{1}{2}}} \tilde{\mathcal{H}}_{0,b}(T).   \label{est 109}
\end{align}  
We make a key observation from \eqref{est 106} that $\mathcal{H}_{b}(t) \geq 0$ $\mathbf{P}$-a.s.: 
\begin{equation}\label{est 104} 
\mathcal{H}_{b}(t) \overset{\eqref{est 0b} \eqref{est 102} \eqref{est 58} \eqref{est 21}}{=} e^{-2B_{2}(t)} \tilde{\mathcal{H}}_{b}(t)  \overset{\eqref{est 106}}{\geq} 0.
\end{equation} 
Therefore, using the fact that $B_{2}(0) = 0$, on $\{ \mathfrak{t} \geq T \}$, we can deduce 
\begin{align}
\Bigg\lvert e^{T} \int_{\mathbb{T}^{3}} (A\cdot b)(0) dx \Bigg\rvert  \overset{\eqref{est 108}}{\leq}& e^{L^{\frac{1}{2}}} \mathcal{H}_{b}(0) \overset{\eqref{est 109}}{\leq} \left( \frac{ (2\pi)^{\frac{3}{2}} -2}{2(2\pi)^{\frac{3}{2}}} \right)  e^{-2L^{\frac{1}{2}}} \tilde{\mathcal{H}}_{0,b}(T)  \nonumber\\
\overset{\eqref{est 110}}{\leq}& e^{-2L^{\frac{1}{2}}} \left( \frac{1}{2} \right) e^{2L^{\frac{1}{4}}} \mathcal{H}_{b}(T)   \overset{\eqref{est 104}}{<}\left(\frac{1}{2} \right) \int_{\mathbb{T}^{3}} (A\cdot b)(T) dx.
\end{align}
This implies according to definition from \eqref{est 0b}, 
\begin{equation*}
2e^{T} \mathcal{H}_{b}(0) < \mathcal{H}_{b}(T)
\end{equation*} 
and thus the second inequality in \eqref{est 14}.
 
\subsection{Proof of the third inequality in \eqref{est 14}}
For the fixed $T > 0$ we take $L > 1$ larger if necessary to satisfy 
\begin{equation}\label{est 116} 
1+ \frac{8}{(2\pi)^{\frac{3}{2}}}  \leq  e^{-2L^{\frac{1}{2}}} e^{4LT} \left( \frac{ (2\pi)^{\frac{3}{2}} - 8}{2 (2\pi)^{\frac{3}{2}}} \right). 
\end{equation} 
We continue to use the same notations of $\tilde{\mathcal{H}}_{b}, \tilde{\mathcal{H}}_{0,b}$ in \eqref{est 21} and $\tilde{\mathcal{H}}_{u}, \tilde{\mathcal{H}}_{0,u}$ in \eqref{est 67}. Directly from \eqref{est 86}, we can compute 
\begin{equation}\label{est 112}
\tilde{\mathcal{H}}_{0,u} (t) = \frac{m_{L}^{2}M_{0}(t)}{ 2^{\frac{5}{2}} \pi^{\frac{3}{2}} }.
\end{equation} 
Similarly to \eqref{est 69} we can compute using \eqref{est 87a} and \eqref{est 87b}, starting from \eqref{est 67},  
\begin{align}
 \lvert \tilde{\mathcal{H}}_{u} (t) - \tilde{\mathcal{H}}_{0,u} (t) \rvert \leq ( \lVert v(t) - v_{0} (t) \rVert_{L_{x}^{2}} + \lVert \Xi (t) - \Xi_{0} (t) \rVert_{L_{x}^{2}}) 2 m_{L}M_{0}(t)^{\frac{1}{2}}. \label{est 111}
\end{align} 
Similarly to \eqref{est 70} we can compute considering that $a^{b \beta} > (2\pi)^{3} + 1$ from \eqref{est 77},  
\begin{equation}\label{est 113} 
\lvert \tilde{\mathcal{H}}_{u}(t) - \tilde{\mathcal{H}}_{0,u} (t) \rvert \overset{\eqref{est 111}  \eqref{est 84}}{\leq} 4 m_{L}^{2}M_{0}(t)^{\frac{1}{2}} M_{0}(t)^{\frac{1}{2}} \left( \frac{a^{-b\beta}}{1-a^{-b\beta}} \right) \overset{\eqref{est 73}}{\leq}  \tilde{\mathcal{H}}_{0,u}(t) \frac{8}{(2\pi)^{\frac{3}{2}}}.
\end{equation} 
It follows that 
\begin{equation}\label{est 114}
0 < \tilde{\mathcal{H}}_{0,u} (t) \Bigg( 1 - \frac{8}{(2\pi)^{\frac{3}{2}}} \Bigg) \leq \tilde{\mathcal{H}}_{u}(t).
\end{equation} 
Similarly to \eqref{est 104}, we make a key observation that $\mathbf{P}$-a.s., 
\begin{equation}\label{est 115} 
\mathcal{H}_{u}(t) \overset{\eqref{est 0c} \eqref{est 102} \eqref{est 67}}{=} e^{B_{1}(t) + B_{2}(t)} \tilde{\mathcal{H}}_{u}(t)  \overset{\eqref{est 114}}{\geq} 0.   
\end{equation} 
We are ready to compute  
\begin{align}
\lvert \tilde{\mathcal{H}}_{u} (0) \rvert \leq& \lvert \tilde{\mathcal{H}}_{u} (0) - \tilde{\mathcal{H}}_{0,u} (0) \rvert + \tilde{\mathcal{H}}_{0,u} (0)  \overset{\eqref{est 113}}{\leq}  \tilde{\mathcal{H}}_{0,u} (0) \Bigg(1+ \frac{8}{(2\pi)^{\frac{3}{2}}} \Bigg) \label{est 117} \\
&\overset{\eqref{est 112}\eqref{est 107}\eqref{est 116}}{\leq} m_{L}^{2} e^{-2L^{\frac{1}{2}}} \frac{ e^{4LT + 2L}}{ 2^{\frac{5}{2}} \pi^{\frac{3}{2}}} \Bigg( \frac{ (2\pi)^{\frac{3}{2}} -8}{ 2(2\pi)^{\frac{3}{2}}} \Bigg) \overset{\eqref{est 107}\eqref{est 112}\eqref{est 114}}{\leq}  \frac{1}{2} e^{-2L^{\frac{1}{2}}} \tilde{\mathcal{H}}_{u}(T). \nonumber 
\end{align}
Then we may continue to estimate using \eqref{est 118} on $\{ \mathfrak{t} \geq T \}$, 
\begin{align}
\Bigg\lvert  \int_{\mathbb{T}^{3}}(u \cdot b)(0) dx \Bigg\rvert &\overset{ \eqref{est 118} \eqref{est 117} \eqref{est 67}}{\leq} \frac{1}{2} e^{-2L^{\frac{1}{2}}} \int_{\mathbb{T}^{3}} v(T) \cdot \Xi(T) dx \nonumber\\
\overset{\eqref{est 102} \eqref{est 110} \eqref{est 115}}{\leq}&  \frac{1}{2} e^{-2L^{\frac{1}{2}}} e^{2L^{\frac{1}{4}}} \int_{\mathbb{T}^{3}} u(T) \cdot b(T) dx \overset{\eqref{est 115}}{<} \frac{1}{2} \int_{\mathbb{T}^{3}} (u\cdot b)(T) dx. 
\end{align}
This implies by definition from \eqref{est 0c} that 
\begin{equation*}
2 \mathcal{H}_{u}(0)  < \mathcal{H}_{u}(T),
\end{equation*} 
and thus the third inequality in \eqref{est 14}. This concludes the proof of Theorem \ref{Theorem 2.2}. 

\section{Proof of Theorem \ref{Theorem 2.3} and Corollary \ref{Corollary 2.4}}\label{Section 5}
As we mentioned, we now assume $\mathbb{T} = [0,1]$ for convenience. For brevity we assume $\nu_{1} = \nu_{2} = 1$ in \eqref{gen stoch MHD}.  For the prescribed $(u^{\text{in}}, b^{\text{in}})$, let $\{\mathcal{F}_{t}\}_{t\geq 0}$ be the augmented joint canonical filtration on $(\Omega, \mathcal{F}, \mathbf{P})$ generated by $(B_{1}, B_{2})$ and $(u^{\text{in}}, b^{\text{in}})$. Then $B_{1}$ and $B_{2}$ are both $\{\mathcal{F}_{t}\}_{t\geq 0}$-Wiener processes and $u^{\text{in}}, b^{\text{in}}$ are both $\mathcal{F}_{0}$-measurable. We define 
\begin{equation}\label{define z1in and z2in}
z_{1}^{\text{in}} (t,x) \triangleq e^{-t (-\Delta)^{m_{1}}} u^{\text{in}} (x) \hspace{1mm} \text{ and } \hspace{1mm} z_{2}^{\text{in}}(t,x) \triangleq e^{-t(-\Delta)^{m_{2}}} b^{\text{in}}(x) \hspace{1mm} \text{ for } t \in [0, T_{L} ], 
\end{equation} 
and then split \eqref{gen stoch MHD} to the following two systems:
\begin{subequations}\label{z additive} 
\begin{align}
& dz_{1} + [\nabla \pi_{1} + (-\Delta)^{m_{1}} z_{1} ] dt = dB_{1} \text{ and } \nabla\cdot z_{1} = 0 \text{ for } t > 0, \hspace{3mm} z_{1}(0,x) = 0, \\
& dz_{2} + (-\Delta)^{m_{2}} z_{2} dt = dB_{2} \hspace{12mm} \text{ and } \nabla\cdot z_{2} = 0 \text{ for } t > 0,  \hspace{3mm} z_{2} (0,x) = 0. 
\end{align}
\end{subequations} 
and 
\begin{subequations}\label{equation v and Theta}
\begin{align}
&\partial_{t} v + \nabla \pi_{2} + (-\Delta)^{m_{1}} v + \divergence \Bigg( ( v + z_{1}^{\text{in}} + z_{1}) \otimes (v+ z_{1}^{\text{in}}  + z_{1})  \\
& \hspace{8mm} - ( \Theta + z_{2}^{\text{in}}  + z_{2}) \otimes ( \Theta + z_{2}^{\text{in}} + z_{2}) \Bigg) = 0,  \hspace{2mm} \nabla \cdot v = 0 \text{ for } t > 0,  \hspace{1mm} \text{ and }   \hspace{1mm} v(0) = 0, \nonumber \\
&\partial_{t} \Theta + (-\Delta)^{m_{2}} \Theta + \divergence \Bigg( \Theta + z_{2}^{\text{in}}  + z_{2}) \otimes (v + z_{1}^{\text{in}}  + z_{1})  \\
&  \hspace{8mm} - ( v + z_{1}^{\text{in}}  + z_{1}) \otimes ( \Theta + z_{2}^{\text{in}}  + z_{2}) \Bigg) = 0,  \hspace{2mm} \nabla\cdot \Theta = 0 \text{ for } t > 0,  \hspace{1mm} \text{ and }  \hspace{1mm} \Theta(0) = 0. \nonumber 
\end{align}
\end{subequations}
We can solve \eqref{z additive} to see that 
\begin{equation}\label{sol z1 and z2}
z_{1}(t) = \int_{0}^{t} e^{-(t-s) (-\Delta)^{m_{1}}} \mathbb{P} dB_{1}(s) \hspace{2mm} \text{ and } \hspace{2mm} z_{2}(t) = \int_{0}^{t} e^{-(t-s) (-\Delta)^{m_{2}}} dB_{2}(s). 
\end{equation} 
so that 
\begin{equation}\label{est 131} 
(u,b) = (v + z_{1}^{\text{in}} + z_{1}, \Theta + z_{2}^{\text{in}} + z_{2}), \hspace{1mm} \text{ along with } \pi = \pi_{1} + \pi_{2}, 
\end{equation} 
satisfies \eqref{gen stoch MHD} starting from the prescribed initial data of $(u^{\text{in}}, b^{\text{in}})$. 

\begin{proposition}\rm{(\hspace{1sp}\cite[Proposition 3.1]{HZZ21a} and \cite[Proposition 4.4]{Y22c})}\label{Proposition 5.1}
Under the hypothesis \eqref{hypo noise}, the solutions $z_{k}$ for both $k \in \{1,2\}$ to \eqref{z additive} satisfy for any $\delta \in (0, \frac{1}{2}), T > 0,$ and $l \in \mathbb{N}$, 
\begin{equation*}
\mathbb{E}^{\mathbf{P}} \Bigg[ \lVert z_{k} \rVert_{C_{T}\dot{H}^{1-\delta}}^{l} + \lVert z_{k} \rVert_{C_{T}^{\frac{1}{2} - \delta}L^{2}}^{l} \Bigg] < \infty. 
\end{equation*}  
\end{proposition}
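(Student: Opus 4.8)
The plan is to regard $z_k$, via \eqref{sol z1 and z2}, as a stochastic convolution $z_k(t)=\int_0^t e^{-(t-s)A_k}\,dB_k(s)$ with $A_k\triangleq(-\Delta)^{m_k}$, where $B_k$ is a $G_kG_k^\ast$-Wiener process with $\Tr(G_kG_k^\ast)<\infty$ by \eqref{hypo noise} (the Leray projection appearing in the $k=1$ case of \eqref{z additive} commutes with $A_k$ and is bounded on every $L^2$-based Sobolev space, hence is immaterial). Two facts will be used throughout, following \cite{HZZ21a, Y22c}. First, since $A_k$ generates a bounded analytic semigroup on $L^2$, one has $\|A_k^{\theta}e^{-\tau A_k}\|_{L^2\to L^2}\lesssim_\theta \tau^{-\theta}$ for $\tau>0,\ \theta\geq 0$, and $\|(e^{-\tau A_k}-\Id)f\|_{L^2}\lesssim_\theta \tau^{\theta}\|A_k^{\theta}f\|_{L^2}$ for $\theta\in[0,1]$; recalling $(-\Delta)^{(1-\delta)/2}=A_k^{(1-\delta)/(2m_k)}$, the $\dot H^{1-\delta}$-regularity demand will become a time-singularity of order $\tau^{-(1-\delta)/m_k}$, integrable precisely because $m_k\geq 1$. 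Second, a Wiener integral valued in a Hilbert space $H$ is Gaussian in $H$, so by equivalence of Gaussian moments $\mathbb{E}^{\mathbf{P}}[\|X\|_H^{l}]\lesssim_l (\mathbb{E}^{\mathbf{P}}[\|X\|_H^{2}])^{l/2}$; hence it suffices to prove second-moment versions and then invoke Gaussianity.

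For the spatial bound I would apply the factorization method. Fix $\delta\in(0,\tfrac12)$; since $m_k\geq 1$ we have $\tfrac{1-\delta}{m_k}<1$, so we may choose $\alpha\in(0,\tfrac12)$ with $2\alpha+\tfrac{1-\delta}{m_k}<1$ and write $z_k(t)=\tfrac{\sin\pi\alpha}{\pi}\int_0^t(t-s)^{\alpha-1}e^{-(t-s)A_k}Y_k(s)\,ds$, where $Y_k(s)\triangleq\int_0^s(s-\sigma)^{-\alpha}e^{-(s-\sigma)A_k}\,dB_k(\sigma)$ is well defined in $\dot H^{1-\delta}$ for $\alpha<\tfrac12$. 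The Itô isometry together with the semigroup smoothing gives $\mathbb{E}^{\mathbf{P}}[\|Y_k(s)\|_{\dot H^{1-\delta}}^2]\lesssim \Tr(G_kG_k^\ast)\int_0^s(s-\sigma)^{-2\alpha-(1-\delta)/m_k}\,d\sigma$, which is finite and bounded uniformly for $s\in[0,T]$; Gaussianity upgrades this to $\sup_{s\in[0,T]}\mathbb{E}^{\mathbf{P}}[\|Y_k(s)\|_{\dot H^{1-\delta}}^l]<\infty$ for every $l$. Then, for $l$ large enough that $l\alpha>1$, Hölder's inequality in $s$ applied to the factorization formula, together with boundedness of $e^{-(t-s)A_k}$ on $\dot H^{1-\delta}$, yields $\|z_k\|_{C_T\dot H^{1-\delta}}^l\lesssim_{T,l,\alpha}\int_0^T\|Y_k(s)\|_{\dot H^{1-\delta}}^l\,ds$; taking $\mathbb{E}^{\mathbf{P}}$ closes the estimate, and the remaining small values of $l$ follow by Jensen's inequality.

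For the temporal bound I would use Kolmogorov's continuity criterion for the $L^2$-valued process $z_k$, which satisfies $z_k(0)=0$. For $0\leq r<t\leq T$ one has, by the Itô isometry, $\mathbb{E}^{\mathbf{P}}[\|z_k(t)-z_k(r)\|_{L^2}^2]\leq \int_r^t\|e^{-(t-s)A_k}G_k\|_{L_2(U_k,L^2)}^2\,ds+\int_0^r\|(e^{-(t-r)A_k}-\Id)e^{-(r-s)A_k}G_k\|_{L_2(U_k,L^2)}^2\,ds$. Bounding the first integrand by $\Tr(G_kG_k^\ast)$ and, for any $\theta\in(0,\tfrac12)$, the second by $(t-r)^{2\theta}(r-s)^{-2\theta}\Tr(G_kG_k^\ast)$, one obtains $\mathbb{E}^{\mathbf{P}}[\|z_k(t)-z_k(r)\|_{L^2}^2]\lesssim_{T,\theta}|t-r|^{2\theta}$. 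Gaussianity then gives $\mathbb{E}^{\mathbf{P}}[\|z_k(t)-z_k(r)\|_{L^2}^l]\lesssim_{T,\theta,l}|t-r|^{\theta l}$ for every $l$; choosing $l$ large and $\theta$ close to $\tfrac12$, Kolmogorov's criterion provides $z_k\in C_T^{\frac{1}{2}-\delta}L^2$ with all moments finite, for any $\delta\in(0,\tfrac12)$, completing the proof.

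As for the difficulty: this is a standard regularity statement for stochastic convolutions and presents no essential obstacle. The only point that genuinely requires care is the choice of the factorization exponent $\alpha$ making the kernel $(t-s)^{\alpha-1}$ integrable while keeping $Y_k(s)$ finite in $\dot H^{1-\delta}$, which forces $2\alpha+\tfrac{1-\delta}{m_k}<1$; this is exactly where $m_k\geq 1$ and $\delta<\tfrac12$ enter, and is the step I would write out most carefully. Passing from second to $l$-th moments is immediate by Gaussian hypercontractivity, and the upgrade to the supremum in time in the first norm and to Hölder continuity in the second are handled by Hölder's inequality and Kolmogorov's criterion, respectively.
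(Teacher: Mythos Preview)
Your proposal is correct and follows the standard approach: factorization for the spatial regularity in $C_T\dot H^{1-\delta}$, together with It\^o isometry, Gaussian hypercontractivity, and Kolmogorov's continuity criterion for the temporal H\"older regularity in $C_T^{1/2-\delta}L^2$. The paper itself gives no proof of this proposition; it is stated with a direct citation to \cite[Proposition 3.1]{HZZ21a} and \cite[Proposition 4.4]{Y22c}, and your sketch is precisely the argument carried out in those references.
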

With the same Sobolev constant $C_{S}$ from \eqref{define CS}, we define differently from \eqref{est 49}, 
\begin{align}
T_{L} \triangleq& \inf \Bigg\{ t \geq 0: C_{S} \max_{k = 1, 2} \lVert z_{k} (t) \rVert_{\dot{H}^{1 - \delta}} \geq L \Bigg\} \nonumber \\
&  \wedge \inf\Bigg\{ t \geq 0: C_{S} \max_{k = 1,2} \lVert z_{k} \rVert_{C_{t}^{\frac{1}{2} - 2 \delta} L^{2}} \geq L \Bigg\} \wedge L, \hspace{3mm} 0 < \delta <  \frac{5p-6}{2p} < \frac{1}{8}   \label{define TL} 
\end{align} 
and observe that $T_{L} > 0$ and $\lim_{L\to\infty} T_{L} = + \infty$ $\mathbf{P}$-a.s. due to Proposition \ref{Proposition 5.1}. The fact that $\delta < \frac{5p-6}{2p}$ justifies the embedding of $H^{1-\delta} (\mathbb{T}^{3}) \hookrightarrow L^{\frac{p}{p-1}}(\mathbb{T}^{3})$ and also implies $\frac{6}{5-2\delta} < p$ which will be used subsequently (e.g. \eqref{est 243}). We fix
\begin{equation}\label{define epsilon}
\epsilon \in \mathbb{Q}_{+} \text{ such that } \epsilon \in \Bigg(0,  \min \Bigg\{ \frac{1}{20},  \frac{1}{8} \Bigg(\frac{5}{2} - 2 m_{k}\Bigg), 1 - \frac{6-3p}{2m_{k} p} \Bigg\} \Bigg). 
\end{equation} 
We impose that $a \in 5 \mathbb{N}$ such that $a^{\epsilon} \in 5 \mathbb{N}$ and that $b \in 2 \mathbb{N}$ to satisfy 
\begin{equation}\label{define b and l}
b > \max\Bigg\{\frac{(28)(56)^{2}}{\epsilon}, \max_{k\in\{1,2\}} \Bigg(1- \frac{6-3p}{2m_{k} p} \Bigg)^{-1} \Bigg\} \hspace{1mm} \text{ and } \hspace{1mm} l^{-1} \triangleq \lambda_{q}^{14} \lambda_{q+1}^{\frac{\epsilon}{112}}.
\end{equation} 
We keep the same definition of $\lambda_{q}$ in \eqref{define lambdaq deltaq} but modify $\delta_{q}$ as follows: 
\begin{equation}\label{define lambdaq deltaq 2}
\lambda_{q} \triangleq a^{b^{q}} \text{ for } q \in \mathbb{N}_{0}, \hspace{3mm} \delta_{q} \triangleq 
\begin{cases}
\frac{1}{2} \lambda_{1}^{2\beta} \lambda_{q}^{-2\beta} & \text{ for all } q \in \mathbb{N}, \\
1 & \text{ if } q = -1, 0;
\end{cases}
\end{equation} 
we will use the fact that $\delta_{1} = 1$ subsequently (e.g. \eqref{est 260}). We define 
\begin{equation}\label{define parameters}
r_{\bot} \triangleq \lambda_{q+1}^{-1+ 2 \epsilon}, \hspace{3mm} r_{\lVert} \triangleq \lambda_{q+1}^{-1+ 6 \epsilon}, \hspace{3mm}  \mu \triangleq \lambda_{q+1}^{\frac{3}{2} - 6 \epsilon}, \hspace{3mm} \tau \triangleq \lambda_{q+1}^{1- 6 \epsilon}, \hspace{3mm} \sigma \triangleq \lambda_{q+1}^{2 \epsilon}. 
\end{equation}
We observe that due to $a^{\epsilon} \in 5 \mathbb{N}$ and $b \in 2 \mathbb{N}$, we have $\lambda_{q+1} r_{\bot} = \lambda_{q+1}^{2\epsilon} \in \mathbb{N}$. 
We define 
\begin{equation}\label{define zkq}
z_{k,q} \triangleq \mathbb{P}_{\leq f(q)}z_{k} \text{ where } f(q) \triangleq \lambda_{q+1}^{\frac{40\epsilon}{11}} \text{ for both } k \in \{1,2\}. 
\end{equation} 
With \eqref{equation v and Theta} in mind, for all $q \in \mathbb{N}_{0}$ we consider over $[t_{q}, T_{L}]$ 
\begin{subequations}\label{equation vq and Thetaq}
\begin{align} 
&\partial_{t} v_{q} + \nabla \pi_{q} + (-\Delta)^{m_{1}} v_{q} + \divergence \Bigg( ( v_{q} + z_{1}^{\text{in}} + z_{1,q}) \otimes (v_{q}+ z_{1}^{\text{in}}  + z_{1,q}) \\
& \hspace{8mm} - ( \Theta_{q} + z_{2}^{\text{in}}  + z_{2,q}) \otimes ( \Theta_{q} + z_{2}^{\text{in}} + z_{2,q}) \Bigg) =  \divergence \mathring{R}_{q}^{v},  \hspace{1mm} \nabla \cdot v_{q} = 0 \text{ for } t > 0,   \text{ and }  v_{q}(0) = 0,  \nonumber \\
&\partial_{t} \Theta_{q} + (-\Delta)^{m_{2}} \Theta_{q} + \divergence \Bigg( \Theta_{q} + z_{2}^{\text{in}}  + z_{2,q}) \otimes (v_{q} + z_{1}^{\text{in}}  + z_{1,q})   \\
&  \hspace{8mm} - ( v_{q} + z_{1}^{\text{in}}  + z_{1,q}) \otimes ( \Theta_{q} + z_{2}^{\text{in}}  + z_{2,q}) \Bigg) = \divergence \mathring{R}_{q}^{\Theta},  \hspace{1mm} \nabla\cdot \Theta_{q} = 0 \text{ for } t > 0,   \text{ and }   \Theta_{q}(0) = 0. \nonumber 
\end{align}
\end{subequations}
We assume hereafter that 
\begin{equation}\label{constraint}
\frac{4 \sqrt{2}}{4 \sqrt{2} -5} \leq a^{b\beta} \text{ so that } \sum_{q \geq 1} \delta_{q}^{\frac{1}{2}} \leq \frac{4}{5} \text{ and } 2 \delta_{q+1} \leq \delta_{q} \text{ for all } q \in \mathbb{N}_{0}. 
\end{equation} 

\begin{remark}\label{Remark 5.1}
As we mentioned in the beginning of this Section \ref{Section 5}, we redefined $\mathbb{T} = [0,1]$ for convenience, and the fact that $t_{q} \geq -1$ by \eqref{define tq} is related because we will need $\lvert t \rvert \in (0,1)$ in \eqref{est 251}. Now, for all $t \in [t_{q}, 0), q \in \mathbb{N}_{0}$, we assume 
\begin{subequations}\label{extend} 
\begin{align}
&z_{1}^{\text{in}} (t) = e^{-\lvert t \rvert (-\Delta)^{m_{1}}} u^{\text{in}}, \hspace{3mm} z_{2}^{\text{in}} (t) = e^{-\lvert t \rvert (-\Delta)^{m_{2}}} b^{\text{in}},  \label{extend a} \\
& z_{1} \equiv z_{2} \equiv v_{q} \equiv \Theta_{q} \equiv 0, \hspace{3mm}  \mathring{R}_{q}^{v}  = (z_{1}^{\text{in}} \mathring{\otimes} z_{1}^{\text{in}} - z_{2}^{\text{in}} \mathring{\otimes} z_{2}^{\text{in}}), \hspace{3mm} \mathring{R}_{q}^{\Theta} = (z_{2}^{\text{in}} \otimes z_{1}^{\text{in}} - z_{1}^{\text{in}} \otimes z_{2}^{\text{in}} ). \label{extend b}  
\end{align} 
\end{subequations}
As we will see in \eqref{hypothesis 2}, the inductive hypothesis guarantees that $v_{q}, \Theta_{q}$ both vanish near $t = 0$ so that $\partial_{t}v_{q}(0) = \partial_{t}\Theta_{q}(0) = 0$; because $z_{1}, z_{2}$ also vanish as $t \to 0^{+}$ continuously, our extensions allow the system \eqref{equation vq and Thetaq} to be solved on $[t_{q}, T_{L}]$. At the inductive step $q= 0$, we will need to verify $z_{k}^{\text{in}} \otimes z_{k}^{\text{in}} \in L_{t,x}^{1}$ to bound $L_{t,x}^{1}$-norms of $\mathring{R}_{0}^{v}$ and $\mathring{R}_{0}^{\Theta}$ although $z_{1}^{\text{in}}(0) \overset{\eqref{define z1in and z2in}}{=} u^{\text{in}} \in L^{p}(\mathbb{T}^{3})$ and $z_{2}^{\text{in}}(0) \overset{\eqref{define z1in and z2in}}{=} b^{\text{in}} \in L^{p}(\mathbb{T}^{3})$ for $p < 2$ and thus H$\ddot{o}$lder's inequality cannot deduce the desired regularity. This is why, in contrast to previous works, we cannot just extend $z_{k}^{\text{in}}$ by its value at $t = 0$, and hence our extension in \eqref{extend a} (see \cite[Remark 3.5]{LZ23}).
\end{remark} 

Next, with $u^{\text{in}}, b^{\text{in}} \in L^{p}(\mathbb{T}^{3})$ fixed, we fix $N \geq 1$ sufficiently large such that 
\begin{equation}\label{define N}
\lVert u^{\text{in}} \rVert_{L^{p}} + \lVert b^{\text{in}} \rVert_{L^{p}} \leq N \hspace{3mm} \mathbf{P}\text{-a.s.}
\end{equation} 
We can directly compute using the embedding of $W^{\frac{3}{p} - \frac{3}{2}, p} (\mathbb{T}^{3}) \hookrightarrow L^{2}(\mathbb{T}^{3})$, 
\begin{equation}\label{est 124}  
\sum_{k=1}^{2}  \lVert z_{k}^{\text{in}} \rVert_{L_{[0,T_{L}],x}^{2}}^{2} \lesssim \int_{0}^{T_{L}} t^{-\frac{6-3p}{2m_{1}p}  } \lVert u^{\text{in}} \rVert_{L^{p}}^{2} + t^{- \frac{6-3p}{2m_{2}p} } \lVert b^{\text{in}} \rVert_{L^{p}}^{2} dt \overset{\eqref{define N}\eqref{define TL}}{\lesssim} N^{2} \sum_{k=1}^{2} L^{1- \frac{6-3p}{2m_{k}p} }.
\end{equation} 
We also fix a sufficiently large deterministic constant 
\begin{equation}\label{define ML}
M_{L} \triangleq M_{L}(N)  \gg \Bigg\{L^{3}, N^{2} \sum_{k=1}^{2} L^{1- \frac{6-3p}{2m_{k}p}} \Bigg\}
\end{equation} 
so that \eqref{est 124} implies 
\begin{equation}\label{est 122}
\sum_{k=1}^{2}  \lVert z_{k}^{\text{in}} \rVert_{L_{[0,T_{L}]}^{2} L_{x}^{2}}^{2} \ll M_{L}.
\end{equation}  
The fact that $M_{L} \gg L^{3}$ is used e.g. in \eqref{est 260}. Next, we define, for $\delta > 0$ from \eqref{define TL}, 
\begin{subequations}\label{define A}
\begin{align}
&\sigma_{q} \triangleq \delta_{q} \hspace{1mm} \forall \hspace{1mm} q \in \mathbb{N}_{0} \cup \{-1\}, \hspace{10mm} \gamma_{q} \triangleq 
\begin{cases}
\delta_{q} & \forall \hspace{1mm} q \in \mathbb{N}_{0} \setminus \{3\}, \\
K \gg 1 & \text{ if } q = 3, 
\end{cases}\\ 
& A \gg M_{L} \max\Bigg\{\frac{1}{1- \sum_{k=1}^{2} \frac{6-3p}{4m_{k} p}},  \sum_{k=1}^{2} \frac{1}{1 - \frac{6-3p}{2m_{k} p}} \Bigg\},   \\
&1 < p^{\ast} < \min\Bigg\{\frac{2-8\epsilon}{2m_{k} - \frac{1}{2} - \epsilon (\frac{45}{56})}, \frac{2- 8 \epsilon}{2- \epsilon (\frac{249}{28})}, \frac{3-14\epsilon}{2-\epsilon ( \frac{24917}{(56)^{2}})}, \frac{6}{6-2\delta},  \frac{3p}{2(3-m_{k} p)}\Bigg\}, 
\end{align}
\end{subequations} 
which is well defined due to the choice of $\delta$ in \eqref{define TL} and $\epsilon$ in \eqref{define epsilon}. We mention that the lower bound on $A$ is used e.g. in \eqref{est 264}, \eqref{est 249}, and \eqref{est 253}. 

\begin{hypothesis}\label{Hypothesis 5.1}
For a universal constant $M_{0} > 0$ to be explained from the subsequent proof (see e.g. \eqref{est 187}, \eqref{est 197}, \eqref{est 194 star}, \eqref{est 197}), the solution $(v_{q}, \Theta_{q}, \mathring{R}_{q}^{v},\mathring{R}_{q}^{\Theta})$ that solves \eqref{equation vq and Thetaq} satisfies the following:
\begin{subequations}
\begin{align}
&\lVert v_{q} \rVert_{L_{[\sigma_{q} \wedge T_{L}, T_{L}],x}^{2}} \vee \lVert \Theta_{q} \rVert_{L_{[\sigma_{q} \wedge T_{L}, T_{L}],x}^{2}} \nonumber \\
\leq& M_{0}  \Bigg( M_{L}^{\frac{3}{4}} \sum_{r=1}^{q} \delta_{r}^{\frac{1}{2}} + \sqrt{2} M_{L}^{\frac{1}{4}} \sum_{r=1}^{q} \gamma_{r}^{\frac{1}{2}} \Bigg) + \sqrt{2}M_{0} (M_{L}+ A)^{\frac{1}{2}} \sum_{r=1}^{q-1} (r \sigma_{r-1})^{\frac{1}{2}} \label{hypothesis 1a}\\
\leq& M_{0} M_{L}^{\frac{3}{4}} + \sqrt{2} M_{0} M_{L}^{\frac{1}{4}} ( K^{\frac{1}{2}} + 1) + 17 M_{0} (M_{L} +A)^{\frac{1}{2}}, \label{hypothesis 1b} \\
&v_{q}(t) = \Theta_{q}(t) = 0 \hspace{2mm} \forall \hspace{1mm}  t \in [t_{q}, \sigma_{q} \wedge T_{L}], \label{hypothesis 2} \\
& \lVert v_{q} \rVert_{C_{[t_{q}, T_{L}] ,x}^{1}} \vee \lVert \Theta_{q} \rVert_{C_{[t_{q}, T_{L}] ,x}^{1}} \leq \lambda_{q}^{7} M_{L}^{\frac{1}{2}}, \label{hypothesis 3}\\
& \lVert v_{q} \rVert_{C_{[0,T_{L}]} L_{x}^{p}} \vee \lVert \Theta_{q} \rVert_{C_{[0, T_{L}]} L_{x}^{p}} \leq M_{L}^{\frac{1}{2}} \sum_{r=1}^{q} \delta_{r}^{\frac{1}{2}} \leq M_{L}^{\frac{1}{2}},\label{hypothesis 4}
\end{align}
\end{subequations} 
\begin{subequations}
\begin{align}
& \lVert \mathring{R}_{q}^{v} \rVert_{L_{[\sigma_{q-1} \wedge T_{L}, T_{L}] ,x }^{1}} \vee \lVert \mathring{R}_{q}^{\Theta} \rVert_{L_{[\sigma_{q-1} \wedge T_{L}, T_{L}] ,x }^{1}} \leq \delta_{q+1} M_{L}, \label{hypothesis 5} \\
& \lVert \mathring{R}_{q}^{v} \rVert_{L_{[0, T_{L}] ,x }^{1}}  \vee  \lVert \mathring{R}_{q}^{\Theta} \rVert_{L_{[0, T_{L}] ,x }^{1}}  \leq  \delta_{q+1}M_{L} + 2(q+1) A \Bigg(\sum_{k=1}^{2} \sigma_{q}^{1-  \frac{6-3p}{2m_{k}p}}  +  \sigma_{q}^{1-  \sum_{k=1}^{2}\frac{6-3p}{4m_{k}p}}  \Bigg), \label{hypothesis 6a}\\
& \sup_{a \in [t_{q}, (\sigma_{q} \wedge T_{L}) - h]} \lVert \mathring{R}_{q}^{v} \rVert_{L_{[a, a+h] ,x}^{1}} + \sup_{a \in [t_{q}, (\sigma_{q} \wedge T_{L}) - h]} \lVert \mathring{R}_{q}^{\Theta} \rVert_{L_{[a, a+h] ,x}^{1}}  \nonumber \\
\leq& 2(q+1) A \Bigg(  \sum_{k=1}^{2}\left(\frac{h}{2} \right)^{1- \frac{6-3p}{2m_{k}{p}}} +  \left(\frac{h}{2} \right)^{1- \sum_{k=1}^{2} \frac{6-3p}{4m_{k}{p}}} \Bigg) \hspace{3mm}  \forall \hspace{1mm} h \in (0, (\sigma_{q} \wedge T_{L} ) - t_{q}].\label{hypothesis 7a}
\end{align}
\end{subequations}
\end{hypothesis}

The following  is the key iterative result that is the crux of the proof of Theorem \ref{Theorem 2.3}.
\begin{proposition}\label{Proposition 5.2}
Let $L \geq 1$ and $N$ satisfy \eqref{define N}. Then there exists a choice of $a, b,$ and $\beta$ such that the following holds. Suppose that $(v_{q},\Theta_{q}, \mathring{R}_{q}^{v}, \mathring{R}_{q}^{\Theta})$ for some $q \in \mathbb{N}_{0}$ is a $\{\mathcal{F}_{t}\}_{t\geq 0}$-adapted solution to \eqref{equation vq and Thetaq} that satisfies the Hypothesis \ref{Hypothesis 5.1}. Then there exists $(v_{q+1}, \Theta_{q+1}, \mathring{R}_{q+1}^{v}, \mathring{R}_{q+1}^{\Theta})$ that is $\{\mathcal{F}_{t}\}_{t\geq 0}$-adapted, solves \eqref{equation vq and Thetaq} that satisfies for $p$ in \eqref{define m1, m2, and p}, 
\begin{subequations}\label{est 123}
\begin{align}
& \lVert v_{q+1} - v_{q} \rVert_{L_{[ (2 \sigma_{q-1} ) \wedge T_{L}] ,x}^{2}} \vee \lVert \Theta_{q+1} - \Theta_{q} \rVert_{L_{[ (2 \sigma_{q-1} ) \wedge T_{L}] ,x}^{2}} \nonumber \\
& \hspace{30mm}  \leq M_{0} (M_{L}^{\frac{1}{2}} \delta_{q+1}^{\frac{1}{2}} + \gamma_{q+1}^{\frac{1}{2}}) ( M_{L}^{\frac{1}{2}} - 2 \sigma_{q-1} )^{\frac{1}{2}}, \label{est 123a}\\
& \lVert v_{q+1} - v_{q} \rVert_{L_{[\sigma_{q+1} \wedge T_{L}, (2\sigma_{q-1}) \wedge T_{L}] ,x}^{2}} \vee \lVert \Theta_{q+1} - \Theta_{q} \rVert_{L_{[\sigma_{q+1} \wedge T_{L}, (2\sigma_{q-1}) \wedge T_{L}] ,x}^{2}} \nonumber \\
& \hspace{40mm}   \leq M_{0} \Bigg( (M_{L}+ qA)^{\frac{1}{2}} + \gamma_{q+1}^{\frac{1}{2}} \Bigg)(2 \sigma_{q-1})^{\frac{1}{2}},\label{est 123b} \\
& v_{q+1} (t) = \Theta_{q+1}(t) = 0 \hspace{2mm} \forall \hspace{1mm} t \in [t_{q+1}, \sigma_{q+1} \wedge T_{L}], \label{est 123c} \\
& \lVert v_{q+1} - v_{q} \rVert_{C_{[0, T_{L}]} L_{x}^{p}} \vee \lVert \Theta_{q+1} - \Theta_{q} \rVert_{C_{[0, T_{L}]} L_{x}^{p}}  \leq M_{L}^{\frac{1}{2}} \lambda_{q+1}^{-\frac{\epsilon}{112}} \leq M_{L}^{\frac{1}{2}} \delta_{q+1}^{\frac{1}{2}}, \label{est 123d}
\end{align}
\end{subequations}  
and consequently the Hypothesis \ref{Hypothesis 5.1} at level $q+1$. Finally, 
\begin{align}
& \Bigg\lvert \left( \lVert v_{q+1} \rVert_{L_{[2 \wedge T_{L}, T_{L} ] ,x}^{2}}^{2} - \lVert \Theta_{q+1} \rVert_{L_{[2 \wedge T_{L}, T_{L}] ,x}^{2}}^{2} \right) - \left( \lVert v_{q} \rVert_{L_{[2 \wedge T_{L}, T_{L} ] ,x}^{2}}^{2} - \lVert \Theta_{q} \rVert_{L_{[2 \wedge T_{L}, T_{L}] ,x}^{2}}^{2} \right) - 3 \gamma_{q+1} (T_{L} - 2 \wedge T_{L} ) \Bigg\rvert  \nonumber \\
& \hspace{50mm} \leq 11\epsilon_{v}^{-1} \max\Bigg\{1, \epsilon_{\Theta}^{-1} \sum_{k \in \Lambda_{\Theta}} \lVert \gamma_{k} \rVert_{C(B_{\epsilon_{\Theta}} (0))} \Bigg\} \delta_{q+1} M_{L}. \label{est 127}
\end{align}
\end{proposition}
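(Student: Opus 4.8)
The plan is to track, across one step of the iteration, the increment of the ``magnetic‑energy gap''
$$\mathcal{D}_{q}\triangleq \lVert v_{q}\rVert_{L_{[2\wedge T_{L},T_{L}],x}^{2}}^{2}-\lVert \Theta_{q}\rVert_{L_{[2\wedge T_{L},T_{L}],x}^{2}}^{2},$$
using the decomposition $v_{q+1}=v_{l}+w_{q+1}$, $\Theta_{q+1}=\Theta_{l}+d_{q+1}$, with $w_{q+1}=w_{q+1}^{p}+w_{q+1}^{c}+w_{q+1}^{t}$, $d_{q+1}=d_{q+1}^{p}+d_{q+1}^{c}+d_{q+1}^{t}$ built analogously to \eqref{est 38}--\eqref{est 40b} (cf.\ \cite{LZZ22,LZ23}). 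First I would write, abbreviating $\langle\cdot,\cdot\rangle$ and $\lVert\cdot\rVert$ for the pairing and norm on $L^{2}([2\wedge T_{L},T_{L}]\times\mathbb{T}^{3})$,
\begin{align*}
\mathcal{D}_{q+1}-\mathcal{D}_{q}={}&\bigl(\lVert v_{l}\rVert^{2}-\lVert v_{q}\rVert^{2}\bigr)-\bigl(\lVert\Theta_{l}\rVert^{2}-\lVert\Theta_{q}\rVert^{2}\bigr)+2\langle v_{l},w_{q+1}\rangle-2\langle\Theta_{l},d_{q+1}\rangle\\
&+\bigl(\lVert w_{q+1}^{p}\rVert^{2}-\lVert d_{q+1}^{p}\rVert^{2}\bigr)+E_{\mathrm{corr}},
\end{align*}
where $E_{\mathrm{corr}}$ collects all terms of $\lVert w_{q+1}\rVert^{2}-\lVert d_{q+1}\rVert^{2}$ involving at least one corrector $w_{q+1}^{c},w_{q+1}^{t},d_{q+1}^{c},d_{q+1}^{t}$. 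The goal is then to show that every group except $\lVert w_{q+1}^{p}\rVert^{2}$ is $\ll\delta_{q+1}M_{L}$, while $\lVert w_{q+1}^{p}\rVert^{2}$ yields the main term $3\gamma_{q+1}(T_{L}-2\wedge T_{L})$.

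For the mollification errors I would use $\lvert\lVert v_{l}\rVert^{2}-\lVert v_{q}\rVert^{2}\rvert\lesssim\lVert v_{l}-v_{q}\rVert_{C_{[0,T_{L}]}L_{x}^{2}}\lVert v_{q}\rVert_{C_{[0,T_{L}]}L_{x}^{2}}\lesssim l\,\lVert v_{q}\rVert_{C_{[t_{q},T_{L}],x}^{1}}M_{L}^{1/2}$, which by \eqref{hypothesis 3} and the choice $l^{-1}=\lambda_{q}^{14}\lambda_{q+1}^{\epsilon/112}$ in \eqref{define b and l} is $\ll\delta_{q+1}M_{L}$ (and identically for $\Theta$). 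For the cross terms, $w_{q+1}^{p}+w_{q+1}^{c}$ has the $\curl\curl$ structure at frequency $\sim\lambda_{q+1}$ while $v_{l}$ is localized to frequencies $\lesssim l^{-1}\ll\lambda_{q+1}$, so integration by parts gives $\lvert\langle v_{l},w_{q+1}^{p}+w_{q+1}^{c}\rangle\rvert\ll\delta_{q+1}M_{L}$, and $w_{q+1}^{t}$ carries the prefactor $\mu^{-1}$ and, being $\mathbb{P}_{\neq 0}$ of a $(\mathbb{T}/\lambda_{q+1}\sigma)^{3}$-periodic object, has minimal active frequency $\lambda_{q+1}\sigma$, so Lemma \ref{Lemma A.5} again yields $\lvert\langle v_{l},w_{q+1}^{t}\rangle\rvert\ll\delta_{q+1}M_{L}$; the same applies to $\langle\Theta_{l},d_{q+1}\rangle$. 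Finally $E_{\mathrm{corr}}\ll\delta_{q+1}M_{L}$ follows from the corrector $L^{2}$ bounds underlying \eqref{est 123a}--\eqref{est 123b} together with the parameter relations \eqref{define parameters}.

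It then remains to evaluate $\lVert w_{q+1}^{p}\rVert^{2}-\lVert d_{q+1}^{p}\rVert^{2}$, which equals the space-time integral over $[2\wedge T_{L},T_{L}]\times\mathbb{T}^{3}$ of $\Tr\bigl(w_{q+1}^{p}\otimes w_{q+1}^{p}-d_{q+1}^{p}\otimes d_{q+1}^{p}\bigr)$. Since building blocks attached to distinct directions have disjoint frequency supports (Lemmas \ref{Lemma A.2}--\ref{Lemma A.3}) and the amplitudes $a_{\xi}$ vary slowly relative to $\lambda_{q+1}$, at each time the spatial integral equals $\sum_{\xi\in\Lambda_{v}}\int a_{\xi}^{2}\,\xi\otimes\xi-\sum_{\xi\in\Lambda_{\Theta}}\int a_{\xi}^{2}\,\xi_{2}\otimes\xi_{2}$ up to an error $\ll\delta_{q+1}M_{L}$ from the high-frequency parts of $\phi_{\xi}^{2}\varphi_{\xi}^{2}$; by the geometric identity defining the velocity amplitude (the analogue of \eqref{est 81}), the first sum is $\rho_{v}\Id-\mathring{R}_{l}^{v}-\mathring{G}$ plus lower order terms, with $\mathring{G}$ the analogue of $\mathring{G}^{\Xi}$ in \eqref{est 80}, so taking the trace annihilates the trace-free $\mathring{R}_{l}^{v}$ and $\mathring{G}$ and leaves $3\rho_{v}$, while the second sum has trace $\lesssim\delta_{q+1}M_{L}$. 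Writing $\rho_{v}$ as its prescribed constant piece $\gamma_{q+1}$ plus a Reynolds piece whose $L^{1}$-norm is $\lesssim\lVert\mathring{R}_{l}^{v}\rVert_{L^{1}}+\lVert\mathring{G}\rVert_{L^{1}}\lesssim\delta_{q+1}M_{L}$ (using \eqref{hypothesis 5} and the fact that $2\wedge T_{L}\geq\sigma_{q}\wedge T_{L}$ since $\sigma_{q}=\delta_{q}\leq 1$ by \eqref{define lambdaq deltaq 2}), the constant piece integrates to exactly $3\gamma_{q+1}(T_{L}-2\wedge T_{L})$ because $\lvert\mathbb{T}^{3}\rvert=1$ for $\mathbb{T}=[0,1]$. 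Tracking the absolute constants accumulated across all the steps above gives the asserted bound $11\epsilon_{v}^{-1}\max\{1,\epsilon_{\Theta}^{-1}\sum_{k\in\Lambda_{\Theta}}\lVert\gamma_{k}\rVert_{C(B_{\epsilon_{\Theta}}(0))}\}\delta_{q+1}M_{L}$.

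The step I expect to be the main obstacle is the control of the temporal correctors $w_{q+1}^{t},d_{q+1}^{t}$: these are absent from the scheme of \cite{BBV20}, present in \cite{Y23c}, and here additionally carry the temporal intermittency of the present construction, so that both $\langle v_{l},w_{q+1}^{t}\rangle$ and the contributions of $w_{q+1}^{t}$ to $E_{\mathrm{corr}}$ must be driven below $\delta_{q+1}M_{L}$ by simultaneously exploiting the $\mu^{-1}$ prefactor, the $\mathbb{P}_{\neq 0}$ frequency gap, and the precise interrelations among $\mu,\sigma,\tau,r_{\bot},r_{\lVert}$ in \eqref{define parameters}. Keeping the final error strictly proportional to $\delta_{q+1}M_{L}$, rather than to a larger power of $M_{L}$ or a looser negative power of $\lambda_{q+1}$, is exactly what forces those parameter constraints and is the genuinely delicate point of the argument.
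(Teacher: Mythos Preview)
Your overall decomposition is the same as the paper's (its terms $\text{II}_{1}$--$\text{II}_{6}$ mirror your mollification error, cross terms, principal-squared, and corrector pieces), and your identification of the main contribution as $3\int\rho_{v}=3\gamma_{q+1}(T_{L}-2\wedge T_{L})+O(\delta_{q+1}M_{L})$ via the trace of the geometric identity is correct. However, there are two genuine gaps. First, the perturbation in this section has \emph{four} pieces, not three: $w_{q+1}=\tilde w_{q+1}^{p}+\tilde w_{q+1}^{c}+\tilde w_{q+1}^{t}+\tilde w_{q+1}^{o}$ (see \eqref{define w and d}), with the extra temporal-oscillation corrector $w_{q+1}^{o}$ (and $d_{q+1}^{o}$) built from $h_{\xi}$; you omit it entirely, and your references to \eqref{est 38}--\eqref{est 40b} and to $\phi_{\xi}^{2}\varphi_{\xi}^{2}$ are to the \emph{wrong} scheme (Sections~3--4), whereas here the building blocks are $W_{\xi},D_{\xi}$ carrying $\psi_{\xi_{1}}\phi_{\xi}$ and the temporal profile $g_{\xi}$.

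Second, and more seriously, your computation of the principal term drops the $g_{\xi}^{2}$ factor. Since $w_{q+1}^{p}=\sum_{\xi}a_{\xi}g_{\xi}W_{\xi}$, the spatial integral of $|w_{q+1}^{p}|^{2}$ at time $t$ is (to leading order) $\sum_{\xi}g_{\xi}^{2}(t)\int a_{\xi}^{2}\,dx$, not $\sum_{\xi}\int a_{\xi}^{2}\,dx$. To recover $3\int\rho_{v}$ after time integration you must show $\sum_{\xi\in\Lambda_{v}}\bigl|\int_{2\wedge T_{L}}^{T_{L}}(g_{\xi}^{2}(t)-1)\lVert a_{\xi}(t)\rVert_{L^{2}_{x}}^{2}\,dt\bigr|\ll\delta_{q+1}M_{L}$ (the paper's $\text{II}_{1,5}$). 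This does not follow from frequency separation or from the $\mu^{-1}$ prefactor; it requires that $g_{\xi}^{2}-1$ is mean-zero over each $\sigma^{-1}$-period together with the quantitative estimate \eqref{est 203} of Lemma~\ref{Lemma A.8} and the $C^{0,1}$ bound on $t\mapsto\lVert a_{\xi}(t)\rVert_{L^{2}_{x}}^{2}$ from \eqref{est 160}. Your closing paragraph correctly flags temporal intermittency as the delicate point, but the actual mechanism you need is this period-averaging lemma, not the $\mathbb{P}_{\neq 0}$/Lemma~\ref{Lemma A.5} argument you propose. (Minor: $\fint W_{\xi}\otimes W_{\xi}=\xi_{1}\otimes\xi_{1}$, not $\xi\otimes\xi$; and for $\langle v_{l},w_{q+1}^{p}\rangle$ the paper simply uses $\lVert w_{q+1}^{p}\rVert_{L^{1}_{x}}\lesssim r_{\bot}^{1/2}r_{\lVert}^{1/2}$ against $\lVert v_{l}\rVert_{L^{\infty}}\leq\lambda_{q}^{7}M_{L}^{1/2}$ rather than any integration by parts.)
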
 

We will prove Proposition \ref{Proposition 5.2} subsequently; for the time being, we assume it and prove Theorem \ref{Theorem 2.3}. Let us start with an intermediary result in preparation to prove Theorem \ref{Theorem 2.3}. 
\begin{proposition}\label{Proposition 5.3}
Define $p$ by \eqref{define m1, m2, and p}. There exists a $\mathbf{P}$-a.s. strongly positive stopping time $T_{L}$, that can be made arbitrarily large by choosing $L > 1$ to be large, such that for any initial data $u^{\text{in}}, b^{\text{in}} \in L_{\sigma}^{p}$ $\mathbf{P}$-a.s. that are  independent of the Wiener processes $B_{1}$ and $B_{2}$, the following holds. There exists $\{\mathcal{F}_{t}\}_{t \geq 0}$-adapted processes 
\begin{align}\label{est 266}
u,b \in C([0, T_{L}]; L^{p} (\mathbb{T}^{3})) \cap L^{2} (0, T_{L}; H^{\zeta} (\mathbb{T}^{3})) \hspace{3mm} \mathbf{P}\text{-a.s.}
\end{align}
for some $\zeta > 0$, that solves \eqref{gen stoch MHD} analytically weakly such that $(u,b) \rvert_{t=0} = (u^{\text{in}}, b^{\text{in}})$. Finally, there exist infinitely many such solutions $(u,b)$.  
\end{proposition}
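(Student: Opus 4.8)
The plan is to construct $(u,b)$ as the limit of the convex integration iteration of Proposition~\ref{Proposition 5.2}, run up to the stopping time $T_L$ of \eqref{define TL}; since $T_L>0$ $\mathbf P$-a.s.\ and $T_L\to\infty$ $\mathbf P$-a.s.\ by Proposition~\ref{Proposition 5.1}, enlarging $L$ makes $T_L$ as large as desired (in probability). First I would fix $N$ as in \eqref{define N} (possible since $u^{\text{in}},b^{\text{in}}\in L_\sigma^p$ $\mathbf P$-a.s.) together with $M_L,A$ as in \eqref{define ML}, \eqref{define A}, and verify the base case $q=0$: take $v_0\equiv\Theta_0\equiv0$ on $[t_0,T_L]$ and let $\mathring R_0^v$, $\mathring R_0^\Theta$ be the trace-free, resp.\ skew-symmetric, parts of $(z_1^{\text{in}}+z_{1,0})\otimes(z_1^{\text{in}}+z_{1,0})-(z_2^{\text{in}}+z_{2,0})\otimes(z_2^{\text{in}}+z_{2,0})$ and of $(z_2^{\text{in}}+z_{2,0})\otimes(z_1^{\text{in}}+z_{1,0})-(z_1^{\text{in}}+z_{1,0})\otimes(z_2^{\text{in}}+z_{2,0})$, so that \eqref{equation vq and Thetaq} holds at $q=0$ and the extension \eqref{extend} is respected. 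Checking Hypothesis~\ref{Hypothesis 5.1} at $q=0$ is then routine: the bounds on $v_0,\Theta_0$ are trivial, while the $L^1_{t,x}$ bounds \eqref{hypothesis 5}, \eqref{hypothesis 6a}, \eqref{hypothesis 7a} on $\mathring R_0^v,\mathring R_0^\Theta$ reduce, via H\"older's inequality, to $\lVert z_k^{\text{in}}\rVert_{L^2_{[0,T_L],x}}^2$ and $\lVert z_{k,q}\rVert_{L^2_{[0,T_L],x}}^2$, which are controlled by \eqref{est 124} and by \eqref{define TL}, together with $\delta_1=\sigma_{-1}=1$ and the largeness $M_L\gg L^3+N^2\sum_k L^{1-\frac{6-3p}{2m_kp}}$, $A\gg M_L$.

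Next I would iterate Proposition~\ref{Proposition 5.2}, obtaining $\{\mathcal F_t\}_{t\ge0}$-adapted $(v_q,\Theta_q,\mathring R_q^v,\mathring R_q^\Theta)$ for all $q$. Summing \eqref{est 123d} and using the doubly-exponential decay of $\delta_q$ gives that $\{v_q\},\{\Theta_q\}$ are Cauchy in $C([0,T_L];L^p(\mathbb T^3))$, with limits $v,\Theta$; combining \eqref{est 123a}--\eqref{est 123c} gives Cauchy-ness in $L^2((0,T_L)\times\mathbb T^3)$, and interpolating this against the uniform $C_{t,x}^1$ bound \eqref{hypothesis 3} (whose growth is only polynomial, $\sim\lambda_q^7 M_L^{1/2}$) yields convergence in $L^2(0,T_L;H^\zeta(\mathbb T^3))$ for every $\zeta>0$ small enough relative to $\beta,b$. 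Because $\delta_{q+1}M_L\to0$ and $(q+1)\sigma_q^{1-c}\to0$ for every $c\in(0,1)$, the right side of \eqref{hypothesis 6a} tends to $0$, so $\mathring R_q^v,\mathring R_q^\Theta\to0$ in $L^1_{t,x}$; letting $q\to\infty$ in \eqref{equation vq and Thetaq}, with $z_{k,q}\to z_k$ by \eqref{define zkq} and Proposition~\ref{Proposition 5.1}, shows $(v,\Theta)$ solves \eqref{equation v and Theta} analytically weakly on $[0,T_L]$. Setting $(u,b)\triangleq(v+z_1^{\text{in}}+z_1,\ \Theta+z_2^{\text{in}}+z_2)$ as in \eqref{est 131} yields \eqref{est 266}: $z_k^{\text{in}}=e^{-t(-\Delta)^{m_k}}(\cdot)\in C([0,T_L];L^p)$ by strong continuity of the analytic semigroup on $L^p$ and lies in $L^2(0,T_L;H^\zeta)$ for small $\zeta$ by the semigroup smoothing estimate underlying \eqref{est 124} (with an extra $\zeta/m_k$ in the exponent), while $z_k\in C_{T_L}\dot{H}^{1-\delta}\cap C_{T_L}L^2$ on $[0,T_L]$ by \eqref{define TL}; adaptedness passes to the limit, and $u|_{t=0}=u^{\text{in}}$, $b|_{t=0}=b^{\text{in}}$ since $v_q(0)=\Theta_q(0)=0$ by \eqref{hypothesis 2}, $z_k(0)=0$, and $z_k^{\text{in}}(0)=u^{\text{in}},b^{\text{in}}$.

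For the infinitely many solutions I would exploit the free parameter $K$ in $\gamma_3=K$ from \eqref{define A}. Telescoping \eqref{est 127} over $q$ (the squared norms pass to the $L^2_{t,x}$-limit), using $\gamma_{q+1}=\delta_{q+1}$ for $q+1\ne3$ and $\gamma_3=K$, together with $\sum_q\delta_{q+1}M_L\le C_1$ with $C_1$ deterministic and $K$-independent, gives on the event $\{T_L>2\}$ an identity of the form
\begin{equation*}
\Big\lvert\big(\lVert v\rVert_{L^2_{[2\wedge T_L,T_L],x}}^2-\lVert\Theta\rVert_{L^2_{[2\wedge T_L,T_L],x}}^2\big)-3(T_L-2)(K+c_0)\Big\rvert\le C_1,
\end{equation*}
with $c_0,C_1$ deterministic and independent of $K$. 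Expanding $(u,b)$ via \eqref{est 131}, the quantity $\lVert u\rVert_{L^2_{[2\wedge T_L,T_L],x}}^2-\lVert b\rVert_{L^2_{[2\wedge T_L,T_L],x}}^2$ differs from the left side above only by cross and remainder terms that, for fixed $L$, are deterministically bounded by $O(1+K^{1/2})$ (using the $L^2$ bound \eqref{hypothesis 1b} on $v,\Theta$ and \eqref{define TL}), hence are dominated by the $K$-linear main term for $K$ large. Choosing $L$ so large that $\mathbf P(\{T_L>2+\varepsilon_0\})>0$ for some $\varepsilon_0>0$, and then $K$ along a sufficiently sparse sequence $K_1<K_2<\cdots$, the solutions $(u^{(K_j)},b^{(K_j)})$ realize pairwise distinct values of $\lVert u\rVert_{L^2_{[2\wedge T_L,T_L],x}}^2-\lVert b\rVert_{L^2_{[2\wedge T_L,T_L],x}}^2$ on $\{T_L>2+\varepsilon_0\}$, so they are pairwise distinct, giving the final assertion.

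The main obstacle is Proposition~\ref{Proposition 5.2} itself, which is assumed here; granting it, the only genuinely delicate points in the present argument are (i) the interpolation producing $L^2_tH^\zeta_x$ regularity at the reduced exponent $p<2$, where one must make the merely polynomial-in-$\lambda_q$ growth of \eqref{hypothesis 3} be beaten by the decay in \eqref{est 123a}--\eqref{est 123c}, and (ii) the verification at $q=0$ that the time-singular tensors $z_k^{\text{in}}\otimes z_k^{\text{in}}$ belong to $L^1_{t,x}$ up to $t=0$ (cf.\ Remark~\ref{Remark 5.1}), which is exactly where the integrability estimate \eqref{est 124} and the smallness of $\delta$ imposed in \eqref{define TL} are used.
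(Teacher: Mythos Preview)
Your proposal is correct and follows essentially the same approach as the paper: the base case $v_0\equiv\Theta_0\equiv0$ with the explicit $\mathring R_0^v,\mathring R_0^\Theta$, the verification of Hypothesis~\ref{Hypothesis 5.1} at $q=0$ via \eqref{est 124} and $M_L\gg L^3$, the $C_tL_x^p$ limit from \eqref{est 123d}, the $L^2_tH^\zeta_x$ interpolation against \eqref{hypothesis 3}, and the use of the free parameter $K=\gamma_3$ together with the telescoped \eqref{est 127} all match the paper. The only place you work harder than necessary is the final non-uniqueness step: rather than expanding $\lVert u\rVert^2-\lVert b\rVert^2$ and bounding the $O(1+K^{1/2})$ cross terms, the paper simply observes that since $(u,b)-(v,\Theta)=(z_1^{\text{in}}+z_1,\,z_2^{\text{in}}+z_2)$ is independent of $K$, the inequality $\lVert v_K\rVert^2-\lVert\Theta_K\rVert^2\neq\lVert v_{K'}\rVert^2-\lVert\Theta_{K'}\rVert^2$ on $\{T_L>2\}$ already forces $(u_K,b_K)\neq(u_{K'},b_{K'})$.
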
 

\begin{proof}[Proof of Proposition \ref{Proposition 5.3}]
At step $q = 0$, we consider $v_{0} \equiv \Theta_{0} \equiv 0$ on $[t_{0}, T_{L}]$; this way, all the inductive hypothesis \eqref{hypothesis 1a}-\eqref{hypothesis 4} are trivially satisfied. Moreover, for $t \in [0, T_{L}]$, 
\begin{subequations}\label{est 125} 
\begin{align}
& \mathring{R}_{0}^{v} = (z_{1}^{\text{in}} + z_{1,0}) \mathring{\otimes} (z_{1}^{\text{in}} + z_{1,0}) - (z_{2}^{\text{in}} + z_{2,0}) \mathring{\otimes} (z_{2}^{\text{in}} + z_{2,0}), \label{est 125a} \\
& \mathring{R}_{0}^{\Theta} = (z_{2}^{\text{in}} + z_{2,0}) \otimes (z_{1}^{\text{in}} + z_{1,0}) - (z_{1}^{\text{in}} + z_{1,0}) \otimes (z_{2}^{\text{in}} + z_{2,0}), \label{est 125b} 
\end{align}
\end{subequations} 
while for $t \in [t_{q}, 0)$, due to Remark \ref{Remark 5.1}, 
\begin{align}\label{est 126} 
\mathring{R}_{0}^{v} = z_{1}^{\text{in}} \mathring{\otimes} z_{1}^{\text{in}} - z_{2}^{\text{in}} \mathring{\otimes} z_{2}^{\text{in}}, \hspace{3mm} \mathring{R}_{0}^{\Theta} = z_{2}^{\text{in}} \otimes z_{1}^{\text{in}} - z_{1}^{\text{in}} \otimes z_{2}^{\text{in}}. 
\end{align}
We can estimate $\mathring{R}_{0}^{v}$ as an example: for any $\iota > 0$, as $\delta_{-1} = 1$ due to \eqref{define lambdaq deltaq 2}, 
\begin{equation}\label{est 260}
\lVert \mathring{R}_{0}^{v} \rVert_{L_{[0, T_{L}] ,x}^{1}}   \overset{\eqref{est 125a}}{\lesssim} \int_{0}^{T_{L}} \sum_{k=1}^{2}\lVert z_{k}^{\text{in}} \rVert_{L_{x}^{2}}^{2} + \lVert z_{k,0} \rVert_{L_{x}^{2}}^{2} dt \overset{\eqref{est 122} \eqref{define TL}}{\leq} \iota M_{L} + C L^{3} \overset{\eqref{define ML}}{\leq} \delta_{1}M_{L}. 
\end{equation} 
Therefore, the inductive hypothesis \eqref{hypothesis 5} and \eqref{hypothesis 6a} at level $q=0$ are satisfied. Concerning \eqref{hypothesis 7a}  at level $q=0$, for all $t \in (0,1]$, the first inequality of \eqref{est 124} shows that 
\begin{equation}\label{est 261} 
\sum_{k=1}^{2} \lVert z_{k}^{\text{in}}(t) \rVert_{L_{x}^{2}}^{2} \lesssim t^{- \frac{6-3p}{2m_{1}p}}  \lVert u^{\text{in}} \rVert_{L_{x}^{p}}^{2} + t^{- \frac{6-3p}{2m_{2}p}} \lVert b^{\text{in}} \rVert_{L_{x}^{p}}^{2} \overset{\eqref{define N}}{\lesssim} N^{2} \sum_{k=1}^{2} t^{- \frac{6-3p}{2m_{k}p}}.
\end{equation} 
As $- \frac{6-3p}{2m_{k}p}< 0$ for both $k \in \{1,2\}$ and $\lvert t \rvert < 1$ for all $t \in (0,1]$, we can also bound 
\begin{align}
\lVert \mathring{R}_{0}^{v}(t) \rVert_{L_{x}^{1}} \overset{\eqref{est 125a}}{\lesssim} \sum_{k=1}^{2} \lVert z_{k}^{\text{in}} \rVert_{L_{x}^{2}}^{2} + \lVert z_{k,0} \rVert_{L_{x}^{2}}^{2} \overset{\eqref{est 261}}{\lesssim} (N^{2} + L^{2}) \sum_{k=1}^{2} t^{- \frac{6-3p}{2m_{k}p}}.  \label{est 262} 
\end{align}
On the other hand, for $t \in [-1, 0)$, 
\begin{equation}\label{est 263}
\lVert \mathring{R}_{0}^{v} (t) \rVert_{L_{x}^{1}} \overset{\eqref{est 126}}{\lesssim} \lvert t \rvert^{- \frac{6-3p}{2m_{1}p}} \lVert u^{\text{in}} \rVert_{L_{x}^{p}}^{2} + \lvert t\rvert^{-\frac{6-3p}{2m_{2}p}} \lVert b^{\text{in}} \rVert_{L_{x}^{p}}^{2} ]\overset{\eqref{define N}}{\lesssim} N^{2} \sum_{k=1}^{2} \lvert t \rvert^{- \frac{6-3p}{2m_{k}p}}.
\end{equation} 
Therefore, as $t_{0} = -1$ due to \eqref{define tq} and $\sigma_{0} = 1$ due to \eqref{define A}, for all $a \in [-1, (1 \wedge T_{L}) - h]$ where $h \in (0, (1\wedge T_{L}) + 1]$, 
\begin{equation}\label{est 264}
\lVert \mathring{R}_{0}^{v} \rVert_{L_{[a, a+h] ,x}^{1}}\ll A \Bigg [ \sum_{k=1}^{2}(\frac{h}{2})^{1- \frac{6-3p}{2m_{k} p}} + (\frac{h}{2})^{1- \sum_{k=1}^{2}\frac{6-3p}{4m_{k} p}}  \Bigg]. 
\end{equation} 
Hence, Hypothesis \ref{Hypothesis 5.1} at level $q = 0$ holds and we can apply Proposition \ref{Proposition 5.2} to obtain $\{\mathcal{F}_{t}\}_{t\geq 0}$-adapted processes $(v_{q}, \Theta_{q}, \mathring{R}_{q}^{v}, \mathring{R}_{q}^{\Theta})_{q \in \mathbb{N}_{0}}$ that satisfies Hypothesis \ref{Hypothesis 5.1} and rely on \eqref{est 123d} to deduce  the limit $(v,\Theta)$ such that 
\begin{equation}\label{est 128} 
v_{q} \to v \text{ and } \Theta_{q} \to \Theta \text{ in } C_{[0,T_{L}]} L_{x}^{p}. 
\end{equation}
Moreover, we can compute for any $\zeta \in ( 0,  \frac{2\beta}{2\beta + 21b^{2}})$, due to \eqref{define TL}, \eqref{define ML}, \eqref{hypothesis 3}, \eqref{hypothesis 2}, \eqref{est 123}, and \eqref{define A}, 
\begin{align}
\int_{0}^{T_{L}} \lVert v_{q+1} - v_{q} \rVert_{H^{\zeta}}^{2} dt \lesssim& \Bigg( M_{0} [ A^{\frac{1}{2}} + M_{L}^{\frac{3}{4}} ] \delta_{q-1}^{\frac{1}{3}} \Bigg)^{2(1-\zeta)} M_{L}^{\frac{3\zeta}{2}} \lambda_{q+1}^{14 \zeta} \nonumber \\
\lesssim& \left( M_{0} [ A^{\frac{1}{2}} + M_{L}^{\frac{3}{4}} ] \right)^{2(1-\zeta)} M_{L}^{\frac{3\zeta}{2}} \delta_{q-1}^{\frac{2(1- \zeta)}{3} - \frac{14 \zeta b^{2}}{2\beta}} a^{14 \zeta b^{3}} \searrow 0 \label{est 265}
\end{align} 
as $q \nearrow + \infty$, where the first inequality used the fact that $q^{\frac{1}{2}} \delta_{q}^{\frac{1}{6}} \lesssim 1$. Together with similar computations for $\Theta_{q+1} - \Theta_{q}$, we conclude that $v_{q} \to v$ and $\Theta_{q} \to \Theta$ in $L^{2} ([0, T_{L}]; H^{\zeta} (\mathbb{T}^{3}))$ for all $\zeta \in (0, \frac{2\beta}{2\beta + 21b^{2}})$. Moreover, it follows from \eqref{hypothesis 6a} that $(v,\Theta)$ satisfy the equations in \eqref{equation v and Theta} weakly. Concerning initial data, \eqref{hypothesis 2} and \eqref{est 128} imply that $v(0) = 0$ and similarly $\Theta(0) = 0$. Concerning non-uniqueness, we find a constant $\mathcal{C}$ independent of $q$ such that
\begin{align}
&\Bigg\lvert \lVert v \rVert_{L_{[2 \wedge T_{L}, T_{L}] ,x}^{2}}^{2} - \lVert \Theta \rVert_{L_{[2\wedge T_{L}, T_{L} ] ,x}^{2}}^{2} - 3 K (T_{L} - 2 \wedge T_{L} ) \Bigg\rvert   \nonumber \\
\overset{\eqref{define A} \eqref{est 127}}{\leq}& 11\epsilon_{v}^{-1} \max\{1, \epsilon_{\Theta}^{-1} \sum_{k \in \Lambda_{\Theta}} \lVert \gamma_{k} \rVert_{C(B_{\epsilon_{\Theta}} (0))} \}  M_{L} \sum_{q=0}^{\infty} \delta_{q+1} + 3 M_{L}^{\frac{1}{2}} \sum_{q\neq 2} \gamma_{q+1} \triangleq \mathcal{C}. \label{est 139}  
\end{align}
For $L > 1$ sufficiently large so that $\mathbf{P} ( \{ T_{L} > 2 \} ) > 0$, on $\{ T_{L} > 2 \}$, for $K \neq K'$ such that 
\begin{equation}\label{est 130} 
\lvert K-K' \rvert > \frac{2 \mathcal{C}}{3(T_{L} - 2)}, 
\end{equation} 
the corresponding limits $(v_{K},\Theta_{K})$ and $(v_{K'}, \Theta_{K'})$ satisfy, as a consequence of \eqref{est 139}, 
\begin{equation*}
 \Bigg\lvert  \Bigg( \lVert v_{K} \rVert_{L_{[2, T_{L}] ,x}^{2}}^{2} - \lVert \Theta_{K} \rVert_{L_{[2, T_{L}] ,x}^{2}}^{2} \Bigg) - \Bigg( \lVert v_{K'} \rVert_{L_{[2, T_{L}] ,x}^{2}}^{2} - \lVert \Theta_{K'} \rVert_{L_{[2, T_{L}] ,x}^{2}}^{2} \Bigg) \Bigg\rvert  \overset{\eqref{est 139} \eqref{est 130}}{>} 0;
\end{equation*} 
this implies $(u_{K}, b_{K}) \neq (u_{K'}, b_{K'})$. The existence of infinitely many such solutions follows from the fact that we can choose different $K$'s. The proof of Proposition \ref{Proposition 5.3} is complete. 
\end{proof}
The proofs of Theorem \ref{Theorem 2.3} and Corollary \ref{Corollary 2.4} follow from Proposition \ref{Proposition 5.3} similarly to \cite{HZZ21a}; we include details in the Appendix Sections \ref{Section B.1}-\ref{Section B.2} for completeness. 

\subsection{Proof of Proposition \ref{Proposition 5.2}}
We mollify $v_{q}, \Theta_{q}, \mathring{R}_{q}^{v}$, and $\mathring{R}_{q}^{\Theta}$, identically to \eqref{est 34}; additionally, we mollify $z_{k}$ to obtain $z_{k,l} \triangleq z_{k} \ast_{x} \varrho_{l} \ast_{t} \vartheta_{l}$ for $k \in \{1,2\}$. We write the mollified system from \eqref{equation vq and Thetaq} as 
\begin{subequations}\label{mollified system} 
\begin{align}
& \partial_{t} v_{l} + (-\Delta)^{m_{1}} v_{l} + \nabla p_{l} + \divergence N_{\text{com}}^{v} = \divergence ( \mathring{R}_{l}^{v} + R_{\text{com1}}^{v}), \hspace{3mm} \nabla\cdot v_{l} = 0, \\
& \partial_{t} \Theta_{l} + (-\Delta)^{m_{2}} \Theta_{l} + \divergence N_{\text{com}}^{\Theta} = \divergence ( \mathring{R}_{l}^{\Theta} + R_{\text{com1}}^{\Theta}), \hspace{9mm} \nabla \cdot \Theta_{l} = 0, 
\end{align}
\end{subequations}
where we defined 
\begin{subequations}\label{define commutator errors}
\begin{align}
& N_{\text{com}}^{v} \triangleq (v_{q} + z_{1}^{\text{in}} + z_{1,q}) \otimes (v_{q} + z_{1}^{\text{in}} + z_{1,q}) - (\Theta_{q} + z_{2}^{\text{in}} + z_{2,q}) \otimes (\Theta_{q} + z_{2}^{\text{in}} + z_{2,q}), \\
& N_{\text{com}}^{\Theta} \triangleq ( \Theta_{q} + z_{2}^{\text{in}} + z_{2,q}) \otimes (v_{q} + z_{1}^{\text{in}} + z_{1,q}) - (v_{q} + z_{1}^{\text{in}} + z_{1,q}) \otimes ( \Theta_{q} + z_{2}^{\text{in}} + z_{2,q}),  \\
& R_{\text{com1}}^{v} \triangleq \mathring{N}_{\text{com}}^{v} - \mathring{N}_{\text{com}}^{v} \ast_{x} \varrho_{l} \ast_{t} \vartheta_{l},   \hspace{8mm} R_{\text{com1}}^{\Theta} \triangleq  N_{\text{com}}^{\Theta} - N_{\text{com}}^{\Theta} \ast_{x} \varrho_{l} \ast_{t} \vartheta_{l}, \\
&p_{l} \triangleq p_{q} \ast_{x} \varrho_{l} \ast_{t} \vartheta_{l} - \frac{1}{3} \Tr (N_{\text{com}}^{v} - N_{\text{com}}^{v} \ast_{x} \varrho_{l} \ast_{t} \vartheta_{l}),
\end{align}
\end{subequations} 
where $\mathring{N}_{\text{com}}^{v}$ is the trace-free part of $N_{\text{com}}^{v}$. We define with $\epsilon_{\Theta}$ the radius from Lemma \ref{Lemma A.2},
\begin{equation}\label{define rhoTheta}
\rho_{\Theta}  \triangleq \epsilon_{\Theta}^{-1} \sqrt{ l^{2} + \lvert \mathring{R}_{l}^{\Theta}  \rvert^{2}} \hspace{1mm} \text{ so that } \hspace{1mm}  \Bigg\lvert \frac{ \mathring{R}_{l}^{\Theta}}{\rho_{\Theta}} \Bigg\rvert \leq \epsilon_{\Theta} \hspace{1mm} \text{ and } \hspace{1mm} \rho_{\Theta} \geq  \epsilon_{\Theta}^{-1}\max\{l, \lvert \mathring{R}_{l}^{\Theta} \rvert \}.
\end{equation} 
Now we define the amplitude function of the magnetic perturbations:
\begin{equation}\label{define magnetic amp}
a_{\xi} (t,x) \triangleq \rho_{\Theta}^{\frac{1}{2}}(t,x) \gamma_{\xi} \left( - \frac{ \mathring{R}_{l}^{\Theta} (t,x)}{\rho_{\Theta}(t,x)} \right) \text{ for } \xi \in \Lambda_{\Theta}, 
\end{equation}  
where $\gamma_{\xi}$ is from Lemma \ref{Lemma A.2}. The following is a consequence of Lemma \ref{Lemma A.2} and \eqref{est 159}: 
\begin{align} 
& \sum_{\xi \in \Lambda_{\Theta}} a_{\xi}^{2} g_{\xi}^{2} (D_{\xi} \otimes W_{\xi} - W_{\xi} \otimes D_{\xi})  \label{identity 0a}\\
=& - \mathring{R}_{l}^{\Theta} + \sum_{\xi \in \Lambda_{\Theta}} a_{\xi}^{2} g_{\xi}^{2} \mathbb{P}_{\neq 0} (D_{\xi} \otimes W_{\xi} - W_{\xi} \otimes D_{\xi}) + \sum_{\xi \in \Lambda_{\Theta}} a_{\xi}^{2} (g_{\xi}^{2} -1) \fint_{\mathbb{T}^{3}} D_{\xi} \otimes W_{\xi} - W_{\xi} \otimes D_{\xi} dx. \nonumber 
\end{align}
We can verify the following estimates similarly to previous works (e.g. \cite[Equation (5.6)]{LZ23}): for all $\xi \in \Lambda_{\Theta}$ and all $N \in \mathbb{N}$,
\begin{subequations}\label{est 152} 
\begin{align}
& \lVert a_{\xi} \rVert_{C_{[ (2\sigma_{q-1}) \wedge T_{L}, T_{L}] ,x}} \lesssim  l^{-\frac{5}{2}} \delta_{q+1}^{\frac{1}{2}} M_{L}^{\frac{1}{2}} , \hspace{8mm}  \lVert a_{\xi} \rVert_{C_{[0, T_{L}] ,x}}  \lesssim l^{-\frac{5}{2}} ( M_{L} + qA)^{\frac{1}{2}}, \label{est 152a}  \\
& \lVert a_{\xi} \rVert_{C_{[(2\sigma_{q-1})\wedge T_{L}, T_{L} ] ,x}^{N}}  \lesssim l^{-15N - \frac{5}{2}}  \delta_{q+1}^{\frac{1}{2}} M_{L}^{\frac{1}{2}},  \hspace{3mm} \lVert a_{\xi} \rVert_{C_{[0,T_{L}] ,x}^{N}} \lesssim l^{-15N - \frac{5}{2}}(M_{L} + qA)^{\frac{1}{2}}.\label{est 152b} 
\end{align}
\end{subequations}  
Next, we define 
\begin{equation}\label{define GTheta}
\mathring{G}^{\Theta} \triangleq \sum_{\xi \in \Lambda_{\Theta}} a_{\xi}^{2} \fint_{\mathbb{T}^{3}} W_{\xi} \otimes W_{\xi} - D_{\xi} \otimes D_{\xi} dx. 
\end{equation} 
The following estimates can be verified using \eqref{est 152} similarly to previous works such as \cite[Equation (4.19)]{LZZ22}, although the time intervals must be distinguished here: for all $N \in \mathbb{N}$, 
\begin{subequations}\label{est 158} 
\begin{align}
& \lVert \mathring{G}^{\Theta} \rVert_{C_{[ (2 \sigma_{q-1}) \wedge T_{L}, T_{L} ] ,x}}  \overset{\eqref{est 152a}}{\lesssim}   l^{-5} \delta_{q+1} M_{L},  \hspace{5mm}  \lVert \mathring{G}^{\Theta} \rVert_{C_{[ (2 \sigma_{q-1}) \wedge T_{L}, T_{L} ], x}^{N}}  \lesssim l^{-15N - 5} \delta_{q+1} M_{L},  \label{est 158a} \\
&\lVert \mathring{G}^{\Theta} \rVert_{C_{[ 0, T_{L} ] ,x}} \overset{\eqref{est 152a}}{\lesssim}  l^{-5} (M_{L}+  qA), \hspace{9mm} \lVert \mathring{G}^{\Theta} \rVert_{C_{[0, T_{L}], x}^{N}} \lesssim l^{-15N - 5} (M_{L} + qA). \label{est 158b}
\end{align}
\end{subequations} 
We define 
\begin{equation}\label{define rhov}
\rho_{v}  \triangleq \epsilon_{v}^{-1} \sqrt{ l^{2} + \lvert \mathring{R}_{l}^{v}  + \mathring{G}^{\Theta}  \rvert^{2}} + \gamma_{q+1} \text{ so that } \Bigg\lvert \frac{\mathring{R}_{l}^{v} + \mathring{G}^{\Theta}}{\rho_{v}} \Bigg\rvert \leq \epsilon_{v}, \rho_{v} \geq \frac{\max\{l, \lvert \mathring{R}_{l}^{v} +  \mathring{G}^{\Theta} \rvert \}}{\epsilon_{v}}.
\end{equation} 
Furthermore, we define for $\xi \in \Lambda_{v}$, 
\begin{equation}\label{define axi in Lambdav}
a_{\xi} \triangleq \rho_{v}^{\frac{1}{2}} \gamma_{\xi} \Bigg( \Id - \frac{ \mathring{R}_{l}^{v} + \mathring{G}^{\Theta}}{\rho_{v}} \Bigg),
\end{equation} 
which satisfies the following identity as a consequence of Lemma \ref{Lemma A.3} and \eqref{est 159}, 
\begin{align} 
\sum_{\xi \in \Lambda_{v}} a_{\xi}^{2} g_{\xi}^{2} W_{\xi} \otimes W_{\xi} =&\rho_{v} \Id - \mathring{R}_{l}^{v} - \mathring{G}^{\Theta} + \sum_{\xi \in \Lambda_{v}} a_{\xi}^{2} g_{\xi}^{2} \mathbb{P}_{\neq 0} ( W_{\xi} \otimes W_{\xi}) \nonumber \\
&+ \sum_{\xi \in \Lambda_{v}} a_{\xi}^{2} \Bigg( g_{\xi}^{2} -1 \Bigg) \fint_{\mathbb{T}^{3}} W_{\xi} \otimes W_{\xi} dx, \label{identity 0b}
\end{align}
as well as the following estimates, similarly to \eqref{est 152}: for all $\xi \in \Lambda_{v}$ and all $N \in \mathbb{N}$, 
\begin{subequations}\label{est 160}  
\begin{align}
&  \lVert a_{\xi} \rVert_{C_{[ ( 2 \sigma_{q-1}) \wedge T_{L}, T_{L} ], x}}\lesssim l^{-\frac{5}{2}} \delta_{q+1}^{\frac{1}{2}} M_{L}^{\frac{1}{2}} + \gamma_{q+1}^{\frac{1}{2}},  \hspace{3mm} \lVert a_{\xi} \rVert_{C_{[0, T_{L}], x}}   \lesssim l^{-\frac{5}{2}} (M_{L} + qA)^{\frac{1}{2}} + \gamma_{q+1}^{\frac{1}{2}}, \label{est 160a}\\
& \lVert a_{\xi} \rVert_{C_{ [ (2 \sigma_{q-1}) \wedge T_{L}, T_{L} ], x}^{N}} \lesssim l^{-28N- \frac{5}{2}} ( \delta_{q+1}^{\frac{1}{2}} M_{L}^{\frac{1}{2}} + \gamma_{q+1}^{\frac{1}{2}}),  \label{est 160b}\\
&\lVert a_{\xi} \rVert_{C_{ [0, T_{L} ], x}^{N}} \lesssim l^{-28 N- \frac{5}{2}} (  M_{L} + qA + \gamma_{q+1})^{\frac{1}{2}}. \label{est 160c}
\end{align}
\end{subequations} 
Next, we define
\begin{equation}\label{define chi}
\chi(t)  
\begin{cases}
=0 & \text{ if } t \leq \sigma_{q+1}, \\
\in (0,1) & \text{ if } t \in (\sigma_{q+1}, 2 \sigma_{q+1}), \\
=1 & \text{ if } t \geq 2 \sigma_{q+1}, 
\end{cases} \hspace{5mm}  \text{ such that } \lVert \chi' \rVert_{C_{t}} \leq \sigma_{q+1}^{-1}.
\end{equation} 
We now define all the pieces of our perturbations: 
\begin{subequations}
\begin{align}
&w_{q+1}^{p} \triangleq \sum_{\xi \in \Lambda_{v} \cup \Lambda_{\Theta}} a_{\xi} g_{\xi} W_{\xi}, \hspace{3mm} d_{q+1}^{p} \triangleq \sum_{\xi \in \Lambda_{\Theta}} a_{\xi} g_{\xi} D_{\xi}, \label{define principal w and principal d}\\
& w_{q+1}^{c} \triangleq \sum_{\xi \in \Lambda_{v} \cup \Lambda_{\Theta}} g_{\xi} \Bigg( \curl ( \nabla a_{\xi} \times W_{\xi}^{c}) + \nabla a_{\xi} \times \curl W_{\xi}^{c} + a_{\xi} \tilde{W}_{\xi}^{c} \Bigg), \label{define corrector w}\\
& d_{q+1}^{c} \triangleq \sum_{\xi \in \Lambda_{\Theta}} g_{\xi} \Bigg( \curl ( \nabla a_{\xi} \times D_{\xi}^{c}) + \nabla a_{\xi} \times \curl D_{\xi}^{c} + a_{\xi} \tilde{D}_{\xi}^{c} \Bigg), \label{define corrector d}\\
& w_{q+1}^{t} \triangleq  - \mu^{-1} \sum_{\xi \in \Lambda_{v} \cup \Lambda_{\Theta}} \mathbb{P} \mathbb{P}_{\neq 0}  ( a_{\xi}^{2} g_{\xi}^{2} \psi_{\xi_{1}}^{2} \phi_{\xi}^{2} \xi_{1} ), \hspace{1mm} d_{q+1}^{t} \triangleq - \mu^{-1} \sum_{\xi \in \Lambda_{\Theta}} \mathbb{P} \mathbb{P}_{\neq 0} (a_{\xi}^{2} g_{\xi}^{2} \psi_{\xi_{1}}^{2} \phi_{\xi}^{2} \xi_{2}),  \label{define temporal w and temporal d}\\
& w_{q+1}^{o} \triangleq - \sigma^{-1} \sum_{\xi \in \Lambda_{v}} \mathbb{P} \mathbb{P}_{\neq 0} \Bigg( h_{\xi} \fint_{\mathbb{T}^{3}} W_{\xi} \otimes W_{\xi} dx \nabla a_{\xi}^{2} \Bigg) \nonumber \\
& \hspace{20mm}   - \sigma^{-1} \sum_{\xi \in \Lambda_{\Theta}} \mathbb{P} \mathbb{P}_{\neq 0} \Bigg( h_{\xi} \fint_{\mathbb{T}^{3}} [ W_{\xi} \otimes W_{\xi} - D_{\xi} \otimes D_{\xi} ] dx \nabla a_{\xi}^{2} \Bigg),  \label{define temporal oscillatory w}  \\
& d_{q+1}^{o} \triangleq - \sigma^{-1} \sum_{\xi \in \Lambda_{\Theta}} \mathbb{P} \mathbb{P}_{\neq 0} \Bigg( h_{\xi} \fint_{\mathbb{T}^{3}} [ D_{\xi} \otimes W_{\xi} - W_{\xi} \otimes D_{\xi} ] dx \nabla a_{\xi}^{2} \Bigg), \label{define temporal oscillatory d} 
\end{align} 
\end{subequations} 
so that they can be shown to satisfy the following identities (see \cite{LZZ22}):  
\begin{subequations}\label{identities 1} 
\begin{align}
& d_{q+1}^{p} \otimes w_{q+1}^{p} - w_{q+1}^{p} \otimes d_{q+1}^{p} + \mathring{R}_{l}^{\Theta} \nonumber \\
=& \sum_{\xi \in \Lambda_{\Theta}} a_{\xi}^{2} g_{\xi}^{2} \mathbb{P}_{\neq 0} ( D_{\xi} \otimes W_{\xi} - W_{\xi} \otimes D_{\xi} ) + \sum_{\xi \in \Lambda_{\Theta}}  a_{\xi}^{2} ( g_{\xi}^{2} -1) \fint_{\mathbb{T}^{3}} D_{\xi} \otimes W_{\xi} - W_{\xi} \otimes D_{\xi} dx \nonumber \\
&+ \Bigg( \sum_{\xi, \xi' \in \Lambda_{\Theta}: \xi \neq \xi '} + \sum_{\xi \in \Lambda_{v}, \xi' \in \Lambda_{\Theta}} \Bigg) a_{\xi} a_{\xi'} g_{\xi} g_{\xi'} (D_{\xi'} \otimes W_{\xi} - W_{\xi} \otimes D_{\xi'}), \label{identities 1a}\\
& w_{q+1}^{p} \otimes w_{q+1}^{p} - d_{q+1}^{p} \otimes d_{q+1}^{p} + \mathring{R}_{l}^{v} \nonumber \\
=& \rho_{v} \Id + \sum_{\xi \in \Lambda_{v}} a_{\xi}^{2} g_{\xi}^{2} \mathbb{P}_{\neq 0} (W_{\xi} \otimes W_{\xi}) + \sum_{\xi \in \Lambda_{\Theta}} a_{\xi}^{2} g_{\xi}^{2} \mathbb{P}_{\neq 0} ( W_{\xi} \otimes W_{\xi} - D_{\xi} \otimes D_{\xi} ) \nonumber \\
&+ \sum_{\xi \in \Lambda_{v}} a_{\xi}^{2} \Bigg( g_{\xi}^{2} -1 \Bigg) \fint_{\mathbb{T}^{3}} W_{\xi} \otimes W_{\xi} dx + \sum_{\xi \in \Lambda_{\Theta}} a_{\xi}^{2} \Bigg( g_{\xi}^{2} -1 \Bigg) \fint_{\mathbb{T}^{3}} W_{\xi} \otimes W_{\xi} - D_{\xi} \otimes D_{\xi} dx \nonumber \\
&+ \sum_{\xi, \xi' \in \Lambda_{v} \cup \Lambda_{\Theta}: \xi \neq \xi'} a_{\xi} a_{\xi'} g_{\xi} g_{\xi'}  W_{\xi} \otimes W_{\xi'} - \sum_{\xi, \xi' \in \Lambda_{\Theta}: \xi \neq \xi'} a_{\xi} a_{\xi'} g_{\xi} g_{\xi'} D_{\xi} \otimes D_{\xi'},  \label{identities 1b}\\
& w_{q+1}^{p} + w_{q+1}^{c} = \curl\curl \Bigg( \sum_{\xi \in \Lambda_{v} \cup \Lambda_{\Theta}} a_{\xi} g_{\xi} W_{\xi}^{c} \Bigg), d_{q+1}^{p} + d_{q+1}^{c} = \curl\curl \Bigg( \sum_{\xi \in \Lambda_{\Theta}} a_{\xi} g_{\xi} D_{\xi}^{c} \Bigg), \label{identities 1c} \\
& \partial_{t} w_{q+1}^{t} + \sum_{\xi \in \Lambda_{v} \cup \Lambda_{\Theta}} \mathbb{P}_{\neq 0} \Bigg( a_{\xi}^{2} g_{\xi}^{2} \divergence ( W_{\xi} \otimes W_{\xi} ) \Bigg) \nonumber \\
=& ( \nabla \Delta^{-1} \divergence) \mu^{-1}\sum_{\xi \in \Lambda_{v} \cup \Lambda_{\Theta}} \mathbb{P}_{\neq 0} \partial_{t} \Bigg(a_{\xi}^{2} g_{\xi}^{2} \psi_{\xi_{1}}^{2} \phi_{\xi}^{2} \xi_{1}\Bigg) - \mu^{-1} \sum_{\xi \in \Lambda_{v} \cup \Lambda_{\Theta}} \mathbb{P}_{\neq 0} \Bigg(\partial_{t} ( a_{\xi}^{2} g_{\xi}^{2} ) \psi_{\xi_{1}}^{2} \phi_{\xi}^{2} \xi_{1}\Bigg), \label{identities 1e}\\
& \partial_{t} d_{q+1}^{t} + \sum_{\xi \in \Lambda_{\Theta}} \mathbb{P}_{\neq 0} \Bigg( a_{\xi}^{2} g_{\xi}^{2} \divergence ( D_{\xi} \otimes W_{\xi} - W_{\xi} \otimes D_{\xi} ) \Bigg) \nonumber \\
=& ( \nabla \Delta^{-1} \divergence) \mu^{-1}\sum_{\xi \in \Lambda_{\Theta}} \mathbb{P}_{\neq 0} \partial_{t} \Bigg(a_{\xi}^{2} g_{\xi}^{2} \psi_{\xi_{1}}^{2} \phi_{\xi}^{2} \xi_{2}\Bigg) - \mu^{-1} \sum_{\xi \in \Lambda_{\Theta}} \mathbb{P}_{\neq 0} \Bigg( \partial_{t} ( a_{\xi}^{2} g_{\xi}^{2} ) \psi_{\xi_{1}}^{2} \phi_{\xi}^{2} \xi_{2}\Bigg), \label{identities 1f}\\
& \partial_{t} w_{q+1}^{o} + \sum_{\xi \in \Lambda_{v}} \mathbb{P}_{\neq} \Bigg( ( g_{\xi}^{2} -1) \fint_{\mathbb{T}^{3}} W_{\xi} \otimes W_{\xi} dx \nabla ( a_{\xi}^{2}) \Bigg) \nonumber \\
&+ \sum_{\xi \in \Lambda_{\Theta}} \mathbb{P}_{\neq 0} \Bigg( ( g_{\xi}^{2} -1) \fint_{\mathbb{T}^{3}} W_{\xi} \otimes W_{\xi} - D_{\xi} \otimes D_{\xi} dx \nabla ( a_{\xi}^{2}) \Bigg) \nonumber \\
=& ( \nabla \Delta^{-1} \divergence) \sigma^{-1} \sum_{\xi \in \Lambda_{v}} \mathbb{P}_{\neq 0} \partial_{t} \Bigg( h_{\xi} \fint_{\mathbb{T}^{3}} W_{\xi} \otimes W_{\xi} dx \nabla ( a_{\xi}^{2}) \Bigg)   \nonumber \\
&+  ( \nabla \Delta^{-1} \divergence) \sigma^{-1} \sum_{\xi \in \Lambda_{\Theta}} \mathbb{P}_{\neq 0} \partial_{t} \Bigg( h_{\xi} \fint_{\mathbb{T}^{3}} [W_{\xi} \otimes W_{\xi} - D_{\xi} \otimes D_{\xi} ] dx \nabla ( a_{\xi}^{2}) \Bigg)   \nonumber \\
& - \sigma^{-1} \sum_{\xi \in \Lambda_{v}} \mathbb{P}_{\neq 0} \Bigg( h_{\xi} \fint_{\mathbb{T}^{3}} W_{\xi} \otimes W_{\xi} dx \partial_{t} \nabla ( a_{\xi}^{2}) \Bigg) \nonumber  \\
&-\sigma^{-1} \sum_{\xi \in \Lambda_{\Theta}} \mathbb{P}_{\neq 0} \Bigg( h_{\xi} \fint_{\mathbb{T}^{3}} [W_{\xi} \otimes W_{\xi} - D_{\xi} \otimes D_{\xi} ]dx \partial_{t} \nabla (a_{\xi}^{2}) \Bigg), \label{identities 1g}\\
& \partial_{t} d_{q+1}^{o} + \sum_{\xi \in \Lambda_{\Theta}} \mathbb{P}_{\neq 0} \Bigg( (g_{\xi}^{2} -1) \fint_{\mathbb{T}^{3}} [D_{\xi} \otimes W_{\xi} - W_{\xi} \otimes D_{\xi}] dx \nabla (a_{\xi}^{2}) \Bigg) \nonumber \\
=& ( \nabla \Delta^{-1} \divergence) \sigma^{-1} \sum_{\xi \in \Lambda_{\Theta}} \mathbb{P}_{\neq 0} \partial_{t} \Bigg( h_{\xi} \fint_{\mathbb{T}^{3}} [D_{\xi} \otimes W_{\xi} - W_{\xi} \otimes D_{\xi}] dx \nabla (a_{\xi}^{2}) \Bigg) \nonumber \\
& - \sigma^{-1} \sum_{\xi \in \Lambda_{\Theta}} \mathbb{P}_{\neq 0} \Bigg( h_{\xi} \fint_{\mathbb{T}^{3}} D_{\xi} \otimes W_{\xi} - W_{\xi} \otimes D_{\xi} dx \partial_{t} \nabla (a_{\xi}^{2}) \Bigg).\label{identities 1h}
\end{align}
\end{subequations} 
Then we define, with $\chi$ from \eqref{define chi},   
\begin{subequations}\label{define tilde w and tilde d}
\begin{align}
& \tilde{w}_{q+1}^{p} \triangleq w_{q+1}^{p} \chi, \hspace{3mm} \tilde{w}_{q+1}^{c} \triangleq w_{q+1}^{c} \chi, \hspace{3mm} \tilde{w}_{q+1}^{t} \triangleq w_{q+1}^{t} \chi^{2}, \hspace{3mm} \tilde{w}_{q+1}^{o} \triangleq w_{q+1}^{o} \chi^{2}, \label{define tilde w} \\
& \tilde{d}_{q+1}^{p} \triangleq d_{q+1}^{p} \chi, \hspace{4mm} \tilde{d}_{q+1}^{c} \triangleq d_{q+1}^{c} \chi, \hspace{4mm} \tilde{d}_{q+1}^{t} \triangleq d_{q+1}^{t} \chi^{2}, \hspace{5mm} \tilde{d}_{q+1}^{o} \triangleq d_{q+1}^{o} \chi^{2},  \label{define tilde d}
\end{align}
\end{subequations} 
and finally 
\begin{subequations}\label{define w and d}
\begin{align}
& w_{q+1} \triangleq \tilde{w}_{q+1}^{p} + \tilde{w}_{q+1}^{c} + \tilde{w}_{q+1}^{t} + \tilde{w}_{q+1}^{o}, \hspace{3mm} v_{q+1} \triangleq v_{l} + w_{q+1}, \label{define w}\\
& d_{q+1} \triangleq \tilde{d}_{q+1}^{p} + \tilde{d}_{q+1}^{c} +  \tilde{d}_{q+1}^{t} + \tilde{d}_{q+1}^{o}, \hspace{6mm} \Theta_{q+1} \triangleq \Theta_{l} + d_{q+1}. \label{define d}
\end{align}
\end{subequations} 

\subsubsection{Case $t \in ( (2 \sigma_{q-1}) \wedge T_{L}, T_{L} ]$}
If $ T_{L} \leq 2 \sigma_{q-1}$, then there is nothing to prove and thus we assume that $2 \sigma_{q-1} < T_{L}$ which implies that $\chi(t) = 1$ on $(2\sigma_{q-1}, T_{L}]$. We first deduce the following estimates using Lemma \ref{Lemma A.7} similarly to previous works (e.g. \cite[Equations (5.24)-(5.27)]{LZ23}): for all $t \in ( 2 \sigma_{q-1}, T_{L} ]$, all $\rho \in (1,\infty)$, and all $N \in \mathbb{N}$, 
\begin{subequations}\label{est 178}
\begin{align}
&  \lVert  w_{q+1}^{p} (t) \rVert_{L_{x}^{\rho}}  \lesssim  \sum_{\xi \in \Lambda_{v} \cup \Lambda_{\Theta}} \lvert g_{\xi} (t) \rvert  ( l^{-\frac{5}{2}} \delta_{q+1}^{\frac{1}{2}} M_{L}^{\frac{1}{2}} + \gamma_{q+1}^{\frac{1}{2}})  r_{\bot}^{\frac{1}{\rho} -\frac{1}{2}} r_{\lVert}^{\frac{1}{\rho} -\frac{1}{2}}, \label{est 178a} \\
& \lVert  d_{q+1}^{p} (t) \rVert_{L_{x}^{\rho}}  \lesssim \sum_{\xi \in \Lambda_{\Theta}} \lvert g_{\xi} (t) \rvert  ( l^{-\frac{5}{2}} \delta_{q+1}^{\frac{1}{2}} M_{L}^{\frac{1}{2}}) r_{\bot}^{\frac{1}{\rho} -\frac{1}{2}} r_{\lVert}^{\frac{1}{\rho} -\frac{1}{2}}, \label{est 178b} \\
& \lVert \nabla^{N} w_{q+1}^{p} (t) \rVert_{L_{x}^{\rho}}  \lesssim  \sum_{\xi \in \Lambda_{v} \cup \Lambda_{\Theta}} \lvert g_{\xi} (t) \rvert  l^{- \frac{5}{2}} ( \delta_{q+1}^{\frac{1}{2}} M_{L}^{\frac{1}{2}} + \gamma_{q+1}^{\frac{1}{2}})  r_{\bot}^{\frac{1}{\rho} - \frac{1}{2}} r_{\lVert}^{\frac{1}{\rho} - \frac{1}{2}} \lambda_{q+1}^{N}, \label{est 178c} \\
& \lVert \nabla^{N} d_{q+1}^{p} (t) \rVert_{L_{x}^{\rho}}  \lesssim \sum_{\xi \in \Lambda_{\Theta}} \lvert g_{\xi} (t) \rvert  l^{- \frac{5}{2}} ( \delta_{q+1}^{\frac{1}{2}} M_{L}^{\frac{1}{2}})  r_{\bot}^{\frac{1}{\rho} - \frac{1}{2}} r_{\lVert}^{\frac{1}{\rho} - \frac{1}{2}} \lambda_{q+1}^{N}, \label{est 178d}  \\
& \lVert w_{q+1}^{c} (t) \rVert_{L_{x}^{\rho}} \lesssim   \sum_{\xi \in \Lambda_{v} \cup \Lambda_{\Theta}} \lvert g_{\xi} (t) \rvert l^{-\frac{5}{2}} ( \delta_{q+1}^{\frac{1}{2}} M_{L}^{\frac{1}{2}} + \gamma_{q+1}^{\frac{1}{2}}) r_{\bot}^{\frac{1}{\rho} + \frac{1}{2}} r_{\lVert}^{\frac{1}{\rho} - \frac{3}{2}}, \label{est 178e}  \\
& \lVert d_{q+1}^{c} (t) \rVert_{L_{x}^{\rho}} \lesssim \sum_{\xi \in \Lambda_{\Theta}} \lvert g_{\xi} (t) \rvert l^{-\frac{5}{2}} \delta_{q+1}^{\frac{1}{2}} M_{L}^{\frac{1}{2}} r_{\bot}^{\frac{1}{\rho} + \frac{1}{2}} r_{\lVert}^{\frac{1}{\rho} - \frac{3}{2}}, \label{est 178f} \\
& \lVert \nabla^{N} w_{q+1}^{c} (t) \rVert_{L_{x}^{\rho}}  \lesssim  \sum_{\xi \in \Lambda_{v} \cup \Lambda_{\Theta}} \lvert g_{\xi} (t) \rvert l^{-\frac{5}{2}} ( \delta_{q+1}^{\frac{1}{2}} M_{L}^{\frac{1}{2}} + \gamma_{q+1}^{\frac{1}{2}}) r_{\bot}^{\frac{1}{\rho} + \frac{1}{2}} r_{\lVert}^{\frac{1}{\rho} - \frac{3}{2}} \lambda_{q+1}^{N}, \label{est 178g} \\
& \lVert \nabla^{N} d_{q+1}^{c} (t) \rVert_{L_{x}^{\rho}}  \lesssim \sum_{\xi \in \Lambda_{\Theta}} \lvert g_{\xi} (t) \rvert l^{-\frac{5}{2}} ( \delta_{q+1}^{\frac{1}{2}} M_{L}^{\frac{1}{2}}) r_{\bot}^{\frac{1}{\rho} + \frac{1}{2}} r_{\lVert}^{\frac{1}{\rho} - \frac{3}{2}} \lambda_{q+1}^{N}, \label{est 178h}  \\
&  \lVert w_{q+1}^{t} (t) \rVert_{L_{x}^{\rho}} \lesssim \mu^{-1}  \sum_{\xi \in \Lambda_{v} \cup \Lambda_{\Theta}} \lvert g_{\xi} (t) \rvert^{2} ( l^{-5} \delta_{q+1} M_{L} + \gamma_{q+1}) r_{\lVert}^{\frac{1}{\rho} - 1} r_{\bot}^{\frac{1}{\rho} - 1}, \label{est 178i} \\
&  \lVert d_{q+1}^{t} (t) \rVert_{L_{x}^{\rho}}  \lesssim \mu^{-1} \sum_{\xi \in \Lambda_{\Theta}} \lvert g_{\xi} (t) \rvert^{2} l^{-5} \delta_{q+1} M_{L} r_{\lVert}^{\frac{1}{\rho} - 1} r_{\bot}^{\frac{1}{\rho} - 1}, \label{est 178j} \\
&  \lVert \nabla^{N} w_{q+1}^{t} (t) \rVert_{L_{x}^{\rho}} \lesssim \mu^{-1}  \sum_{\xi \in \Lambda_{v} \cup \Lambda_{\Theta}}  \lvert g_{\xi}(t) \rvert^{2} l^{-5} [ \delta_{q+1} M_{L} + \gamma_{q+1}] r_{\lVert}^{\frac{1}{\rho} - 1} r_{\bot}^{\frac{1}{\rho} -1} \lambda_{q+1}^{N}, \label{est 178k} \\
& \lVert \nabla^{N} d_{q+1}^{t} (t) \rVert_{L_{x}^{\rho}} \lesssim  \mu^{-1} \sum_{\xi \in \Lambda_{\Theta}} \lvert g_{\xi}(t) \rvert^{2} l^{-5} \delta_{q+1} M_{L} r_{\lVert}^{\frac{1}{\rho} - 1} r_{\bot}^{\frac{1}{\rho} -1} \lambda_{q+1}^{N}, \label{est 178l}  \\
&  \lVert w_{q+1}^{o} (t) \rVert_{L_{x}^{\rho}} \lesssim \sigma^{-1}  l^{- 33} ( \delta_{q+1} M_{L} + \gamma_{q+1}), \label{est 178m} \\
& \lVert d_{q+1}^{o} (t) \rVert_{L_{x}^{\rho}} \lesssim \sigma^{-1} l^{- 20} \delta_{q+1} M_{L}, \label{est 178n} \\
&  \lVert \nabla^{N} w_{q+1}^{o} (t) \rVert_{L_{x}^{\rho}} \lesssim \sigma^{-1}  l^{-28N - 33} ( \delta_{q+1} M_{L} + \gamma_{q+1}),  \label{est 178o}\\
& \lVert \nabla^{N} d_{q+1}^{o} (t) \rVert_{L_{x}^{\rho}}  \lesssim \sigma^{-1} l^{-15N - 20} \delta_{q+1} M_{L}. \label{est 178p}
\end{align}
\end{subequations} 
Over the whole temporal domain $[0, T_{L}]$, we obtain the following estimates: for all $N \in \mathbb{N}$, 
\begin{subequations}\label{est 179} 
\begin{align}
&  \lVert w_{q+1}^{p} \rVert_{C_{[0, T_{L}], x}^{N}} + \lVert d_{q+1}^{p} \rVert_{C_{[0, T_{L}], x}^{N}} + \lVert w_{q+1}^{c} \rVert_{C_{[0, T_{L}], x}^{N}} + \lVert d_{q+1}^{c} \rVert_{C_{[0, T_{L}], x}^{N}}   \ll \lambda_{q+1}^{\frac{5N}{2} + \frac{3}{2}}, \label{est 179a}   \\
&\lVert w_{q+1}^{t} \rVert_{C_{[0, T_{L}], x}^{N}} + \lVert d_{q+1}^{t} \rVert_{C_{[0, T_{L}], x}^{N}}  \ll \lambda_{q+1}^{\frac{5N}{2} + \frac{5}{2}}, \hspace{3mm}  \lVert w_{q+1}^{o} \rVert_{C_{[0,T_{L}], x}^{N}} + \lVert d_{q+1}^{o} \rVert_{C_{[0,T_{L}], x}^{N}}  \ll \lambda_{q+1}^{(1-4\epsilon)N}.
\end{align}
\end{subequations} 
Next, using the improved H$\ddot{\mathrm{o}}$lder's inequality (e.g. \cite[Proposition 3]{BMS21}), we can estimate 
\begin{equation}\label{est 185} 
\lVert \tilde{w}_{q+1}^{p} (t) \rVert_{L_{x}^{2}} \lesssim  \sum_{\xi  \in \Lambda_{v} \cup \Lambda_{\Theta}} \lvert g_{\xi} (t) \rvert ( \lVert \rho_{v}(t) \rVert_{L_{x}^{1}}^{\frac{1}{2}} + \delta_{q+1}^{\frac{1}{2}} M_{L}^{\frac{1}{2}} + \gamma_{q+1}^{\frac{1}{2}}). 
\end{equation} 
We recall that $2\sigma_{q-1} < T_{L}$ by assumption, choose $n_{1}, n_{2} \in \mathbb{N}$ such that $\frac{n_{1} -1}{\sigma} < 2 \sigma_{q-1} \leq \frac{n_{1}}{\sigma}, \frac{n_{2} -1}{\sigma} \leq T_{L} < \frac{n_{2}}{\sigma}$, define $\rho_{v}(t) \equiv \rho_{v}(T_{L})$ for all $t \in (T_{L}, \frac{n_{2}}{\sigma}]$, and estimate 
\begin{align}
&\Bigg\lVert g_{\xi}  \lVert \rho_{v}  \rVert_{L_{x}^{1}}^{\frac{1}{2}} \Bigg\rVert_{L_{ [2 \sigma_{q-1}, T_{L} ]}^{2}} \lesssim  \lVert g_{\xi} \rVert_{L^{2} ( [ \frac{n_{1} -1}{\sigma}, \frac{n_{1}}{\sigma} ])} \lVert \rho_{v} \rVert_{C_{[0, T_{L}], x}}^{\frac{1}{2}}  \nonumber \\
& \hspace{10mm} + \lVert g_{\xi} \rVert_{L^{2} ( [ \frac{n_{1}}{\sigma}, \frac{n_{2}}{\sigma} ])}  \lVert \rho_{v} \rVert_{L_{[ \frac{n_{1}}{\sigma}, \frac{n_{2}}{\sigma} ], x}^{1}} + \sigma^{-\frac{1}{2}} \lVert g_{\xi} \rVert_{L^{2} ( [0,1])} \Bigg\lVert \lVert \rho_{v} \rVert_{L_{x}^{1}}^{\frac{1}{2}} \Bigg\rVert_{C^{0,1} ([\frac{n_{1}}{\sigma}, \frac{n_{2}}{\sigma} ])}  \Bigg(T_{L} + \frac{1}{\sigma} - 2 \sigma_{q-1}\Bigg)^{\frac{1}{2}}  \nonumber \\
& \hspace{30mm}  \lesssim ( \delta_{q+1}^{\frac{1}{2}} M_{L}^{\frac{1}{2}} + \gamma_{q+1}^{\frac{1}{2}}) \Bigg(T_{L} + \frac{1}{\sigma} - 2 \sigma_{q-1}\Bigg)^{\frac{1}{2}}. \label{est 184} 
\end{align} 
Thus, taking $L^{2}([2 \sigma_{q-1}, T_{L}])$-norm over integrating \eqref{est 185}, relying on \eqref{est 184} and \eqref{define g}, we are able to deduce for $M_{0} \gg 1$ sufficiently large, 
\begin{equation}\label{est 187} 
\lVert \tilde{w}_{q+1}^{p} \rVert_{L_{ [2 \sigma_{q-1}, T_{L} ], x}^{2}} \leq \frac{1}{2} M_{0}  ( \delta_{q+1}^{\frac{1}{2}} M_{L}^{\frac{1}{2}} + \gamma_{q+1}^{\frac{1}{2}}) ( M_{L}^{\frac{1}{2}} - 2 \sigma_{q-1})^{\frac{1}{2}}; 
\end{equation} 
an analogous upper bound without the addition of $\gamma_{q+1}^{\frac{1}{2}}$ in the first parenthesis holds for $\lVert \tilde{d}_{q+1}^{p} \rVert_{L_{[2 \sigma_{q-1}, T_{L}], x}^{2}}$. 
 
\subsubsection{Case $t \in ( \sigma_{q+1} \wedge T_{L}, T_{L} ]$}
If $\sigma_{q+1} \geq T_{L}$, then there is nothing to prove and thus we assume that $\sigma_{q+1} < T_{L}$. Therefore, $\chi (t) \in (0, 1]$ according to \eqref{define chi}. We can estimate for all $t \in ( \sigma_{q+1}, T_{L}]$, using the improved H$\ddot{\mathrm{o}}$lder's inequality (e.g. \cite[Proposition 3]{BMS21}) again,  
\begin{equation}\label{est 188}
\lVert \tilde{w}_{q+1}^{p} (t) \rVert_{L_{x}^{2}} \lesssim \sum_{\xi  \in \Lambda_{v} \cup \Lambda_{\Theta}}\lvert g_{\xi}(t) \rvert \Bigg[ \lVert \rho_{\Theta}(t) \rVert_{L_{x}^{1}}^{\frac{1}{2}} + \lVert \rho_{v}(t) \rVert_{L_{x}^{1}}^{\frac{1}{2}} +  ( M_{L} + qA)^{\frac{1}{2}} + \gamma_{q+1}^{\frac{1}{2}}  \Bigg]. 
\end{equation} 
Consequently, similarly to the derivation of \eqref{est 187}, for $n_{3}$, $n_{4} \in \mathbb{N}$ such that $\frac{n_{3} -1}{\sigma} < \sigma_{q+1} \leq \frac{n_{3}}{\sigma}, \frac{n_{4}-1}{\sigma} < ( 2 \sigma_{q-1}) \wedge T_{L} \leq \frac{n_{4}}{\sigma}$, for $M_{0} \gg 1$ sufficiently large, 
\begin{align}
& \lVert \tilde{w}_{q+1}^{p} \rVert_{L_{ [ \sigma_{q+1}, ( 2\sigma_{q-1}) \wedge T_{L} ], x}^{2}} \overset{\eqref{define g}}{\lesssim} \Bigg( ( M_{L} +qA)^{\frac{1}{2}} + \gamma_{q+1}^{\frac{1}{2}} \Bigg) \Bigg[ ( 2 \sigma_{q-1}) \wedge T_{L} + \frac{2}{\sigma} - \sigma_{q+1} \Bigg]^{\frac{1}{2}}  \nonumber \\
&+ \Bigg( \int_{\frac{n_{3} -1}{\sigma}}^{\frac{n_{4}}{\sigma}} \lVert \rho_{v} \rVert_{L_{x}^{1}}  + \lVert \rho_{\Theta} \rVert_{L_{x}^{1}}  dt \Bigg)^{\frac{1}{2}}  + \sigma^{-\frac{1}{2}} \Bigg[( 2 \sigma_{q-1}) \wedge T_{L} + \frac{2}{\sigma} - \sigma_{q+1} \Bigg]^{\frac{1}{2}} l^{-\frac{41}{2}} ( M_{L} + qA + \gamma_{q+1})  \nonumber \\
&\leq  \frac{1}{2} M_{0}  \Bigg( ( M_{L} + qA)^{\frac{1}{2}} + \gamma_{q+1}^{\frac{1}{2}} \Bigg) ( 2 \sigma_{q-1})^{\frac{1}{2}};  \label{est 194 star}
\end{align}
an identical upper bound holds for $\lVert \tilde{d}_{q+1}^{p} \rVert_{L_{[ \sigma_{q+1}, ( 2\sigma_{q-1}) \wedge T_{L} ], x}^{2}}$. 
 
\subsubsection{Verifications of \eqref{est 123a}, \eqref{est 123b}, \eqref{est 123c}}
We apply \eqref{est 187} and \eqref{est 194 star} together to conclude 
\begin{equation}\label{est 220}
\lVert \tilde{w}_{q+1}^{p} \rVert_{L_{ [\sigma_{q+1} \wedge T_{L}, T_{L} ], x}^{2}}   \lesssim  \Bigg( (M_{L} + qA)^{\frac{1}{2}} + \gamma_{q+1}^{\frac{1}{2}} \Bigg) M_{L}^{\frac{1}{4}}, 
\end{equation} 
and the same upper bound holds for $\lVert \tilde{d}_{q+1}^{p} \rVert_{L_{[\sigma_{q+1} \wedge T_{L}, T_{L} ], x}^{2}}$. Next, for $t \in ( \sigma_{q+1}, T_{L}]$, we can get the same estimates by same derivations of \eqref{est 178}: for all $\rho \in (1,\infty)$, and all $N \in \mathbb{N}$,  
\begin{subequations}\label{est 194}
\begin{align}
&  \lVert  w_{q+1}^{p} (t) \rVert_{L_{x}^{\rho}}  \lesssim  \sum_{\xi \in \Lambda_{v} \cup \Lambda_{\Theta}} \lvert g_{\xi} (t) \rvert  ( l^{-\frac{5}{2}} [M_{L} + qA]^{\frac{1}{2}} + \gamma_{q+1}^{\frac{1}{2}}) r_{\bot}^{\frac{1}{\rho} -\frac{1}{2}} r_{\lVert}^{\frac{1}{\rho} -\frac{1}{2}}, \label{est 194a} \\
& \lVert  d_{q+1}^{p} (t) \rVert_{L_{x}^{\rho}}  \lesssim \sum_{\xi \in \Lambda_{\Theta}} \lvert g_{\xi} (t) \rvert  ( l^{-\frac{5}{2}}  [M_{L} + qA]^{\frac{1}{2}})  r_{\bot}^{\frac{1}{\rho} -\frac{1}{2}} r_{\lVert}^{\frac{1}{\rho} -\frac{1}{2}}, \label{est 194b} \\
& \lVert \nabla^{N} w_{q+1}^{p} (t) \rVert_{L_{x}^{\rho}}  \lesssim  \sum_{\xi \in \Lambda_{v} \cup \Lambda_{\Theta}} \lvert g_{\xi} (t) \rvert  l^{- \frac{5}{2}} ([M_{L}+ qA]^{\frac{1}{2}} + \gamma_{q+1}^{\frac{1}{2}}) r_{\bot}^{\frac{1}{\rho} - \frac{1}{2}} r_{\lVert}^{\frac{1}{\rho} - \frac{1}{2}} \lambda_{q+1}^{N}, \label{est 194c} \\
& \lVert \nabla^{N} d_{q+1}^{p} (t) \rVert_{L_{x}^{\rho}}  \lesssim \sum_{\xi \in \Lambda_{\Theta}} \lvert g_{\xi} (t) \rvert  l^{- \frac{5}{2}} [M_{L} + qA]^{\frac{1}{2}}  r_{\bot}^{\frac{1}{\rho} - \frac{1}{2}} r_{\lVert}^{\frac{1}{\rho} - \frac{1}{2}} \lambda_{q+1}^{N}, \label{est 194d}  \\
& \lVert w_{q+1}^{c} (t) \rVert_{L_{x}^{\rho}} \lesssim \sum_{\xi \in \Lambda_{v} \cup \Lambda_{\Theta}}  \lvert g_{\xi} (t) \rvert l^{-\frac{5}{2}} ( [M_{L}+ qA]^{\frac{1}{2}} + \gamma_{q+1}^{\frac{1}{2}}) r_{\bot}^{\frac{1}{\rho} + \frac{1}{2}} r_{\lVert}^{\frac{1}{\rho} - \frac{3}{2}}, \label{est 194e}  \\
& \lVert d_{q+1}^{c} (t) \rVert_{L_{x}^{\rho}} \lesssim \sum_{\xi \in \Lambda_{v} \cup \Lambda_{\Theta}} \lvert g_{\xi} (t) \rvert l^{-\frac{5}{2}} [M_{L}+ qA]^{\frac{1}{2}} r_{\bot}^{\frac{1}{\rho} + \frac{1}{2}} r_{\lVert}^{\frac{1}{\rho} - \frac{3}{2}}, \label{est 194f} \\
& \lVert \nabla^{N} w_{q+1}^{c} (t) \rVert_{L_{x}^{\rho}}  \lesssim \sum_{\xi \in \Lambda_{v} \cup \Lambda_{\Theta}} \lvert g_{\xi} (t) \rvert l^{-\frac{5}{2}} ([M_{L}+qA]^{\frac{1}{2}} + \gamma_{q+1}^{\frac{1}{2}}) r_{\bot}^{\frac{1}{\rho} + \frac{1}{2}} r_{\lVert}^{\frac{1}{\rho} - \frac{3}{2}} \lambda_{q+1}^{N}, \label{est 194g} \\
& \lVert \nabla^{N} d_{q+1}^{c} (t) \rVert_{L_{x}^{\rho}}  \lesssim \sum_{\xi \in \Lambda_{v} \cup \Lambda_{\Theta}} \lvert g_{\xi} (t) \rvert l^{-\frac{5}{2}} ([M_{L}+qA]^{\frac{1}{2}}) r_{\bot}^{\frac{1}{\rho} + \frac{1}{2}} r_{\lVert}^{\frac{1}{\rho} - \frac{3}{2}} \lambda_{q+1}^{N}, \label{est 194h}  \\
&  \lVert w_{q+1}^{t} (t) \rVert_{L_{x}^{\rho}} \lesssim \mu^{-1}  \sum_{\xi \in \Lambda_{v} \cup \Lambda_{\Theta}} \lvert g_{\xi} (t) \rvert^{2}  l^{-5} (M_{L} + qA + \gamma_{q+1}) r_{\lVert}^{\frac{1}{\rho} - 1} r_{\bot}^{\frac{1}{\rho} - 1}, \label{est 194i} \\
&  \lVert d_{q+1}^{t} (t) \rVert_{L_{x}^{\rho}}  \lesssim \mu^{-1} \sum_{\xi \in \Lambda_{v} \cup \Lambda_{\Theta}} \lvert g_{\xi} (t) \rvert^{2} l^{-5} (M_{L}+ qA) r_{\lVert}^{\frac{1}{\rho} - 1} r_{\bot}^{\frac{1}{\rho} - 1}, \label{est 194j} \\
&  \lVert \nabla^{N} w_{q+1}^{t} (t) \rVert_{L_{x}^{\rho}} \lesssim \mu^{-1} \sum_{\xi \in \Lambda_{v} \cup \Lambda_{\Theta}} \lvert g_{\xi}(t) \rvert^{2} l^{-5} (M_{L}+ qA + \gamma_{q+1}) r_{\lVert}^{\frac{1}{\rho} - 1} r_{\bot}^{\frac{1}{\rho} -1} \lambda_{q+1}^{N}, \label{est 194k} \\
& \lVert \nabla^{N} d_{q+1}^{t} (t) \rVert_{L_{x}^{\rho}} \lesssim  \mu^{-1} \sum_{\xi \in \Lambda_{v} \cup \Lambda_{\Theta}} \lvert g_{\xi}(t) \rvert^{2} l^{-5} (M_{L}+ qA) r_{\lVert}^{\frac{1}{\rho} - 1} r_{\bot}^{\frac{1}{\rho} -1} \lambda_{q+1}^{N}, \label{est 194l}  \\
&  \lVert w_{q+1}^{o} (t) \rVert_{L_{x}^{\rho}} \lesssim \sigma^{-1}  l^{- 33} ( M_{L} + qA + \gamma_{q+1}), \label{est 194m} \\
& \lVert d_{q+1}^{o} (t) \rVert_{L_{x}^{\rho}} \lesssim \sigma^{-1}  l^{- 20} (M_{L}+ qA), \label{est 194n} \\
&  \lVert \nabla^{N} w_{q+1}^{o} (t) \rVert_{L_{x}^{\rho}} \lesssim \sigma^{-1} l^{-28N - 33} (M_{L} + qA+ \gamma_{q+1}),  \label{est 194o}\\
& \lVert \nabla^{N} d_{q+1}^{o} (t) \rVert_{L_{x}^{\rho}}  \lesssim \sigma^{-1} l^{-15N - 20} (M_{L}+ qA).  \label{est 194p}
\end{align}
\end{subequations} 
Next, for all $t \in ( ( 2 \sigma_{q-1} \wedge T_{L}, T_{L} ]$, we can rely on \eqref{est 178} to deduce 
\begin{align}
& \lVert \tilde{w}_{q+1}^{c} (t) + \tilde{w}_{q+1}^{t} (t) + \tilde{w}_{q+1}^{o} (t) \rVert_{L_{x}^{2}} + \lVert \tilde{d}_{q+1}^{c} (t) + \tilde{d}_{q+1}^{t} (t) + \tilde{d}_{q+1}^{o} (t) \rVert_{L_{x}^{2}}  \nonumber \\ 
\lesssim& \sum_{\xi \in \Lambda_{v} \cup \Lambda_{\Theta}} ( \delta_{q+1}^{\frac{1}{2}} M_{L}^{\frac{1}{2}} + \gamma_{q+1}^{\frac{1}{2}}) [ l^{-\frac{5}{2}} \lambda_{q+1}^{-4\epsilon} \lvert g_{\xi} (t) \rvert + l^{-\frac{11}{2}} \lambda_{q+1}^{-\frac{1}{2} + 2 \epsilon} \lvert g_{\xi}(t) \rvert^{2} + l^{-\frac{67}{2}} \lambda_{q+1}^{-2\epsilon}]. \label{est 274} 
\end{align}
Taking $L^{2} ([(2 \sigma_{q-1}) \wedge T_{L}, T_{L} ])$-norm on \eqref{est 274} and applying \eqref{est 195}, \eqref{define g}, and \eqref{define h} lead us to, for $M_{0} \gg 1$, 
\begin{align}
& \lVert \tilde{w}_{q+1}^{c} + \tilde{w}_{q+1}^{t} + \tilde{w}_{q+1}^{o} \rVert_{L_{[ ( 2 \sigma_{q-1}) \wedge T_{L}, T_{L} ], x}^{2}} + \lVert  \tilde{d}_{q+1}^{c} + \tilde{d}_{q+1}^{t} + \tilde{d}_{q+1}^{o} \rVert_{L_{ [ ( 2 \sigma_{q-1}) \wedge T_{L}, T_{L} ], x}^{2}}  \label{est 197} \\
\lesssim& ( \delta_{q+1}^{\frac{1}{2}} M_{L}^{\frac{1}{2}} + \gamma_{q+1}^{\frac{1}{2}}) \Bigg(T_{L} - ( 2 \sigma_{q-1}) \wedge T_{L} + \frac{2}{\sigma}\Bigg)^{\frac{1}{2}} l^{-\frac{11}{2}} \lambda_{q+1}^{-\epsilon} \leq \frac{M_{0}}{4}  ( M_{L}^{\frac{1}{2}} \delta_{q+1}^{\frac{1}{2}} + \gamma_{q+1}^{\frac{1}{2}}) (M_{L}^{\frac{1}{2}} - 2 \sigma_{q-1})^{\frac{1}{2}}.    \nonumber 
\end{align}
Similarly, we can derive 
\begin{align}
& \lVert \tilde{w}_{q+1}^{c} + \tilde{w}_{q+1}^{t} + \tilde{w}_{q+1}^{o} \rVert_{L_{ ( \sigma_{q+1} \wedge T_{L}, ( 2\sigma_{q-1}) \wedge T_{L} ], x}^{2}} +  \lVert \tilde{d}_{q+1}^{c} + \tilde{d}_{q+1}^{t} + \tilde{d}_{q+1}^{o} \rVert_{L_{ ( \sigma_{q+1} \wedge T_{L}, ( 2\sigma_{q-1}) \wedge T_{L} ], x}^{2}}   \nonumber \\ 
\lesssim&  ( M_{L} + qA + \gamma_{q+1})^{\frac{1}{2}} \Bigg( l^{-\frac{5}{2}} \lambda_{q+1}^{-4\epsilon} \Bigg( \int_{\sigma_{q+1} \wedge T_{L}}^{(2 \sigma_{q-1}) \wedge T_{L}} \lvert g_{\xi}(t) \rvert^{2} dt \Bigg)^{\frac{1}{2}}  \nonumber \\
& \hspace{10mm} + l^{-\frac{11}{2}} \lambda_{q+1}^{-\frac{1}{2} + 2 \epsilon} \Bigg( \int_{ \sigma_{q+1} \wedge T_{L}}^{(2 \sigma_{q-1} ) \wedge T_{L}} \lvert g_{\xi} (t) \rvert^{4} dt \Bigg)^{\frac{1}{2}} + l^{-\frac{67}{2}} \lambda_{q+1}^{-2\epsilon} \Bigg( \int_{\sigma_{q+1} \wedge T_{L}}^{(2 \sigma_{q-1} ) \wedge T_{L}} \lvert h_{\xi} (t) \rvert^{2} dt \Bigg)^{\frac{1}{2}} \Bigg)   \nonumber \\
\leq& \frac{M_{0}}{4} \Bigg( ( M_{L} + qA)^{\frac{1}{2}} + \gamma_{q+1}^{\frac{1}{2}} \Bigg) ( 2\sigma_{q-1})^{\frac{1}{2}}. \label{est 199} 
\end{align} 
Thus,  we are now able to conclude by applying \eqref{est 187} and \eqref{est 197} to \eqref{define w}, 
\begin{equation}\label{est 200}
 \lVert w_{q+1} \rVert_{L_{[ ( 2 \sigma_{q-1}) \wedge T_{L}, T_{L} ], x}^{2}} \leq  \frac{3}{4} M_{0}   ( M_{L}^{\frac{1}{2}} \delta_{q+1}^{\frac{1}{2}} + \gamma_{q+1}^{\frac{1}{2}}) (M_{L}^{\frac{1}{2}} - 2 \sigma_{q-1})^{\frac{1}{2}}; 
\end{equation} 
an identical bound holds for $\lVert d_{q+1} \rVert_{L_{[ ( 2 \sigma_{q-1}) \wedge T_{L}, T_{L} ], x}^{2}}$. Next,  by applying \eqref{est 194 star} and \eqref{est 199} to \eqref{define w}, we deduce 
\begin{equation}\label{est 201} 
 \lVert w_{q+1} \rVert_{L_{[\sigma_{q+1} \wedge T_{L}, ( 2 \sigma_{q-1}) \wedge T_{L} ], x}^{2}} \leq \frac{3}{4} M_{0} \Bigg( ( M_{L} + qA)^{\frac{1}{2}} + \gamma_{q+1}^{\frac{1}{2}} \Bigg) ( 2\sigma_{q-1})^{\frac{1}{2}};
\end{equation} 
the same bound applies for $\lVert d_{q+1} \rVert_{L_{[\sigma_{q+1} \wedge T_{L}, ( 2 \sigma_{q-1}) \wedge T_{L} ], x}^{2}}$. We can now verify \eqref{est 123a} using \eqref{define w}, \eqref{est 200}, and \eqref{hypothesis 3}, 
\begin{equation}\label{est 256}
 \lVert v_{q+1} - v_{q} \rVert_{L_{( ( 2 \sigma_{q-1} ) \wedge T_{L}, T_{L} ], x}^{2}}  \leq M_{0} (M_{L}^{\frac{1}{2}}\delta_{q+1}^{\frac{1}{2}}   + \gamma_{q+1}^{\frac{1}{2}}) ( M_{L}^{\frac{1}{2}} - 2 \sigma_{q-1})^{\frac{1}{2}}; 
\end{equation} 
and identical upper bound holds for $\lVert \Theta_{q+1} - \Theta_{q} \rVert_{L_{( ( 2 \sigma_{q-1} ) \wedge T_{L}, T_{L} ], x}^{2}}$. We also verify \eqref{est 123b} via \eqref{est 201} and \eqref{hypothesis 3}: 
\begin{equation*}
\lVert v_{q+1} - v_{q} \rVert_{L_{[\sigma_{q+1} \wedge T_{L}, ( 2 \sigma_{q-1}) \wedge T_{L} ], x}^{2}} \leq M_{0} \Bigg( (M_{L}+ qA)^{\frac{1}{2}} + \gamma_{q+1}^{\frac{1}{2}} \Bigg) (2 \sigma_{q-1})^{\frac{1}{2}};
\end{equation*}  
the same upper bound holds for $\lVert \Theta_{q+1} - \Theta_{q} \rVert_{L_{[\sigma_{q+1} \wedge T_{L}, ( 2 \sigma_{q-1}) \wedge T_{L} ], x}^{2}}$. 
Finally, for $t \in [t_{q+1}, \sigma_{q+1} \wedge T_{L}]$, we have $\chi(t) = 0$ due to \eqref{define chi}. Therefore, $w_{q+1} \equiv d_{q+1} \equiv 0$ due to \eqref{define w and d}. By \eqref{hypothesis 2} we have $v_{q} \equiv \Theta_{q} \equiv 0$ on $[t_{q}, \sigma_{q} \wedge T_{L}]$. Because $2l \ll \delta_{q+1}$, it follows that on $[t_{q+1}, \sigma_{q+1} \wedge T_{L}]$ $v_{l} \equiv \Theta_{l} \equiv 0$  and hence $v_{q+1} \equiv \Theta_{q+1} \equiv 0$ which verifies \eqref{est 123c}.

\subsection{Verifications of \eqref{hypothesis 1a}-\eqref{hypothesis 1b} at level $q+1$}
As a consequence of \eqref{est 123a}, \eqref{est 123b}, and \eqref{est 123c} that we already verified, we see that
\begin{align}\label{est 275} 
\lVert v_{q+1} - v_{q} \rVert_{L_{[0, T_{L} ], x}^{2}} \leq M_{0} ( M_{L} + qA)^{\frac{1}{2}} ( 2 \sigma_{q-1})^{\frac{1}{2}} + M_{0} M_{L}^{\frac{3}{4}} \delta_{q+1}^{\frac{1}{2}} + M_{0} \gamma_{q+1}^{\frac{1}{2}} \sqrt{2} M_{L}^{\frac{1}{4}}. 
\end{align}
Consequently, 
\begin{align*}
\lVert v_{q+1} \rVert_{L_{[0, T_{L}], x}^{2}} \leq& M_{0}  \Bigg( M_{L}^{\frac{3}{4}} \sum_{r=1}^{q+1} \delta_{r}^{\frac{1}{2}} + \sqrt{2} M_{L}^{\frac{1}{4}} \sum_{r=1}^{q+1} \gamma_{r}^{\frac{1}{2}} \Bigg) + \sqrt{2}M_{0} (M_{L}+ A)^{\frac{1}{2}} \sum_{r=1}^{q} (r \sigma_{r-1})^{\frac{1}{2}}  \\ 
\leq& M_{0} \Bigg( M_{L}^{\frac{3}{4}} + \sqrt{2} M_{L}^{\frac{1}{4}} (K^{\frac{1}{2}} + 1) + 17 (M_{L} +A)^{\frac{1}{2}} \Bigg),
\end{align*}
which verifies  \eqref{hypothesis 1a}-\eqref{hypothesis 1b} at level $q+1$ as the same upper bound holds for $\lVert \Theta_{q+1} \rVert_{L_{[0, T_{L}], x}^{2}}$.  

\subsection{Verification of \eqref{est 123d} and \eqref{hypothesis 4} at level $q+1$}
For all $m \in (1, \infty)$, \eqref{est 194a}, \eqref{est 194e}, \eqref{est 194i}, and \eqref{est 194m} give us 
\begin{equation}\label{est 202} 
 \lVert w_{q+1} \rVert_{C_{[0, T_{L} ]} L_{x}^{m}} \lesssim \lambda_{q+1}^{\frac{3}{2} - 7 \epsilon + \frac{8\epsilon -2}{m}} l^{-3} + \lambda_{q+1}^{-2\epsilon} l^{-34}.
\end{equation} 
Consequently, we are able to verify \eqref{est 123d}: using \eqref{define w} and \eqref{est 202}, 
\begin{equation}\label{est 257}
 \lVert v_{q+1} - v_{q} \rVert_{C_{ [0, T_{L} ]}L_{x}^{p}}  \lesssim \lambda_{q+1}^{\frac{3}{2} - 7 \epsilon + \frac{8\epsilon -2}{m}} l^{-3} + \lambda_{q+1}^{-2\epsilon} l^{-34} +  \lVert v_{l} - v_{q} \rVert_{C_{ [0, T_{L} ], x}} \leq M_{L}^{\frac{1}{2}} \lambda_{q+1}^{-\frac{\epsilon}{112}}  \leq M_{L}^{\frac{1}{2}} \delta_{q+1}^{\frac{1}{2}};
\end{equation} 
the same upper bound applies for $\lVert \Theta_{q+1} - \Theta_{q} \rVert_{C_{ [0, T_{L} ] L_{x}^{p}}}$. In turn, \eqref{est 123d} now leads to \eqref{hypothesis 4} at level $q+1$, specifically, together with \eqref{constraint},  
\begin{equation*}
\lVert v_{q+1} \rVert_{C_{ [0, T_{L} ]} L_{x}^{p}} \leq \lVert v_{q+1} - v_{q} \rVert_{C_{ [0, T_{L} ]} L_{x}^{p}} + \lVert v_{q} \rVert_{C_{ [0, T_{L} ]} L_{x}^{p}} \leq M_{L}^{\frac{1}{2}} \sum_{r=1}^{q+1} \delta_{r}^{\frac{1}{2}} \leq M_{L}^{\frac{1}{2}}, 
\end{equation*} 
and the same upper bound holds for $\lVert \Theta_{q+1} \rVert_{C_{ [0, T_{L} ]} L_{x}^{p}}$.  Next, upon estimating $\lVert v_{q+1} \rVert_{C_{ [0,T_{L} ], x}^{1}}$ and $\lVert \Theta_{q+1} \rVert_{C_{ [0, T_{L} ], x}^{1}}$, when the temporal derivative $\partial_{t}$ falls on $\chi$, we know $\lVert \chi' \rVert_{C_{t}} \leq \sigma_{q+1}^{-1} = \delta_{q+1}^{-1} \lesssim l^{-1}$ due to \eqref{define chi} and \eqref{define A} which will not create any significant problems. Therefore, applying \eqref{est 179} to \eqref{define w} immediately leads us to 
\begin{equation*}
\lVert v_{q+1} \rVert_{C_{ [t_{q+1}, T_{L}], x}^{1}} + \lVert \Theta_{q+1} \rVert_{C_{[t_{q+1}, T_{L} ], x}^{1}} = \lVert w_{q+1} + v_{l} \rVert_{C_{[0, T_{L}], x}^{1}} +  \lVert d_{q+1} + \Theta_{l} \rVert_{C_{[0, T_{L}], x}^{1}}  \leq \lambda_{q+1}^{7} M_{L}^{\frac{1}{2}},
\end{equation*} 
verifying \eqref{hypothesis 3} at level $q+1$. 

\subsection{Verification of \eqref{est 127}}
For $t \in (2 \wedge T_{L}, T_{L} ]$, we have $\chi(t) = 1$ due to \eqref{define chi} and 
\begin{align}
\Bigg\lvert ( \lVert v_{q+1} \rVert_{L_{[ 2 \wedge T_{L}, T_{L}], x}^{2}}^{2} - \lVert \Theta_{q+1} \rVert_{L_{ [ 2 \wedge T_{L}, T_{L} ], x}^{2}}^{2} ) -& ( \lVert v_{q} \rVert_{L_{ [2 \wedge T_{L}, T_{L}], x}^{2}}^{2} - \lVert \Theta_{q} \rVert_{L_{ [2 \wedge T_{L}, T_{L} ], x}^{2}}^{2} ) \nonumber \\
& \hspace{15mm}  - 3 \gamma_{q+1} ( T_{L} - 2 \wedge T_{L} ) \Bigg\rvert  \leq \sum_{k=1}^{6} \RomanII_{k}, \label{est 215}
\end{align} 
where 
\begin{subequations}
\begin{align}
\RomanII_{1} \triangleq&  \Bigg\lvert \int_{2 \wedge T_{L}}^{T_{L}} \int_{\mathbb{T}^{3}} \lvert w_{q+1}^{p} \rvert^{2} - \lvert d_{q+1}^{p} \rvert^{2} dx dt - 3 \gamma_{q+1} (T_{L} - 2 \wedge T_{L}) \Bigg\rvert, \label{define RomanII1}  \\
\RomanII_{2} \triangleq&  2 \lVert w_{q+1}^{p} \cdot ( w_{q+1}^{c} + w_{q+1}^{t} + w_{q+1}^{o} ) \rVert_{L_{ [ 2\wedge T_{L}, T_{L}], x}^{1}} \nonumber \\
&+ 2 \lVert (w_{q+1}^{c} + w_{q+1}^{t} + w_{q+1}^{o} ) \cdot v_{l} \rVert_{L_{ [2 \wedge T_{L}, T_{L}], x}^{1}}, \label{define RomanII2} \\ 
\RomanII_{3} \triangleq& 2 \lVert d_{q+1}^{p} \cdot ( d_{q+1}^{c} + d_{q+1}^{t} + d_{q+1}^{o} ) \rVert_{L_{[2\wedge T_{L}, T_{L}], x}^{1}} + 2 \lVert (d_{q+1}^{c} + d_{q+1}^{t} + d_{q+1}^{o} ) \cdot \Theta_{l} \rVert_{L_{ [2 \wedge T_{L}, T_{L} ], x}^{1}}, \label{define RomanII3} \\    
\RomanII_{4} \triangleq&  \lVert w_{q+1}^{c} + w_{q+1}^{t} + w_{q+1}^{o} \rVert_{L_{ [2 \wedge T_{L}, T_{L}], x}^{2}}^{2} +  \lVert d_{q+1}^{c} + d_{q+1}^{t} + d_{q+1}^{o} \rVert_{L_{ [2 \wedge T_{L}, T_{L}], x}^{2}}^{2}, \label{define RomanII4} \\
\RomanII_{5} \triangleq& 2 \lVert w_{q+1}^{p} \cdot v_{l} \rVert_{L_{[ 2 \wedge T_{L}, T_{L} ], x}^{1}} + 2 \lVert d_{q+1}^{p} \cdot \Theta_{l} \rVert_{L_{ [ 2 \wedge T_{L}, T_{L} ], x}^{1}}, \label{define RomanII5} \\ 
\RomanII_{6} \triangleq& \Bigg\lvert \lVert v_{l} \rVert_{L_{ [2 \wedge T_{L}, T_{L} ], x}^{2}}^{2} - \lVert v_{q} \rVert_{L_{ [2 \wedge T_{L}, T_{L} ], x}^{2}}^{2} \Bigg\rvert + \Bigg\lvert \lVert \Theta_{l} \rVert_{L_{ [2 \wedge T_{L}, T_{L} ], x}^{2}}^{2} - \lVert \Theta_{q} \rVert_{L_{[2 \wedge T_{L}, T_{L} ], x}^{2}}^{2} \Bigg\rvert. \label{define RomanII6} 
\end{align} 
\end{subequations}  
Concerning $\RomanII_{1}$, using \eqref{define rhov} and \eqref{identities 1b}, we can estimate 
\begin{align}\label{est 210} 
\RomanII_{1} \leq \sum_{k=1}^{6} \RomanII_{1,k} 
\end{align}
where 
\begin{subequations}
\begin{align}
&\RomanII_{1,1} \triangleq 3 \epsilon_{v}^{-1} l T_{L},  \hspace{10mm} \RomanII_{1,2} \triangleq 3 \epsilon_{v}^{-1} \Bigg( \lVert \mathring{R}_{l}^{v} \rVert_{L_{ ( 2 \wedge T_{L}, T_{L} ], x}^{1}} + \lVert \mathring{G}^{\Theta} \rVert_{L_{( 2 \wedge T_{L}, T_{L} ], x}^{1}} \Bigg), \label{define RomanII1,1 and RomanII1,2} \\
&\RomanII_{1,3} \triangleq \sum_{\xi \in \Lambda_{v}} \Bigg\lvert \int_{2\wedge T_{L}}^{T_{L}} \int_{\mathbb{T}^{3}} a_{\xi}^{2} g_{\xi}^{2} \mathbb{P}_{\neq 0} \lvert W_{\xi} \rvert^{2} dx dt \Bigg\rvert, \label{define RomanII1,3}\\
& \RomanII_{1,4} \triangleq \sum_{\xi \in \Lambda_{\Theta}} \Bigg\lvert \int_{2 \wedge T_{L}}^{T_{L}} \int_{\mathbb{T}^{3}} a_{\xi}^{2} g_{\xi}^{2} \mathbb{P}_{\neq 0} ( \lvert W_{\xi} \rvert^{2} - \lvert D_{\xi} \rvert^{2} ) dx dt \Bigg\rvert, \label{define RomanII1,4}\\
& \RomanII_{1,5} \triangleq \sum_{\xi \in \Lambda_{v}} \Bigg\lvert \int_{2 \wedge T_{L}}^{T_{L}} (g_{\xi}^{2} -1) \lVert a_{\xi} \rVert_{L_{x}^{2}}^{2} dt \Bigg\rvert, \label{define RomanII1,5}\\
& \RomanII_{1,6} \triangleq \sum_{\xi, \xi' \in \Lambda_{v} \cup \Lambda_{\Theta}: \xi \neq \xi'} \Bigg\lvert \int_{2 \wedge T_{L}}^{T_{L}} \int_{\mathbb{T}^{3}} a_{\xi} a_{\xi'} g_{\xi} g_{\xi'} W_{\xi} \cdot W_{\xi'} dx dt \Bigg\rvert  \nonumber \\
& \hspace{10mm} + \sum_{\xi, \xi' \in \Lambda_{\Theta}: \xi \neq \xi'} \Bigg\lvert \int_{2 \wedge T_{L}}^{T_{L}} \int_{\mathbb{T}^{3}} a_{\xi} a_{\xi'} g_{\xi} g_{\xi'} D_{\xi} \cdot D_{\xi'} dx dt \Bigg\rvert.\label{define RomanII1,6}
\end{align}
\end{subequations}  
We can estimate by relying on \eqref{hypothesis 5} and \eqref{est 159}, 
\begin{equation}\label{est 206}
\RomanII_{1,1} \ll M_{L} \delta_{q+1}, \hspace{3mm} \RomanII_{1,2} \leq 3 \epsilon_{v}^{-1} \delta_{q+1} M_{L}  + 7 \epsilon_{v}^{-1} \epsilon_{\Theta}^{-1} \sum_{\xi \in \Lambda_{\Theta}} \lVert \gamma_{\xi} \rVert_{C(B_{\epsilon_{\Theta}} (0))} \delta_{q+1} M_{L}. 
\end{equation} 
Next, for $N >  \frac{1}{\epsilon} [ 1 - \frac{\epsilon}{112} ( 442 - \frac{3}{28} )]$, using the fact that $W_{\xi}$ is $(\mathbb{T}/\lambda_{q+1}r_{\bot})^{3}$-periodic leads to 
\begin{equation}\label{est 207}
\RomanII_{1,3}  \leq \sum_{\xi \in \Lambda_{v}} \lVert g_{\xi} \rVert_{L_{ [2 \wedge T_{L}, T_{L} ]}^{2}}^{2} \sup_{t \in [2 \wedge T_{L}, T_{L} ]} \Bigg\lvert \int_{\mathbb{T}^{3}} (-\Delta)^{\frac{N}{2}} a_{\xi}^{2} (t) (-\Delta)^{-\frac{N}{2}} \mathbb{P}_{\neq 0} \lvert W_{\xi} (t) \rvert^{2} dx \Bigg\rvert  \ll \delta_{q+1} M_{L};
\end{equation} 
similar computations with $N >  \frac{7}{13 \epsilon} [ 1 - \frac{\epsilon}{112} (442 - \frac{3}{28} ) ]$ also shows that $\RomanII_{1,4} \ll \delta_{q+1}M_{L}$. 

Next, we find $n_{1}, n_{2} \in \mathbb{N}_{0}$ such that $\frac{n_{1}}{\sigma} < 2 \wedge T_{L} \leq \frac{n_{1} + 1}{\sigma}, \frac{n_{2}}{\sigma} \leq T_{L} < \frac{n_{2} + 1}{\sigma}$, define $a_{\xi} (t) = a_{\xi} (T_{L})$ for all $t \in [T_{L}, \frac{n_{2} + 1}{\sigma}]$, observe that $g_{\xi}^{2} - 1$ is mean-zero so that we can estimate using \eqref{est 203}, 
\begin{equation}\label{est 208} 
\RomanII_{1,5} \leq \sum_{\xi \in \Lambda_{v}} \Bigg\lvert \int_{\frac{n_{1} + 1}{\sigma}}^{\frac{n_{2}}{\sigma}} ( g_{\xi}^{2} -1) \lVert a_{\xi}  \rVert_{L_{x}^{2}}^{2} dt \Bigg\rvert + \Bigg(\int_{ 2 \wedge T_{L}}^{\frac{n_{1} + 1}{\sigma}} + \int_{\frac{n_{2}}{\sigma}}^{T_{L}}\Bigg) ( g_{\xi}^{2} + 1) \lVert a_{\xi}  \rVert_{L_{x}^{2}}^{2} dt \ll M_{L} \delta_{q+1}. 
\end{equation} 
Next, using \eqref{define W and D}, \eqref{est 205}, \eqref{define g}, and \eqref{quadruple product} leads to 
\begin{align}\label{est 209}
\RomanII_{1,6} \lesssim& \sum_{\xi, \xi' \in \Lambda_{v} \cup \Lambda_{\Theta}: \xi \neq \xi'} \lVert a_{\xi} \rVert_{C_{[ 2\wedge T_{L}, T_{L}], x}} \lVert a_{\xi'} \rVert_{C_{ [2 \wedge T_{L}, T_{L} ], x}} \nonumber \\
& \hspace{15mm} \times \lVert g_{\xi} \rVert_{L_{[2 \wedge T_{L}, T_{L} ]}^{2}} \lVert g_{\xi'} \rVert_{L_{[2 \wedge T_{L}, T_{L} ]}^{2}}   \lVert \psi_{\xi_{1}} \phi_{\xi} \psi_{\xi_{1} '} \phi_{\xi'} \rVert_{C_{ [2 \wedge T_{L}, T_{L} ]} L_{x}^{1}}  \ll M_{L} \delta_{q+1}. 
\end{align}
Considering \eqref{est 206}, \eqref{est 207}, \eqref{est 208}, and \eqref{est 209} to \eqref{est 210} allows us to conclude that for any $\iota > 0$,  
\begin{equation}\label{est 211}
\RomanII_{1} \leq \iota M_{L} \delta_{q+1} + 3 \epsilon_{v}^{-1} \delta_{q+1} M_{L}  + 7 \epsilon_{v}^{-1} \epsilon_{\Theta}^{-1} \sum_{\xi \in \Lambda_{\Theta}} \lVert \gamma_{\xi} \rVert_{C(B_{\epsilon_{\Theta}} (0))} \delta_{q+1} M_{L}.
\end{equation} 
Next, we rely on \eqref{hypothesis 1b}, \eqref{est 187}, and \eqref{est 197} to deduce 
\begin{equation}\label{est 212}
\RomanII_{2}  \lesssim [ \lVert v_{q} \rVert_{L_{( 2 \wedge T_{L}, T_{L} ], x}^{2}} + \lVert w_{q+1}^{p} \rVert_{L_{ ( ( 2 \sigma_{q-1}) \wedge T_{L}, T_{L} ], x}^{2}} ] \lVert w_{q+1}^{c} + w_{q+1}^{t} + w_{q+1}^{o} \rVert_{L_{ ( ( 2 \wedge T_{L}), T_{L} ], x}^{2}} \ll M_{L} \delta_{q+1};
\end{equation} 
the same bound can be deduced essentially identically for $\RomanII_{3}$ while applying \eqref{est 197} to $\RomanII_{4}$ immediately gives the same bound. Next, using \eqref{est 178a},\eqref{est 178b}, \eqref{hypothesis 3}, and \eqref{define g}, we estimate from \eqref{define RomanII5} 
\begin{equation}\label{est 213}
\RomanII_{5} \lesssim  \Bigg[ \int_{2 \wedge T_{L}}^{T_{L}} \lvert g_{\xi} (t) \rvert dt l^{-\frac{5}{2}} ( \delta_{q+1}^{\frac{1}{2}} M_{L}^{\frac{1}{2}} + \gamma_{q+1}^{\frac{1}{2}}) r_{\bot}^{\frac{1}{2}} r_{\lVert}^{\frac{1}{2}} \Bigg] \lambda_{q}^{7} M_{L}^{\frac{1}{2}} \ll  \delta_{q+1} M_{L}.
\end{equation} 
Finally, relying on \eqref{hypothesis 1b} and \eqref{hypothesis 3} gives us 
\begin{equation}\label{est 214} 
\RomanII_{6} \lesssim  \lVert v_{l} - v_{q} \rVert_{L_{ ( 2 \wedge T_{L}, T_{L} ], x}^{\infty}}  \lVert v_{q} \rVert_{L_{ (t_{q}, T_{L} ], x}^{2}} + \lVert \Theta_{l} - \Theta_{q} \rVert_{L_{ ( 2 \wedge T_{L}, T_{L} ], x}^{\infty}} \lVert \Theta_{q} \rVert_{L_{ ( t_{q}, T_{L} ], x}^{2}}  \ll \delta_{q+1}M_{L}. 
\end{equation} 
Considering \eqref{est 211}, \eqref{est 212}, \eqref{est 213}, and \eqref{est 214} into \eqref{est 215} allows us to verify \eqref{est 127}. 

\subsection{Decomposition of the Reynolds and magnetic Reynolds stress}
The following decomposition relies on \eqref{identities 1} and differs from \cite{LZZ22} due to stochastic terms and temporal cutoff $\chi$; on the other hand, it differs from \cite{LZ23} due to coupling with Maxwell's equation: 
\begin{equation}\label{define Rq+1Theta}
\mathring{R}_{q+1}^{\Theta}  = R_{\text{lin}}^{\Theta}  + R_{\text{cor}}^{\Theta}  + R_{\text{osc}}^{\Theta}  + R_{\text{com1}}^{\Theta} + R_{\text{com2}}^{\Theta} + R_{\text{stoc}}^{\Theta} 
\end{equation} 
where, besides $R_{\text{com1}}^{\Theta}$ already defined in \eqref{define commutator errors}, with $\mathcal{R}^{\Theta}$ from Lemma \ref{divergence inverse operator}, 
\begin{subequations}\label{est 241} 
\begin{align}
&R_{\text{lin}}^{\Theta}  \triangleq \mathcal{R}^{\Theta} \left(\partial_{t} ( \tilde{d}_{q+1}^{p} + \tilde{d}_{q+1}^{c} )\right) + \mathcal{R}^{\Theta} (-\Delta)^{m_{2}} d_{q+1} \nonumber \\
& \hspace{8mm} + d_{q+1} \otimes v_{l} - v_{l} \otimes d_{q+1} + \Theta_{l} \otimes w_{q+1} - w_{q+1} \otimes \Theta_{l},  \label{define RlinTheta} \\
&R_{\text{cor}}^{\Theta} \triangleq \mathcal{R}^{\Theta} \mathbb{P} \divergence \Bigg( \tilde{d}_{q+1}^{p} \otimes ( \tilde{w}_{q+1}^{c} + \tilde{w}_{q+1}^{t} + \tilde{w}_{q+1}^{o}) - ( \tilde{w}_{q+1}^{c} + \tilde{w}_{q+1}^{t} + \tilde{w}_{q+1}^{o}) \otimes d_{q+1}  \nonumber \\
& \hspace{8mm} + ( \tilde{d}_{q+1}^{c} + \tilde{d}_{q+1}^{t} + \tilde{d}_{q+1}^{o}) \otimes w_{q+1} - \tilde{w}_{q+1}^{p} \otimes ( \tilde{d}_{q+1}^{c} + \tilde{d}_{q+1}^{t} + \tilde{d}_{q+1}^{o}) \Bigg), \label{define RcorTheta} \\
&R_{\text{osc}}^{\Theta} \triangleq \sum_{k=1}^{4} R_{\text{osc}, k}^{\Theta} + \mathcal{R}^{\Theta} \left( (\chi^{2})' (d_{q+1}^{t} + d_{q+1}^{o})\right) +  \left( \mathring{R}_{l}^{\Theta} ( 1- \chi^{2}) \right), \label{define RoscTheta} \\
&R_{\text{com2}}^{\Theta} \triangleq \Theta_{l} \otimes v_{l} - \Theta_{q} \otimes v_{q} + v_{q} \otimes \Theta_{q} - v_{l} \otimes \Theta_{l}, \label{define Rcom2Theta} \\
&R_{\text{stoc}}^{\Theta} \triangleq  \mathcal{R}^{\Theta} \mathbb{P} \divergence \Bigg([\Theta_{q+1} - \Theta_{q} ]\otimes z_{1}^{\text{in}} + [v_{q} - v_{q+1} ] \otimes z_{2}^{\text{in}} + \Theta_{q+1} \otimes z_{1,q+1} - \Theta_{q} \otimes z_{1,q}  \nonumber \\
& \hspace{8mm} + z_{2}^{\text{in}} \otimes [ v_{q+1} - v_{q} ] + z_{2,q+1} \otimes v_{q+1} - z_{2,q} \otimes v_{q} - v_{q+1} \otimes z_{2,q+1} + v_{q} \otimes z_{2,q} \nonumber \\
&\hspace{8mm} + [z_{2,q+1} - z_{2,q}] \otimes z_{1}^{\text{in}} + z_{2,q+1} \otimes z_{1,q+1} - z_{2,q} \otimes z_{1,q} + z_{2}^{\text{in}} \otimes [ z_{1, q+1} - z_{1,q} ] \nonumber \\
& \hspace{8mm} + z_{1,q} \otimes \Theta_{q} - z_{1,q+1} \otimes \Theta_{q+1} + [z_{1,q} - z_{1,q+1} ] \otimes z_{2}^{\text{in}}  \nonumber \\
& \hspace{8mm} + z_{1}^{\text{in}} \otimes [ \Theta_{q} - \Theta_{q+1} ] + z_{1}^{\text{in}} \otimes [z_{2,q} - z_{2,q+1} ] + z_{1,q} \otimes z_{2,q} - z_{1, q+1} \otimes z_{2,q+1} \Bigg), \label{define RstochTheta} 
\end{align}
\end{subequations} 
\begin{subequations}
\begin{align}
& R_{\text{osc}, 1}^{\Theta} \triangleq \chi^{2} \sum_{\xi \in \Lambda_{\Theta}} \mathcal{R}^{\Theta} \mathbb{P} \mathbb{P}_{\neq 0} \Bigg( g_{\xi}^{2} \mathbb{P}_{\neq 0} ( D_{\xi} \otimes W_{\xi} - W_{\xi} \otimes D_{\xi} ) \nabla (a_{\xi}^{2}) \Bigg), \label{define Rosc1Theta}  \\
& R_{\text{osc}, 2}^{\Theta} \triangleq - \chi^{2} \mu^{-1} \sum_{\xi \in \Lambda_{\Theta}} \mathcal{R}^{\Theta} \mathbb{P} \mathbb{P}_{\neq 0} \Bigg( \partial_{t} (a_{\xi}^{2} g_{\xi}^{2} ) \psi_{\xi_{1}}^{2} \phi_{\xi}^{2} \xi_{2} \Bigg), \label{define Rosc2Theta} \\
& R_{\text{osc}, 3}^{\Theta} \triangleq - \chi^{2} \sigma^{-1} \sum_{\xi \in \Lambda_{\Theta}} \mathcal{R}^{\Theta} \mathbb{P} \mathbb{P}_{\neq 0} \Bigg( h_{\xi} \fint_{\mathbb{T}^{3}} D_{\xi} \otimes W_{\xi} - W_{\xi} \otimes D_{\xi} dx \partial_{t} \nabla (a_{\xi}^{2}) \Bigg),  \label{define Rosc3Theta} \\
& R_{\text{osc}, 4}^{\Theta} \triangleq \chi^{2} \Bigg( \sum_{\xi, \xi' \in \Lambda_{\Theta}: \xi \neq \xi'} + \sum_{\xi \in \Lambda_{v}, \xi' \in \Lambda_{\Theta}} \Bigg) \mathcal{R}^{\Theta} \mathbb{P} \divergence \Bigg( a_{\xi} a_{\xi'} g_{\xi} g_{\xi'} ( D_{\xi'} \otimes W_{\xi} - W_{\xi} \otimes D_{\xi'} ) \Bigg). \label{define Rosc4Theta} 
\end{align}
\end{subequations}
Similarly, with appropriate pressure terms that we refrain from writing details as they will not play significant roles in our proof (see \cite{LZZ22, LZ23}), we decompose 
\begin{equation}\label{define Rq+1v}
\mathring{R}_{q+1}^{v}  = R_{\text{lin}}^{v} + R_{\text{cor}}^{v}  + R_{\text{osc}}^{v}   + R_{\text{com1}}^{v} + R_{\text{com2}}^{v} + R_{\text{stoc}}^{v}
\end{equation} 
where, besides $R_{\text{com1}}^{v}$ already defined in \eqref{define commutator errors}, with $\mathcal{R}$ from Lemma \ref{divergence inverse operator}, 
\begin{subequations}\label{est 242} 
\begin{align}
&R_{\text{lin}}^{v}  \triangleq \mathcal{R} \left(\partial_{t} (\tilde{w}_{q+1}^{p} + \tilde{w}_{q+1}^{c}) \right) + \mathcal{R}(-\Delta)^{m_{1}} w_{q+1} \nonumber  \\
& \hspace{8mm} +  v_{l} \mathring{\otimes} w_{q+1} + w_{q+1} \mathring{\otimes} v_{l} - \Theta_{l} \mathring{\otimes} d_{q+1} - d_{q+1} \mathring{\otimes} \Theta_{l}, \label{define Rlinv} \\
&R_{\text{cor}}^{v} \triangleq  \mathcal{R} \mathbb{P} \divergence\Bigg( ( \tilde{w}_{q+1}^{c} + \tilde{w}_{q+1}^{t} + \tilde{w}_{q+1}^{o}) \mathring{\otimes} w_{q+1} + \tilde{w}_{q+1}^{p} \mathring{\otimes} ( \tilde{w}_{q+1}^{c} + \tilde{w}_{q+1}^{t} + \tilde{w}_{q+1}^{o}) \nonumber  \\
& \hspace{8mm} - (\tilde{d}_{q+1}^{c} +\tilde{d}_{q+1}^{t} + \tilde{d}_{q+1}^{o}) \mathring{\otimes} d_{q+1} - \tilde{d}_{q+1}^{p} \mathring{\otimes} ( \tilde{d}_{q+1}^{c} + \tilde{d}_{q+1}^{t} + \tilde{d}_{q+1}^{o}) \Bigg),\label{define Rcorv}  \\
&R_{\text{osc}}^{v} \triangleq \sum_{k=1}^{4} R_{\text{osc}, k}^{v} + \mathcal{R} \left( (\chi^{2})' (w_{q+1}^{t} + w_{q+1}^{o})\right) + \mathring{R}_{l}^{v} ( 1- \chi^{2}), \label{define Roscv} \\
&R_{\text{com2}}^{v} \triangleq  v_{l} \mathring{\otimes} v_{l} - v_{q} \mathring{\otimes} v_{q}  + \Theta_{q} \mathring{\otimes} \Theta_{q} - \Theta_{l} \mathring{\otimes} \Theta_{l},\label{define Rcom2v}  \\
&R_{\text{stoc}}^{v} \triangleq \mathcal{R} \mathbb{P} \divergence \Bigg([v_{q+1} - v_{q} ] \mathring{\otimes} z_{1}^{\text{in}} + [\Theta_{q} - \Theta_{q+1}  ] \mathring{\otimes} z_{2}^{\text{in}} + v_{q+1} \mathring{\otimes} z_{1,q+1} - v_{q} \mathring{\otimes} z_{1,q}  \nonumber \\
& \hspace{10mm} + z_{2}^{\text{in}} \mathring{\otimes} [\Theta_{q} - \Theta_{q+1} ] + z_{1,q+1} \mathring{\otimes} v_{q+1} - z_{1,q} \mathring{\otimes} v_{q} - \Theta_{q+1} \mathring{\otimes} z_{2,q+1} + \Theta_{q} \mathring{\otimes} z_{2,q}  \nonumber \\
& \hspace{10mm} + [z_{1,q+1} - z_{1,q} ] \mathring{\otimes} z_{1}^{\text{in}} + z_{1,q+1} \mathring{\otimes} z_{1,q+1} - z_{1,q} \mathring{\otimes} z_{1,q} + z_{2}^{\text{in}} \mathring{\otimes} [z_{2,q} - z_{2,q+1} ]  \nonumber \\
& \hspace{10mm} + z_{2,q} \mathring{\otimes} \Theta_{q} - z_{2,q+1} \mathring{\otimes} \Theta_{q+1} + [z_{2,q} - z_{2,q+1} ] \mathring{\otimes} z_{2}^{\text{in}}  \nonumber \\
& \hspace{10mm} + z_{1}^{\text{in}} \mathring{\otimes} [v_{q+1} - v_{q} ] + z_{1}^{\text{in}} \mathring{\otimes} [z_{1,q+1} - z_{1,q} ] - z_{2,q+1} \mathring{\otimes} z_{2,q+1} + z_{2,q} \mathring{\otimes} z_{2,q} \Bigg), \label{define Rstochv} 
\end{align}
\end{subequations} 
\begin{subequations} 
\begin{align}
& R_{\text{osc}, 1}^{v} \triangleq \chi^{2} \sum_{\xi \in \Lambda_{v}} \mathcal{R} \mathbb{P}_{\neq 0} \Bigg( g_{\xi}^{2} \mathbb{P}_{\neq 0} ( W_{\xi} \otimes W_{\xi} ) \nabla (a_{\xi}^{2} ) \Bigg) \nonumber \\
& \hspace{10mm} + \chi^{2} \sum_{\xi \in \Lambda_{\Theta}} \mathcal{R} \mathbb{P}_{\neq 0} \Bigg( g_{\xi}^{2} \mathbb{P}_{\neq 0} ( W_{\xi} \otimes W_{\xi} - D_{\xi} \otimes D_{\xi} ) \nabla (a_{\xi}^{2}) \Bigg),  \label{define Rosc1v} \\
& R_{\text{osc}, 2}^{v} \triangleq - \chi^{2}  \mu^{-1} \sum_{\xi \in \Lambda_{v} \cup \Lambda_{\Theta}} \mathcal{R} \mathbb{P}_{\neq 0} \Bigg( \partial_{t} (a_{\xi}^{2} g_{\xi}^{2} ) \psi_{\xi_{1}}^{2} \phi_{\xi}^{2} \xi_{1} \Bigg), \label{define Rosc2v} \\
& R_{\text{osc}, 3}^{v} \triangleq - \chi^{2}\sigma^{-1} \sum_{\xi \in \Lambda_{v}}  \mathcal{R} \mathbb{P}_{\neq 0} \Bigg( h_{\xi} \fint_{\mathbb{T}^{3}} W_{\xi} \otimes W_{\xi} dx \partial_{t} \nabla (a_{\xi}^{2} ) \Bigg) \nonumber \\
& \hspace{10mm} - \chi^{2} \sigma^{-1} \sum_{\xi \in \Lambda_{\Theta}} \mathcal{R} \mathbb{P}_{\neq 0} \Bigg( h_{\xi} \fint_{\mathbb{T}^{3}} W_{\xi} \otimes W_{\xi} - D_{\xi} \otimes D_{\xi} dx \partial_{t} \nabla (a_{\xi}^{2} ) \Bigg), \label{define Rosc3v}  \\
& R_{\text{osc}, 4}^{v} \triangleq \chi^{2} \sum_{\xi, \xi' \in \Lambda_{v} \cup \Lambda_{\Theta}: \xi \neq \xi'} \mathcal{R} \mathbb{P} \divergence ( a_{\xi} a_{\xi'} g_{\xi} g_{\xi'} W_{\xi} \otimes W_{\xi'} ) \nonumber \\
& \hspace{10mm} - \chi^{2}\sum_{\xi, \xi' \in \Lambda_{\Theta}: \xi \neq \xi'} \mathcal{R} \mathbb{P} \divergence \Bigg( a_{\xi} a_{\xi'} g_{\xi} g_{\xi'} D_{\xi} \otimes D_{\xi'} \Bigg). \label{define Rosc4v} 
\end{align}
\end{subequations}  

\subsection{Verification of \eqref{hypothesis 5} at level $q+1$}
Over $[\sigma_{q} \wedge T_{L}, T_{L}]$, if $T_{L} \leq \sigma_{q}$, there is nothing to prove; thus, we assume that $\sigma_{q} < T_{L}$. On this interval, $\chi\equiv 1$ due to \eqref{define chi}. We estimate the temporal derivatives within $R_{\text{lin}}^{\Theta}$ and $R_{\text{lin}}^{v}$of \eqref{define RlinTheta} and \eqref{define Rlinv} for all $t \in [0, T_{L}]$, 
\begin{align}
\lVert \mathcal{R}^{\Theta} \partial_{t} (d_{q+1}^{p} + d_{q+1}^{c}) (t) & \rVert_{L_{x}^{1}} + \lVert \mathcal{R} \partial_{t} (w_{q+1}^{p} + w_{q+1}^{c})(t) \rVert_{L_{x}^{1}} \nonumber\\
\lesssim& \sum_{\xi \in \Lambda_{v} \cup \Lambda_{\Theta}}  \lvert g_{\xi} (t) \rvert  l^{-3} \lambda_{q+1}^{\frac{5}{2} + \frac{8 \epsilon -2}{p^{\ast}} - 18\epsilon} + \lvert \partial_{t} g_{\xi} (t) \rvert l^{-3} \lambda_{q+1}^{\frac{8 \epsilon -2}{p^{\ast}} - 4 \epsilon}. \label{est 216}
\end{align} 
Integrating \eqref{est 216} over $[\sigma_{q} \wedge T_{L}, T_{L}]$ and making use of \eqref{define g} give us 
\begin{equation}\label{est 217} 
 \lVert \mathcal{R}^{\Theta} \partial_{t} (\tilde{d}_{q+1}^{p} + \tilde{d}_{q+1}^{c}) \rVert_{L_{[\sigma_{q}\wedge T_{L}, T_{L} ], x}^{1}} + \lVert \mathcal{R} \partial_{t} (\tilde{w}_{q+1}^{p} + \tilde{w}_{q+1}^{c}) \rVert_{L_{[\sigma_{q} \wedge T_{L}, T_{L} ], x}^{1}}  \lesssim l^{-3} \lambda_{q+1}^{2+ \frac{8\epsilon -2}{p^{\ast}}- 15 \epsilon}.
\end{equation} 
Next, considering \eqref{define m1, m2, and p}, we can split $w_{q+1}$ and $d_{q+1}$ to four pieces according to \eqref{define w and d}, rely on \eqref{est 194} to estimate for all $t \in (\sigma_{q+1},  T_{L}]$, 
\begin{align}
&\lVert \mathcal{R}(-\Delta)^{m_{1}} w_{q+1}(t) \rVert_{L_{x}^{p^{\ast}}} + \lVert \mathcal{R}^{\Theta} (-\Delta)^{m_{2}} d_{q+1} (t) \rVert_{L_{x}^{p^{\ast}}}  \nonumber \\
\lesssim& \sum_{\xi \in \Lambda_{v} \cup \Lambda_{\Theta}} \sum_{k=1}^{2} \lvert g_{\xi} (t) \rvert l^{-3}  \lambda_{q+1}^{2m_{k} - 4 \epsilon + \frac{8 \epsilon -2}{p^{\ast}}} + \lvert g_{\xi} (t) \rvert^{2} l^{-6} \lambda_{q+1}^{2m_{k} - \frac{1}{2}+ \frac{8 \epsilon -2}{p^{\ast}} - 2\epsilon} + \lambda_{q+1}^{-2\epsilon} l^{-88}.   \label{est 230}
\end{align} 
Integrating \eqref{est 230} over $(\sigma_{q}, T_{L}]$, we are able to conclude via \eqref{define g}, 
\begin{align} 
\lVert \mathcal{R} ( -\Delta)^{m_{1}} w_{q+1} \rVert_{L_{ ( \sigma_{q}, T_{L} ]}^{1} L_{x}^{p^{\ast}}} +& \lVert \mathcal{R}^{\Theta} (-\Delta)^{m_{2}} d_{q+1} \rVert_{L_{ ( \sigma_{q} \wedge T_{L} ]}^{1} L_{x}^{p^{\ast}}}  \nonumber\\
\lesssim& \sum_{k=1}^{2} l^{-3} \lambda_{q+1}^{2m_{k} - \frac{1}{2} + \frac{8 \epsilon -2}{p^{\ast}} - \epsilon} + M_{L} \lambda_{q+1}^{-2\epsilon} l^{-88}. \label{est 218}  
\end{align} 
The remaining term of $R_{\text{lin}}^{\Theta}$ and $R_{\text{lin}}^{v}$ of \eqref{define RlinTheta} and \eqref{define Rlinv} can be estimated as follows: for $t \in (\sigma_{q+1}, T_{L}]$,
\begin{align}
&\lVert  ( v_{l} \mathring{\otimes} w_{q+1} + w_{q+1} \mathring{\otimes} v_{l} - \Theta_{l} \mathring{\otimes} d_{q+1} - d_{q+1} \mathring{\otimes} \Theta_{l} )(t) \rVert_{L_{x}^{p^{\ast}}} \nonumber \\
&+ \lVert  (d_{q+1} \otimes v_{l} - v_{l} \otimes d_{q+1} + \Theta_{l} \otimes w_{q+1} - w_{q+1} \otimes \Theta_{l} )(t) \rVert_{L_{x}^{p^{\ast}}}  \nonumber \\
\lesssim&  \sum_{\xi \in \Lambda_{v} \cup \Lambda_{\Theta}} \lambda_{q}^{7} M_{L}^{\frac{1}{2}} \Bigg( \lvert g_{\xi} (t) \rvert l^{-3} \lambda_{q+1}^{ \frac{ 8 \epsilon -2}{p^{\ast}} + 1 - 4 \epsilon} + \lvert g_{\xi}(t) \rvert^{2} l^{-6} \lambda_{q+1}^{\frac{1}{2} + \frac{8\epsilon -2}{p^{\ast}} - 2 \epsilon} +  l^{-34} \lambda_{q+1}^{-2\epsilon} ]\Bigg).  \label{est 280}
\end{align} 
Integrating \eqref{est 280} over $[\sigma_{q}, T_{L}]$ now gives us 
\begin{align}
& \lVert   (v_{l} \mathring{\otimes}  w_{q+1} + w_{q+1} \mathring{\otimes} v_{l} - \Theta_{l} \mathring{\otimes} d_{q+1} - d_{q+1} \mathring{\otimes} \Theta_{l} ) \rVert_{L_{ [ \sigma_{q}, T_{L} ]}^{1} L_{x}^{p^{\ast}}}  \label{est 219}\\
&+ \lVert ( d_{q+1} \otimes v_{l} - v_{l} \otimes d_{q+1} + \Theta_{l} \otimes w_{q+1} - w_{q+1} \otimes \Theta_{l} ) \rVert_{L_{ [\sigma_{q}, T_{L} ]}^{1} L_{x}^{p^{\ast}}} \lesssim M_{L} \lambda_{q}^{7} l^{-34} \lambda_{q+1}^{-2\epsilon}. \nonumber 
\end{align} 
Considering \eqref{est 217}, \eqref{est 218}, and \eqref{est 219} to \eqref{define Rlinv} and \eqref{define RlinTheta}, we now conclude that 
\begin{equation}\label{est 221} 
\lVert R_{\text{lin}}^{v} \rVert_{L_{[\sigma_{q}, T_{L} ], x}^{1}} + \lVert R_{\text{lin}}^{\Theta} \rVert_{L_{ [\sigma_{q}, T_{L} ], x}^{1}} \ll M_{L} \delta_{q+2}. 
\end{equation}  
Next, we estimate the corrector terms using \eqref{est 194}: for all $t \in (\sigma_{q+1} ,T_{L}]$,
\begin{align}
& \lVert R_{\text{cor}}^{v} (t) \rVert_{L_{x}^{p^{\ast}}} + \lVert R_{\text{cor}}^{\Theta} (t) \rVert_{L_{x}^{p^{\ast}}}  \lesssim [ \lVert w_{q+1}^{p} (t) \rVert_{L_{x}^{2}} + \lVert w_{q+1}(t) \rVert_{L_{x}^{2}} + \lVert d_{q+1}^{p} (t) \rVert_{L_{x}^{2}} + \lVert d_{q+1}(t) \rVert_{L_{x}^{2}} ]  \nonumber \\
& \hspace{20mm}  \times \sum_{\xi \in \Lambda_{v} \cup \Lambda_{\Theta}} [ \lvert g_{\xi} (t) \rvert l^{-3} r_{\bot}^{\frac{1}{p^{\ast}}} r_{\lVert}^{\frac{1}{p^{\ast}} - 2} + \lvert g_{\xi} (t) \rvert^{2} l^{-6} \mu^{-1} r_{\lVert}^{\frac{1}{p^{\ast}} - \frac{3}{2}} r_{\bot}^{\frac{1}{p^{\ast}} - \frac{3}{2}} + \sigma^{-1} l^{-34} ].  \label{est 232}
\end{align}
We integrate \eqref{est 232} over $[\sigma_{q}, T_{L} ]$ and use \eqref{est 220}, \eqref{est 200}, \eqref{est 201}, and \eqref{define g} to obtain 
\begin{equation}\label{est 222} 
 \lVert R_{\text{cor}}^{v} \rVert_{L_{[ \sigma_{q}, T_{L} ]}^{1} L_{x}^{p^{\ast}}} + \lVert R_{\text{cor}}^{\Theta} \rVert_{L_{ [ \sigma_{q}, T_{L}]}^{1} L_{x}^{p^{\ast}}}  \ll M_{L} \delta_{q+2}.
\end{equation} 

Next, we estimate the oscillations errors: first, for $t \in ( \sigma_{q+1}, T_{L}]$, by relying on \eqref{est 46}, \eqref{est 160}, and \eqref{est 152}, 
\begin{align}
& \lVert R_{\text{osc}, 1}^{v}(t) \rVert_{L_{x}^{p^{\ast}}} + \lVert R_{\text{osc}, 1}^{\Theta}(t) \rVert_{L_{x}^{p^{\ast}}} \lesssim \sum_{\xi \in \Lambda_{v} \cup \Lambda_{\Theta}} \lvert g_{\xi}(t) \rvert^{2} l^{-90} \lambda_{q+1}^{2-10\epsilon + \frac{8\epsilon-2}{p^{\ast}}}. \label{est 233} 
\end{align} 
Integrating \eqref{est 233} over $[\sigma_{q}, T_{L} ]$ and making use of \eqref{define g} give us 
\begin{equation}\label{est 223}
 \lVert R_{\text{osc,1}}^{v} \rVert_{L_{[\sigma_{q}, T_{L}]}^{1} L_{x}^{p^{\ast}}} +\lVert R_{\text{osc,1}}^{\Theta} \rVert_{L_{[\sigma_{q}, T_{L} ]} L_{x}^{p^{\ast}}} \lesssim l^{-90} \lambda_{q+1}^{2-10\epsilon + \frac{8\epsilon-2}{p^{\ast}}} \ll \delta_{q+2} M_{L}.
\end{equation} 
Second, for $t \in (\sigma_{q+1}, T_{L}]$, 
\begin{align}
&\lVert R_{\text{osc,2}}^{v}(t) \rVert_{L_{x}^{p^{\ast}}} + \lVert R_{\text{osc,2}}^{\Theta} (t) \rVert_{L_{x}^{p^{\ast}}} \nonumber\\
\lesssim&\lambda_{q+1}^{-\frac{3}{2} + 6 \epsilon + (-2+ 8\epsilon)(\frac{1}{p^{\ast}} - 1)}  \sum_{\xi \in \Lambda_{v} \cup \Lambda_{\Theta}}  \left[ l^{-34} \lvert g_{\xi}(t) \rvert^{2} + l^{-6} \lvert g_{\xi}(t) \rvert \lvert \partial_{t} g_{\xi}(t) \rvert \right]. \label{est 234} 
\end{align} 
Integrating \eqref{est 234} over $(\sigma_{q}, T_{L} ]$ and making use of \eqref{define g} give us 
\begin{equation}\label{est 224} 
\lVert \mathring{R}_{\text{osc,2}}^{v} \rVert_{L_{[\sigma_{q}, T_{L}]}^{1} L_{x}^{p^{\ast}}} + \lVert \mathring{R}_{\text{osc,2}}^{\Theta}  \rVert_{L_{[\sigma_{q}, T_{L} ]}^{1} L_{x}^{p^{\ast}}}  \lesssim  l^{-6} \lambda_{q+1}^{\frac{3}{2} + \frac{8\epsilon -2}{p^{\ast}} - 6 \epsilon} \ll  \delta_{q+2} M_{L}. 
\end{equation} 
Next, for all $t \in (\sigma_{q+1}, T_{L} ]$, we can compute using \eqref{est 159}, 
\begin{equation}\label{est 225} 
\lVert R_{\text{osc,3}}^{v} (t) \rVert_{L_{x}^{p^{\ast}}} + \lVert R_{\text{osc,3}}^{\Theta}(t) \rVert_{L_{x}^{p^{\ast}}} \lesssim  \lambda_{q+1}^{-2\epsilon} l^{-62}. 
\end{equation} 
Integrating \eqref{est 225} over $(\sigma_{q}, T_{L} ]$, we obtain 
\begin{equation}\label{est 225 star}
 \lVert R_{\text{osc,3}}^{v}  \rVert_{L_{[\sigma_{q}, T_{L} ]}^{1}L_{x}^{p^{\ast}}} + \lVert R_{\text{osc,3}}^{\Theta} \rVert_{L_{[\sigma_{q}, T_{L} ]}^{1} L_{x}^{p^{\ast}}}  \ll M_{L} \delta_{q+2}.
\end{equation} 
Next, for all $t \in (\sigma_{q+1}, T_{L} ]$, relying on \eqref{define W and D}, \eqref{est 205}, \eqref{est 152}, \eqref{est 160}, and \eqref{quadruple product} gives us 
\begin{equation}\label{est 235}
 \lVert R_{\text{osc,4}}^{v}(t) \rVert_{L_{x}^{p^{\ast}}} + \lVert R_{\text{osc,4}}^{\Theta} (t) \rVert_{L_{x}^{p^{\ast}}} \lesssim \sum_{\xi, \xi' \in \Lambda_{v} \cup \Lambda_{\Theta}: \xi \neq \xi'} \lvert g_{\xi} g_{\xi'} (t) \rvert l^{-6} \lambda_{q+1}^{2 - 8 \epsilon + \frac{14 \epsilon-3}{p^{\ast}}}. 
\end{equation} 
Integrating \eqref{est 235} over $(\sigma_{q}, T_{L}]$ and relying on \eqref{define g} give us 
\begin{equation}\label{est 225 starstar} 
\lVert R_{\text{osc,4}}^{v}  \rVert_{L_{[\sigma_{q}, T_{L}]}^{1}L_{x}^{p^{\ast}}} + \lVert R_{\text{osc,4}}^{\Theta}  \rVert_{L_{[\sigma_{q}, T_{L} ]}^{1}L_{x}^{p^{\ast}}}  \lesssim  l^{-6} \lambda_{q+1}^{2- 8 \epsilon + \frac{14 \epsilon -3}{p^{\ast}}}  \ll \delta_{q+2}M_{L}. 
\end{equation}  
Due to \eqref{est 223}, \eqref{est 224}, \eqref{est 225 star}, and \eqref{est 225 starstar}, we conclude that 
\begin{equation}\label{est 226} 
\lVert R_{\text{osc}}^{v} \rVert_{L_{[\sigma_{q}, T_{L} ]}^{1} L_{x}^{p^{\ast}}} + \lVert R_{\text{osc}}^{v} \rVert_{L_{[\sigma_{q}, T_{L} ]}^{1} L_{x}^{p^{\ast}}} \ll \delta_{q+2} M_{L}.  
\end{equation} 
Next, we can bound for all $t \in (\sigma_{q+1}, T_{L} ]$, starting from \eqref{define commutator errors} 
\begin{align}
& \lVert R_{\text{com1}}^{v}(t) \rVert_{L_{x}^{p^{\ast}}} + \lVert R_{\text{com1}}^{\Theta}(t) \rVert_{L_{x}^{p^{\ast}}}    \nonumber \\
\lesssim& \lVert ( \mathring{N}_{\text{com}}^{v} - \mathring{N}_{\text{com}}^{v} \ast_{x} \varrho_{l} \ast_{t} \vartheta_{l} ) (t) \rVert_{L_{x}^{p^{\ast}}} +  \lVert (N_{\text{com}}^{\Theta} - N_{\text{com}}^{\Theta} \ast_{x} \varrho_{l} \ast_{t} \vartheta_{l} ) (t) \rVert_{L_{x}^{p^{\ast}}} \nonumber \\
\lesssim& l \lambda_{q}^{7} M_{L} + l \Bigg( \lambda_{q}^{7} M_{L}  [\delta_{q+1}^{- [ \frac{3}{m_{1}} \frac{2-p}{4p} + 1 ]} + \delta_{q+1}^{- [ \frac{3}{m_{1}} \frac{2-p}{4p} + \frac{1}{2m_{1}} ]} ] \Bigg)   \nonumber \\
&+ M_{L} \lambda_{q}^{7}  l^{\frac{1}{2} - 2 \delta} + l M_{L} \Bigg(\delta_{q+1}^{- [ \frac{3}{m_{1}} \frac{2-p}{4p} + \frac{1}{2m_{1}} ]}  + \delta_{q+1}^{- [\frac{3}{m_{1}} ( \frac{2-p}{4p}) + 1]} \Bigg)  \delta_{q+1}^{- [ \frac{3}{m_{1}} \frac{2-p}{4p} + \frac{3(p^{\ast} -1)}{2p^{\ast}m_{1}} ]}  \nonumber \\
&+ l \Bigg(\delta_{q+1}^{- [ \frac{3}{m_{1}} \frac{2-p}{4p} + 1]}+ \delta_{q+1}^{- [ \frac{3}{m_{1}} \frac{2-p}{4p} + \frac{1}{2m_{1}} ]}   \Bigg)M_{L}  +  \delta_{q+1}^{-[\frac{3}{m_{1}} \frac{2-p}{4p} + \frac{3(p^{\ast} - 1)}{2m_{1} p^{\ast}}]}l^{\frac{1}{2} - 2 \delta} M_{L} \ll M_{L} \delta_{q+2}. \label{est 227} 
\end{align} 
Next, we can bound for all $t \in [0, T_{L} ]$ by relying on \eqref{hypothesis 3} and \eqref{hypothesis 4}, 
\begin{equation}\label{est 228} 
 \lVert R_{\text{com2}}^{v} (t) \rVert_{L_{x}^{p^{\ast}}} + \lVert R_{\text{com2}}^{\Theta} (t) \rVert_{L_{x}^{p^{\ast}}}  \lesssim  l( \lambda_{q}^{7} M_{L}^{\frac{1}{2}} ) M_{L}^{\frac{1}{2}} \ll M_{L} \delta_{q+2}.
\end{equation} 
Finally, we can bound for all $t \in [0, T_{L}]$, using \eqref{hypothesis 3}
\begin{align}
& \lVert R_{\text{stoc}}^{\Theta} (t) \rVert_{L_{x}^{p^{\ast}}} \lesssim  \lambda_{q+1}^{-\frac{\epsilon}{112}}M_{L}^{\frac{1}{2}} \sum_{k=1}^{2} \Bigg((1+ t^{- \frac{3}{m_{k} p} + \frac{3}{2m_{1} p^{\ast}}}) N + L \Bigg) \nonumber \\
& \hspace{10mm} + f(q)^{- (1- \delta - \frac{6(p^{\ast} -1)}{2p^{\ast}})} L \Bigg(\lVert v_{q}(t) \rVert_{L_{x}^{2}} +  \lVert \Theta_{q}(t) \rVert_{L_{x}^{2}} + \lVert v_{q+1}(t) \rVert_{L_{x}^{2}} + \sum_{k=1}^{2} \lVert z_{k}^{\text{in}}(t) \rVert_{L_{x}^{2}} + L \Bigg), \label{est 236} 
\end{align}
and hence, integrating over $[0, T_{L}]$ and relying on \eqref{est 122}, \eqref{hypothesis 4}, and \eqref{define zkq} give us 
\begin{equation}\label{est 229} 
 \lVert R_{\text{stoc}}^{\Theta}  \rVert_{L_{[0, T_{L} ]}^{1}L_{x}^{p^{\ast}}}  \lesssim  \lambda_{q+1}^{-\frac{\epsilon}{112}}M_{L}^{\frac{3}{2}} \sum_{k=1}^{2}\Bigg(T_{L}^{1- \frac{3}{m_{2} p} + \frac{3}{2m_{2} p^{\ast}}} + 1 \Bigg)  \ll M_{L}\delta_{q+2};  
\end{equation} 
the same upper bounds hold for $R_{\text{stoc}}^{v}$ as well. Due to \eqref{est 221}, \eqref{est 222}, \eqref{est 226}, \eqref{est 227}, \eqref{est 228}, \eqref{est 229}, we conclude \eqref{hypothesis 5} at level $q+1$. 

\subsection{Verification of \eqref{hypothesis 6a} at level $q+1$} 
Having verified \eqref{hypothesis 5} at level $q+1$ on $(\sigma_{q} \wedge T_{L}, T_{L} ]$, we now work on the interval $(\sigma_{q+1} \wedge T_{L}, \sigma_{q}\wedge T_{L} ]$. If $T_{L} \leq \sigma_{q+1}$, there is nothing to prove and hence we assume that $\sigma_{q+1} < T_{L}$ so that $t \in (\sigma_{q+1}, \sigma_{q} \wedge T_{L} ]$. By \eqref{define chi} this implies that $\chi(t) \in (0,1)$ so that all the estimates on $R_{\text{cor}}^{\Theta}$ in \eqref{define RcorTheta}, $R_{\text{cor}}^{v}$ in \eqref{define Rcorv}, $R_{\text{com1}}^{\Theta}$ in and  $R_{\text{com1}}^{v}$ in \eqref{define commutator errors}, $R_{\text{com2}}^{\Theta}$ in \eqref{define Rcom2Theta}, $R_{\text{com2}}^{v}$ in \eqref{define Rcom2v}, $R_{\text{stoc}}^{\Theta}$ in \eqref{define RstochTheta}, and $R_{\text{stoc}}^{v}$ in \eqref{define Rstochv} from previous section apply with minimum modifications. For convenience we define 
\begin{subequations}\label{define Rcutv and RcutTheta}
\begin{align}
& R_{\text{cut}}^{v} \triangleq \chi' \mathcal{R} ( w_{q+1}^{p} + w_{q+1}^{c}) + (\chi^{2})' \mathcal{R} ( w_{q+1}^{t} + w_{q+1}^{o}), \\
& R_{\text{cut}}^{\Theta} \triangleq \chi' \mathcal{R}^{\Theta} (d_{q+1}^{p} + d_{q+1}^{c}) + ( \chi^{2})' \mathcal{R}^{\Theta} ( d_{q+1}^{t} + d_{q+1}^{o}). 
\end{align}
\end{subequations} 
We estimate for all $t \in (\sigma_{q+1}, \sigma_{q} \wedge T_{L} ]$, using \eqref{define chi}, \eqref{define A}, and \eqref{est 194}, 
\begin{align}\label{est 231} 
&\lVert R_{\text{cut}}^{v} (t) \rVert_{L_{x}^{p^{\ast}}} + \lVert R_{\text{cut}}^{\Theta} (t) \rVert_{L_{x}^{p^{\ast}}} \nonumber\\ 
\lesssim& \sum_{\xi \in \Lambda_{v} \cup \Lambda_{\Theta}} \lvert g_{\xi}(t) \rvert l^{-4} \lambda_{q+1}^{1+ \frac{8\epsilon -2}{p^{\ast}} - 4 \epsilon} + \lvert g_{\xi} (t) \rvert^{2} l^{-7} \lambda_{q+1}^{\frac{1}{2} + \frac{8 \epsilon -2}{p^{\ast}} - 2 \epsilon} + l^{-35} \lambda_{q+1}^{-2\epsilon}.
\end{align}
With this, we are ready to estimate the $L^{1}([ \sigma_{q+1}, \sigma_{q} \wedge T_{L}] \times \mathbb{T}^{3})$-norm of $\mathring{R}_{q+1}^{\Theta}$ and $\mathring{R}_{q+1}^{v}$. First, we observe that $v_{q}$ and $\Theta_{q}$ vanish on $[t_{q}, \sigma_{q}\wedge T_{L} ]$ due to \eqref{hypothesis 2} at level both $q$ and $q+1$, and consequently so do $v_{l}$ and $\Theta_{l}$, which implies that $(d_{q+1} \otimes v_{l} - v_{l} \otimes d_{q+1} + \Theta_{l} \otimes w_{q+1} - w_{q+1} \otimes \Theta_{l} )$ within $R_{\text{lin}}^{\Theta}$ of \eqref{define RlinTheta} vanishes reducing our work to estimate only $\mathcal{R}^{\Theta} [ \partial_{t} ( \tilde{d}_{q+1}^{p} + \tilde{d}_{q+1}^{c}) + (-\Delta)^{m_{2}} d_{q+1} ]$. We can apply \eqref{est 231}, \eqref{est 216}, \eqref{est 230}, and \eqref{est 195} to deduce similarly to \eqref{est 221}  
\begin{align}\label{est 237}
\lVert R_{\text{lin}}^{\Theta} \rVert_{L_{[\sigma_{q+1}, \sigma_{q} \wedge T_{L} ], x}^{1}} \ll M_{L}^{\frac{1}{2}} \Bigg( \sum_{k=1}^{2}\sigma_{q+1}^{1- \frac{6- 3p}{2m_{k} p}} +  \sigma_{q+1}^{1- \sum_{k=1}^{2} \frac{6-3 p}{4m_{k} p}}\Bigg).
\end{align} 
Next, by making use of \eqref{est 232}, \eqref{est 195}, \eqref{est 194 star}, and \eqref{est 201}, we can compute similarly to \eqref{est 222}
\begin{equation}\label{est 238} 
\lVert R_{\text{cor}}^{\Theta} \rVert_{L_{ ( \sigma_{q+1}, \sigma_{q} \wedge T_{L} ], x}^{1}} \ll M_{L}^{\frac{1}{2}} \Bigg( \sum_{k=1}^{2}\sigma_{q+1}^{1- \frac{6- 3p}{2m_{k} p}} +  \sigma_{q+1}^{1- \sum_{k=1}^{2} \frac{6-3 p}{4m_{k} p}}\Bigg). 
\end{equation}  
Next, for any $\iota > 0$ small, by making use of \eqref{est 233}, \eqref{est 234}, \eqref{est 225}, \eqref{est 235}, and \eqref{est 231}, we can compute 
\begin{equation}\label{est 239} 
 \lVert R_{\text{osc}}^{\Theta} \rVert_{L_{[\sigma_{q+1}, \sigma_{q} \wedge T_{L} ], x}^{1}}  \leq \iota M_{L}^{\frac{1}{2}}  \Bigg( \sum_{k=1}^{2}\sigma_{q+1}^{1- \frac{6- 3p}{2m_{k} p}} +  \sigma_{q+1}^{1- \sum_{k=1}^{2} \frac{6-3 p}{4m_{k} p}}\Bigg) + \lVert \mathring{R}_{l}^{\Theta} \rVert_{L_{[ \sigma_{q+1}, (2\sigma_{q+1}) \wedge T_{L} ], x}^{1}}.
\end{equation} 
Next, computations in \eqref{est 227} and the fact that $v_{l}$ and $\Theta_{l}$ vanish over $[t_{q}, \sigma_{q} \wedge T_{L}]$ due to \eqref{hypothesis 2} show that 
\begin{equation}\label{est 240} 
\lVert R_{\text{com1}}^{\Theta} \rVert_{L_{[\sigma_{q+1}, \sigma_{q} \wedge T_{L} ], x}^{1}} \ll M_{L}^{\frac{1}{2}} \Bigg( \sum_{k=1}^{2}\sigma_{q+1}^{1- \frac{6- 3p}{2m_{k} p}} +  \sigma_{q+1}^{1- \sum_{k=1}^{2} \frac{6-3 p}{4m_{k} p}}\Bigg), \lVert R_{\text{com2}}^{\Theta} \rVert_{L_{[\sigma_{q+1}, \sigma_{q} \wedge T_{L} ], x}^{1}}  =0,
\end{equation} 
respectively. Finally, our computations in \eqref{est 236} and \eqref{est 229} show that we can also bound $\lVert R_{\text{stoc}}^{\Theta} \rVert_{L_{ [\sigma_{q+1}, \sigma_{q} \wedge T_{L} ], x}^{1}}$ similarly so that together with, \eqref{est 237}, \eqref{est 238}, \eqref{est 239}, and  \eqref{est 240}, it allows us to conclude that for any $\iota > 0$ small, 
\begin{equation}\label{est 250} 
 \lVert \mathring{R}_{q+1}^{\Theta} \rVert_{L_{[\sigma_{q+1}, \sigma_{q} \wedge T_{L} ], x}^{1}}  \leq \iota M_{L}^{\frac{1}{2}}  \Bigg( \sum_{k=1}^{2}\sigma_{q+1}^{1- \frac{6- 3p}{2m_{k} p}} +  \sigma_{q+1}^{1- \sum_{k=1}^{2} \frac{6-3 p}{4m_{k} p}}\Bigg) + \lVert \mathring{R}_{l}^{\Theta} \rVert_{L_{[\sigma_{q+1}, (2\sigma_{q+1}) \wedge T_{L} ], x}^{1}},
\end{equation} 
and analogous computation shows 
\begin{equation*} 
 \lVert \mathring{R}_{q+1}^{v} \rVert_{L_{[\sigma_{q+1}, \sigma_{q} \wedge T_{L} ], x}^{1}}  \leq \iota M_{L}^{\frac{1}{2}}  \Bigg( \sum_{k=1}^{2}\sigma_{q+1}^{1- \frac{6- 3p}{2m_{k} p}} +  \sigma_{q+1}^{1- \sum_{k=1}^{2} \frac{6-3 p}{4m_{k} p}}\Bigg) + \lVert \mathring{R}_{l}^{v} \rVert_{L_{[\sigma_{q+1}, (2\sigma_{q+1}) \wedge T_{L} ], x}^{1}}.
\end{equation*} 

Next, we consider the time interval of $[0, \sigma_{q+1} \wedge T_{L} ]$. From \eqref{hypothesis 2} that we verified for $q+1$, we know $v_{l} \equiv v_{q+1} \equiv \Theta_{l} \equiv \Theta_{q+1} \equiv 0$, which implies due to \eqref{define w and d} that $w_{q+1}$ and $d_{q+1}$ and therefore $\tilde{w}_{q+1}$ and $\tilde{d}_{q+1}$ all vanish. Directly from \eqref{est 241} and \eqref{est 242}, this implies that 
\begin{align}
\mathring{R}_{q+1}^{\Theta} =  \mathring{R}_{l}^{\Theta} + R_{\text{com1}}^{\Theta} + R_{\text{stoc}}^{\Theta}  \hspace{2mm} \text{ and } \hspace{2mm}  \mathring{R}_{q+1}^{v} =  \mathring{R}_{l}^{v} + R_{\text{com1}}^{v} + R_{\text{stoc}}^{v}. \label{est 245} 
\end{align}
Working on $R_{\text{com1}}^{\Theta}$, we estimate for all $t \in [0, \sigma_{q+1}\wedge T_{L} ]$, 
\begin{equation}\label{est 246}
\lVert R_{\text{com1}}^{\Theta}(t) \rVert_{L_{x}^{1}} \leq \lVert N_{\text{com}}^{\Theta}(t) \rVert_{L_{x}^{1}} + \lVert N_{\text{com}}^{\Theta} \ast_{x} \varrho_{l} \ast_{t} \vartheta_{l} (t) \rVert_{L_{x}^{1}} 
\end{equation} 
where from \eqref{define commutator errors} we see that because $\Theta_{q}$ and $v_{q}$ vanish on $(0, \sigma_{q+1} \wedge T_{L} ]$,  
\begin{equation}\label{est 281} 
N_{\text{com}}^{\Theta} = (z_{2}^{\text{in}} + z_{2,q}) \otimes (z_{1}^{\text{in}} + z_{1,q}) - (z_{1}^{\text{in}} + z_{1,q}) \otimes (z_{2}^{\text{in}} + z_{2,q}).
\end{equation} 
We can estimate directly for all $t \in [0, \sigma_{q+1} \wedge T_{L} ]$, by relying on \eqref{define N}, \eqref{define TL}, \eqref{define ML}, and the embedding $H^{1-\delta} (\mathbb{T}^{3}) \hookrightarrow L^{\frac{p}{p-1}}(\mathbb{T}^{3})$ due to \eqref{define TL}, 
\begin{equation}\label{est 243}
\lVert N_{\text{com}}^{\Theta}(t) \rVert_{L_{x}^{1}} \ll M_{L} t^{- \sum_{k=1}^{2} (\frac{6-3p}{4m_{k} p})} \text{ while } \lVert \mathring{N}_{\text{com}}^{v}(t) \rVert_{L_{x}^{1}} \ll M_{L} \sum_{k=1}^{2} t^{- (\frac{6-3p}{2m_{k} p})}. 
\end{equation} 
Moreover, for $t \in [t_{q+1}, 0)$ so that $\lvert t \rvert < \frac{4}{5}$ due to \eqref{define tq} and \eqref{constraint}, $\Theta_{q} \equiv v_{q} \equiv z_{1} \equiv z_{2} \equiv 0$ due to Remark \ref{Remark 5.1} so that $z_{1,q} \equiv z_{2,q} \equiv 0$; therefore, we can verify 
\begin{equation}\label{est 244} 
\lVert N_{\text{com}}^{\Theta} (t) \rVert_{L_{x}^{1}} \ll M_{L} \lvert t \rvert^{-\sum_{k=1}^{2} \frac{6-3p}{4m_{k} p}}  \text{ and } \lVert \mathring{N}_{\text{com}}^{v} (t) \rVert_{L_{x}^{1}} \ll M_{L} \sum_{k=1}^{2} \lvert t\rvert^{- (\frac{6-3p}{2m_{k} p})}.
\end{equation} 
On the other hand, for any $0 \leq t_{1} < t_{2} \leq \sigma_{q+1} \wedge T_{L}$, 
\begin{equation}\label{est 247} 
\lVert N_{\text{com}}^{\Theta} \ast_{x} \varrho_{l} \ast_{t} \vartheta_{l} \rVert_{L_{ [ t_{1}, t_{2} ]}^{1} L_{x}^{1}} \leq  \frac{2M_{L}}{1- \sum_{k=1}^{2} \frac{6-3p}{4m_{k} p}} (t_{2} -t_{1})^{1- \sum_{k=1}^{2} \frac{6-3p}{4m_{k} p}}.
\end{equation} 
Next, for all $t \in [0, \sigma_{q+1} \wedge T_{L} )$, as $v_{q}, \Theta_{q}, v_{q+1}$, and $\Theta_{q+1}$ all vanish due to \eqref{hypothesis 3} at both levels $q$ and $q+1$, we can estimate by making use of $H^{1-\delta} (\mathbb{T}^{3}) \hookrightarrow L^{\frac{pp^{\ast}}{p-p^{\ast}}} ( \mathbb{T}^{3})$, 
\begin{equation}\label{est 248}
 \lVert R_{\text{stoc}}^{\Theta} (t) \rVert_{L_{x}^{p^{\ast}}}\lesssim L(N+L)  \ll M_{L} t^{- \sum_{k=1}^{2} (\frac{6-3p}{4m_{k} p})}.
\end{equation} 
Therefore, applying \eqref{est 243}, \eqref{est 247}, \eqref{est 248}, and \eqref{est 246} to \eqref{est 245} leads to 
\begin{equation}\label{est 249} 
\lVert \mathring{R}_{q+1}^{\Theta} \rVert_{L_{ [ 0, \sigma_{q+1} \wedge T_{L} ], x}^{1}} \leq \lVert \mathring{R}_{l}^{\Theta} \rVert_{L_{[0, \sigma_{q+1} \wedge T_{L} ], x}^{1}}  + A ( \sigma_{q+1} \wedge T_{L})^{1- \sum_{k=1}^{2} \frac{6-3p}{4m_{k} p}}.
\end{equation} 
At last, considering \eqref{est 249}, \eqref{est 250}, and \eqref{hypothesis 7a} at level $q$ leads to 
\begin{equation*} 
 \lVert \mathring{R}_{q+1}^{\Theta} \rVert_{L_{[0,T_{L} ], x}^{1}}  \leq \delta_{q+2}M_{L} + 2(q+2) A \Bigg(\sigma_{q+1}^{1-  \frac{6-3p}{2m_{1}p}} + \sigma_{q+1}^{1-  \frac{6-3p}{2m_{2}p}}  +  \sigma_{q+1}^{1-  \sum_{k=1}^{2}\frac{6-3p}{4m_{k}p}}  \Bigg),
\end{equation*} 
which verifies \eqref{hypothesis 6a} at level $q+1$; analogous computations verify \eqref{hypothesis 6a} at level $q+1$. To verify \eqref{hypothesis 7a} at level $q+1$, we first compute for all $0 \leq a \leq a + h \leq \sigma_{q+1} \wedge T_{L}$, 
\begin{equation}\label{est 252}
\lVert \mathring{R}_{q+1}^{\Theta} \rVert_{L_{[a, a+h], x}^{1}}  \leq 2(q+2) A\Bigg(  \sum_{k=1}^{2}\left(\frac{h}{2} \right)^{1- \frac{6-3p}{2m_{k}{p}}} +  \left(\frac{h}{2} \right)^{1- \sum_{k=1}^{2} \frac{6-3p}{4m_{k}{p}}} \Bigg). 
\end{equation}  
We now focus on the case $t \in [t_{q+1}, 0)$. We have $z_{1} \equiv z_{2} \equiv v_{q} \equiv \Theta_{q} \equiv 0$ due to \eqref{extend b} and $\chi \equiv 0$ due to \eqref{define chi} so that $w_{q+1} \equiv d_{q+1} \equiv 0$ and consequently $v_{q+1} \equiv \Theta_{q+1} \equiv 0$ due to \eqref{define w and d}. Therefore, \eqref{equation vq and Thetaq} shows that $\divergence \mathring{R}_{q+1}^{\Theta} = \divergence (z_{2}^{\text{in}} \otimes z_{1}^{\text{in}} - z_{1}^{\text{in}} \otimes z_{2}^{\text{in}})$. Hence, identical computations to \eqref{est 244} verify that for all $t \in [t_{q+1}, 0)$, using the fact that $\lvert t \rvert \leq \lvert t_{q+1} \rvert < \frac{4}{5}$ due to \eqref{define tq} and \eqref{constraint},  
\begin{equation}\label{est 251}
\lVert \mathring{R}_{q+1}^{\Theta} (t) \rVert_{L_{x}^{1}} \ll M_{L} \lvert t \rvert^{-\sum_{k=1}^{2} \frac{6-3p}{4m_{k} p}}  \text{ and } \lVert \mathring{R}_{q+1}^{v} (t) \rVert_{L_{x}^{1}} \ll M_{L} \sum_{k=1}^{2} \lvert t\rvert^{- (\frac{6-3p}{2m_{k} p})}.
\end{equation}  
For $t_{q+1} \leq a-h \leq a < 0$, we can verify using \eqref{est 251} and \eqref{define A}, 
\begin{equation}\label{est 253}
 \lVert \mathring{R}_{q+1}^{\Theta} \rVert_{L_{[a-h, a],x}^{1}} \leq A h^{1- \sum_{k=1}^{2} \frac{6-3p}{4m_{k} p}} \text{ and } \lVert \mathring{R}_{q+1}^{v} \rVert_{L_{[a-h, a],x}^{1}} \leq  \sum_{k=1}^{2}  A h^{1-  \frac{6-3p}{2m_{k} p}}.
\end{equation} 
To verify \eqref{hypothesis 7a}, we consider the case $t_{q+1} \leq a \leq (\sigma_{q+1} \wedge T_{L}) - h$ for $h \in (0, (\sigma_{q+1} \wedge T_{L}) -t_{q+1}]$. We see that the case $a \geq 0$ is treated by \eqref{est 252} while the case $a+ h \leq 0$ is treated by \eqref{est 253}. In case $a < 0 < a+h$, we can estimate using \eqref{est 251}, 
\begin{align*} 
 \lVert \mathring{R}_{q+1}^{\Theta} \rVert_{L_{[a, a+h], x}^{1}}  =  \lVert \mathring{R}_{q+1}^{\Theta} \rVert_{L_{[a, 0], x}^{1}} + \lVert \mathring{R}_{q+1}^{\Theta} \rVert_{L_{ [0, a+h], x}^{1}} \leq 2(q+2) A \Bigg(  \sum_{k=1}^{2}\left(\frac{h}{2} \right)^{1- \frac{6-3p}{2m_{k}{p}}} +  \left(\frac{h}{2} \right)^{1- \sum_{k=1}^{2} \frac{6-3p}{4m_{k}{p}}} \Bigg);
\end{align*}
$\mathring{R}_{q+1}^{v}$ can be bounded analogously, verifying \eqref{hypothesis 7a}.  

Finally, $(v_{q+1}, \Theta_{q+1}, \mathring{R}_{q+1}^{v}, \mathring{R}_{q+1}^{\Theta})$ can be shown to be $\{\mathcal{F}_{t}\}_{t\geq 0}$-adapted following previous works (e.g. \cite{HZZ21a}); thus, we omit its proof. This completes the proof of Proposition \ref{Proposition 5.2}. 

\appendix
\section{Preliminaries} 
\subsection{Preliminaries for the proof of Theorems \ref{Theorem 2.1}-\ref{Theorem 2.2}}\label{Section A.1}
 
 \begin{lemma}\label{divergence inverse operator}
\rm{(\cite[Equation (5.34)]{BV19b} and \cite[Sections 6.1-6.2]{BBV20})} Define  
\begin{equation*}
(\mathcal{R}f)^{kl} \triangleq ( \partial^{k}\Delta^{-1} f^{l} + \partial^{l} \Delta^{-1} f^{k}) - \frac{1}{2} (\delta^{kl} + \partial^{k} \partial^{l} \Delta^{-1}) \divergence \Delta^{-1} f, \hspace{3mm} k, l \in \{1,2,3\}
\end{equation*} 
for any $f \in C^{\infty}(\mathbb{T}^{3})$ that is mean-zero. Then $\mathcal{R} f(x)$ is a symmetric trace-free matrix for each $x \in \mathbb{T}^{3}$, and  satisfies $\divergence (\mathcal{R} f) = f$. Moreover, $\mathcal{R}$ satisfies the classical Calder$\acute{\mathrm{o}}$n-Zygmund and Schauder estimates: $\lVert (-\Delta)^{\frac{1}{2}} \mathcal{R} \rVert_{L_{x}^{q} \mapsto L_{x}^{q}} + \lVert \mathcal{R} \rVert_{L_{x}^{q} \mapsto L_{x}^{q}}  + \lVert \mathcal{R} \rVert_{C_{x} \mapsto C_{x}} \lesssim 1$ for all $q \in (1, \infty)$. Additionally, we define for $f: \mathbb{T}^{3}\mapsto \mathbb{R}^{3}$ such that $\nabla\cdot f = 0$,  
\begin{equation*} 
( \mathcal{R}^{\Xi} f)^{ij} \triangleq \epsilon_{ijk} (-\Delta)^{-1} (\nabla \times f)^{k} \text{ where } \epsilon_{ijk} \text{ is the Levi-Civita tensor.}
\end{equation*} 
Then $\divergence \mathcal{R}^{\Xi} (f) = f, \mathcal{R}^{\Xi} (f) = - (\mathcal{R}^{\Xi} (f))^{T}$, and $(-\Delta)^{\frac{1}{2}}\mathcal{R}^{\Xi}$ is a Calder$\acute{\mathrm{o}}$n-Zygmund operator.  
\end{lemma}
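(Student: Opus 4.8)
The plan is to establish the two groups of assertions — those about the symmetric operator $\mathcal{R}$ and those about the skew-symmetric operator $\mathcal{R}^{\Xi}$ — by reducing everything to elementary computations with Fourier multipliers on $\mathbb{T}^{3}$, the one standing assumption being that the argument $f$ has zero mean in the case of $\mathcal{R}$, so that $\Delta^{-1}$ (equivalently $(-\Delta)^{-1}$) is unambiguously defined; for $\mathcal{R}^{\Xi}$ the hypothesis $\nabla\cdot f=0$ alone suffices, because $\nabla\times f$ is automatically mean-zero and hence $(-\Delta)^{-1}(\nabla\times f)$ makes sense.

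For $\mathcal{R}$ I would first observe that symmetry of the matrix $\mathcal{R}f(x)$ is immediate, the defining expression being invariant under $k\leftrightarrow l$. For the trace I would compute $\sum_{k}(\mathcal{R}f)^{kk}=2\,\divergence\Delta^{-1}f-\tfrac12(3+\Delta\Delta^{-1})\,\divergence\Delta^{-1}f$ and use that $\Delta\Delta^{-1}=\Id$ on the mean-zero subspace, which gives $2\,\divergence\Delta^{-1}f-2\,\divergence\Delta^{-1}f=0$. For the divergence identity I would differentiate term by term: $\partial^{l}(\partial^{k}\Delta^{-1}f^{l}+\partial^{l}\Delta^{-1}f^{k})=\partial^{k}\Delta^{-1}\divergence f+f^{k}$, while applying $\partial^{l}$ to the correction term contributes $-\partial^{k}\Delta^{-1}\divergence f$ after one more use of $\Delta\Delta^{-1}=\Id$; summing these yields $\divergence(\mathcal{R}f)=f$.

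For the mapping properties I would note that $(-\Delta)^{1/2}\mathcal{R}$ has a matrix-valued Fourier symbol that is homogeneous of degree $0$ and smooth on $\mathbb{R}^{3}\setminus\{0\}$, hence is a Calder\'on--Zygmund operator and is bounded on $L_{x}^{q}$ for every $q\in(1,\infty)$; since the symbol of $\mathcal{R}$ itself is then homogeneous of degree $-1$, the H\"ormander--Mikhlin multiplier theorem on the torus gives $\lVert\mathcal{R}\rVert_{L_{x}^{q}\to L_{x}^{q}}\lesssim1$ for $q\in(1,\infty)$ as well. The one assertion that is not a formal consequence of $L^{p}$ singular-integral theory is the bound $\lVert\mathcal{R}\rVert_{C_{x}\to C_{x}}\lesssim1$, and this is the step I expect to require the only real care: here I would use that in \emph{three} dimensions the convolution kernel of $\mathcal{R}$ has merely a locally integrable $|x|^{-2}$ singularity together with a smooth tail, so that $\lVert\mathcal{R}f\rVert_{C^{0}}\lesssim\lVert K\rVert_{L^{1}(\mathbb{T}^{3})}\lVert f\rVert_{C^{0}}\lesssim\lVert f\rVert_{C^{0}}$; it is precisely here that the ambient dimension enters.

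Finally, for $\mathcal{R}^{\Xi}$ skew-symmetry is immediate from $\epsilon_{ijk}=-\epsilon_{jik}$. For the divergence I would compute $\partial^{j}(\mathcal{R}^{\Xi}f)^{ij}=\epsilon_{ijk}\partial^{j}(-\Delta)^{-1}(\nabla\times f)^{k}=(-\Delta)^{-1}\big(\nabla\times(\nabla\times f)\big)^{i}=(-\Delta)^{-1}\big(\nabla(\divergence f)-\Delta f\big)^{i}=f^{i}$, using $\divergence f=0$ in the last step, so that $\divergence\mathcal{R}^{\Xi}(f)=f$. That $(-\Delta)^{1/2}\mathcal{R}^{\Xi}$ is Calder\'on--Zygmund follows exactly as before: $\nabla\times$ contributes a symbol linear in $\xi$ and $(-\Delta)^{-1}$ a symbol $|\xi|^{-2}$, so the composition is homogeneous of degree $-1$, and multiplying by $(-\Delta)^{1/2}$ produces a degree-$0$ symbol smooth off the origin. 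All of these steps are classical and recorded in \cite[Equation (5.34)]{BV19b} and \cite[Sections 6.1--6.2]{BBV20}, to which one can alternatively simply refer.
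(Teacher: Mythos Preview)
Your proposal is correct. The paper does not supply a proof of this lemma at all: it is stated as a standard tool with citations to \cite[Equation (5.34)]{BV19b} and \cite[Sections 6.1--6.2]{BBV20}, and the authors simply invoke it wherever $\mathcal{R}$ or $\mathcal{R}^{\Xi}$ appears. What you have written is therefore not an alternative route but rather the explicit verification that the paper omits: the trace and divergence computations for $\mathcal{R}$, the curl-of-curl identity for $\mathcal{R}^{\Xi}$, and the Fourier-symbol homogeneity argument for the Calder\'on--Zygmund bounds are all standard and carried out correctly. Your remark that the $C_{x}\to C_{x}$ bound relies on the $|x|^{-2}$ local integrability of the kernel in dimension three is the one point worth flagging, and you have identified it properly.
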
 

The following are two geometric lemmas for the MHD system. 
\begin{lemma}\label{Lemma A.2}
\rm{(\cite[Lemma 3.1]{LZZ22}; see also \cite[Proposition 2.3]{CL21}, \cite[Lemma 4.1]{BBV20})} There exists a set $\Lambda_{\Xi} \subset \mathbb{S}^{2} \cap \mathbb{Q}^{3}$ consisting of vectors $\xi$ with associated o.n.b. $(\xi, \xi_{1}, \xi_{2}), \epsilon_{\Xi} > 0$, and smooth positive functions $\gamma_{\xi}: B_{\epsilon_{\Xi}} (0) \mapsto \mathbb{R}$, where $B_{\epsilon_{\Xi}}(0)$ is the ball of radius $\epsilon_{\Xi}$ centered at 0 in the space of $3\times 3$ skew-symmetric matrices, such that for all $Q \in B_{\epsilon_{\Xi}} (0)$, $Q = \sum_{\xi \in \Lambda_{\Xi}} (\gamma_{\xi} (Q))^{2} (\xi_{2} \otimes \xi_{1} - \xi_{1} \otimes \xi_{2})$.
\end{lemma}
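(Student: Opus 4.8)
\textbf{Proof proposal for Lemma \ref{Lemma A.2}.}
The plan is to reduce the claimed matrix identity to an elementary statement about vectors in $\mathbb{R}^{3}$ via the Hodge isomorphism between $3\times 3$ skew-symmetric matrices and $\mathbb{R}^{3}$, and then to exhibit an explicit finite rational direction set together with explicit square-root coefficient functions.

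First I would record the Hodge identification: every $3\times 3$ skew-symmetric matrix $Q=(Q_{ij})$ is uniquely of the form $Q_{ij}=\epsilon_{ijk}\mathbf{q}_{k}$ with $\mathbf{q}=\mathbf{q}(Q)\in\mathbb{R}^{3}$ given by $\mathbf{q}_{k}=\tfrac{1}{2}\epsilon_{kij}Q_{ij}$, and $Q\mapsto\mathbf{q}(Q)$ is a linear isomorphism with $\lVert Q\rVert_{HS}=\sqrt{2}\,\lvert\mathbf{q}(Q)\rvert$. For any orthonormal triple $(\xi,\xi_{1},\xi_{2})$ a direct computation gives $\mathbf{q}(\xi_{2}\otimes\xi_{1}-\xi_{1}\otimes\xi_{2})=\xi_{2}\times\xi_{1}=-\,\xi_{1}\times\xi_{2}$. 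I will therefore fix, for each $\xi$ in the set chosen below, an orthonormal frame $(\xi,\xi_{1},\xi_{2})$ with $\xi_{1}\times\xi_{2}=-\xi$, so that $\mathbf{q}(\xi_{2}\otimes\xi_{1}-\xi_{1}\otimes\xi_{2})=\xi$; since $\mathbf{q}$ is an isomorphism, the identity $Q=\sum_{\xi\in\Lambda_{\Xi}}\gamma_{\xi}(Q)^{2}(\xi_{2}\otimes\xi_{1}-\xi_{1}\otimes\xi_{2})$ is then equivalent to $\mathbf{q}(Q)=\sum_{\xi\in\Lambda_{\Xi}}\gamma_{\xi}(Q)^{2}\,\xi$ in $\mathbb{R}^{3}$.

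Next I would take $\Lambda_{\Xi}\triangleq\{\pm e_{1},\pm e_{2},\pm e_{3}\}\subset\mathbb{S}^{2}\cap\mathbb{Q}^{3}$, equipped with the rational orthonormal frames $(e_{1},e_{3},e_{2})$, $(-e_{1},e_{2},e_{3})$ and their analogues for the four remaining directions, which indeed satisfy $\xi_{1}\times\xi_{2}=-\xi$. Fix a constant $c_{0}>0$ and, writing $\mathbf{q}=\mathbf{q}(Q)$, define on the ball $B_{\epsilon_{\Xi}}(0)$ of skew-symmetric matrices
\begin{equation*}
\gamma_{\pm e_{i}}(Q)\triangleq\Big(c_{0}\pm\tfrac{1}{2}\,\mathbf{q}(Q)_{i}\Big)^{\frac{1}{2}},\qquad i\in\{1,2,3\}.
\end{equation*}
Then $\sum_{\xi\in\Lambda_{\Xi}}\gamma_{\xi}(Q)^{2}\,\xi=\sum_{i=1}^{3}\big[(c_{0}+\tfrac{1}{2}\mathbf{q}_{i})-(c_{0}-\tfrac{1}{2}\mathbf{q}_{i})\big]e_{i}=\mathbf{q}(Q)$, which is exactly the identity required by the previous paragraph. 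Since $Q\mapsto\mathbf{q}(Q)$ is linear there is $C\geq 1$ with $\lvert\mathbf{q}(Q)\rvert\leq C\lVert Q\rVert_{HS}$; choosing $\epsilon_{\Xi}\triangleq c_{0}/C$ forces $\lvert\mathbf{q}(Q)_{i}\rvert\leq\lvert\mathbf{q}(Q)\rvert<c_{0}$ for every $Q\in B_{\epsilon_{\Xi}}(0)$, whence $c_{0}\pm\tfrac{1}{2}\mathbf{q}(Q)_{i}\in(\tfrac{c_{0}}{2},\tfrac{3c_{0}}{2})$ and each $\gamma_{\pm e_{i}}$ is strictly positive on $B_{\epsilon_{\Xi}}(0)$. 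Smoothness is immediate, each $\gamma_{\pm e_{i}}$ being the composition of the linear map $Q\mapsto\mathbf{q}(Q)_{i}$, an affine shift into $(0,\infty)$, and $t\mapsto\sqrt{t}$ on $(0,\infty)$, hence $C^{\infty}(B_{\epsilon_{\Xi}}(0))$. This would complete the proof.

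I do not expect a genuine obstacle here; the only points demanding care are the orientation bookkeeping that fixes the sign in $\mathbf{q}(\xi_{2}\otimes\xi_{1}-\xi_{1}\otimes\xi_{2})=\pm\xi$, and the passage between the radius of the ball of skew matrices and that of its image in $\mathbb{R}^{3}$. The cited works \cite{LZZ22, CL21, BBV20} carry out precisely this argument, occasionally with a larger or differently chosen $\Lambda_{\Xi}$ (and correspondingly larger $\epsilon_{\Xi}$) selected so as to be simultaneously compatible with the velocity direction set $\Lambda_{v}$ of Lemma \ref{Lemma A.3} — e.g.\ so that $\Lambda_{\Xi}\cap\Lambda_{v}=\emptyset$ and the rescaled wavevectors enjoy the nearest-neighbour non-collision property used to place the intermittent building blocks with disjoint supports — but such refinements play no role in the statement above, and any enlargement of $\Lambda_{\Xi}$ is handled by the same coordinate-wise square-root construction.
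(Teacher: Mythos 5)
The paper does not actually prove this lemma — it is quoted verbatim from the cited sources (\cite{LZZ22}, \cite{CL21}, \cite{BBV20}) — and your argument is correct and is essentially the same standard proof those references give: the identification of skew-symmetric matrices with $\mathbb{R}^{3}$, the sign computation $\mathbf{q}(\xi_{2}\otimes\xi_{1}-\xi_{1}\otimes\xi_{2})=\xi_{2}\times\xi_{1}$, and the coordinate-wise affine-plus-constant coefficients $\gamma_{\pm e_{i}}(Q)=(c_{0}\pm\tfrac12\mathbf{q}(Q)_{i})^{1/2}$ over $\Lambda_{\Xi}=\{\pm e_{1},\pm e_{2},\pm e_{3}\}$ all check out, yielding smooth strictly positive $\gamma_{\xi}$ on a small ball. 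Your closing caveat is also apt: the particular $\Lambda_{\Xi}$ used downstream must in addition be chosen rational in all three frame vectors, disjoint from $\Lambda_{v}$, and with distinct $\xi_{1}$'s, but none of these requirements is part of the statement being proved, and your construction satisfies the statement as written.
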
 
 
\begin{lemma}\label{Lemma A.3}
\rm{(\cite[Lemma 3.2]{LZZ22}; see also \cite[Proposition 2.2]{CL21}, \cite[Lemma 4.2]{BBV20})} There exists a set $\Lambda_{v} \subset \mathbb{S}^{2} \cap \mathbb{Q}^{3}$ consisting of vectors $\xi$ with associated o.n.b. $(\xi, \xi_{1}, \xi_{2}), \epsilon_{v} > 0$, and smooth positive functions $\gamma_{\xi}: B_{\epsilon_{v}} (\Id) \mapsto \mathbb{R}$, where $B_{\epsilon_{v}}(\Id)$ is the ball of radius $\epsilon_{v}$ centered at the identity in the space of $3\times 3$ symmetric matrices, such that for all $Q \in B_{\epsilon_{v}}(\Id)$, $Q = \sum_{\xi \in \Lambda_{v}} (\gamma_{\xi} (Q))^{2} (\xi_{1} \otimes \xi_{1})$. 
\end{lemma}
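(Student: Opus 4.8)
\textbf{Proof proposal for Lemma \ref{Lemma A.3}.}

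The plan is to reduce the statement to the classical positive-definite decomposition lemma of De Lellis--Sz\'ekelyhidi (as extended by the references cited), since the required decomposition $Q = \sum_{\xi \in \Lambda_v} (\gamma_\xi(Q))^2 (\xi_1 \otimes \xi_1)$ involves only the symmetric tensor $\xi_1 \otimes \xi_1$ rather than the full Mikado-type geometry needed for the magnetic lemma. First I would recall that the space of $3 \times 3$ symmetric matrices is $6$-dimensional, and that the rank-one symmetric tensors $\{\zeta \otimes \zeta : \zeta \in \mathbb{S}^2\}$ span it; indeed any symmetric matrix has a spectral decomposition $Q = \sum_{i=1}^3 \mu_i (e_i \otimes e_i)$, so symmetric matrices sufficiently close to $\Id$ are positive definite and their eigenvalues lie near $1$. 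The goal is to find a \emph{fixed finite} family of rational directions whose rank-one tensors convexly generate a neighborhood of $\Id$ with smooth coefficients.

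The key steps, in order: (i) fix the standard construction — choose a finite set $\Lambda_v \subset \mathbb{S}^2 \cap \mathbb{Q}^3$ of vectors $\xi$ such that the associated directions $\xi_1$ (a rational orthonormal partner of $\xi$; one takes $(\xi, \xi_1, \xi_2)$ to be an o.n.b. of $\mathbb{R}^3$ with rational entries, which exists by a Pythagorean-triple argument) have the property that $\{\xi_1 \otimes \xi_1 : \xi \in \Lambda_v\}$ positively spans the symmetric matrices, i.e. $\Id$ lies in the interior of their convex hull, cone, and in fact $\Id = \sum_{\xi \in \Lambda_v} c_\xi \, \xi_1 \otimes \xi_1$ for strictly positive constants $c_\xi$; (ii) invoke the implicit function theorem / partition-of-unity argument from \cite[Lemma 4.2]{BBV20} (equivalently \cite[Proposition 2.2]{CL21}, \cite[Lemma 3.2]{LZZ22}): since the map $(\mu_\xi)_{\xi \in \Lambda_v} \mapsto \sum_\xi \mu_\xi (\xi_1 \otimes \xi_1)$ is a surjective linear map with a strictly positive preimage of $\Id$, a smooth right inverse exists on a ball $B_{\epsilon_v}(\Id)$ taking values in strictly positive tuples, and one sets $\gamma_\xi(Q) \triangleq (\mu_\xi(Q))^{1/2}$, which is smooth and positive because its argument stays bounded away from $0$ on the (compact, or shrunk) neighborhood; (iii) verify the identity $Q = \sum_{\xi \in \Lambda_v} (\gamma_\xi(Q))^2 (\xi_1 \otimes \xi_1)$ holds for all $Q \in B_{\epsilon_v}(\Id)$ by construction, and that $\Lambda_v$, $\epsilon_v$, and the $\gamma_\xi$ are all independent of $Q$. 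Since this lemma is quoted verbatim from \cite[Lemma 3.2]{LZZ22} with the same normalization conventions, the cleanest route is simply to cite that source and indicate that the construction is the standard one; if a self-contained argument is wanted, steps (i)--(iii) above give it.

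The main obstacle — and it is a mild one — is the \emph{rationality} requirement: one must ensure that the spanning set $\Lambda_v$ can be chosen inside $\mathbb{S}^2 \cap \mathbb{Q}^3$ and that each $\xi$ admits a rational orthonormal frame $(\xi, \xi_1, \xi_2)$, so that later the building blocks $W_\xi$ are periodic on the torus at the required scales. This is handled exactly as in the cited works: one starts from a set of rational directions known to positively span (for instance suitable permutations and sign changes of a fixed Pythagorean-type vector), checks the positive-span condition by an explicit finite linear-algebra computation, and then shrinks $\epsilon_v$ if necessary so that the smooth inverse stays in the positive orthant. No genuinely new idea beyond the De Lellis--Sz\'ekelyhidi framework is required; the proof is a citation plus, optionally, the three bookkeeping steps above.
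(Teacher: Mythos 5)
Your proposal is correct and matches the paper's treatment: the paper gives no proof of Lemma \ref{Lemma A.3}, quoting it directly from \cite[Lemma 3.2]{LZZ22} (see also \cite[Proposition 2.2]{CL21}, \cite[Lemma 4.2]{BBV20}), and the construction you sketch (rational directions $\xi$ with rational o.n.b. $(\xi,\xi_{1},\xi_{2})$ whose tensors $\xi_{1}\otimes\xi_{1}$ positively span the symmetric matrices, a linear right inverse positive at $\Id$, and $\gamma_{\xi}\triangleq\mu_{\xi}^{1/2}$ after shrinking $\epsilon_{v}$) is exactly the standard argument underlying those cited lemmas.
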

We can choose $\Lambda_{\Xi}$ and $\Lambda_{v}$ so that $\Lambda_{\Xi}\cap \Lambda_{v} = \emptyset$ and their o.n.b.'s satisfy $\xi_{1} \neq \xi_{1}'$ when $\xi \neq \xi'$. We set $\Lambda \triangleq \Lambda_{\Xi} \cup \Lambda_{v}$ and find $N_{\Lambda} \in \mathbb{N}$ such that 
\begin{equation}\label{est 278}
\{ N_{\Lambda} \xi, N_{\Lambda} \xi_{1}, N_{\Lambda} \xi_{2} \} \subset N_{\Lambda} \mathbb{S}^{2} \cap \mathbb{Z}^{3}. 
\end{equation} 
We let $\Psi: \mathbb{R} \mapsto \mathbb{R}$ be a smooth cutoff function supported on $[-1, 1]$ such that 
\begin{equation}\label{Psi}
\phi \triangleq - \frac{d^{2}}{(dx)^{2}} \Psi \text{ satisfies } \int_{\mathbb{R}} \phi^{2} (x) dx = 2 \pi. 
\end{equation} 
For parameters $0 < \sigma \ll r \ll 1$, specified in \eqref{eta, sigma, r, mu}, we define the rescaled functions: 
\begin{equation}\label{estimate 51}
\phi_{r} (x) \triangleq r^{-\frac{1}{2}}\phi\left(\frac{x}{r} \right), \hspace{1mm} \phi_{\sigma} (x)\triangleq  \sigma^{-\frac{1}{2}} \phi \left(\frac{x}{\sigma} \right), \hspace{1mm} \text{ and } \hspace{1mm} \Psi_{\sigma}(x) \triangleq \sigma^{-\frac{1}{2}}\Psi\left( \frac{x}{\sigma} \right). 
\end{equation} 
We periodize these functions so that we can view the resulting functions, which we continue to denote respectively as $\phi_{r}, \phi_{\sigma}$, and $\Psi_{\sigma}$, as functions defined on $\mathbb{T}$. Then we fix a parameter $\lambda$ such that $\lambda \sigma \in \mathbb{N}$, as well as a large time-oscillation parameter $\mu \gg \sigma^{-1}$, both specified in \eqref{eta, sigma, r, mu}, and define for every $\xi \in \Lambda$ 
\begin{equation}\label{est 141}
\phi_{\xi} (t,x)   \triangleq \phi_{r} (\lambda \sigma N_{\Lambda} (\xi \cdot x + \mu t)), \hspace{3mm} \varphi_{\xi} (x)  \triangleq \phi_{\sigma} (\lambda \sigma N_{\Lambda} \xi_{1} \cdot x),  \hspace{3mm}  \Psi_{\xi} (x)   \triangleq \Psi_{\sigma}(\lambda \sigma N_{\Lambda} \xi_{1} \cdot x).
\end{equation} 
\begin{lemma}
\rm{(\cite[Lemma 2.5]{CL21}, cf. \cite[Lemmas 5.1-5.2]{BBV20})}  For any $q \in [1,\infty]$, $M , N \in \mathbb{N}$, and $\xi \neq \xi'$, the following estimates hold: 
\begin{subequations}\label{est 43} 
\begin{align}
& \lVert \nabla^{M} \partial_{t}^{N} \phi_{\xi} \rVert_{C_{t}L_{x}^{q}} \lesssim (\lambda \sigma)^{M + N} r^{\frac{1}{q} - \frac{1}{2} - M - N} \mu^{N},  \hspace{8mm}  \lVert \nabla^{M} \varphi_{\xi} \rVert_{L_{x}^{q}} + \lVert \nabla^{M} \Psi_{\xi} \rVert_{L_{x}^{q}} \lesssim \lambda^{M} \sigma^{\frac{1}{q} - \frac{1}{2}}, \label{est 43a} \\
& \lVert \nabla^{M} (\phi_{\xi} \varphi_{\xi}) \rVert_{C_{t}L_{x}^{q}} + \lVert \nabla^{M} (\phi_{\xi} \Psi_{\xi} ) \rVert_{C_{t}L_{x}^{q}} \lesssim \lambda^{M} r^{\frac{1}{q} - \frac{1}{2}} \sigma^{\frac{1}{q} - \frac{1}{2}}, \hspace{1mm}  \lVert \phi_{\xi} \varphi_{\xi} \phi_{\xi'} \varphi_{\xi'} \rVert_{C_{t}L_{x}^{q}} \lesssim \sigma^{\frac{2}{q} - 1} r^{-1},  \label{est 43c}
\end{align}
\end{subequations}
where the implicit constants only depend on $p, N$, and $M$.  
\end{lemma}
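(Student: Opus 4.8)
The plan is to reduce every bound in \eqref{est 43a}--\eqref{est 43c} to elementary one--dimensional rescaling identities for the profiles $\phi_{r},\phi_{\sigma},\Psi_{\sigma}$ defined in \eqref{estimate 51}, together with two structural facts about the torus: (i) $L^{q}(\mathbb{T}^{3})$ is translation invariant; and (ii) if $k\in\mathbb{Z}^{3}\setminus\{0\}$ and $g:\mathbb{T}\to\mathbb{R}$ is periodic, then $\lVert g(k\cdot x)\rVert_{L^{q}(\mathbb{T}^{3})}=\lVert g\rVert_{L^{q}(\mathbb{T})}$ (writing $k=dk'$ with the entries of $k'$ coprime, the map $x\mapsto k'\cdot x\ (\mathrm{mod}\ 1)$ pushes Lebesgue measure on $\mathbb{T}^{3}$ to Lebesgue measure on $\mathbb{T}$, and $y\mapsto dy$ is a $d$--fold cover of $\mathbb{T}$), and its matrix analogue for an invertible integer matrix $A$. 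By \eqref{est 278} the frequency vectors $N_{\Lambda}\xi,N_{\Lambda}\xi_{1},N_{\Lambda}\xi_{2}$ all lie in $\mathbb{Z}^{3}$, and $\lambda\sigma\in\mathbb{N}$ by the choice in \eqref{eta, sigma, r, mu}, so (ii) applies to all the functions built in \eqref{est 141}. All implicit constants will depend only on the fixed profiles $\phi,\Psi$, on $N_{\Lambda}$, and on $M,N$, as claimed.

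For the single building blocks, I would differentiate $\phi_{\xi}(t,x)=\phi_{r}(\lambda\sigma N_{\Lambda}(\xi\cdot x+\mu t))$ by the chain rule: each spatial derivative produces a factor $\lambda\sigma N_{\Lambda}\xi$ of Euclidean norm $\lambda\sigma N_{\Lambda}$ (since $|\xi|=1$) and each time derivative a factor $\lambda\sigma N_{\Lambda}\mu$, so pointwise $|\nabla^{M}\partial_{t}^{N}\phi_{\xi}|\lesssim(\lambda\sigma N_{\Lambda})^{M+N}\mu^{N}\,|\phi_{r}^{(M+N)}(\lambda\sigma N_{\Lambda}(\xi\cdot x+\mu t))|$. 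For fixed $t$ the shift $\mu t$ is removed by (i), and (ii) with $k=\lambda\sigma N_{\Lambda}\xi$ reduces the $L^{q}_{x}$ norm to $\lVert\phi_{r}^{(M+N)}\rVert_{L^{q}(\mathbb{T})}$; since $\supp\phi\subset[-1,1]$ and $r<1$, the periodized copies of $\phi_{r}^{(M+N)}=r^{-\frac12-(M+N)}\phi^{(M+N)}(\cdot/r)$ are disjoint, hence $\lVert\phi_{r}^{(M+N)}\rVert_{L^{q}(\mathbb{T})}=r^{\frac1q-\frac12-(M+N)}\lVert\phi^{(M+N)}\rVert_{L^{q}(\mathbb{R})}$, which is the first bound in \eqref{est 43a}. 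The estimates for $\nabla^{M}\varphi_{\xi}$ and $\nabla^{M}\Psi_{\xi}$ are identical with $\phi_{\sigma}$ (resp. $\Psi_{\sigma}$) and the frequency $\lambda\sigma N_{\Lambda}\xi_{1}$ in place of $\phi_{r}$ and $\lambda\sigma N_{\Lambda}\xi$; each derivative costs $\lambda\sigma N_{\Lambda}$ while $\phi_{\sigma}^{(M)}$ contributes $\sigma^{\frac1q-\frac12-M}$, and $(\lambda\sigma)^{M}\sigma^{-M}=\lambda^{M}$.

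For the products in \eqref{est 43c}, the new ingredient is the geometric separation of frequencies built into Lemmas \ref{Lemma A.2}--\ref{Lemma A.3}: $\xi\perp\xi_{1}$, and for $\xi\neq\xi'$ the directions $\xi_{1}\neq\xi_{1}'$ are chosen in general position. Using that $N_{\Lambda}\xi,N_{\Lambda}\xi_{1}$ are part of an integer basis, I would pick an invertible integer matrix $A$ with first two rows $N_{\Lambda}\xi,N_{\Lambda}\xi_{1}$, define the separated function $G(y)\triangleq\phi_{r}(\lambda\sigma y_{1})\phi_{\sigma}(\lambda\sigma y_{2})$ on $\mathbb{T}^{3}$ (well defined as $\lambda\sigma\in\mathbb{N}$), note $G(Ax)=\phi_{\xi}\varphi_{\xi}$ up to the harmless $t$--translation, and apply the matrix form of (ii) and Fubini to get $\lVert\phi_{\xi}\varphi_{\xi}\rVert_{L^{q}_{x}}=\lVert\phi_{r}\rVert_{L^{q}(\mathbb{T})}\lVert\phi_{\sigma}\rVert_{L^{q}(\mathbb{T})}\approx r^{\frac1q-\frac12}\sigma^{\frac1q-\frac12}$. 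Applying Leibniz to $\nabla^{M}(\phi_{\xi}\varphi_{\xi})$ and the pointwise bounds above, each term carries $(\lambda\sigma N_{\Lambda})^{M}r^{\frac1q-\frac12-M_{1}}\sigma^{\frac1q-\frac12-M_{2}}$ with $M_{1}+M_{2}=M$, and $(\lambda\sigma)^{M}r^{-M_{1}}\sigma^{-M_{2}}=\lambda^{M}(\sigma/r)^{M_{1}}\leq\lambda^{M}$ because $\sigma\ll r$; summing the finitely many terms gives the first bound in \eqref{est 43c}, and the $\phi_{\xi}\Psi_{\xi}$ bound is identical with $\Psi_{\sigma}$ replacing $\phi_{\sigma}$.

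The genuinely delicate point---and the step I expect to be the main obstacle---is the four--fold estimate $\lVert\phi_{\xi}\varphi_{\xi}\phi_{\xi'}\varphi_{\xi'}\rVert_{C_{t}L^{q}_{x}}\lesssim\sigma^{\frac2q-1}r^{-1}$ for $\xi\neq\xi'$: the four directions $\xi,\xi_{1},\xi',\xi_{1}'$ live in $\mathbb{R}^{3}$ and cannot be simultaneously diagonalized, so a naive H\"older split into $\lVert\phi_{\xi}\varphi_{\xi}\rVert_{L^{2q}}\lVert\phi_{\xi'}\varphi_{\xi'}\rVert_{L^{2q}}$ scales like $(r\sigma)^{\frac1q-1}$, which is \emph{larger} than the asserted bound because $\sigma\ll r$. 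Instead one must argue on the supports: $\phi_{\xi},\phi_{\xi'}$ are each concentrated on $O(\lambda\sigma)$ parallel slabs of width $O(r/(\lambda\sigma))$ and height $O(r^{-1/2})$, while $\varphi_{\xi},\varphi_{\xi'}$ are concentrated on $O(\lambda\sigma)$ parallel slabs of width $O(1/\lambda)$ and height $O(\sigma^{-1/2})$; using $\xi_{1}\neq\xi_{1}'$ (in general position, from the construction in Lemmas \ref{Lemma A.2}--\ref{Lemma A.3}) the two families of $\varphi$--slabs intersect in a set of measure $O(\sigma^{2})$, on which the product is bounded pointwise by $r^{-1}\sigma^{-1}$, giving $\lVert\cdot\rVert_{L^{q}}^{q}\lesssim\sigma^{2-q}r^{-q}$. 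This is precisely the computation carried out in \cite[Lemma 2.5]{CL21} and \cite[Lemmas 5.1--5.2]{BBV20}, and the proof here is a direct transcription of theirs; the remaining estimates reduce to the routine rescaling described above.
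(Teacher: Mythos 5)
Your proof is correct and is essentially the argument of the cited sources: the paper states this lemma without proof, quoting \cite[Lemma 2.5]{CL21} and \cite[Lemmas 5.1--5.2]{BBV20}, and your reconstruction --- the one-dimensional rescaling identities $\lVert \phi_r^{(k)}\rVert_{L^q(\mathbb{T})}\approx r^{\frac{1}{q}-\frac{1}{2}-k}$ combined with measure-preservation of $x\mapsto Ax$ for invertible integer matrices, the Leibniz bookkeeping with $(\sigma/r)^{M_1}\le 1$, and, for the cross term, sup-bounding $\phi_\xi\phi_{\xi'}$ by $r^{-1}$ while exploiting that the $\varphi_\xi,\varphi_{\xi'}$ slab families meet in a set of measure $\lesssim\sigma^2$ (equivalently $\lVert\varphi_\xi\varphi_{\xi'}\rVert_{L^q}\lesssim\sigma^{\frac{2}{q}-1}$), rather than a plain H\"older split, which you correctly identify as too lossy --- is exactly the computation carried out there. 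The one point worth making explicit is that the intersection-measure step requires $\xi_1$ and $\xi_1'$ to be linearly independent (non-parallel, not merely distinct), which is what your ``general position'' remark refers to and is guaranteed by the choice of the direction sets in Lemmas \ref{Lemma A.2}--\ref{Lemma A.3}.
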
 

\begin{lemma}\label{Lemma A.5}
\rm{(\cite[Lemma 4.1]{CL21}, cf. \cite[Lemma 7.4]{LQ20})} Let $g \in C^{2} (\mathbb{T}^{3})$ and $k \in \mathbb{N}$. Then 
\begin{equation}\label{est 46}
\lVert (-\Delta)^{-\frac{1}{2}} \mathbb{P}_{\neq 0} (g \mathbb{P}_{\geq k} f ) \rVert_{L_{x}^{q}} \lesssim k^{-1} \lVert g \rVert_{C_{x}^{2}} \lVert f \rVert_{L_{x}^{q}} \hspace{3mm} \forall \hspace{1mm} q \in (1,\infty) \text{ and } f \in L^{q}(\mathbb{T}^{3}). 
\end{equation} 
\end{lemma}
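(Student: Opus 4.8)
\emph{Proof proposal for Lemma \textup{\ref{Lemma A.5}}.} Set $h \triangleq \mathbb{P}_{\geq k}f$, so that $\hat h$ is supported in $\{\lvert \xi\rvert \geq k\}$ and, for the frequency projection in use here, $\lVert h\rVert_{L_x^q} \lesssim \lVert f\rVert_{L_x^q}$ (in every application of the lemma in this paper one in fact has $\mathbb{P}_{\geq k}f = \mathbb{P}_{\neq 0}f$, so this is just subtraction of a mean). The plan is a two-piece Littlewood--Paley decomposition of the factor $g$: write $g = g_{\leq} + g_{>}$, where $g_{\leq}$ is the smooth frequency projection of $g$ onto $\{\lvert\xi\rvert\leq k/2\}$ and $g_{>} \triangleq g - g_{\leq}$. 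The point is that for $g_{\leq}h$ the inverse gradient supplies the decisive factor $k^{-1}$, while the contribution of $g_{>}$ is controlled by the smallness of $g_{>}$ in $L_x^{\infty}$.

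I would first record the torus-specific input: on $\mathbb{T}^{3}$ the operator $(-\Delta)^{-\frac{1}{2}}\mathbb{P}_{\neq 0}$ is bounded on $L^{q}$ for every $q\in(1,\infty)$. Indeed its Fourier symbol $\lvert\xi\rvert^{-1}$ on $\mathbb{Z}^{3}\setminus\{0\}$ is the restriction of a function on $\mathbb{R}^{3}$ that vanishes near the origin, is smooth off the origin, and satisfies $\lvert\partial_\xi^\alpha(\cdot)\rvert\lesssim\lvert\xi\rvert^{-\lvert\alpha\rvert}$; since the nonzero integer frequencies are bounded below by $1$, the Marcinkiewicz--Mikhlin multiplier theorem on $\mathbb{T}^{3}$ applies. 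The same observation shows that the smooth projections $g\mapsto g_{\leq}$ and $g\mapsto g_{>}$ are bounded on $L_x^{q}$ and on $L_x^{\infty}$ uniformly in $k$.

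For the main piece, since $\widehat{g_{\leq}}$ is supported in $\{\lvert\xi\rvert\leq k/2\}$ and $\hat h$ in $\{\lvert\xi\rvert\geq k\}$, the product $g_{\leq}h$ has Fourier support in $\{\lvert\xi\rvert\geq k/2\}$. Choosing a smooth cutoff $\chi$ with $\chi\equiv 1$ on $\{\lvert\eta\rvert\geq\tfrac12\}$ and $\chi\equiv 0$ on $\{\lvert\eta\rvert\leq\tfrac14\}$, we may replace $(-\Delta)^{-\frac{1}{2}}\mathbb{P}_{\neq 0}$ acting on $g_{\leq}h$ by the multiplier $m_k(D)$ with $m_k(\xi)=\chi(\xi/k)\lvert\xi\rvert^{-1}=k^{-1}\widetilde m(\xi/k)$, $\widetilde m(\eta)=\chi(\eta)\lvert\eta\rvert^{-1}$; as $\widetilde m$ is a Mikhlin multiplier whose Mikhlin constant is invariant under $\eta\mapsto\eta/k$, one gets $\lVert m_k(D)\rVert_{L_x^{q}\to L_x^{q}}\lesssim k^{-1}$ uniformly in $k$, hence $\lVert (-\Delta)^{-\frac{1}{2}}\mathbb{P}_{\neq 0}(g_{\leq}h)\rVert_{L_x^{q}}\lesssim k^{-1}\lVert g_{\leq}\rVert_{L_x^{\infty}}\lVert h\rVert_{L_x^{q}}\lesssim k^{-1}\lVert g\rVert_{C_x^{2}}\lVert f\rVert_{L_x^{q}}$ by H\"older. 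For the remainder, Bernstein's inequality applied to the dyadic blocks of $g$ at frequencies $\gtrsim k$ gives $\lVert g_{>}\rVert_{L_x^{\infty}}\lesssim\sum_{2^{j}\gtrsim k}2^{-2j}\lVert g\rVert_{C_x^{2}}\lesssim k^{-2}\lVert g\rVert_{C_x^{2}}$; then the $L_x^{q}$-boundedness from the previous paragraph together with H\"older yields $\lVert (-\Delta)^{-\frac{1}{2}}\mathbb{P}_{\neq 0}(g_{>}h)\rVert_{L_x^{q}}\lesssim\lVert g_{>}h\rVert_{L_x^{q}}\lesssim k^{-2}\lVert g\rVert_{C_x^{2}}\lVert f\rVert_{L_x^{q}}$. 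Adding the two contributions and using $k\geq 1$ produces \eqref{est 46}.

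The only delicate point is the one isolated in the second paragraph: unlike on $\mathbb{R}^{3}$, where $(-\Delta)^{-\frac12}$ is not $L^{q}$-bounded, on $\mathbb{T}^{3}$ the composition with the zero-mean projection is bounded, and the gain $k^{-1}$ in the main piece must be extracted uniformly in $k$ via the scaling-invariance of the Mikhlin norm; once these are set in place, everything else is H\"older's inequality, Bernstein's inequality, and bookkeeping of Fourier supports. (In fact $C_x^{1}$ already suffices, since the remainder estimate only needs $\lVert g_{>}\rVert_{L_x^{\infty}}\lesssim k^{-1}\lVert g\rVert_{C_x^{1}}$; we keep $C_x^{2}$ to match the statement.)
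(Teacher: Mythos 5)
The paper does not prove Lemma \ref{Lemma A.5} at all—it is quoted from \cite[Lemma 4.1]{CL21} and \cite[Lemma 7.4]{LQ20} (going back to \cite{BV19a})—and your argument (splitting $g$ at frequency $k/2$, extracting the factor $k^{-1}$ from the rescaled Mikhlin symbol $k^{-1}\tilde m(\xi/k)$ on the frequency support $\{\lvert \xi\rvert \geq k/2\}$ of $g_{\leq}\mathbb{P}_{\geq k}f$, and using Bernstein to get $\lVert g_{>}\rVert_{L^{\infty}}\lesssim k^{-2}\lVert g\rVert_{C^{2}}$ together with the $L^{q}$-boundedness of $(-\Delta)^{-\frac12}\mathbb{P}_{\neq 0}$ on $\mathbb{T}^{3}$) is exactly the standard proof behind those citations, and it is correct. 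Your caveat about $\lVert \mathbb{P}_{\geq k}f\rVert_{L^{q}}\lesssim \lVert f\rVert_{L^{q}}$ is well placed: with the paper's sharp indicator-of-a-ball definition of $\mathbb{P}_{\leq k}$ this bound is not uniform in $k$ for $q\neq 2$, and the lemma is legitimately used only because, as you note, in every application (e.g.\ \eqref{est 52}) the function $f$ has no active frequencies below $k$, so $\mathbb{P}_{\geq k}f$ is just $f$ minus its mean.
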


\subsection{Additional preliminaries for the proof of Theorem \ref{Theorem 2.3}}\label{Section A.2}
For the purpose of proofs of Theorems \ref{Theorem 2.1}-\ref{Theorem 2.2}, it was convenient to consider $\mathbb{T} = [-\pi, \pi]$; however, hereafter, for convenience we consider $\mathbb{T} = [0,1]$ by renormalizing. In contrast to $\sigma, r,$ and $\mu$ in \eqref{eta, sigma, r, mu}, we will consider $\sigma, r_{\bot}, r_{\lVert}, \mu$, and $\tau$ in \eqref{define parameters}. We can take the same $\Psi$ and $\phi$ from \eqref{Psi} and additionally let $\psi: \mathbb{R} \mapsto \mathbb{R}$ be a smooth, mean-zero function, supported on $[-1,1]$ such that $\int_{\mathbb{R}} \psi^{2}(x) dx = 2\pi$. The cut-off functions are now defined by 
\begin{align*}
\phi_{r_{\bot}}(x) \triangleq r_{\bot}^{-\frac{1}{2}} \phi \left( \frac{x}{r_{\bot}} \right), \hspace{2mm} \Psi_{r_{\bot}} (x) \triangleq r_{\bot}^{-\frac{1}{2}}\Psi \left(\frac{x}{r_{\bot}} \right), \hspace{2mm} \psi_{r_{\lVert}} (x) \triangleq r_{\lVert}^{-\frac{1}{2}} \psi \left( \frac{x}{r_{\lVert}} \right). 
\end{align*} 
We periodize $\phi_{r_{\bot}}, \Psi_{r_{\bot}}$, and $\psi_{r_{\lVert}}$ so that they can be considered as functions on $\mathbb{T}$. The intermittent velocity flows and intermittent magnetic flows are respectively defined by   
\begin{subequations}\label{define W and D} 
\begin{align}
& W_{\xi} \triangleq \psi_{r_{\lVert}} ( \lambda r_{\bot} N_{\Lambda} ( \xi_{1} \cdot x + \mu t)) \phi_{r_{\bot}} (\lambda r_{\bot} N_{\lambda} \xi \cdot x) \xi_{1}, \hspace{3mm} \xi \in \Lambda_{v} \cup \Lambda_{\Theta}, \\
& D_{\xi} \triangleq \psi_{r_{\lVert}} (\lambda r_{\bot} N_{\Lambda} (\xi_{1} \cdot x + \mu t)) \phi_{r_{\bot}} ( \lambda r_{\bot} N_{\Lambda} \xi \cdot x) \xi_{2}, \hspace{3mm} \xi \in \Lambda_{\Theta}, 
\end{align}
\end{subequations} 
for o.n.b. $(\xi, \xi_{1}, \xi_{2})$ of $\mathbb{R}^{3}$ from Lemmas \ref{Lemma A.2}-\ref{Lemma A.3}; such $W_{\xi}$ and $D_{\xi}$ are $(\mathbb{T} /\lambda r_{\bot})^{3}$-periodic. Differently from \eqref{est 141} we introduce  
\begin{equation}\label{est 205}
\phi_{\xi} \triangleq \phi_{r_{\bot}} (\lambda r_{\bot} N_{\Lambda} \xi \cdot x), \Psi_{\xi} (x) \triangleq \Psi_{r_{\bot}} (\lambda r_{\bot} N_{\Lambda} \xi \cdot x), \psi_{\xi_{1}} (x) \triangleq \psi_{r_{\lVert}} (\lambda r_{\bot} N_{\Lambda} (\xi_{1} \cdot x + \mu t)). 
\end{equation} 
We have the following useful identities from \cite[Equations (3.11)-(3.12)]{LZZ22}: 
\begin{subequations}\label{est 159}
\begin{align}
& \fint_{\mathbb{T}^{3}} W_{\xi} \otimes W_{\xi} dx = \xi_{1} \otimes \xi_{1}, \hspace{3mm} \fint_{\mathbb{T}^{3}} D_{\xi} \otimes D_{\xi} dx = \xi_{2} \otimes \xi_{2}, \\
& \fint_{\mathbb{T}^{3}} W_{\xi} \otimes D_{\xi} dx = \xi_{1} \otimes \xi_{2}, \hspace{3mm} \fint_{\mathbb{T}^{3}} D_{\xi} \otimes W_{\xi} dx = \xi_{2} \otimes \xi_{1}, \\
& \divergence (W_{\xi} \otimes W_{\xi}) = \mu^{-1} \partial_{t} (\psi_{\xi_{1}}^{2} \phi_{\xi}^{2} \xi_{1}), \hspace{5mm}  \divergence (D_{\xi} \otimes D_{\xi}) = 0, \\
& \divergence (D_{\xi} \otimes W_{\xi}) = \mu^{-1} \partial_{t} ( \psi_{\xi_{1}}^{2} \phi_{\xi}^{2} \xi_{2}), \hspace{5mm}  \divergence (W_{\xi} \otimes D_{\xi}) = 0. 
\end{align}
\end{subequations} 
We also introduce 
\begin{align}
& \tilde{W}_{\xi}^{c} \triangleq \lambda^{-2} N_{\Lambda}^{-2} \nabla \psi_{\xi_{1}} \times \curl (\Psi_{\xi} \xi_{1}), W_{\xi}^{c} \triangleq  \lambda^{-2} N_{\Lambda}^{-2} \psi_{\xi_{1}} \Psi_{\xi} \xi_{1} \hspace{1mm}  \text{ so that }  \hspace{1mm} W_{\xi} + \tilde{W}_{\xi}^{c} = \curl \curl W_{\xi}^{c},  \nonumber \\
& \tilde{D}_{\xi}^{c} \triangleq - \lambda^{-2} N_{\Lambda}^{-2} \Delta \psi_{\xi_{1}} \Psi_{\xi} \xi_{2}, D_{\xi}^{c} \triangleq \lambda^{-2} N_{\Lambda}^{-2}  \psi_{\xi_{1}} \Psi_{\xi} \xi_{2}  \hspace{1mm} \text{ so that } \hspace{1mm} D_{\xi} + \tilde{D}_{\xi}^{c} = \curl \curl D_{\xi}^{c}. \label{identity 0c}
\end{align}
\begin{lemma}\rm{(\hspace{1sp}\cite[Lemmas 3.3--3.4]{LZZ22})}\label{Lemma A.6} 
For all $q \in [1, \infty], N, M \in \mathbb{N}$, 
\begin{equation}\label{estimate on psi, phi, and Psi}
\lVert \nabla^{N} \partial_{t}^{M} \psi_{\xi_{1}} \rVert_{C_{t}L_{x}^{q}} \lesssim r_{\lVert}^{\frac{1}{q} - \frac{1}{2}} \Bigg( \frac{r_{\bot} \lambda}{r_{\lVert}} \Bigg)^{N} \Bigg( \frac{r_{\bot} \lambda \mu}{r_{\lVert}} \Bigg)^{M}, \hspace{3mm} \lVert \nabla^{N} \phi_{\xi} \rVert_{L_{x}^{q}} + \lVert \nabla^{N} \Psi_{\xi} \rVert_{L_{x}^{q}} \lesssim r_{\bot}^{\frac{1}{q} - \frac{1}{2}} \lambda^{N}.
\end{equation}
Consequently, 
\begin{subequations}\label{estimate on W and D}
\begin{align}
& \lVert \nabla^{N} \partial_{t}^{M} W_{\xi} \rVert_{C_{t}L_{x}^{q}} + r_{\lVert} r_{\bot}^{-1} \lVert \nabla^{N} \partial_{t}^{M} \tilde{W}_{\xi}^{c} \rVert_{C_{t}L_{x}^{q}} + \lambda^{2} \lVert \nabla^{N} \partial_{t}^{M} W_{\xi}^{c} \rVert_{C_{t}L_{x}^{q}}  \nonumber \\
& \hspace{20mm} \lesssim r_{\bot}^{\frac{1}{q} - \frac{1}{2}} r_{\lVert}^{\frac{1}{q} - \frac{1}{2}} \lambda^{N} \Bigg( \frac{r_{\bot} \lambda \mu}{r_{\lVert}} \Bigg)^{M}, \hspace{3mm} \text{ for } \xi \in \Lambda_{v} \cup \Lambda_{\Theta}, \\
&  \lVert \nabla^{N} \partial_{t}^{M} D_{\xi} \rVert_{C_{t}L_{x}^{q}} + r_{\lVert} r_{\bot}^{-1} \lVert \nabla^{N} \partial_{t}^{M} \tilde{D}_{\xi}^{c} \rVert_{C_{t}L_{x}^{q}} + \lambda^{2} \lVert \nabla^{N} \partial_{t}^{M} D_{\xi}^{c} \rVert_{C_{t}L_{x}^{q}}  \nonumber \\
& \hspace{20mm} \lesssim r_{\bot}^{\frac{1}{q} - \frac{1}{2}} r_{\lVert}^{\frac{1}{q} - \frac{1}{2}} \lambda^{N} \Bigg( \frac{r_{\bot} \lambda \mu}{r_{\lVert}} \Bigg)^{M}, \hspace{3mm} \text{ for } \xi \in \Lambda_{\Theta}.
\end{align}
\end{subequations} 
Finally, for $\xi, \xi' \in \Lambda_{v} \cup \Lambda_{\Theta}$ such that $\xi \neq \xi'$ and any $q \in [1,\infty]$, 
\begin{equation}\label{quadruple product}
\lVert \psi_{\xi_{1}} \phi_{\xi} \psi_{\xi_{1}'} \phi_{\xi'} \rVert_{C_{t}L_{x}^{q}} \lesssim r_{\bot}^{\frac{1}{q} - 1} r_{\lVert}^{\frac{2}{q}- 1}. 
\end{equation} 
\end{lemma}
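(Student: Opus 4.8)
\textbf{Proof proposal for Lemma~\ref{Lemma A.6}.} The plan is to reduce everything to one‑dimensional rescaling estimates for the building blocks $\psi$, $\phi$, $\Psi$ and then propagate these through the definitions \eqref{define W and D} and \eqref{identity 0c} by the Leibniz rule. First I would establish \eqref{estimate on psi, phi, and Psi}. For $\phi_{\xi}(x)=\phi_{r_\bot}(\lambda r_\bot N_\Lambda \xi\cdot x)$ with $\phi_{r_\bot}(y)=r_\bot^{-1/2}\phi(y/r_\bot)$ periodized on $\mathbb{T}$, a chain‑rule computation gives $\nabla^N\phi_\xi = (\lambda r_\bot N_\Lambda)^N (\xi\otimes\cdots\otimes\xi)\,(\phi_{r_\bot})^{(N)}(\lambda r_\bot N_\Lambda\xi\cdot x)$, where $(\phi_{r_\bot})^{(N)}(y)=r_\bot^{-1/2-N}\phi^{(N)}(y/r_\bot)$. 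Taking $L^q_x$‑norm on $\mathbb{T}^3$, the rescaling by $\lambda r_\bot N_\Lambda\in\mathbb{N}$ (which holds because $a^\epsilon\in5\mathbb{N}$, $b\in2\mathbb{N}$, cf.\ the observation after \eqref{define parameters} that $\lambda_{q+1}r_\bot\in\mathbb{N}$) is norm‑preserving up to a universal constant, so $\lVert \nabla^N\phi_\xi\rVert_{L^q_x}\lesssim (\lambda r_\bot)^N\, r_\bot^{-1/2}\,\lVert \phi^{(N)}\rVert_{L^q(\mathbb{T})}\, r_\bot^{1/q}\cdot r_\bot^{-N}=r_\bot^{1/q-1/2}\lambda^N$; the $\Psi_\xi$ estimate is identical. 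For $\psi_{\xi_1}(x)=\psi_{r_\lVert}(\lambda r_\bot N_\Lambda(\xi_1\cdot x+\mu t))$, each spatial derivative produces a factor $\lambda r_\bot N_\Lambda$ together with a derivative of $\psi_{r_\lVert}$ which costs $r_\lVert^{-1}$, i.e.\ a net $r_\bot\lambda/r_\lVert$ per derivative, while each time derivative produces the same chain factor $\lambda r_\bot N_\Lambda$ times $\mu$ and a $\psi_{r_\lVert}^{(1)}$ costing $r_\lVert^{-1}$, hence $r_\bot\lambda\mu/r_\lVert$ per time derivative; the $L^q_x$‑norm of the undifferentiated profile is $r_\lVert^{1/q-1/2}$ by the same rescaling argument, giving \eqref{estimate on psi, phi, and Psi}.

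Second, I would derive \eqref{estimate on W and D} from \eqref{estimate on psi, phi, and Psi}. By definition $W_\xi=\psi_{\xi_1}\phi_\xi\,\xi_1$, and since $\psi_{\xi_1}$ depends on $(\xi_1\cdot x,t)$ while $\phi_\xi$ depends on $\xi\cdot x$ with $\xi\perp\xi_1$, one applies the Leibniz rule: $\nabla^N\partial_t^M W_\xi$ is a sum of terms $(\nabla^{N_1}\partial_t^M\psi_{\xi_1})(\nabla^{N_2}\phi_\xi)\xi_1$ with $N_1+N_2=N$, and $\phi_\xi$ has no time dependence. Using Hölder with the product measure structure (the two profiles oscillate in orthogonal directions, so on $\mathbb{T}^3$ their product's $L^q$‑norm factors up to a constant, e.g.\ by Fubini after a change of variables aligning the torus directions), each such term is bounded by $r_\lVert^{1/q-1/2}(r_\bot\lambda/r_\lVert)^{N_1}(r_\bot\lambda\mu/r_\lVert)^M\cdot r_\bot^{1/q-1/2}\lambda^{N_2}$. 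The worst case $N_1=0$ on the spatial side is not quite what is claimed, so one must observe that the two estimates need to be combined: the claim $\lambda^N(r_\bot\lambda\mu/r_\lVert)^M$ up to the prefactor $r_\bot^{1/q-1/2}r_\lVert^{1/q-1/2}$ follows because $r_\bot\lambda/r_\lVert=\lambda^{-4\epsilon}\le1$ (from \eqref{define parameters}), so $(r_\bot\lambda/r_\lVert)^{N_1}\lambda^{N_2}\le\lambda^{N_1+N_2}=\lambda^N$; thus every Leibniz term is dominated by the claimed bound. The corrector $W_\xi^c=\lambda^{-2}N_\Lambda^{-2}\psi_{\xi_1}\Psi_\xi\xi_1$ gains $\lambda^{-2}$ which is exactly the weight in the statement, and $\tilde W_\xi^c=\lambda^{-2}N_\Lambda^{-2}\nabla\psi_{\xi_1}\times\curl(\Psi_\xi\xi_1)$ carries one derivative on $\psi_{\xi_1}$ (costing $r_\bot\lambda/r_\lVert$) and one on $\Psi_\xi$ (costing $\lambda$), for a net $\lambda^{-2}\cdot(r_\bot\lambda/r_\lVert)\cdot\lambda = r_\bot r_\lVert^{-1}$, matching the weight $r_\lVert r_\bot^{-1}$ on the left; the identity $W_\xi+\tilde W_\xi^c=\curl\curl W_\xi^c$ is a direct vector‑calculus check using $\Delta(\psi_{\xi_1}\Psi_\xi)$ and orthogonality of $\xi,\xi_1$. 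The $D_\xi$‑family is handled identically with $\xi_2$ in place of $\xi_1$, noting $\tilde D_\xi^c=-\lambda^{-2}N_\Lambda^{-2}\Delta\psi_{\xi_1}\,\Psi_\xi\xi_2$ has two derivatives on $\psi_{\xi_1}$.

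Finally, for \eqref{quadruple product} with $\xi\ne\xi'$, I would use that $\phi_\xi$ is $(\mathbb{T}/\lambda r_\bot)^3$‑periodic in the direction $\xi$ and $\psi_{\xi_1}$ oscillates in direction $\xi_1$; when $\xi\ne\xi'$ the four directions $\xi,\xi_1,\xi',\xi_1'$ include at least two distinct ones, and moreover the o.n.b.'s are chosen (after Lemmas~\ref{Lemma A.2}--\ref{Lemma A.3}, cf.\ the sentence ``We can choose $\Lambda_\Xi$ and $\Lambda_v$ so that $\ldots\xi_1\ne\xi_1'$'') so that the supports of $\phi_\xi$ and $\phi_{\xi'}$, which are concentrated on $r_\bot$‑neighbourhoods of parallel planes with \emph{different} normals, intersect in a set of measure $\lesssim r_\bot^2$ per period cell rather than $r_\bot$. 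Hence $\lVert\phi_\xi\phi_{\xi'}\rVert_{L^q_x}\lesssim r_\bot^{2/q-1}$ (instead of $r_\bot^{1/q-1/2}$ squared which would give the same exponent only for $q=1$; the gain is genuine for $q>1$), while $\psi_{\xi_1}$ and $\psi_{\xi_1'}$ each contribute $r_\lVert^{1/q-1/2}$ along their own lines for a combined $r_\lVert^{2/q-1}$; multiplying gives the claim. The main obstacle is this last point—carefully justifying the product‑measure/Fubini factorization of $L^q(\mathbb{T}^3)$‑norms of products of one‑directional oscillatory profiles with \emph{rational but non‑orthogonal} frequency vectors, and in particular extracting the improved $r_\bot^{2/q-1}$ (rather than $r_\bot^{2(1/q-1/2)}$) exponent in \eqref{quadruple product}. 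This is where one invokes that $\Lambda_v,\Lambda_\Theta\subset\mathbb{S}^2\cap\mathbb{Q}^3$ are \emph{finite} sets, so the relevant geometric intersection bounds are uniform with an implicit constant depending only on $\Lambda_v\cup\Lambda_\Theta$ and $N_\Lambda$; everything else is a bookkeeping exercise in the Leibniz rule and rescaling, exactly as in \cite[Lemmas~3.3--3.4]{LZZ22}.
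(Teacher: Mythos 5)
Your treatment of \eqref{estimate on psi, phi, and Psi} and \eqref{estimate on W and D} is essentially the standard argument behind \cite[Lemmas 3.3--3.4]{LZZ22} (the paper itself only cites these lemmas and gives no proof): one-dimensional rescaling for the profiles, the Leibniz rule, and Fubini-type factorization of $L^{q}(\mathbb{T}^{3})$-norms for products of profiles oscillating in the orthogonal integer directions $N_{\Lambda}\xi\perp N_{\Lambda}\xi_{1}$. One slip: from \eqref{define parameters} one has $r_{\bot}\lambda/r_{\lVert}=\lambda^{1-4\epsilon}$, not $\lambda^{-4\epsilon}$; what you actually need (and what is true) is $r_{\bot}/r_{\lVert}=\lambda^{-4\epsilon}\leq 1$, i.e.\ $r_{\bot}\lambda/r_{\lVert}\leq\lambda$, so the conclusion $(r_{\bot}\lambda/r_{\lVert})^{N_{1}}\lambda^{N_{2}}\leq\lambda^{N}$ survives, but as written the identity is wrong.

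The genuine gap is in \eqref{quadruple product}. You bound the pair $\phi_{\xi}\phi_{\xi'}$ by $r_{\bot}^{2/q-1}$ and the pair $\psi_{\xi_{1}}\psi_{\xi_{1}'}$ by $r_{\lVert}^{2/q-1}$ and then say ``multiplying gives the claim.'' That multiplication is not a H\"older step: the two pair-products are functions of the (generally intersecting) planes $\mathrm{span}\{\xi,\xi'\}$ and $\mathrm{span}\{\xi_{1},\xi_{1}'\}$, so no Fubini factorization of the $L^{q}$-norm of their product is available. Worse, the bound your multiplication would produce, $r_{\bot}^{2/q-1}r_{\lVert}^{2/q-1}$, is strictly \emph{stronger} than the stated one and is false in general: four slab families in $\mathbb{R}^{3}$ with four distinct normals cannot be jointly transversal (any four vectors in $\mathbb{R}^{3}$ are linearly dependent), so the joint support need not have measure fraction $r_{\lVert}^{2}r_{\bot}^{2}$; the exponent $r_{\bot}^{1/q-1}$ in \eqref{quadruple product} is precisely the signature of giving up the transversality gain on one of the $\phi$-factors. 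Note also that no pure H\"older splitting reaches the claimed exponent (e.g.\ $\lVert\psi_{\xi_{1}}\psi_{\xi_{1}'}\rVert_{L^{q}}\lVert\phi_{\xi}\phi_{\xi'}\rVert_{L^{\infty}}$ gives $r_{\lVert}^{2/q-1}r_{\bot}^{-1}$, and $\lVert\psi_{\xi_{1}}\phi_{\xi}\rVert_{L^{2q}}\lVert\psi_{\xi_{1}'}\phi_{\xi'}\rVert_{L^{2q}}$ gives $r_{\lVert}^{1/q-1}r_{\bot}^{1/q-1}$). The correct route, as in the cited reference, is a \emph{triple} estimate: bound, say, $\lVert\psi_{\xi_{1}}\psi_{\xi_{1}'}\phi_{\xi}\rVert_{C_{t}L_{x}^{q}}\lesssim r_{\lVert}^{2/q-1}r_{\bot}^{1/q-1/2}$, which uses that the three integer directions $N_{\Lambda}\xi_{1},N_{\Lambda}\xi_{1}',N_{\Lambda}\xi$ (or the triple with $\xi'$) are linearly independent for the specific finite families of Lemmas \ref{Lemma A.2}--\ref{Lemma A.3}, and then estimate the remaining factor by $\lVert\phi_{\xi'}\rVert_{L_{x}^{\infty}}\lesssim r_{\bot}^{-1/2}$; this yields exactly $r_{\bot}^{1/q-1}r_{\lVert}^{2/q-1}$. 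Your sketch neither verifies this three-direction nondegeneracy nor isolates it as the needed input --- instead it aims at an ``improved $r_{\bot}^{2/q-1}$'' factor that the geometry does not permit --- so the last display is not proved as written.
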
 

Following \cite[pp. 43--44]{LZ23}, we let $G \in C_{c}^{\infty} (0,1)$ be mean-zero and satisfy $\lVert G \rVert_{L^{2}(\mathbb{T})} = 1$. For any $\tau \in \mathbb{N}$, we define $\tilde{g}_{\xi}: \mathbb{T} \mapsto \mathbb{R}$ as the 1-periodic extension of $\tau^{\frac{1}{2}} G(\tau (t- t_{\xi}))$ where $t_{\xi}$ are chosen so that $\tilde{g}_{\xi}$ have disjoint supports for different $\xi$; i.e., $\tilde{g}_{\xi} (t) \triangleq \sum_{n \in \mathbb{Z}} \tau^{\frac{1}{2}} G(\tau(n+ t - t_{\xi}))$. We also define  
\begin{subequations}
\begin{align}
&g_{\xi} (t) \triangleq \tilde{g}_{\xi} (\sigma t) \text{ that satisfies } \lVert g_{\xi} \rVert_{W^{M,q} ([0,1])} \lesssim ( \sigma \tau)^{M} \tau^{\frac{1}{2} - \frac{1}{q}} \hspace{1mm} \forall \hspace{1mm} q \in [1,\infty], M \in \mathbb{N}_{0}, \label{define g}\\
&h_{\xi} (t) \triangleq \int_{0}^{\sigma t} ( \tilde{g}_{\xi}^{2}(s) - 1) ds \text{ that satisfies } \lVert h_{\xi} \rVert_{L^{\infty}} \leq 1. \label{define h}
\end{align}
\end{subequations}

\begin{lemma}\rm{(\hspace{1sp}\cite[Remark B.2]{LZ23})}\label{Lemma A.7} 
Because $g_{\xi}$ is $\mathbb{T}/\sigma$-periodic, for any $n, n_{1}, n_{2} \in \mathbb{N}_{0}$ such that $n_{1} < n_{2}$, and $q \in [1,\infty)$, we can write 
\begin{equation}\label{est 181} 
\lVert g_{\xi} \rVert_{W^{n,q} ( [ \frac{n_{1}}{\sigma}, \frac{n_{2}}{\sigma} ]}^{q} = \frac{n_{2} - n_{1}}{\sigma} \lVert g_{\xi} \rVert_{W^{n,q} ([0,1])}^{q}. 
\end{equation} 
For arbitrary values of $0 < a_{0} < b_{0}$, it is possible to find $n_{1}, n_{2} \in \mathbb{N}_{0}$ satisfying $\frac{n_{1}}{\sigma} < a_{0} \leq \frac{n_{1} + 1}{\sigma}, \frac{n_{2}}{\sigma} \leq b_{0} < \frac{n_{2} + 1}{\sigma}$ so that 
\begin{equation}\label{est 195} 
\lVert g_{\xi} \rVert_{W^{n, q} ([a_{0}, b_{0} ]}^{q} \leq \lVert g_{\xi} \rVert_{W^{n,q} ([ \frac{n_{1}}{\sigma}, \frac{n_{2} + 1}{\sigma} ]}^{q}\leq \Bigg(b_{0} - a_{0} + \frac{2}{\sigma}\Bigg) \lVert g_{\xi} \rVert_{W^{n,q} ([0,1])}^{q}.
\end{equation} 
\end{lemma}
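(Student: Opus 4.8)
The plan is to prove both \eqref{est 181} and \eqref{est 195} by pure periodicity bookkeeping, using that $g_\xi$ has minimal period $1/\sigma$ and that $\sigma=\lambda_{q+1}^{2\epsilon}\in\mathbb{N}$ by the integrality observation recorded just after \eqref{define parameters}; in particular the reference interval $[0,1]$ is the disjoint union of exactly $\sigma$ period cells of $g_\xi$. Throughout I will use that the Sobolev norm $\lVert\cdot\rVert_{W^{n,q}}$ is a finite sum of $L^q$-norms of derivatives, so that it suffices to argue term by term in $j$ with $t\mapsto\lvert\partial_t^j g_\xi(t)\rvert^q$ a $(1/\sigma)$-periodic function.

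First I would prove \eqref{est 181}. Given $n_1<n_2$ in $\mathbb{N}_0$, the interval $[n_1/\sigma,n_2/\sigma]$ is the union of the $n_2-n_1$ consecutive period cells $[(n_1+i)/\sigma,(n_1+i+1)/\sigma]$, $i=0,\dots,n_2-n_1-1$, so additivity of the integral and periodicity give $\int_{n_1/\sigma}^{n_2/\sigma}\lvert\partial_t^j g_\xi\rvert^q\,dt=(n_2-n_1)\int_0^{1/\sigma}\lvert\partial_t^j g_\xi\rvert^q\,dt$. Applying the same decomposition to $[0,1]$, which is $\sigma$ period cells, gives $\int_0^1\lvert\partial_t^j g_\xi\rvert^q\,dt=\sigma\int_0^{1/\sigma}\lvert\partial_t^j g_\xi\rvert^q\,dt$. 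Eliminating the single-cell integral between the two identities yields $\int_{n_1/\sigma}^{n_2/\sigma}\lvert\partial_t^j g_\xi\rvert^q\,dt=\tfrac{n_2-n_1}{\sigma}\int_0^1\lvert\partial_t^j g_\xi\rvert^q\,dt$, and summing over $j$ is exactly \eqref{est 181}. (No change of variables to $\tilde g_\xi$ is needed here; that rescaling only enters the pointwise-in-$M$ bounds in \eqref{define g}.)

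Next I would deduce \eqref{est 195}. For $0<a_0<b_0$, set $n_1\triangleq\lceil\sigma a_0\rceil-1$ and $n_2\triangleq\lfloor\sigma b_0\rfloor$; then $n_1,n_2\in\mathbb{N}_0$ (using $\sigma\geq1$, $a_0>0$), and by construction $\tfrac{n_1}{\sigma}<a_0\leq\tfrac{n_1+1}{\sigma}$ and $\tfrac{n_2}{\sigma}\leq b_0<\tfrac{n_2+1}{\sigma}$. From $n_1<\sigma a_0<\sigma b_0$ and $n_2>\sigma b_0-1$ one checks $n_2\geq n_1$, hence $n_1<n_2+1$ and the interval $[n_1/\sigma,(n_2+1)/\sigma]$ is non-degenerate and contains $[a_0,b_0]$. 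The first inequality in \eqref{est 195} is then just monotonicity of $\int_{(\cdot)}\lvert\partial_t^j g_\xi\rvert^q$ under enlargement of the domain, summed over $j$. For the second, I would invoke \eqref{est 181} with $n_2$ replaced by $n_2+1$ to get $\lVert g_\xi\rVert_{W^{n,q}([n_1/\sigma,(n_2+1)/\sigma])}^q=\tfrac{n_2+1-n_1}{\sigma}\lVert g_\xi\rVert_{W^{n,q}([0,1])}^q$, and estimate the prefactor using $n_1\geq\sigma a_0-1$ and $n_2\leq\sigma b_0$, which give $n_2+1-n_1\leq\sigma(b_0-a_0)+2$, i.e. $\tfrac{n_2+1-n_1}{\sigma}\leq b_0-a_0+\tfrac{2}{\sigma}$, as claimed.

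There is essentially no analytic obstacle; the statement and proof appear in \cite[Remark B.2]{LZ23}, and the only two points meriting a line of care are (i) recording that $\sigma\in\mathbb{N}$, so that $[0,1]$ really is an integer number of period cells and the normalization to $\lVert\cdot\rVert_{W^{n,q}([0,1])}$ does not pick up an error, and (ii) verifying $n_1\leq n_2$ so that the sandwiching interval in \eqref{est 195} is not vacuous.
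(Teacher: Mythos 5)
Your proof is correct, and it is essentially the argument intended here: the paper gives no proof of Lemma \ref{Lemma A.7} at all, merely citing \cite[Remark B.2]{LZ23}, and your periodicity-cell bookkeeping (using that $\sigma=\lambda_{q+1}^{2\epsilon}\in\mathbb{N}$ so that $[0,1]$ is exactly $\sigma$ period cells of $g_{\xi}$, decomposing $[\tfrac{n_{1}}{\sigma},\tfrac{n_{2}}{\sigma}]$ into $n_{2}-n_{1}$ cells for \eqref{est 181}, then choosing $n_{1}=\lceil\sigma a_{0}\rceil-1$, $n_{2}=\lfloor\sigma b_{0}\rfloor$ and combining set monotonicity with \eqref{est 181} and $n_{2}+1-n_{1}\leq\sigma(b_{0}-a_{0})+2$ for \eqref{est 195}) is exactly the standard route taken in the cited reference. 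The two points you flag -- $\sigma\in\mathbb{N}$ and $n_{1}\leq n_{2}$ -- are indeed the only details needing explicit mention, and you handle both.
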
 

\begin{lemma}\rm{(\hspace{1sp}\cite[Theorem C.1]{LZ23})}\label{Lemma A.8} 
Let $q \in [1,\infty], m, n \in \mathbb{N}_{0}$ such that $m < n$, $a \in C^{1} (\mathbb{R}^{d}; \mathbb{R}), f \in L^{q} (\mathbb{T}^{d}; \mathbb{R})$. Then for any $\sigma \in \mathbb{N}$, 
\begin{equation}\label{est 180} 
\Bigg\lvert \lVert af(\sigma \cdot) \rVert_{L^{q} ([ \frac{m}{\sigma}, \frac{n}{\sigma} ]^{d})} - \lVert a \rVert_{L^{q} ([ \frac{m}{\sigma}, \frac{n}{\sigma} ]^{d} )} \lVert f \rVert_{L^{q} (\mathbb{T}^{d})} \Bigg\rvert \lesssim \sigma^{-\frac{1}{q}} \left( \frac{n-m}{\sigma} \right)^{\frac{d}{q}} \lVert a \rVert_{C^{0,1} ( [ \frac{m}{\sigma}, \frac{n}{\sigma} ]^{d} )} \lVert f \rVert_{L^{q} (\mathbb{T}^{d})},
\end{equation} 
where $\lVert \cdot \rVert_{C^{0,1}}$ represents the Lipschitz norm. In particular, in case $d= 1$ and $f$ is mean-zero, 
\begin{equation}\label{est 203} 
\Bigg\lvert \int_{\frac{m}{\sigma}}^{\frac{n}{\sigma}} a(t) f(\sigma t) dt \Bigg\rvert \lesssim \frac{n-m}{\sigma^{2}} \lVert a \rVert_{C^{0,1} ([\frac{m}{\sigma}, \frac{n}{\sigma} ])} \lVert f \rVert_{L^{1} ( \mathbb{T})}.
\end{equation} 
\end{lemma}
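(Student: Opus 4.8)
The plan is to localize the estimate to the period cells of the rescaled function $f(\sigma\,\cdot)$, on each of which $a$ is nearly constant, and to reassemble these cells using \emph{only} the triangle inequality in $\ell^{q}$ — never manipulating $q$-th powers directly — which is precisely what makes the bound valid uniformly in $q\in[1,\infty)$ with a dimensional constant. Set $R\triangleq[\tfrac{m}{\sigma},\tfrac{n}{\sigma}]^{d}$ and partition it into the $N=(n-m)^{d}$ cubes $Q_{j}\triangleq\sigma^{-1}(j+[0,1]^{d})$, $j\in\{m,\dots,n-1\}^{d}$, of side $\sigma^{-1}$; let $x_{j}$ be the center of $Q_{j}$. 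The only scaling input is $\lVert f(\sigma\,\cdot)\rVert_{L^{q}(Q_{j})}=\sigma^{-d/q}\lVert f\rVert_{L^{q}(\mathbb{T}^{d})}$, valid since $f(\sigma\,\cdot)$ is $\sigma^{-1}\mathbb{T}^{d}$-periodic and $Q_{j}$ is a full rescaled period.

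First I would establish the per-cell comparison. Because $R$ is convex and $a\in C^{1}$, $\lvert a(x)-a(x_{j})\rvert\le\sqrt{d}\,\sigma^{-1}\lVert\nabla a\rVert_{C^{0}(R)}$ on $Q_{j}$, so two applications of the triangle inequality give
\begin{equation*}
\Big\lvert\,\lVert a f(\sigma\,\cdot)\rVert_{L^{q}(Q_{j})}-\lvert a(x_{j})\rvert\,\sigma^{-d/q}\lVert f\rVert_{L^{q}(\mathbb{T}^{d})}\,\Big\rvert\le\sqrt{d}\,\sigma^{-1-d/q}\lVert\nabla a\rVert_{C^{0}(R)}\lVert f\rVert_{L^{q}(\mathbb{T}^{d})},
\end{equation*}
and the same estimate for $a$ alone (comparing $\lVert a\rVert_{L^{q}(Q_{j})}$ with $\lvert a(x_{j})\rvert\sigma^{-d/q}$ via $\min_{Q_{j}}\lvert a\rvert$ and $\max_{Q_{j}}\lvert a\rvert$) yields
\begin{equation*}
\Big\lvert\,\lVert a\rVert_{L^{q}(Q_{j})}-\lvert a(x_{j})\rvert\,\sigma^{-d/q}\,\Big\rvert\le\sqrt{d}\,\sigma^{-1-d/q}\lVert\nabla a\rVert_{C^{0}(R)}.
\end{equation*}
Now using $\lVert af(\sigma\,\cdot)\rVert_{L^{q}(R)}^{q}=\sum_{j}\lVert af(\sigma\,\cdot)\rVert_{L^{q}(Q_{j})}^{q}$, $\lVert a\rVert_{L^{q}(R)}^{q}=\sum_{j}\lVert a\rVert_{L^{q}(Q_{j})}^{q}$, and $\sigma^{-d}\sum_{j}\lvert a(x_{j})\rvert^{q}=\big(\sigma^{-d/q}(\sum_{j}\lvert a(x_{j})\rvert^{q})^{1/q}\big)^{q}$, the $\ell^{q}(\{m,\dots,n-1\}^{d})$ triangle inequality turns the two cellwise bounds into
\begin{equation*}
\Big\lvert\,\lVert af(\sigma\,\cdot)\rVert_{L^{q}(R)}-\sigma^{-d/q}\big(\textstyle\sum_{j}\lvert a(x_{j})\rvert^{q}\big)^{1/q}\lVert f\rVert_{L^{q}(\mathbb{T}^{d})}\,\Big\rvert\le N^{1/q}\sqrt{d}\,\sigma^{-1-d/q}\lVert\nabla a\rVert_{C^{0}(R)}\lVert f\rVert_{L^{q}(\mathbb{T}^{d})},
\end{equation*}
together with the analogous bound with $af(\sigma\,\cdot)$ replaced by $a$ and then multiplied through by $\lVert f\rVert_{L^{q}(\mathbb{T}^{d})}$. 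Adding the two and using $N^{1/q}\sigma^{-1-d/q}=\sigma^{-1}(\tfrac{n-m}{\sigma})^{d/q}\le\sigma^{-1/q}(\tfrac{n-m}{\sigma})^{d/q}$ (here $\sigma\ge1$, $q\ge1$) and $\lVert\nabla a\rVert_{C^{0}(R)}\le\lVert a\rVert_{C^{0,1}(R)}$ gives \eqref{est 180} with implicit constant $2\sqrt{d}$. The case $q=\infty$ is handled directly: $\lVert af(\sigma\,\cdot)\rVert_{L^{\infty}(R)}\le\lVert a\rVert_{L^{\infty}(R)}\lVert f\rVert_{L^{\infty}(\mathbb{T}^{d})}$ is trivial, while for the reverse, given $\eta>0$, the positive-measure set $\{\lvert f\rvert>\lVert f\rVert_{L^{\infty}}-\eta\}$ (rescaled) meets the cell containing a near-maximizer of $\lvert a\rvert$ on $R$, on which $\lvert a\rvert\ge\lVert a\rVert_{L^{\infty}(R)}-\sqrt{d}\,\sigma^{-1}\lVert\nabla a\rVert_{C^{0}(R)}$; letting $\eta\downarrow0$ closes it.

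For the mean-zero refinement \eqref{est 203} ($d=1$), I would split $\int_{m/\sigma}^{n/\sigma}a(t)f(\sigma t)\,dt=\sum_{j=m}^{n-1}\int_{Q_{j}}a(t)f(\sigma t)\,dt$, change variables $s=\sigma t$ and use $1$-periodicity of $f$ to write $\int_{Q_{j}}a(t)f(\sigma t)\,dt=\sigma^{-1}\int_{0}^{1}a\big(\tfrac{s+j}{\sigma}\big)f(s)\,ds$, and then replace $a(\tfrac{s+j}{\sigma})$ by $a(\tfrac{s+j}{\sigma})-a(\tfrac{j}{\sigma})$ at no cost since $\int_{0}^{1}f(s)\,ds=0$. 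The bound $\lvert a(\tfrac{s+j}{\sigma})-a(\tfrac{j}{\sigma})\rvert\le\sigma^{-1}\lVert a'\rVert_{C^{0}}$ for $s\in[0,1]$ then gives $\lvert\int_{Q_{j}}a(t)f(\sigma t)\,dt\rvert\le\sigma^{-2}\lVert a'\rVert_{C^{0}}\lVert f\rVert_{L^{1}(\mathbb{T})}$, and summing over the $n-m$ cells yields \eqref{est 203} with $\lVert a'\rVert_{C^{0}}\le\lVert a\rVert_{C^{0,1}}$.

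I do not anticipate a genuine obstacle: the argument is elementary. The single point that needs care — and the only place the precise exponent $\sigma^{-1/q}(\tfrac{n-m}{\sigma})^{d/q}$ is seen to come out right — is to perform the reassembly with the $\ell^{q}$ triangle inequality, so that the $N$ cellwise errors combine as $N^{1/q}\varepsilon$ rather than $N\varepsilon^{q}$; this keeps the estimate uniform in $q$ and dimension-dependent only through $\sqrt{d}$, and avoids any interpolation to reach $q=\infty$.
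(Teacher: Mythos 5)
Your proof is correct. Note that the paper itself does not prove this lemma at all: it is quoted verbatim from \cite[Theorem C.1]{LZ23}, so the only ``paper proof'' is a citation. Your argument is a sound, self-contained substitute, and it follows the natural route behind such decorrelation estimates: tile $[\frac{m}{\sigma},\frac{n}{\sigma}]^{d}$ by the $(n-m)^{d}$ period cells of $f(\sigma\,\cdot)$, freeze $a$ at the cell centers with an $O(\sigma^{-1}\lVert \nabla a \rVert_{C^{0}})$ error, use $\lVert f(\sigma\,\cdot)\rVert_{L^{q}(Q_{j})}=\sigma^{-d/q}\lVert f\rVert_{L^{q}(\mathbb{T}^{d})}$ on each full cell, and reassemble with the reverse triangle inequality in $\ell^{q}$ so that the $N=(n-m)^{d}$ cellwise errors enter as $N^{1/q}$ times the uniform error. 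Your bookkeeping in fact yields the prefactor $\sigma^{-1}(\frac{n-m}{\sigma})^{d/q}$, which is stronger than the stated $\sigma^{-1/q}(\frac{n-m}{\sigma})^{d/q}$ since $\sigma\ge 1$ and $q\ge 1$, so \eqref{est 180} follows; the $q=\infty$ case and the mean-zero cancellation argument for \eqref{est 203} (subtracting $a(\frac{j}{\sigma})$ on each cell, which costs nothing because $\int_{0}^{1}f=0$, and gaining one extra factor of $\sigma^{-1}$ from the Lipschitz bound) are also handled correctly. The only blemish is cosmetic: in your closing paragraph the phrase about the errors combining ``as $N^{1/q}\varepsilon$ rather than $N\varepsilon^{q}$'' is a slip of wording, but the displayed estimates themselves are right.
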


\section{Further details}
\subsection{Proof of Theorem \ref{Theorem 2.3}}\label{Section B.1} 
First, the probabilistically strong solution $u,b$ on $[0, T_{L}]$ starting from the given $u^{\text{in}}, b^{\text{in}} \in L_{\sigma}^{p}$ $\mathbf{P}$-a.s. that was constructed in Proposition \ref{Proposition 5.3} satisfies 
\begin{equation*}
 \lVert u(T_{L}) \rVert_{L^{p}} + \lVert b(T_{L}) \rVert_{L^{p}} \overset{\eqref{est 131} \eqref{define TL} \eqref{define N} \eqref{est 123d}}{\lesssim} N + L + \sum_{q=0}^{\infty} M_{L}^{\frac{1}{2}}  \lambda_{q+1}^{-\frac{\epsilon}{112}}  \lesssim N+ L. 
\end{equation*} 
Now we define 
\begin{subequations}\label{est 135}
\begin{align} 
&(u_{1}, b_{1}) (0) = (u,b)(T_{L}), \hspace{3mm} (\hat{B}_{1}, \hat{B}_{2})(t) = (B_{1}, B_{2})(T_{L} + t) - (B_{1}, B_{2})(T_{L}),  \\
&\hat{\mathcal{F}}_{t} = \sigma \Bigg( \{ (\hat{B}_{1},\hat{B}_{2})(s)\}_{s \leq t } \Bigg) \vee \sigma ( ( u,b)(T_{L})),
\end{align} 
\end{subequations} 
where the last is defined to be the smallest $\sigma$-algebra that contains $\sigma \Bigg( \{ (\hat{B}_{1},\hat{B}_{2})(s)\}_{s \leq t } \Bigg) \cup \sigma ( ( u,b)(T_{L}))$. 
Let 
\begin{equation}\label{hat z1 and hat z2}
\begin{cases}
d \hat{z}_{1} + (-\Delta)^{m_{1}} \hat{z}_{1} dt + \nabla p dt = d \hat{B}_{1},  \nabla\cdot \hat{z}_{1} = 0, \\
\hat{z}_{1}(0) = 0, 
\end{cases}
\hspace{5mm} 
\begin{cases}
d \hat{z}_{2} + (-\Delta)^{m_{2}} \hat{z}_{2} dt = d \hat{B}_{2}, \\
\hat{z}_{2}(0) = 0. 
\end{cases}
\end{equation}
Then 
\begin{subequations}
\begin{align}
&\begin{cases}
d \Bigg( z_{1}(t+T_{L}) - e^{-t (-\Delta)^{m_{1}}} z_{1}(T_{L}) \Bigg) + (-\Delta)^{m_{1}} \Bigg( z_{1}(t+T_{L}) - e^{-t(-\Delta)^{m_{1}}} z_{1}(T_{L}) \Bigg) = \mathbb{P} d\hat{B}_{1}, \\
[z_{1}(t+T_{L}) - e^{-t(-\Delta)^{m_{1}}} z_{1}(T_{L}) ] \rvert_{t=0} = 0, 
\end{cases}\\
&\begin{cases}
d \Bigg( z_{2}(t+T_{L}) - e^{-t (-\Delta)^{m_{2}}} z_{2}(T_{L}) \Bigg) + (-\Delta)^{m_{2}} \Bigg( z_{2}(t+T_{L}) - e^{-t(-\Delta)^{m_{2}}} z_{2}(T_{L}) \Bigg) =d\hat{B}_{2}, \\
[z_{2}(t+T_{L}) - e^{-t(-\Delta)^{m_{2}}} z_{2}(T_{L}) ] \rvert_{t=0} = 0.
\end{cases}
\end{align}
\end{subequations} 
By uniqueness of the solutions to \eqref{hat z1 and hat z2}, we see that 
\begin{equation}\label{est 132}
\hat{z}_{k}(t) = z_{k}(t+T_{L}) - e^{-t(-\Delta)^{m_{k}}} z_{k}(T_{L}) \text{ for } k \in \{1,2\}. 
\end{equation} 
In comparison to \eqref{define TL}, we now define 
\begin{align}
T_{L+1} \triangleq& \inf \Bigg\{ t \geq 0: C_{S} \max_{k = 1, 2} \lVert z_{k} (t) \rVert_{\dot{H}^{1 - \delta}} \geq L+1 \Bigg\} \nonumber \\
&  \wedge \inf\Bigg\{ t \geq 0: C_{S} \max_{k = 1,2} \lVert z_{k} \rVert_{C_{t}^{\frac{1}{2} - 2 \delta} L^{2}} \geq L+1 \Bigg\} \wedge (L+1). \label{define TL+1}
\end{align} 
Considering \eqref{est 132}, we define 
\begin{align}
\hat{T}_{L+1} \triangleq& \inf \Bigg\{ t \geq 0: C_{S} \max_{k = 1, 2} \lVert \hat{z}_{k} (t) \rVert_{\dot{H}^{1 - \delta}} \geq 2(L+1) \Bigg\} \nonumber \\
&  \wedge \inf\Bigg\{ t \geq 0: C_{S} \max_{k = 1,2} \lVert \hat{z}_{k} \rVert_{C_{t}^{\frac{1}{2} - 2 \delta} L^{2}} \geq 2(L+1) \Bigg\} \wedge (L+1). \label{define hat TL+1} 
\end{align} 
It follows that for all $t \leq T_{L+1} - T_{L}$, 
\begin{equation}\label{est 133}
\lVert \hat{z}_{k}(t) \rVert_{\dot{H}^{1-\delta}} < 2(L+1),  \lVert \hat{z}_{k} \rVert_{C_{t}^{\frac{1}{2} - 2 \delta} L^{2}} < 2(L+1) \text{ and consequently } T_{L+1} - T_{L} \leq \hat{T}_{L+1}. 
\end{equation} 
We define 
\begin{equation*}
\Omega_{N} \triangleq \{ N-1 \leq \min \{ \lVert u(T_{L}) \rVert_{L^{p}}, \lVert b(T_{L}) \rVert_{L^{p}} \}, \max\{ \lVert u(T_{L}) \rVert_{L^{p}}, \lVert b(T_{L}) \rVert_{L^{p}} \} < N \} \in \hat{\mathcal{F}}_{0},
\end{equation*} 
and construct similarly to Proposition \ref{Proposition 5.3} a solution $(u_{1}^{N}, b_{1}^{N})$ on $\Omega_{N}$ to \eqref{gen stoch MHD} with $(B_{1}, B_{2})$ replaced by $(\hat{B}_{1}, \hat{B}_{2})$ on $[0, \hat{T}_{L+1}]$ with initial data $(u,b)(T_{L})$ and consequently, 
\begin{equation}\label{est 134}
(\bar{u}_{1}, \bar{b}_{1}) \triangleq \Bigg(\sum_{N\in\mathbb{N}} u_{1}^{N} 1_{\Omega_{N}}, \sum_{N \in \mathbb{N}} b_{1}^{N} 1_{\Omega_{N}}\Bigg) 
\end{equation} 
is a solution on $[0, \hat{T}_{L+1}]$ to \eqref{gen stoch MHD} with $(B_{1}, B_{2})$ replaced by $(\hat{B}_{1}, \hat{B}_{2})$, that is $\{\mathcal{F}_{t}\}_{t\geq 0}$-adapted. Now we define 
\begin{equation}\label{est 136}  
(u_{1}, b_{1})(t) = \Bigg( u(t) 1_{t \leq T_{L}} + \bar{u}_{1}(t- T_{L}) 1_{t > T_{L}}, b(t) 1_{t\leq T_{L}} + \bar{b}_{1}(t-T_{L}) 1_{t> T_{L}} \Bigg).
\end{equation} 
For $t \leq T_{L}$, $(u_{1}, b_{1})(t) = (u,b)(t)$ and thus $(u_{1}, b_{1})$ solves the system on $[0, T_{L}]$. For $t \in (T_{L}, T_{L+1}]$ where $T_{L+1} \leq \hat{T}_{L+1} +T_{L}$ due to \eqref{est 133}, 
\begin{equation*} 
 \bar{u}_{1}(t- T_{L})  =  u(0) - \int_{0}^{t} (-\Delta)^{m_{1}} u_{1} + \mathbb{P} \divergence ( u_{1} \otimes u_{1} - b_{1} \otimes b_{1})(s) ds + B_{1}(t), 
\end{equation*} 
and similar computation for $\bar{b}_{1}(t-T_{L})$ apply and lead to show that $(u_{1}, b_{1})$ satisfies \eqref{gen stoch MHD} up to $T_{L+1}$. We now define 
\begin{equation}\label{est 267} 
T_{L}^{n} \triangleq \sum_{k=1}^{\infty} \frac{k}{2^{n}} 1_{ \{ \frac{k-1}{2^{n}} \leq T_{L} < \frac{k}{2^{n}} \}} 
\end{equation} 
and it follows that $T_{L}^{n}$ is a stopping time and $\lim_{n\to\infty} T_{L}^{n} = T_{L}$ $\mathbf{P}$-a.s. Moreover, it follows from \eqref{est 134} and \eqref{est 135} that $\bar{u}_{1}, \bar{b}_{1}(t) \in \mathcal{F}_{T_{L}^{n} + t}$ for all $n \in \mathbb{N}$. Now let $D$ be any closed domain in $L^{p}(\mathbb{T}^{3})$ and suppose that 
\begin{align*}
\omega \in \{ (\bar{u}_{1}, \bar{b}_{1})(t) \in D \} \cap \{T_{L}^{n} + t \leq s \}. 
\end{align*}
We know $\{ (\bar{u}_{1}, \bar{b}_{1}) (t) \in D \} \in \mathcal{F}_{T_{L}^{n} + t}$. As $T_{L}^{n} + t \leq s$ by assumption, we have $\{ (\bar{u}_{1}, \bar{b}_{1})(t) \in D \} \in \mathcal{F}_{T_{L}^{n} + t} \subset \mathcal{F}_{s}$. On the other hand, immediately we have $\{T_{L}^{n} + t \leq s \} \in \mathcal{F}_{s}$. Therefore, 
\begin{align*}
\{ ( \bar{u}_{1}, \bar{b}_{1}) (t) \in D \} \cap \{ T_{L}^{n} + t \leq s \} \in \mathcal{F}_{s}
\end{align*}
and it follows that $\bar{u}_{1} (t- T_{L}^{n}) 1_{\{ t > T_{L}^{n} \}}$ and $\bar{b}_{1}(t- T_{L}^{n}) 1_{\{t> T_{L}^{n} \}}$ are both measurable with respect to $\{\mathcal{F}_{t}\}_{t\geq 0}$ and consequently, $u_{1}$ and $b_{1}$ are both $\{ \mathcal{F}_{t}\}_{t\geq 0}$-adapted. 

Lastly, we iterate the steps above. Starting from $(u_{k}, b_{k}) (T_{L+k})$ and $(u_{k+1}, b_{k+1})$ up to $T_{L+ k + 1}$, we define 
\begin{equation*}
\bar{u}(t) \triangleq u(t)1_{ \{t\leq T_{L} \}} + \sum_{k=1}^{\infty} u_{k}(t) 1_{ \{ T_{L + k -1} < t \leq T_{L+k} \}}, \hspace{1mm} \bar{b}(t) \triangleq b(t) 1_{ \{t\leq T_{L} \}} + \sum_{k=1}^{\infty} b_{k}(t) 1_{ \{ T_{L + k -1} < t \leq T_{L+k} \}}
\end{equation*}  
which become a probabilistically strong solution with the regularity of $C([0,\infty); L^{p}(\mathbb{T}^{3})) \cap L_{\text{loc}}^{2} ([0,\infty); L^{2}(\mathbb{T}^{3}))$. This completes the proof of Theorem \ref{Theorem 2.3}

\subsection{Proof of Corollary \ref{Corollary 2.4}}\label{Section B.2}  
We take $L \gg 1$ so that $\mathbf{P} ( \{  T_{L} > 2 \}) > \frac{1}{2}$. With same notations from the proof of Proposition \ref{Proposition 5.3}, we take $K \neq K'$ such that $3K (T_{L} -2) \wedge 3K' (T_{L} - 2)> \mathcal{C}$; assuming w.l.o.g. that $K > K'$, we also choose them such that $K - K' > \frac{2 \mathcal{C}}{3(T_{L} - 2)}$ and consequently
\begin{align*}
[ 3K (T_{L} - 2) - \mathcal{C}, 3K (T_{L} - 2) + \mathcal{C}] \cap [3K' (T_{L} - 2) - \mathcal{C}, 3K' (T_{L} - 2)+ \mathcal{C}] = \emptyset. 
\end{align*}
This, along with \eqref{est 139}, implies that the laws of $(u_{K}, b_{K})$ and $(u_{K'}, b_{K'})$ are different, completing the proof of Corollary \ref{Corollary 2.4}.
 
\section*{Acknowledgments}
The author expresses deep gratitude to Prof. Bradley Shadwick and Prof. Adam Larios for valuable discussions concerning MHD system and Dr. Rajendra Beekie concerning \cite{BBV20}.

\end{document}